\let\mathcal\mathscr
\numberwithin{equation}{section}
\newtheorem{theorem}{Theorem}[section]
\newtheorem{lemma}[theorem]{Lemma}
\theoremstyle{definition}
\newtheorem*{ack}{Acknowledgements}
\newtheorem{rem}[theorem]{Remark}
\renewcommand{\phi}{\varphi}
\newcommand{\PP}{\mathbb{P}}
\renewcommand{\AA}{\mathbb{A}}
\newcommand{\GG}{\mathbb{G}}
\newcommand{\ZZ}{\mathbb{Z}}
\newcommand{\ZZp}{\mathbb{Z}_{\mathrm{prim}}}
\newcommand{\NN}{\mathbb{N}}
\newcommand{\QQ}{\mathbb{Q}}
\newcommand{\RR}{\mathbb{R}}
\newcommand{\cA}{\mathcal{A}}
\newcommand{\cB}{\mathcal{B}}
\newcommand{\cD}{\mathcal{D}}
\newcommand{\cM}{\mathcal{M}}
\renewcommand{\leq}{\leqslant}
\renewcommand{\geq}{\geqslant}
\renewcommand{\bar}{\overline}
\newcommand{\ma}{\mathbf}
\newcommand{\q}{\mathbf{q}}
\renewcommand{\a}{\mathbf{a}}
\renewcommand{\k}{\mathbf{k}}
\newcommand*\bell{\ensuremath{\boldsymbol\ell}}
\newcommand{\ve}{\varepsilon}
\newcommand{\e}{\mathbf e}
\newcommand{\bnu}{\boldsymbol{\nu}}
\newcommand{\bve}{\boldsymbol{\varepsilon}}
\newcommand{\bde}{\boldsymbol{\delta}}
\DeclareMathOperator{\sign}{\sigma}
\DeclareMathOperator{\Spec}{Spec}
\DeclareMathOperator{\Br}{Br}
\DeclareMathOperator{\diag}{diag}
\DeclareMathOperator{\Mod}{mod} 
\renewcommand{\bmod}[1]{\,(\Mod{#1})}
\renewcommand{\rho}{\varrho}
\newcommand{\Nloc}{N_{\mathrm{loc}}}
\newcommand{\NBr}{N_{\mathrm{Br}}}
\newcommand{\Stot}{S_{\mathrm{tot}}}
\newcommand{\Tot}{T_{\mathrm{tot}}}
\newcommand{\Toti}{T_{\mathrm{tot}}^{(1)}}
\newcommand{\Totii}{T_{\mathrm{tot}}^{(2)}}
\newcommand{\Sloc}{S_{\mathrm{loc}}}
\newcommand{\Sglob}{S_{\mathrm{glob}}}
\newcommand{\4}{\bmod{4}}
\newcommand{\8}{\bmod{8}}
\renewcommand{\2}{\bmod{2}}
\renewcommand{\=}{\equiv}
\renewcommand{\i}{\mathrm{i}}
\newcommand{\eps}{\epsilon}
\newcommand{\lan}{\left[}
\newcommand{\ran}{\right]}
\newcommand{\ocD}{\overline{\mathcal{D}}}
\begin{document}

\title[Ch\^atelet surfaces failing the Hasse principle]{Density  of Ch\^atelet surfaces failing\\ the Hasse principle}

\author{R.\ de la Bret\`eche}
\address{ 
Institut de Math\'ematiques de Jussieu\\
Universit\'e  Denis Diderot\\
Case Postale 7012\\
F-75251 Paris CEDEX 13\\ France}
\email{breteche@math.jussieu.fr}

\author{T.D.\ Browning}
\address{School of Mathematics\\
University of Bristol\\ Bristol\\ BS8 1TW\\ United Kingdom}
\email{t.d.browning@bristol.ac.uk}

\date{\today}

\begin{abstract} 
Ch\^atelet surfaces provide a rich source of geometrically rational surfaces which do not always satisfy the Hasse principle. 
Restricting attention to a special class of Ch\^atelet surfaces, we investigate the frequency that such counter-examples  arise over the rational numbers.
\end{abstract}

\subjclass{14G05 (11D99, 11G35, 11G50)}

\maketitle
\setcounter{tocdepth}{1}
\tableofcontents

\section{Introduction}\label{s:intro}


A family of geometrically integral 
algebraic varieties defined over a number field  $K$ is said to satisfy the  Hasse principle 
if any variety in the family has a point in $K$ as soon as it has points in every completion of $K$.
Quadrics are among the first examples of families satisfying this property. 
In dimension $2$, 
Ch\^atelet surfaces constitute a family of varieties for which the Hasse principle
is known to fail in general.  
A Ch\^{a}telet surface over $K$ is a proper smooth model
of an affine surface
\begin{equation}\label{eq:1}
Y^2-eZ^2=f(T), 
\end{equation}
where 
$e\in K^*$ is not equal to a square in $K^*$ and 
$f\in K[T]$ is a separable polynomial of  degree 
$3$ or $4$.  For these surfaces it follows from work of  Colliot-Th\'el\`ene, Sansuc and Swinnerton-Dyer \cite{CT} that all failures of the Hasse principle are accounted for by the Brauer--Manin obstruction, a cohomological 
obstruction based on the Brauer group of the surface.
In particular it is known that the Hasse principle holds whenever $f$ is irreducible over $K$, which is the generic situation, or when $f$ has a linear factor over $K$.
In the remaining case, when $f$ is a product of two irreducible quadratic polynomials over 
$K$, counter-examples to the Hasse principle can arise.

In what follows  we shall only consider Ch\^atelet surfaces defined over $\QQ$. 
A great deal of  recent work has been directed at the quantitative arithmetic of rational varieties, 
the aim being to count $\QQ$-rational points of bounded height on the variety, assuming that it contains a 
Zariski dense set of $\QQ$-rational points.  In this paper we seek instead to
vary the varieties in a family and 
measure how often  
counter-examples to the Hasse principle emerge. 
In the setting of Ch\^atelet surfaces, the main result in \cite{CT} implies that a random surface of the form \eqref{eq:1} will satisfy the Hasse principle, 
and furthermore, we can only expect to find counter-examples when $f$ is a product of 
two quadratic polynomials over $\QQ$,  which are both irreducible over $\QQ$.

For simplicity we will henceforth restrict our attention to  Ch\^atelet surfaces given by \eqref{eq:1}, for which $e=-1$ and 
$f$ factorises as a product of two diagonal quadratic polynomials over $\QQ$.
Thus, for any $(a,b,c,d)\in \AA^4$, let
$X_{a,b,c,d}$ denote the  Ch\^atelet surface 
defined by the equation
\begin{equation}\label{eq:surface}
Y^2+Z^2=(aT^2+b)(cT^2+d),
\end{equation}
with 
\begin{equation}\label{eq:stipulate}
abcd\neq 0,\quad ad-bc\neq 0.
\end{equation} 
The fact that the Brauer--Manin obstruction is the only obstruction to the Hasse principle for this family goes back to earlier work of Colliot-Th\'el\`ene, Coray and Sansuc \cite{ct-c-s}, wherein it is shown that $\Br(X_{a,b,c,d})/\Br(\QQ)\cong \ZZ/2\ZZ$ for any $(a,b,c,d)\in \QQ^4$ such that \eqref{eq:stipulate} holds. (This work actually covers general Ch\^atelet surfaces of the form \eqref{eq:1} in which 
$f=q_1q_2$ for distinct quadratic polynomials $q_1,q_2$ that are irreducible over the base field.)
Furthermore, in 
\cite[Prop.~C]{ct-c-s}, a  $1$-parameter family of counter-examples to 
the Hasse principle over $\QQ$ is exhibited.   This family, which we will meet again in \S \ref{s:ct}, 
is given by $X_{1,1-k,-1,k}$ for any 
positive integer $k\equiv 3 \bmod{4}$. It generalises the particular case $k=3$ 
first discovered by Iskovskikh \cite{iskov}.
Our chief object is to give a finer quantitative treatment of this  circle of ideas. 
As we vary over 
rational coefficients, producing surfaces
$X_{a,b,c,d}$ that are defined over $\QQ$, we will investigate the 
 proportion of surfaces that
 \begin{itemize}
\item[---]
  have points everywhere locally,
\item[---]  have $\QQ$-rational points,
\item[---]   fail the Hasse principle.
\end{itemize}
One of the issues that we shall need to address is how best to parameterise the Ch\^atelet surfaces that are of interest to us.

Poonen and Voloch \cite{PV} have discussed 
similar questions in the 
setting of projective hypersurfaces $V\subset \PP^{n}$ of degree $d$, assuming that
 $d,n\geq 2$ and $(d,n)\neq (2,2)$.
Let $\Omega$ denote the set of valuations of $\QQ$ and let $\QQ_v$ be the completion of $\QQ$ at $v$,
 following the convention that $\QQ_\infty=\RR$.
In \cite[Thm.~3.6]{PV} it is shown that 
the proportion of $V$ that are everywhere locally soluble converges to an Euler product 
$c=\prod_{v\in \Omega} c_v>0$, where 
each $c_v$ is the proportion of $V$ for which $V(\QQ_v)\neq \emptyset$.
This is the exact analogue of our Theorem \ref{c:cor1} for hypersurfaces. 
They conjecture, furthermore, that when $d\leq n$  (which implies that the hypersurface is generically Fano)
the proportion of $V$ that have $\QQ$-rational points should converge to this constant $c$.
This  is shown (see \cite[Prop.~3.4]{PV})
to follow from the conjecture of Colliot-Th\'el\`ene that the Brauer--Manin obstruction to the Hasse principle is the only one for smooth, proper, geometrically integral varieties over $\QQ$ which are geometrically rationally connected. 
In fact, for non-singular hypersurfaces $V\subset \PP^n$, with $n\geq 4$, the 
natural map $\Br(\QQ)\rightarrow \Br(V)$ is an isomorphism. Thus the 
Brauer--Manin obstruction to the Hasse principle is generically empty when $n\geq 4$.

Bhargava \cite{msri} has 
recently undertaken an 
extensive investigation of hyperelliptic curves, where it is well-known that the Hasse principle can fail. 
Any hyperelliptic curve over $\QQ$ of genus $g$ can be embedded in weighted projective space $\PP(1,1,g+1)$, via an  equation of the form
$$
T^2=F(Y,Z),
$$
where $F\in \ZZ[Y,Z]$ is a separable  binary form of degree $2g+2.$
In this setting one gets different behaviour to that predicted for Fano hypersurfaces. It is shown in  \cite[Thm.~22]{msri} 
that for each $g\geq 1$, a positive  proportion of 
hyperelliptic curves of genus $g$ over $\QQ$, when ordered by height, 
fail the Hasse principle.

A form  of degree $d$ in $n+1$ variables has $N=\binom{d+n}{d}$  
possible coefficients. Poonen and Voloch 
use affine space $\AA^N$ to parameterise  hypersurfaces $V\subset \PP^n$ of degree $d$, associating to each vector in  $\ZZ^N$
a hypersurface $V$   defined over $\QQ$.
One disadvantage of this approach is that two 
different vectors may produce the same $V$. 
In our work it will be convenient to 
identify certain obvious choices of coefficients $(a,b,c,d)$ which lead to the same  Ch\^atelet surface $X_{a,b,c,d}$
in \eqref{eq:surface}.

Let $(a,b,c,d)\in \AA^4$ and $ (a',b',c',d')\in \AA^4$, 
with \eqref{eq:stipulate} holding for both sets of coefficients. 
The former gives 
rise to the  Ch\^atelet surface
 $X_{a,b,c,d}$ in 
 \eqref{eq:surface} and, using different variables, the latter gives rise to the 
Ch\^atelet surface 
 $X_{a',b',c',d'}$ given by 
$$
 Y'^2+Z'^2=(a'T'^2+b')(c'T'^2+d').
$$
We will identify 
these surfaces, writing $X_{a',b',c',d'}=X_{a,b,c,d}$, if  there is 
matrix $M\in \mathrm{GL}_2(\QQ)$ and a scalar $m\in \QQ^*$ such that 
\begin{equation}\label{eq:transform}
\binom{Y'}{Z'}
=M
\binom{Y}{Z},
 \quad T'=m T.
\end{equation}
In such a situation it is clear that the surfaces 
 $X_{a',b',c',d'}$ and $X_{a,b,c,d}$ really are identical.

We begin by noting that  $X_{\lambda a, \lambda b, \lambda c, \lambda d}=X_{a,b,c,d}$ for any $\lambda \in \QQ^*$, as can be seen by taking 
 $M=\diag(\lambda,\lambda)$ and $m=1$ in \eqref{eq:transform}.
Thus we are led  to consider the 
open set 
$$
U=\{[a,b,c,d]\in \PP^3: \mbox{\eqref{eq:stipulate} holds}\}.
$$
Similarly, we have 
$X_{\lambda^2 a, \mu^2 b, \lambda^2 c, \mu^2 d}=X_{a,b,c,d}$
for any $\lambda,\mu\in \QQ^*$, 
as  seen by taking 
$M=\diag(\mu^2,\mu^2)$ and $m=\mu/\lambda$ in \eqref{eq:transform}.
Next, we let 
 $N(\zeta)=\zeta_1^2+\zeta_2^2$ denote the norm of any $\zeta=\zeta_1+\i\zeta_2\in \QQ(\i)$. 
For $\xi,\eta\in \QQ(\i)^*$, we have 
$X_{N(\xi) a, N(\xi) b, N(\eta) c, N(\eta) d}=X_{a,b,c,d}$,
where the transformation is given by 
$$
M=\left(
\begin{matrix}
\xi_1\eta_1-\xi_2\eta_2 & -(\xi_1\eta_2+\xi_2\eta_1)\\
\xi_1\eta_2+\xi_2\eta_1 &
 \xi_1\eta_1-\xi_2\eta_2
\end{matrix}\right)
$$
and $m=1$ in \eqref{eq:transform}.
In conclusion, we have produced an action of 
$\GG_{m,\QQ}^2\times R_{\QQ(\i)/\QQ}(\GG_{m,\QQ(\i)})^2$ on $U$, where 
 $\GG_{m,k}$ denotes the 
linear  algebraic  group $\Spec K[T,T^{-1}]$ associated to a field $K$.
Building on  this,
we note that $X_{(a,b,c,d)^{\rho_1}}=X_{(a,b,c,d)^{\rho_2}}=X_{a,b,c,d}$, where
\begin{equation}\label{eq:rho-i}
\rho_1: (a,b,c,d)\mapsto (c,d,a,b), \quad
\rho_2: (a,b,c,d)\mapsto (b,a,d,c).
\end{equation}
This  leads to a further action on  $U$ 
by the $0$-dimensional algebraic group $(\ZZ/2\ZZ)^2$.
We therefore obtain a group action 
$G\times U\rightarrow U$, where $G$ is the algebraic $\QQ$-group
$$
G=
\GG_{m,\QQ}^2\times R_{\QQ(\i)/\QQ}(\GG_{m,\QQ(\i)})^2 \rtimes (\ZZ/2\ZZ)^2.
$$
Given  $u=[a,b,c,d]\in U$, the orbit of $u$ under $ G(\QQ)$ produces  the same surface $X_{a,b,c,d}$. 

In our analysis we  will restrict attention to elements of $\cM=U(\QQ)/G(\QQ)$.
We will be interested in three basic subsets,  given by 
\begin{align*}
\cM_{\mathrm{loc}}
&=\left\{ [a,b,c,d]\in \cM: X_{a,b,c,d}(\QQ_v) \neq \emptyset ~\forall v\in \Omega \right\},\\
\cM_{\mathrm{glob}}
&=\left\{ [a,b,c,d]\in \cM: X_{a,b,c,d}(\QQ) \neq \emptyset \right\},\\
\cM_{\mathrm{Br}}
&=\cM_{\mathrm{loc}}\setminus \cM_{\mathrm{glob}}.
\end{align*}
The elements of $\cM_{\mathrm{Br}}$ 
are precisely the surfaces for which there is a non-empty Brauer--Manin obstruction to the Hasse principle.
In \S \ref{s:prelim} we will choose representative coordinates in  
$\ZZp^4/\{\pm 1\}$ for the parameter space $\mathcal{M}$. In this way, to each point $u\in \mathcal{M}$ we can associate a height $H(u)$,  which will turn out to be 
 $\max\{|a|,|b|,|c|,|d|\}$ on $\ZZp^4/\{\pm 1\}$.
With this in mind we wish to study the cardinalities
\begin{equation}\label{eq:card}
N_{\mathrm{loc/glob/Br}} (P)=
\#\{u\in \cM_{\mathrm{loc/glob/Br}}: H(u)\leq P\},
\end{equation}
as $P\rightarrow \infty$. Our first result
is the following.

\begin{theorem}\label{t:M-local}
We have 
$N_{\mathrm{loc}}(P)= \tau_{\mathrm{loc}} P^4+O (P^{4-1/8+\ve}),$ for any $\ve>0$, 
where 
if $\tau_{\mathrm{loc},v}$ is the density of points in $U(\QQ_v)/G(\QQ_v)$ for which the associated surface has $\QQ_v$-points then 
$$
\tau_{\mathrm{loc}}=\prod_{v\in \Omega} \tau_{\mathrm{loc},v}.
$$
\end{theorem}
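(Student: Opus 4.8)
The plan is to reduce $N_{\mathrm{loc}}(P)$ to a count of integer points in a homogeneously expanding region subject to finitely many local conditions, and then to evaluate that count by a sieve. The first task is to pin down a workable system of integer representatives for $\cM=U(\QQ)/G(\QQ)$. The $\PP^3$-scaling is removed by taking primitive integer vectors $(a,b,c,d)$, unique up to sign; the torus $\GG_{m,\QQ}^2$ acts through squares on the pairs $(a,c)$ and $(b,d)$, so I would require that neither pair has a common square divisor; the factor $R_{\QQ(\i)/\QQ}(\GG_{m,\QQ(\i)})^2$ acts by multiplying $(a,b)$, resp.\ $(c,d)$, by norms from $\QQ(\i)$ --- that is, by positive rationals whose prime factors occurring to odd multiplicity all lie in $\{2\}\cup\{p\equiv 1\bmod{4}\}$ --- so I would impose a matching norm-reducedness condition on $(a,b)$ and on $(c,d)$; and the finite group generated by $\rho_1,\rho_2$ is killed by a lexicographic normalisation. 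This should produce an explicit set $\cR\subset\ZZ^4$, cut out by the openness conditions \eqref{eq:stipulate} together with finitely many coprimality, squarefree and norm conditions, with $\cR\to\cM$ surjective and generically $|(\ZZ/2\ZZ)^2|$-to-one, and with the property that on $\cR$ the height $H([a,b,c,d])$ equals $\max(|a|,|b|,|c|,|d|)$ away from thin loci. We then obtain
\[
N_{\mathrm{loc}}(P)=\frac{1}{4}\,\#\{(a,b,c,d)\in\cR:\ \max(|a|,|b|,|c|,|d|)\le P,\ X_{a,b,c,d}\ \text{is everywhere locally soluble}\}+O(1),
\]
the $O(1)$ absorbing the thin loci where the multiplicity drops.

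The second task is to express everywhere-local-solubility place by place. Since $Y^2+Z^2$ is a universal quadratic form over $\QQ_p$ whenever $p\equiv 1\bmod{4}$, the surface automatically has $\QQ_p$-points there; at $\infty$ solubility reduces to a condition on the signs of $a,b,c,d$ (it fails exactly when one of $aT^2+b$, $cT^2+d$ is positive definite and the other negative definite); and at $p=2$ or $p\equiv 3\bmod{4}$ solubility is equivalent to the existence of $t\in\PP^1(\QQ_p)$ for which $(at^2+b)(ct^2+d)$ is a norm from $\QQ_p(\i)$, which translates into a congruence condition on $(a,b,c,d)$ modulo a bounded power of $p$. For $p\equiv 3\bmod{4}$ this condition holds unless $p$ divides the tuple in a constrained way (roughly $p\mid a,d$ or $p\mid b,c$ with matched valuation parities), so the $p$-adic density of insoluble tuples is $O(p^{-2})$; in particular $\prod_v\tau_{\mathrm{loc},v}$ converges, and $\tau_{\mathrm{loc},v}$ is recognised as the $v$-adic density of those representatives in $\cR$ meeting the $v$-adic condition, weighted by the Jacobian of the reduction to the fundamental domain at $v$.

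The final and most delicate task is the count itself. I would split the bad primes at a level $Q=P^{\theta}$. Primes $p\le Q$ are handled exactly, by counting lattice points of $\cR$ with $\max(|a|,\dots,|d|)\le P$ and $ad\neq bc$ subject to congruences of modulus $\prod_{p\le Q}p^{O(1)}$; the main term then assembles the partial Euler product $\prod_{v\le Q}\tau_{\mathrm{loc},v}\cdot(2P)^4$, with an error controlled by the boundary of the region and by the size of the modulus. Primes $p>Q$ are treated by bounding the number of tuples in the box that are insoluble at some such $p$ by the measure of those tuples divisible in the requisite way, which is $O(P^4/Q)$ up to lower-order contributions from the subvarieties $abcd=0$ and $ad=bc$. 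Balancing the two error sources, and absorbing the loss incurred by the (itself sieve-theoretic) norm-reduction for the $R_{\QQ(\i)/\QQ}$-torus, yields the stated exponent $4-\tfrac18$. The principal obstacle is exactly this uniformity: one must impose the $\cR$-conditions, the square and norm reductions, the congruences at all $p\le Q$, and the cutoff $ad\neq bc$ simultaneously while keeping the total error at size $P^{4-1/8+\ve}$; controlling lattice points in this multiply-constrained expanding region, rather than identifying the main term, is where essentially all the work resides.
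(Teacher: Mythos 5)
Your outline shares the paper's two main structural ingredients: (i) the reduced representative set you call $\cR$ coincides with the paper's $S_{\mathrm{tot}}$ of \eqref{eq:Stot} (the ``norm--reducedness'' for the $R_{\QQ(\i)/\QQ}$-factors is precisely the condition $\gcd(a,b),\gcd(c,d)\in\cB$), and (ii) local solubility is indeed reduced to a finite congruence condition place by place, matching Lemma~\ref{lem:X-local}. One small inaccuracy: the map $\cR\to\cM$ is exactly $1{:}4$, not just generically so, since $(\ZZ/2\ZZ)^2$ acts freely once $ad-bc\neq 0$ is imposed; no $O(1)$ correction for thin loci is needed.

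Where you diverge from the paper --- and where the proposal as written has a gap --- is the sieve. You propose to count tuples subject to congruences ``of modulus $\prod_{p\le Q}p^{O(1)}$'' with $Q=P^\theta$, and then bound the remaining error from $p>Q$ by $O(P^4/Q)$. The tail bound $O(p^{-2})$ per prime is right (a tuple can be insoluble at $p\equiv 3\bmod 4$ only if $p\mid \gcd(a,b)$ or $p\mid\gcd(c,d)$). But if the congruence conditions are imposed simultaneously modulo $\prod_{p\le Q}p^{O(1)}$, that modulus is $\exp(\Theta(Q))$, super-polynomial in $P$ for any $Q=P^\theta$; the lattice-point error per residue class ($O((P/M)^3+1)$, summed over exponentially many classes) would overwhelm the main term. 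The paper avoids this by not truncating primes directly: it sums over the gcd data $(m,n,\boldsymbol{\ell},m'',n'',\beta,\gamma,\delta,\mathbf{k})$ of each tuple (the changes of variables \eqref{eq:change1}--\eqref{eq:change3'}), restricts those parameters to size $\leq T$ with error $O(T^{-1+\ve}P^4)$ via Rankin's trick, and then applies Lemma~\ref{lem:basic} to each box, where the congruence modulus is only $16mn=O(T^2)$ and the coprimality modulus $q_1\in\{m'',n''\}$ is $O(T)$. The exponent $1/8$ emerges concretely from the requirement $P\gg T^{8+\ve}$ in that lemma (the $+8$ accounting for the accumulation $A,B,C,D\ll T^5$ and $q_2=16mn\ll T^2$), together with the outer truncation error $T^{-1+\ve}P^4$; it is not obtained by ``balancing'' the small-prime and large-prime error sources as you suggest, because without the gcd parametrization there is no clean small-prime error to balance against.
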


Here, as throughout our work, the implied constant is allowed to depend at most on the choice of parameter $\ve>0$. 
Theorem \ref{t:M-local} 
 will be established in \S
\ref{s:asymptotic-Nloc}, where 
an explicit description of the factors $\tau_{\mathrm{loc},v}$ will appear.
Once combined with an estimation of 
the total number $N(P)$ of elements in $\cM$ with height at most $P$, we will also establish the following result.

\begin{theorem}\label{c:cor1}
We have 
$$
\lim_{P\rightarrow \infty} \frac{N_{\mathrm{loc}}(P)}{N(P)}=
\frac{33257}{39168}
\prod_{p\=3 \4} 
\left( 1-\frac{6-\frac{9}{p^2}+\frac{4}{p^4}}{p^4 b_p}\right),
$$
where
\begin{equation}\label{eq:b_p}
b_p=
\left(1+\frac{1}{p}\right)^2\left(1+\frac{2}{p}+\frac{3}{p^2}+\frac{4}{p^3}-\frac{4}{p^5}\right).
\end{equation}
In particular 83.3\% of the elements of $\cM$ are everywhere locally soluble.
\end{theorem}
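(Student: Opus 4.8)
The plan is to deduce Theorem \ref{c:cor1} by dividing the asymptotic formula of Theorem \ref{t:M-local} by a companion estimate for the total count
\[
N(P)=\#\{u\in\cM:H(u)\le P\}.
\]
Concretely, I would prove that $N(P)=\tau P^4+O(P^{4-\delta})$ for some $\delta>0$ and an explicit positive constant $\tau=\prod_{v\in\Omega}\tau_v$, where $\tau_v$ is the density of points in $U(\QQ_v)/G(\QQ_v)$ — the same local integral appearing in Theorem \ref{t:M-local}, but with the local solubility condition dropped. Granting this, the power-saving error terms are far more than enough to conclude
\[
\lim_{P\to\infty}\frac{\Nloc(P)}{N(P)}=\frac{\tau_{\mathrm{loc}}}{\tau}=\prod_{v\in\Omega}\frac{\tau_{\mathrm{loc},v}}{\tau_v},
\]
so the remaining work is to evaluate these local ratios and to check that the product converges.

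Establishing the estimate for $N(P)$ is where the question of how best to parameterise the surfaces, raised in \S\ref{s:intro}, has to be settled. The identity component of $G(\QQ)$, combined with the projective scaling on $\PP^3$, rescales each coordinate of a representative $[a,b,c,d]$ by an element of $(\QQ^*)^2\cdot N(\QQ(\i)^*)$; this allows one to pick out, in each class, a primitive integral representative in which $a,b,c,d$ are reduced modulo squares and modulo norms from $\QQ(\i)$, the only residual identifications being those of the finite group $\langle\rho_1,\rho_2\rangle\cong(\ZZ/2\ZZ)^2$ of \eqref{eq:rho-i}. One then counts integral points in the resulting fundamental region $|a|,|b|,|c|,|d|\le P$, using M\"obius inversion to peel off the arithmetic (square-, norm- and coprimality) constraints — this is a simpler variant of the lattice-point count carried out for Theorem \ref{t:M-local}, now with no solubility condition imposed. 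A short check using \eqref{eq:stipulate} shows that $\rho_1$ and $\rho_2$ each act freely on $U$, while the fixed loci of $\rho_1\rho_2$ form a lower-dimensional subvariety; hence $(\ZZ/2\ZZ)^2$ contributes a clean factor $\tfrac14$ to the main term, and the Euler product for $\tau$ emerges in the standard way.

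The local ratios are then computed place by place. For $p\=3 \4$, a finite $p$-adic analysis — determining the residue classes of $(a,b,c,d)$ to a bounded power of $p$ for which \eqref{eq:surface} has no $\QQ_p$-point — yields
\[
\frac{\tau_{\mathrm{loc},p}}{\tau_p}=1-\frac{6-\tfrac{9}{p^2}+\tfrac{4}{p^4}}{p^4 b_p},
\]
with $b_p$ as in \eqref{eq:b_p} encoding the normalising density $\tau_p$. For $p\=1 \4$ the surface \eqref{eq:surface} always has a $\QQ_p$-point: since $-1\in(\QQ_p^*)^2$ the form $Y^2+Z^2$ represents every nonzero element of $\QQ_p$, and one may take $T=0$ (noting $bd\neq 0$ by \eqref{eq:stipulate}); thus $\tau_{\mathrm{loc},p}/\tau_p=1$ and these primes drop out of the product. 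The contributions from $v=\infty$ — which amounts to identifying the sign patterns of $(a,b,c,d)$ for which $(aU^2+bV^2)(cU^2+dV^2)$ is negative on $\RR^2\setminus\{\0\}$ — and from $p=2$ — solubility of a sum of two squares over $\QQ_2$ — are each a finite rational computation, and their product is the prefactor $\tfrac{33257}{39168}$. Since the $p\=3 \4$ factor equals $1+O(p^{-4})$, the infinite product converges absolutely, and a numerical evaluation gives $83.3\%$.

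The main obstacle is the estimate for $N(P)$, or rather the bookkeeping it depends on: pinning down an unambiguous fundamental domain for the $G(\QQ)$-action requires keeping track simultaneously of the two square-scalings, the two $\QQ(\i)$-norm scalings (which interact precisely with the primes $p\=3 \4$, exactly the ones that survive in the final formula), the projective scaling and the coprimality normalisation, and then verifying that the finite symmetry group produces no spurious lower-order main term. Once that is in place, checking that the archimedean and $2$-adic densities — together with the various normalising constants, which must cancel between $\Nloc(P)$ and $N(P)$ — collapse to the single rational number $\tfrac{33257}{39168}$ is a careful but elementary verification.
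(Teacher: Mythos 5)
Your proposal is correct and follows essentially the same route as the paper: the paper establishes the companion estimate $N(P)=\tau P^4+O(P^{4-1/5+\ve})$ in Lemma \ref{lem:M-total} via the same sequence of changes of variables used in Theorem \ref{t:M-local} (with the local solubility constraints dropped), then forms the ratio $\tau_{\mathrm{loc}}/\tau$ and evaluates it place by place, with $p\=1\4$ dropping out, $p\=3\4$ giving the stated Euler factor, and the archimedean and $2$-adic factors ($7/8$ and $4751/4896$ respectively) combining into the rational prefactor $33257/39168$.
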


 In \S \ref{s:S-global} we will characterise when 
a  surface $X=X_{a,b,c,d}$ in $\cM_{\mathrm{loc}}$ actually belongs to $\cM_{\mathrm{glob}}$.
The Brauer group $\Br(X)$ 
will play
a fundamental r\^ole in this analysis, for which purpose we recall some basic facts here.
For any field $K\supseteq \QQ$, 
each element $\mathcal{A}\in \Br(X)$ gives rise to an evaluation map $\mathrm{ev}_{\mathcal{A}}:X(K)\rightarrow \Br(K)$.
Class field theory gives the  exact sequence
$$
0 \longrightarrow  \Br (\QQ) \longrightarrow \bigoplus_v \Br(\QQ_v)
\xrightarrow{\sum_{v}
\mathrm{inv}_v} \QQ/\ZZ \longrightarrow 0.
$$
For any 
$\mathcal{A}\in \Br(X)$ we then have the commutative diagram
			\[
				\xymatrix{
					 X(\QQ) \ar[r] \ar[d]^{\mathrm{ev}_{\mathcal{A}}}   &
					 X(\AA_\QQ) \ar[d]^{\mathrm{ev}_{\mathcal{A}}}
					 \\
					 \Br(\QQ) \ar[r] &\bigoplus_v \Br(\QQ_v)  \ar[r] &\QQ/\ZZ
				}
			\]
			where $\AA_\QQ$ denotes the ad\`eles
and $X(\AA_\QQ)=\prod_v X(\QQ_v)$.
Let $\Theta_\mathcal{A}: X(\AA_\QQ) \rightarrow \QQ/\ZZ$ denote the composed map. 
Then it follows that $X(\QQ)\subset \ker \Theta_{\mathcal{A}}$ for all $\mathcal{A}\in \Br(X)$.
We write 
$$
X(\AA_\QQ)^{\Br(X)}=\bigcap_{\mathcal{A}\in \Br(X)} \ker \Theta_{\mathcal{A}}.
$$
In our setting, $\Br(X)/\Br(\QQ)\cong \ZZ/2\ZZ$ 
has order $2$ and so 
$
\mathrm{inv}_v\left( \mathrm{ev}_{\mathcal{A}}(M_v) \right)
$
takes the values $0$ or $1/2$ in $\QQ/\ZZ.$
An obstruction to the Hasse principle arises if and only if $X(\AA_\QQ)^{\Br(X)}=\emptyset$, that is to say,  if and only if for any $(M_v)\in X(\AA_\QQ)$ there exists $\mathcal{A}\in \Br(X)$ such that 
\begin{equation}\label{eq:shanghai}
\sum_v  \mathrm{inv}_v\left( \mathrm{ev}_{\mathcal{A}}(M_v) \right)=\frac{1}{2}.
\end{equation}
Moreover, a generator for 
$\Br(X)/\Br(\QQ)$ 
is given by 
the quaternion algebra 
$(-1, aT^2+b)$ on 
 \eqref{eq:surface}. 
Fundamental to our work is the observation that 
\eqref{eq:shanghai} is impossible 
if there exists a valuation $v$ for which 
$
\mathrm{inv}_v\left( \mathrm{ev}_{\mathcal{A}}(M_v) \right)
$
takes both values $0$ and $1/2$, as a function of $M_v$.
Thus, in order to obtain counter-examples to the Hasse principle, it will be necessary for 
$\mathrm{inv}_v\left( \mathrm{ev}_{\mathcal{A}}(M_v) \right)$ to be  constant for every valuation $v$. 

In \S \ref{s:lower-upper} we shall build on  
\S \ref{s:S-global} to
establish the following asymptotic formula 
for the density of surfaces which provide counter-examples to the Hasse principle.

\begin{theorem}\label{thm2}
Let $\ve>0$. There exists $\tau_{\mathrm{Br}}>0$ such that 
$$
N_{\mathrm{Br}} (P)=
\frac{\tau_{\mathrm{Br}}P^4}{(\log P)^{1/4}} +O \left(\frac{P^4}{(\log P)^{3/4-\ve}} \right).
$$
\end{theorem}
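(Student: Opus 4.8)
The plan is to combine the explicit description of $\cM_{\mathrm{Br}}$ obtained in \S\ref{s:S-global} with an analytic lattice-point count carried out in \S\ref{s:lower-upper}. First I would invoke the characterisation in \S\ref{s:S-global} of when a surface $X=X_{a,b,c,d}\in\cM_{\mathrm{loc}}$ actually lies in $\cM_{\mathrm{Br}}$. Since $\Br(X)/\Br(\QQ)\cong\ZZ/2\ZZ$ is generated by the class $\mathcal{A}$ of the quaternion algebra $(-1,aT^2+b)$, the surface is a counter-example to the Hasse principle exactly when two conditions hold: (a) for every $v\in\Omega$ the map $M_v\mapsto\mathrm{inv}_v(\mathrm{ev}_{\mathcal{A}}(M_v))$ is constant on $X(\QQ_v)$, since otherwise \eqref{eq:shanghai} can always be defeated; and (b) the sum over $v$ of these local constants equals $1/2$. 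For a primitive integral representative $(a,b,c,d)$ of a point of $\cM$, condition (a) is automatic at primes $p\equiv1\bmod4$, reduces to finitely many congruences at $v=2$ and a sign condition at $v=\infty$, and at each prime $p\equiv3\bmod4$ imposes a constraint $\omega_p(a,b,c,d)\in\{0,1\}$ depending only on the $p$-adic valuations and residues of $a,b,c,d$ and $ad-bc$; condition (b) then amounts to one further global congruence. Together with the everywhere-local-solubility conditions already analysed in \S\ref{s:asymptotic-Nloc}, this turns $N_{\mathrm{Br}}(P)$ into a count of integer quadruples $(a,b,c,d)$ in a fundamental domain for the $G(\QQ)$-action, with $\max(|a|,|b|,|c|,|d|)\le P$, subject to the congruences at $2$, the sign condition at $\infty$, the local-solubility conditions, and $\prod_{p\equiv3\bmod4}\omega_p(a,b,c,d)=1$.

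Next I would disentangle the quotient by $G(\QQ)$: standard M\"obius inversion — on $\ZZ$ for the scaling part of $G$ and on $\ZZ[\i]$ for the norm-group part $R_{\QQ(\i)/\QQ}(\GG_{m,\QQ(\i)})^2$, together with the finite group $(\ZZ/2\ZZ)^2$ acting via $\rho_1,\rho_2$ — expresses $N_{\mathrm{Br}}(P)$ as a sum over a family of "content" parameters of counting functions for primitive quadruples in boxes of correspondingly reduced size. Each such inner count is of the form $\sum_{(a,b,c,d)}\prod_{p\equiv3\bmod4}\omega_p(a,b,c,d)$, taken over $(a,b,c,d)\in\ZZ^4$ in a region of volume comparable to the appropriate fraction of $P^4$, with the finitely many archimedean and $2$-adic constraints imposed as well.

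The analytic core is to evaluate such box sums. I would attach the associated (multiple) Dirichlet series, whose Euler factor at a prime $p\equiv3\bmod4$ has the shape $1+\kappa p^{-1}+O(p^{-2})$ with $\kappa=-\tfrac14$ in the relevant averaged sense; consequently the product of these factors behaves, at the edge of its region of convergence, like $L(s,\chi_{-4})^{-1/4}$ up to a factor that is holomorphic and non-vanishing there. Hence, once the $P^4$-scaling has been isolated by a $\zeta$-type factor, the full series carries a branch-point singularity of type $(s-1)^{-1/4}$ at $s=1$. Applying a Tauberian theorem for Dirichlet series with such a singularity (of the type developed by de la Bret\`eche), uniformly enough in the content parameters that their contributions may be summed, and feeding in the lattice-point estimates of \S\ref{s:asymptotic-Nloc} to control the boxes, gives $N_{\mathrm{Br}}(P)=\tau_{\mathrm{Br}}P^4(\log P)^{-1/4}+O(P^4(\log P)^{-3/4+\ve})$. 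Positivity of $\tau_{\mathrm{Br}}$ then follows since it is a convergent product of strictly positive local densities, none of which vanishes — the $1$-parameter family $X_{1,1-k,-1,k}$ with $k\equiv3\bmod4$ already exhibits admissible local configurations everywhere.

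I expect the main obstacle to be twofold. Pinning down condition (a) precisely — the local constancy of $\mathrm{inv}_v(\mathrm{ev}_{\mathcal{A}}(M_v))$ at the primes dividing $abcd(ad-bc)$ — requires a delicate case analysis organised by the $p$-adic valuations of the coefficients and of $ad-bc$, and it is exactly this analysis that produces the value $\kappa=-\tfrac14$ and hence the exponent $1/4$ on $\log P$. The second difficulty, and I believe the true bottleneck, is the analytic extraction with the stated error term: the branch-point singularity must be isolated cleanly and the Tauberian input applied with sufficient uniformity in the auxiliary content variables to secure a saving of a genuine power of $\log P$ over the main term rather than merely an $o(1)$ gain.
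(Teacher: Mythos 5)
Your proposal correctly identifies the broad strategy (characterize $\cM_{\mathrm{Br}}$ via the Brauer evaluation, reduce to counting integer quadruples in a fundamental domain, recognize that local constancy at primes $p\equiv3\bmod4$ generates the $(\log P)^{-1/4}$ and that a Selberg--Delange mechanism is at play), and it flags the right two places where real work is needed. But there is a genuine gap at the analytic heart. The Brauer--Manin condition at a prime $p\equiv3\bmod4$ with $p\mid\Delta'=a'd'-b'c'$ and $p\nmid a'b'c'd'$ reduces to $\bigl(\frac{e_1 a'b'}{p}\bigr)=1$; because the primes in question are those dividing the bilinear quantity $\Delta'$, the indicator $\prod_{p\mid\Delta''}\tfrac12\bigl(1+\bigl(\frac{e_1 AB\,\tilde a\tilde b}{p}\bigr)\bigr)$ is \emph{not} a multiplicative function of the quadruple $(a,b,c,d)$, and there is no ``Euler factor at $p$'' of a multiple Dirichlet series in those variables. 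Your hedge that the Euler factor is $1+\kappa p^{-1}+O(p^{-2})$ ``in the relevant averaged sense'' is precisely where the argument breaks: a Tauberian theorem applied to a series whose coefficients are this indicator does not address the coupling of all four coordinates through $\Delta'$, and one cannot isolate a clean $(s-1)^{\pm1/4}$ singularity directly.

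The paper resolves this by making the problem multiplicative in an auxiliary variable: the indicator is opened as $\sum_{z\mid\Delta''}g_{\a,\q}(z)$ with $g_{\a,\q}$ multiplicative in $z$ (cf.\ \eqref{eq:g} and \eqref{eq:invoke}); for fixed $(\tilde a,\tilde b,z)$ the constraint $z\mid\Delta''$ is a linear lattice condition on $(\tilde c,\tilde d)$, counted uniformly by the Le Boudec-type Lemma \ref{lem:LB} and Heath-Brown's lattice bound; truncating $z$ at $P^2/(MT_2)$ requires the Burgess bound via Lemma \ref{lem:g} and Heath-Brown's large sieve for real characters; and only \emph{after} all of this does Selberg--Delange enter, applied to the one-variable residual sum $\sum_{z\le Z}\hat g_M(z)h(z)/z$ (Lemma \ref{lem:z-sum}), producing the $(\log P)^{-1/4}$ main term and a $(\log P)^{-5/4}$ error that survives partial summation. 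None of that machinery is accounted for in your sketch. Smaller points: the paper handles the $G(\QQ)$-quotient by a $1\!:\!4$ bijection with the explicit domain \eqref{eq:Stot} followed by a chain of GCD-stripping substitutions \eqref{eq:change1}--\eqref{eq:change3'}, not a M\"obius inversion over $\ZZ[\i]$; and it phrases the Brauer conditions through the torsors $W_{\e}$ of Lemma \ref{lem:1}, which is what makes your condition (b) into an explicit congruence at the prime $2$ rather than a global sum that must be tracked separately.
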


It follows from Theorems \ref{t:M-local} and  \ref{thm2} that $N_{\mathrm{glob}}(P)$ satisfies the asymptotic behaviour predicted by Poonen and Voloch \cite{PV} for Fano hypersurfaces. Moreover, 
once coupled with Theorem \ref{c:cor1}, Theorem \ref{thm2} shows that 
83.3\% of the elements of $\cM$ are  soluble over $\QQ$.
It should be stressed that our argument provides a completely explicit algorithm for determining whether or not a given Ch\^atelet surface $X_{a,b,c,d}$ of the form \eqref{eq:surface} 
gives a counter-example to the Hasse principle, 
 without explicitly needing to work with elements of  
$\Br(X_{a,b,c,d})/\Br(\QQ)$.

\begin{ack}
The authors are grateful to Jean-Louis  Colliot-Th\'el\`ene and \'Etienne Fouvry for suggesting this problem to us, to Pierre Le Boudec and the anonymous referee for comments on an earlier draft and to Tim Dokchitser for help with verifying numerically the proof of Lemma \ref{lem:tau-2}.
While working on this paper the first author was supported by an {\em IUF Junior} and  {\em ANR project (PEPR)}, while the second author was supported by {\em ERC grant} 306457.
\end{ack}

\section{Preliminaries}\label{s:prelim}

We will need to choose representative coordinates for the parameter  space $\cM$  that we met in the introduction.  
Two sets that will feature  heavily in our work are  
\begin{equation}\label{eq:AB}
\cA=\{n\in \NN: \mu(n)^2=1\}, \quad 
\cB=\{n\in \cA: p\mid n \Rightarrow p\equiv 3 \4 \}, 
\end{equation}
where $\mu$ denotes the M\"obius function. 
Let $\ZZp^4$ denote the set of relatively prime $4$-tuples of integers. 
We define
\begin{equation}\label{eq:Stot}
S_{\mathrm{tot}}=
\left\{(a,b,c,d)\in \ZZp^4/\{\pm 1\}:  
\begin{array}{l}
ad-bc\neq 0, ~abcd\neq 0\\
\gcd(a,c), \gcd(b,d)\in \cA\\
\gcd(a,b), \gcd(c,d)\in \cB
\end{array}
\right\}.
\end{equation}
It is clear, in view of the  invariance under the action of the maps \eqref{eq:rho-i},
that we have a $1:4$ bijection between 
$\cM$ and our set of representatives $\Stot$.

When it comes to determining whether or not elements of $\Stot$ 
produce surfaces $X_{a,b,c,d}$ with $2$-adic points, a
tedious number of subcases arise, depending on  the residue classes modulo $4$  of the coefficients. 
In order to reduce the number of cases that need to be considered we note that
for any $(a,b,c,d)\in \Stot$, we have $(a,b)\in \{(1,1),(1,0),(0,1)\} \2$. 
But  $S_{\mathrm{tot}}$  is left invariant under the action of the map
$
\rho_2$ defined in \eqref{eq:rho-i}.  Hence we may write
$$
\Stot=\{(a,b,c,d)\in \Stot: 2\nmid a\} \sqcup   
\{(a,b,c,d)\in \Stot: 2\nmid a, ~2\mid b\}.
$$
The  elements of $\Stot$  have non-zero coordinates.
Since we identify $(a,b,c,d)$ with $-(a,b,c,d)$ in $\Stot$, it will  suffice to take representative coordinates
in which $a> 0$.  The sets in which we are interested  therefore take the shape
\begin{equation}\label{eq:Stot'}
\Stot^{(\iota)}=
\left\{(a,b,c,d)\in \ZZp^4:
\begin{array}{l}
a>0, ~bcd(ad-bc)\neq 0\\  
\gcd(a,c), \gcd(b,d)\in \cA\\
\gcd(a,b), \gcd(c,d)\in \cB\\
(a,2^\iota b)\=(1,0) \2
\end{array}
\right\},
\end{equation}
for  $\iota\in \{0,1\}$.  
Among the elements of these sets we will be interested primarily in those coefficients which give rise to Ch\^atelet surfaces $X_{a,b,c,d}$ which have points everywhere locally, or  points globally. Let us therefore define the sets
\begin{equation}\label{eq:Sloc+S}
\begin{split}
\Sloc^{(\iota)}&=
\left\{(a,b,c,d)\in \Stot^{(\iota)}: 
X_{a,b,c,d}(\QQ_v) \neq \emptyset ~\forall v\in \Omega \right\},\\
\Sglob^{(\iota)}&=
\left\{(a,b,c,d)\in \Stot^{(\iota)}: 
X_{a,b,c,d}(\QQ) \neq \emptyset \right\},
\end{split}
\end{equation}
for $\iota\in \{0,1\}$.

We proceed to introduce some further notation.
Given $(a,b,c,d)\in \Stot^{(\iota)}$, we 
define the binary quadratic forms
\begin{equation}\label{eq:Q12}
Q_1(U,V)=aU^2+bV^2, \quad
Q_2(U,V)=cU^2+dV^2.
\end{equation}
Let  $m=\gcd(a,b)$ and $n=\gcd(c,d)$. Then $m,n\in \cB$, with $\gcd(m,n)=1$. We 
henceforth write
\begin{equation}\label{eq:Q12'}
Q_1'(U,V)=a'U^2+b'V^2, \quad
Q_2'(U,V)=c'U^2+d'V^2,
\end{equation}
where 
$
(a,b)=m(a',b')$ and 
$(c,d)=n(c',d').$ 
In particular we have 
$
\gcd(a',b')=\gcd(c',d')=1
$
and 
\begin{equation}\label{eq:hcf}
\begin{split}
d'Q_1'(U,V)-b'Q_2'(U,V)&=\Delta'U^2,\\
-c'Q_1'(U,V)+a'Q_2'(U,V)&=\Delta'V^2,
\end{split}
\end{equation}
where 
 $\Delta'=a'd'-b'c'$.

Returning to the family of surfaces $X_{a,b,c,d}$ arising as proper smooth models of \eqref{eq:surface}, in gauging solubility over the completed field $\QQ_v$, for any $v \in \Omega$, 
it will suffice to work 
with a non-empty Zariski open subset $W$ of $X_{a,b,c,d}$. Indeed, according to 
\cite[Lemme~3.1.2]{ct-c-s}, the set $W(\QQ_v)$ has dense image in 
$X_{a,b,c,d}(\QQ_v)$ for the topology defined by the natural topology of $\QQ_v$.
In this way we see that it suffices to 
examine the local solubility of the affine equation
\begin{equation}\label{eq:chat}
0\neq Y^2+Z^2=Q_1(U,V)Q_2(U,V),
\end{equation}
where $Q_1,Q_2$ are as in \eqref{eq:Q12}.
 Our first order of business is a precise characterisation of when an element can locally be written as the sum of two squares.

Let $v\in \Omega$ and let $\Sigma_v$ denote the non-zero elements in $\QQ_v$ which can be written as a sum of two squares in $\QQ_v$.  
Firstly we note that $\Sigma_\infty = \RR_{>0}$.
When $v$ is a prime $p\equiv 1 \bmod{4}$ then $\Sigma_p=\QQ_p^*$.
Next when $v$ is a prime $p\equiv 3 \bmod{4}$ then 
$\Sigma_p$ is the set of $t\in \QQ_p^*$ for which 
the $p$-adic  
valuation $v_p(t)$ of $t$ is even. Finally when $v=2$ we
have $\Sigma_2=\mathcal{D}$, where
\begin{equation}\label{eq:D}
\mathcal{D}=\{2^n(1+4m): m\in \ZZ_2, ~n\in \ZZ \}.
\end{equation}
Frequent use will be made of this characterisation in \S \ref{s:S-local} and \S \ref{s:S-global}.

We will reserve the letter $p$ for denoting a prime number.  Our work will involve various standard arithmetic functions, including $\omega(n)=\sum_{p\mid n}1$, the generalised divisor function $\tau_k(n)=\sum_{n=d_1\cdots d_k}1$
and the Euler totient function $\phi$. 
In addition to this we will also meet the function
\begin{equation}\label{eq:phi*}
\phi_\delta^*(n)=\prod_{p\mid n}\left(1-\frac{1}{p^\delta}\right),
\end{equation}
for any $\delta>0$, so that $\phi_1^*(n)=\phi(n)/n$.
Note that $1/(\log \log n)\ll \phi_1^*(n)\leq 1$. Finally, let
\begin{equation}\label{eq:psi}
\psi(n)=
\prod_{\substack{p\mid n}} \left(1+\frac{1}{p}\right).
\end{equation}
We will typically abbreviate $\phi_1^*(n)$ by $\phi^*(n)$
and $\tau_2$ by $\tau$.

We will also require an estimate for the average order of the divisor function, as it ranges over the values of a binary linear form. A crucial aspect of the following result is the uniformity in the coefficients of the linear form that it enjoys.

\begin{lemma}\label{lem:divisor}
Let $\a\in \ZZ^2$ with $a_1a_2\neq 0$, 
let $V_1,V_2\geq 2$ and let $\ve>0$.
We have 
$$
\sum_{\substack{v_1\leq V_1\\ v_2\leq V_2}}\tau(|a_1v_1+a_2v_2|)\ll
\tau(\gcd(a_1,a_2))
 \psi(a_1a_2) V_1V_2\log (V_1V_2)+
\frac{\max_i|a_i|^\ve\max_i\{V_i\}^{1+\ve}}{\min_i\{V_i\}},
$$
where the implied constant depends at most on $\ve$.
\end{lemma}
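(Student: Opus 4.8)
The plan is to bound the divisor sum by a dyadic decomposition over the divisors of the linear form. Write $\tau(|a_1v_1+a_2v_2|) = \sum_{e \mid a_1v_1+a_2v_2} 1$ and split the divisors $e$ according to whether $e \leq \sqrt{|a_1v_1+a_2v_2|}$ or not; by the symmetry $e \leftrightarrow |a_1v_1+a_2v_2|/e$ we get $\tau(|a_1v_1+a_2v_2|) \leq 2\sum_{e \mid a_1v_1+a_2v_2, \; e \leq L}1$ where $L = (|a_1|V_1 + |a_2|V_2)^{1/2} \ll (\max_i|a_i|)^{1/2}(\max_i V_i)^{1/2}$. So the sum in question is
$$
\ll \sum_{e \leq L} \#\left\{ (v_1,v_2) : v_1 \leq V_1, \; v_2 \leq V_2, \; a_1 v_1 + a_2 v_2 \equiv 0 \bmod e \right\}.
$$
First I would handle the congruence count. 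Put $g = \gcd(a_1,e)$; the congruence $a_1 v_1 \equiv -a_2 v_2 \bmod e$ is solvable in $v_1$ only when $g \mid a_2 v_2$, and then for each such $v_2$ the number of $v_1 \leq V_1$ is $\ll V_1 g/e + 1$. The number of $v_2 \leq V_2$ with $g \mid a_2 v_2$ is $\ll V_2 \gcd(g,a_2)/g + 1$. Writing $h = \gcd(g,a_2) = \gcd(e,a_1,a_2)$, this gives a count $\ll (V_1 g/e + 1)(V_2 h/g + 1)$, and one should keep the symmetric version obtained by solving for $v_2$ first, taking the minimum of the two.

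Next I would expand the product $(V_1 g/e + 1)(V_2 h/g + 1) = V_1 V_2 h/e + V_1 g/e + V_2 h/g + 1$ and sum each term over $e \leq L$. The main term $V_1 V_2 \sum_{e \leq L} h/e$ is where the arithmetic functions in the statement appear: writing $e = h e'$ with $h \mid \gcd(a_1,a_2)$ forced, one gets $V_1 V_2 \sum_{h \mid \gcd(a_1,a_2)} \sum_{e' \leq L/h} 1/e' \ll V_1 V_2 \tau(\gcd(a_1,a_2)) \log L$. To recover the sharper factor $\psi(a_1 a_2)$ rather than just $\tau(\gcd(a_1,a_2))\log(V_1V_2)$ — note $\log L \ll \log(\max_i|a_i|) + \log(V_1V_2)$ and the $\log \max_i|a_i|$ part must be absorbed — one instead separates the part of $e$ coprime to $a_1 a_2$ from the part dividing a power of $a_1 a_2$: the coprime part contributes the $\log(V_1 V_2)$, while summing $h/e$ over $e$ supported on primes dividing $a_1 a_2$ yields a multiplicative constant $\prod_{p \mid a_1 a_2}(1 + O(1/p)) \ll \psi(a_1 a_2)$. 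This is the step I expect to be the main obstacle: getting the uniformity exactly right so that the coefficient-dependence is $\tau(\gcd(a_1,a_2))\psi(a_1a_2)$ and not something larger, and making sure the $\log(\max_i|a_i|)$ terms either cancel or land inside the second error term.

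Finally I would collect the lower-order terms $V_1 g/e$, $V_2 h/g$ and $1$. Using the minimum over the two ways of reducing the congruence, one arranges that the surviving error is governed by $\min_i\{V_i\}$ in the denominator: summing $\max_i\{V_i\}/e$ over $e \leq L$ with $e$ ranging over divisors of powers of $a_1a_2$ times a free part gives $\ll \max_i|a_i|^{\ve}\max_i\{V_i\}^{1+\ve}/\min_i\{V_i\}$ after crude bounds $\tau_k(n) \ll n^\ve$, and the $\sum_{e\leq L} 1 \ll L \ll \max_i|a_i|^{\ve}\max_i\{V_i\}^{1/2}$ term is dominated by this since $\max_i\{V_i\}/\min_i\{V_i\} \geq 1$ and $\max_i\{V_i\}^{1/2} \leq \max_i\{V_i\}$. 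Assembling the main term and the error term then yields the claimed bound.
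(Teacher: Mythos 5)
The paper itself does not reprove this lemma from scratch: after assuming $V_1\ge V_2$, it simply observes that the bound is a corollary of Theorem~1 of the authors' earlier paper \cite{nair} on sums of arithmetic functions over values of binary forms (a Shiu/Nair--Tenenbaum-type result) in the regime $V_2\gg \max_i|a_i|^{\ve/4}V_1^{\ve/4}$, and of the trivial estimate $\tau(n)=O(n^\ve)$ otherwise. Your proposal instead attempts a self-contained proof by divisor-switching, and there are genuine gaps that your own route does not close.

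The central problem is the passage from $\log L$ to $\log(V_1V_2)$. After writing $\tau(n)\le 2\sum_{e\mid n,\,e\le L}1$ with $L\asymp(\max_i|a_i|\max_i V_i)^{1/2}$, the sum over the lattice densities gives a factor $\log L\asymp\log\max_i|a_i|+\log\max_i V_i$, and the extra $\log\max_i|a_i|$ is not harmless. Your proposed fix --- splitting $e$ into a part coprime to $a_1a_2$ and a part supported on primes dividing $a_1a_2$ --- does not achieve the reduction: the coprime divisors $e$ still range up to $L$, and $\sum_{e\le L,\,(e,a_1a_2)=1}1/e$ is still $\asymp\phi^*(a_1a_2)\log L$, so the $\log\max_i|a_i|$ persists in the main term even after your decomposition. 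The $\psi(a_1a_2)$ factor and the coprimality sieve do not interact with the length of the range of $e$ at all. The only way to trade $\log L$ for $\log(V_1V_2)$ is to first restrict to the regime $\min_i V_i\gg\max_i|a_i|^{\ve/4}\max_i V_i^{\ve/4}$ (which the paper does), where $\log\max_i|a_i|\ll_\ve\log(V_1V_2)$; your argument never makes this case split. Separately, the lower-order terms in your congruence count are not controlled: the ``$+1$'' per value of $v_2$, summed over $e\le L$, yields a contribution of size $\min_i\{V_i\}\cdot L\asymp\min_i\{V_i\}(\max_i|a_i|\max_i V_i)^{1/2}$, which is \emph{not} bounded by the stated error term $\max_i|a_i|^\ve\max_i\{V_i\}^{1+\ve}/\min_i\{V_i\}$ (already false when $V_1=V_2$ and $\max_i|a_i|$ is large). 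Finally, the assertion $L\ll\max_i|a_i|^\ve\max_i\{V_i\}^{1/2}$ is simply wrong: $L$ carries a factor $\max_i|a_i|^{1/2}$, not $\max_i|a_i|^\ve$. Closing these gaps essentially amounts to reproving the cited theorem from \cite{nair}, which is the content the paper deliberately outsources.
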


\begin{proof}
Suppose without loss of generality that $V_1\geq V_2$.
The bound follows from previous work of the authors~\cite[Thm.~1]{nair}
when 
$V_2\gg  \max_i|a_i|^{\ve/4}V_1^{\ve/4}$. Alternatively it follows from the trivial bound $\tau(n)=O(n^\ve)$.
\end{proof}

We close this section with a modified result of Le Boudec \cite[Lemma 2]{LB}, concerning 
pairs of integers constrained to satisfy congruences modulo $q$ and modulo $r$, with $\gcd(q,r)=1$.
We are interested in the cardinality
\begin{equation}\label{eq:LB}
N(U,V;\a)=\#\left\{
(u,v)\in \NN^2: 
\begin{array}{l}
u\leq U, ~v\leq V\\
a_1u+a_2v\= 0\bmod{q}\\
\gcd(uv,q)=1\\
(u,v)\=(u_0,v_0)\bmod{r}
\end{array}
\right\},
\end{equation}
for suitable  $\a, u_0,v_0,U,V$. 
The following result allows us to approximate 
$N(U,V;\a)$ by the expected main term.

\begin{lemma}\label{lem:LB}
Let $\a\in \ZZ^2$ with $a_1a_2\neq 0$ and $\gcd(a_1a_2 r,q)=1$, let $
(u_0,v_0)\in \ZZ^2$ and 
let $
U,V\geq 1$.  We have 
$$
\left|
N(U,V;\a) - \frac{\phi(q)UV}{q^2r^2}
\right|\ll  \tau_3(q)\left(\frac{U+V}{q}+ (\log qr)^3\right) 
+E(\a),
$$
where
$$
E(\a)=\sum_{d\mid q}d
\sum_{\substack{0<|m|,|n| \leq qr/2\\ ma_2-na_1\=0\bmod{d}}}
\frac{1 }{|mn|}.
$$
\end{lemma}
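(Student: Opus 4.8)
This is a variant of \cite[Lemma~2]{LB}; the plan is to adapt that argument, the new ingredients being the condition $\gcd(uv,q)=1$, the auxiliary modulus $r$, and the requirement of full uniformity in $\a$. First I would remove the coprimality condition. Since $\gcd(a_1a_2,q)=1$, the congruence $a_1u+a_2v\equiv 0\bmod q$ forces $\gcd(u,q)=\gcd(v,q)$, so that $\gcd(uv,q)=1$ is equivalent to $\gcd(u,q)=1$, which I detect by writing $\sum_{d\mid\gcd(u,q)}\mu(d)$ and interchanging the order of summation. For each $d\mid q$ the condition $d\mid u$ together with the congruence forces $d\mid v$; substituting $u=du'$, $v=dv'$ (and using $\gcd(d,r)=1$) this reduces $N(U,V;\a)$ to $\sum_{d\mid q}\mu(d)$ times the number of pairs $(u',v')$ with $u'\le U/d$, $v'\le V/d$ whose reduction modulo $Q_d=(q/d)r$ lies in a set $\mathcal{R}_d\subseteq(\ZZ/Q_d\ZZ)^2$ cut out by $a_1u'+a_2v'\equiv 0\bmod{q/d}$ together with $(u',v')\equiv(\bar d\,u_0,\bar d\,v_0)\bmod r$; one checks that $\#\mathcal{R}_d=q/d$.

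The heart of the matter is then a finite Fourier analysis modulo $Q_d$. A short computation with the Chinese Remainder Theorem shows that the transform $\sum_{(\alpha,\beta)\in\mathcal{R}_d}e\bigl((-j\alpha-k\beta)/Q_d\bigr)$ vanishes unless $a_2j\equiv a_1k\bmod{q/d}$, and has absolute value $q/d$ when this holds; here $e(x)=e^{2\pi\i x}$. The frequency $(j,k)=(0,0)$ contributes $\#\mathcal{R}_d\,\lfloor U/d\rfloor\lfloor V/d\rfloor/Q_d^2$, whose leading part is $UV/(dqr^2)$; summing against $\mu(d)$ over $d\mid q$ gives the main term $\phi(q)UV/(q^2r^2)$, the lower-order parts being absorbed into the error. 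For the remaining frequencies I use the standard bound $\bigl|\sum_{w\le M}e(jw/Q_d)\bigr|\ll\min\bigl(M,\|j/Q_d\|^{-1}\bigr)$. Those $(j,k)$ with exactly one coordinate equal to zero force $q/d$ to divide the other; summing the resulting geometric series, using $\log t\le t$ and $\sum_{d\mid q}1\le\tau_3(q)$, these contribute $O\bigl(\tau_3(q)(U+V)/q\bigr)$, while the frequencies clustered near the origin account for the term $\tau_3(q)(\log qr)^3$. For $(j,k)$ with both coordinates nonzero I bound each factor $\min(M,\|\cdot\|^{-1})$ by $Q_d/|j|$, respectively $Q_d/|k|$; summing over $d\mid q$, reindexing $d\leftrightarrow q/d$, and enlarging the summation ranges to $|m|,|n|\le qr/2$, this contribution is $\ll E(\a)$.

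The delicate point --- and the step I expect to be the main obstacle, and the one where the hypothesis $\gcd(a_1a_2r,q)=1$ is used in full --- is the last one: one must bound the twisted character sum $\sum_{a_2j\equiv a_1k\,(q/d)}|jk|^{-1}$ purely in terms of the explicit quantity $E(\a)$, with no residual dependence on the size or factorisation of $\a$. The lack of extra cancellation here is a genuine feature, reflecting that the region is an axis-parallel box rather than a smooth domain; the rest of the argument is routine manipulation of incomplete geometric sums.
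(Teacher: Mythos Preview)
Your approach is correct; it differs from the paper's mainly in how the coprimality condition and the axis frequencies are handled. Rather than sieving out $\gcd(uv,q)=1$ by M\"obius and then running a separate Fourier expansion modulo $Q_d=(q/d)r$ for each $d\mid q$, the paper works once modulo $qr$: summing over $\xi,\eta\bmod q$ with $\gcd(\xi\eta,q)=1$ and $a_1\xi+a_2\eta\equiv 0\bmod q$ collapses the character sum to $S(m,n)=e_r(m\bar q u_0+n\bar q v_0)\,c_q(ma_2-na_1)$, a Ramanujan sum. The off-axis frequencies $mn\ne 0$ then yield $E(\a)$ immediately via $|c_q(k)|\le\gcd(k,q)$. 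For the axis frequencies the paper does \emph{not} compute them directly; instead it observes that $S(m,0)$ and $S(0,n)$ are independent of $\a$, and evaluates their total contribution $N(U,V)$ by averaging $N(U,V;\a)$ over $a_1\in(\ZZ/q\ZZ)^*$: one side gives $\phi(q)N(U,V)$ plus an error $\ll q\tau_3(q)(\log qr)^2$, the other becomes the elementary count of $(u,v)$ in the box with $\gcd(uv,q)=1$ and $(u,v)\equiv(u_0,v_0)\bmod r$. Your direct route is equally valid and perhaps more transparent, at the cost of carrying the family of moduli $Q_d$; the paper's averaging device is shorter but hinges on spotting that the awkward axis terms do not see $\a$ at all.
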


\begin{proof}
As we have mentioned, this  result is based on 
work of Le Boudec \cite[Lemma 2]{LB}, which corresponds to $r=1$ and general closed sets $\mathcal{I},\mathcal{J}\subset \RR$ in place of $[1,U]$ and $[1,V].$
We have decided to include a sketch of the proof for completeness, the key idea being 
to break the $(u,v)$ into congruence classes modulo $qr$ and then use the orthogonality of additive characters $e_{qr}(\cdot)$ to detect these congruences. 

Since  $q$ and $r$ are coprime,  there exist
$\bar{q},\bar{r}\in \ZZ$  such that $q\bar{q}+r\bar{r}=1$. 
We therefore have
\begin{align*}
N(U,V;\a)
&=\sum_{\substack{\xi,\eta\bmod{q}\\
a_1\xi+a_2\eta\=0 \bmod{q}\\
\gcd(\xi\eta,q)=1
}}  
\#\left\{
(u,v)\in \NN^2: 
\begin{array}{l}
u\leq U, ~v\leq V\\
u\=q\bar{q}u_0+r\bar{r}\xi \bmod{qr}\\
v\=q\bar{q}v_0+r\bar{r}\eta \bmod{qr}
\end{array}
\right\}\\
&=
\frac{1}{q^2r^2}
\sum_{m,n\bmod{qr}}
S(m,n)
\sum_{u\leq U}
e_{qr}(-mu)
\sum_{v\leq V}
e_{qr}(-nv),
\end{align*}
where
$$
S(m,n)=
e_{r}(m\bar{q}u_0+n\bar{q}v_0)
\sum_{\substack{\xi,\eta\bmod{q}\\
a_1\xi+a_2\eta\=0 \bmod{q}\\
\gcd(\xi\eta,q)=1
}}  
e_{q}(m\bar{r}\xi+n\bar{r}\eta).
$$
Here, recalling that $\gcd(a_1a_2,q)=1$, we have  
$$
S(m,n)=
e_{r}(m\bar{q}u_0+n\bar{q}v_0)
c_q(ma_2-na_1),
$$
where $c_q(k)$ is the Ramanujan sum, satisfying $|c_q(k)|\leq \gcd(k,q)$.
In particular
$S(0,n)$ and $S(m,0)$ are seen to be independent of $a_1, a_2$.

We denote by $N(U,V)$ the contribution from $m=0$ or $n=0$. Then bounding the sums over $u,v$ using the standard bound for linear exponential sums, we see that 
\begin{equation}\label{eq:conducting}
N(U,V;\a)-N(U,V)\ll  
\sum_{0<|m|,|n| \leq qr/2}
\frac{|S(m,n)| }{|mn|}
\ll E(\a),
\end{equation}
in the notation of the lemma. 
Summing over $a_1 \in (\ZZ/q\ZZ)^*$ we deduce that 
\begin{align*}
\sum_{a_1}N(U,V;\a)- \phi(q)N(U,V)
&\ll  
\sum_{d\mid q}d
\sum_{\substack{0<|m|,|n| \leq qr/2}}
\frac{1 }{|mn|}
\sum_{\substack{a_1\bmod{q}\\
\gcd(a_1,q)=1\\
ma_2-na_1\=0\bmod{d}}}1
\\
&\ll  
\sum_{d\mid q}d
\sum_{h\mid d}\frac{1}{h^2}
\sum_{\substack{0<|m'|,|n'| \leq qr/(2h)\\
\gcd(m',n',d/h)=1
}}
\frac{qh }{d|m'n'|}
\\
&\ll 
q \tau_3(q)(\log qr)^2.
\end{align*}
Moreover, it is clear that 
\begin{align*}
\sum_{a_1}N(U,V;\a)
&=
\#\left\{
(u,v)\in \NN^2:
\begin{array}{l}
u\leq U, ~ v\leq V\\
\gcd(uv,q)=1\\
(u,v)\=(u_0,v_0)\bmod{r}
\end{array}
\right\}\\
&=\frac{\phi^*(q)^2UV}{r^2} +O\left(\tau(q)(U+V)\right).
\end{align*}
Note that $\phi^*(q)/\phi(q)=1/q$.  Combining this with \eqref{eq:conducting} we swiftly arrive at the statement of the lemma.
\end{proof}

\section{Local solubility constraints}\label{s:S-local}

As we have seen, in order to determine whether or not 
$X_{a,b,c,d}$ is soluble over  $\QQ_v$, for any $v \in \Omega$, 
it will suffice to work with the equation \eqref{eq:chat}.
Furthermore, when considering solubility over $\QQ_p$, 
it suffices to establish the existence of solutions $(y,z,u,v)$ for which $u,v\in \ZZ_p$ are coprime.
In order to summarise the local solubility criterion for $X_{a,b,c,d}$ it will be convenient to introduce a new symbol. For any odd prime $p$ and any $n\in \ZZ$ with $v_p(n)=\nu$,  we set 
\begin{equation}\label{eq:symbol}
\lan \frac{n}{p} \ran=
\begin{cases}
\left(\frac{n/p^{\nu}}{p}\right), & \mbox{if $2\mid \nu$},\\
1, & \mbox{if $2\nmid \nu$},
\end{cases}
\end{equation}
where $(\frac{\cdot}{p})$ is the usual Legendre symbol. This symbol takes values in $\{\pm 1\}$.
Finally, 
let $\sign(t)\in \{-,+\}$ denote the sign of any real number $t$, which we extend to vectors in the obvious way.
We may now record the following result,
in which $m=\gcd(a,b)$ and $n=\gcd(c,d)$.

\begin{lemma}\label{lem:X-local}
Let $(a,b,c,d)\in \Stot^{(\iota)}$. Then the following hold: 
\begin{itemize}
\item[(i)]
$X_{a,b,c,d}(\RR)\neq \emptyset$ if and only if $\sign(a,b,c,d)\neq (+,+,-,-)$;
\item[(ii)]
$X_{a,b,c,d}(\QQ_p)\neq \emptyset$ if $v_p(mn)=0$;
\item[(iii)]
if $v_p(mn)=1$ for $p\equiv 3 \bmod{4}$ then 
$X_{a,b,c,d}(\QQ_p)\neq \emptyset$ if and only if
\begin{itemize}
\item[---] 
$v_p(a')=v_p(c')$ or 
$v_p(b')=v_p(d')$ only when these $p$-adic orders are zero,
\item[---]
either $\lan \frac{-a'b'}{p}\ran=1$ or $\lan \frac{-c'd'}{p}\ran=1$;

\end{itemize}

\item[(iv)]
$X_{a,b,c,d}(\QQ_2)\neq \emptyset$ if and only if
$\exists$ coprime $u,v\in \ZZ_2$ such that $Q_1(u,v)Q_2(u,v)\in \cD$.
\end{itemize}
\end{lemma}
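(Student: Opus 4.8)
The plan is to determine local solubility of the smooth model via the affine equation \eqref{eq:chat}, using throughout the characterisation of $\Sigma_v$ (the nonzero sums of two squares in $\QQ_v$) recorded just before Lemma \ref{lem:divisor}. The key elementary observation is that \eqref{eq:chat} has a $\QQ_v$-point with $(u,v)$ a chosen pair of coprime $p$-adic integers if and only if $Q_1(u,v)Q_2(u,v)\in \Sigma_v$, so the whole question reduces to understanding the set of values $\{Q_1(u,v)Q_2(u,v): u,v\in\ZZ_p \text{ coprime}\}$ modulo squares and, at the bad primes, modulo the relevant local square-class and valuation data. Part (iv) is then essentially a restatement of this with $v=2$ and $\Sigma_2=\cD$, so the work is concentrated in (i)--(iii).

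For (i) I would argue that over $\RR$ the product $Q_1Q_2$ takes a positive value for some real $(u,v)$ unless both forms are negative definite, i.e.\ unless $(a,b)$ and $(c,d)$ are both $(-,-)$; since $a>0$ in $\Stot^{(\iota)}$ this is automatic unless we are in the orbit representative with the displayed sign pattern, giving the stated criterion. For (ii), when $p\nmid mn$ the two forms $Q_1,Q_2$ are primitive modulo $p$ and one shows that already modulo $p$ (lifting by Hensel when $p$ is odd, and by a direct $2$-adic check when $p=2$, noting $2\nmid mn$ forces at least one of $a,b$ and one of $c,d$ to be a unit) one can choose coprime $u,v$ with $Q_1(u,v)Q_2(u,v)$ a nonzero square times a unit — in fact for $p\equiv1\bmod 4$ every unit is in $\Sigma_p$, for $p\equiv3\bmod4$ one arranges the product to have even valuation, and for $p=2$ one solves the relevant congruence mod $8$. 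The heart of the lemma is (iii): here $p\equiv3\bmod4$ and $v_p(mn)=1$, so exactly one of $m,n$ — say $m$ — is divisible by $p$ once. Writing $Q_1=mQ_1'$ with $Q_1'$ primitive, one analyses $v_p(Q_1'(u,v))$ and $v_p(Q_2'(u,v))$ over coprime $(u,v)$, using \eqref{eq:hcf} (which pins down how the valuations of $Q_1'$ and $Q_2'$ interact through $\Delta'$) to see that the valuation of the product is forced to be odd — hence $Q_1Q_2\notin\Sigma_p$ — precisely when the excluded coincidences of positive $p$-adic orders occur; in the remaining cases one reduces to checking whether $-a'b'$ or $-c'd'$ is a square modulo $p$, which is exactly the condition on the symbols $\lan\tfrac{-a'b'}{p}\ran$, $\lan\tfrac{-c'd'}{p}\ran$.

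The main obstacle I expect is the bookkeeping in part (iii): one must run through the finitely many configurations of $(v_p(a'),v_p(b'),v_p(c'),v_p(d'))$ compatible with $\gcd(a',b')=\gcd(c',d')=1$, and for each decide — using \eqref{eq:hcf} and the parity of $v_p$ of a value represented by a primitive form over $\QQ_p$ for $p\equiv3\bmod4$ — whether coprime $(u,v)$ can be chosen making $v_p(Q_1'(u,v)Q_2'(u,v))$ even, and if so whether the resulting unit part can be made to lie in the correct square class. The symbol \eqref{eq:symbol} is tailored to absorb the odd-valuation cases (where it is declared to be $1$), which streamlines this, but one still has to verify that the condition ``$\lan\tfrac{-a'b'}{p}\ran=1$ or $\lan\tfrac{-c'd'}{p}\ran=1$'' is both necessary and sufficient once the valuation obstruction is absent. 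For $p=2$ in case (iv) I would not attempt a closed-form criterion at this stage — the statement deliberately leaves it as the existence of coprime $u,v\in\ZZ_2$ with $Q_1(u,v)Q_2(u,v)\in\cD$ — so no further argument is needed there beyond the reduction to \eqref{eq:chat}.
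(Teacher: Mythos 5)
Your reduction to the affine equation \eqref{eq:chat} and to the characterisation of $\Sigma_v$ is the same as the paper's, and for (iii) your high-level plan — a case analysis on the tuple $(v_p(a'),v_p(b'),v_p(c'),v_p(d'))$ driven by \eqref{eq:hcf} and the symbol \eqref{eq:symbol} — is essentially what the paper does. But there are two genuine gaps.

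\textbf{Part (i).} Your criterion is stated backwards and, more importantly, misses a case. You say $Q_1Q_2$ is forced to be nonpositive ``unless both forms are negative definite''; but if both $Q_1,Q_2$ were negative definite then $Q_1Q_2>0$ everywhere, so there is nothing to prove there. The actual obstruction is $Q_1$ positive definite and $Q_2$ negative definite (given $a>0$), and this is the displayed sign pattern — fine. What you leave untreated is the case where \emph{both} forms are indefinite (with $a>0$ this is $\sign(a,b,c,d)=(+,-,-,+)$, i.e.\ $ac<0$, $bd<0$, $ad>0$, $bc>0$). Then neither form has constant sign and you must exhibit a common sign somewhere; the paper does this explicitly, checking that $Q_1Q_2(\sqrt{ad+bc},\sqrt{-2ac})=-ac(ad-bc)^2>0$. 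Without that step the ``only if'' direction of (i) is unproved.

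\textbf{Part (ii).} You read (ii) as including $p=2$ and assert that when $2\nmid mn$ one can always ``solve the relevant congruence mod $8$''. That is false: since $m,n\in\cB$ are automatically odd, $v_2(mn)=0$ for every surface in the family, and yet $X_{a,b,c,d}(\QQ_2)$ is empty for infinitely many such surfaces — this is exactly what (iv) is recording and what the worked example $X_{1,1-k,-1,k}$ illustrates. In the paper, part (ii) is implicitly for odd $p$ only (the proof handles $p\equiv1\bmod4$ trivially and $p\equiv3\bmod4$ by constructing coprime $u,v$ with $v_p(Q_1'Q_2')=0$), and $p=2$ is handled entirely by (iv). Your sketch would need to be rewritten to respect this split; as written the claim for $p=2$ is incorrect. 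A minor further remark: in (iii), for $p\equiv3\bmod4$ the set $\Sigma_p$ is exactly ``even valuation'', so your worry about ``whether the resulting unit part can be made to lie in the correct square class'' is vacuous; the only issue is achieving an odd value of $v_p(Q_1'Q_2')$, and the Legendre-symbol condition enters precisely as the criterion for being able to force an odd valuation of one of $Q_1',Q_2'$ — not as a constraint on the unit.
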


One recalls here that $m,n\in \cB$, with $\gcd(m,n)=1$. Thus  $v_p(mn)\leq 1$ for any prime $p$ and 
$X_{a,b,c,d}(\QQ_p)\neq \emptyset$ whenever $p\=1\4$.

\begin{proof}[Proof of Lemma \ref{lem:X-local}]
That the conditions are necessary is self-evident.   Turning to sufficiency,  we begin by establishing (i), noting that $a>0$ for any 
$(a,b,c,d)\in \Stot^{(\iota)}$.
The case in which $ac$ or $bd$ is positive is trivial, so we study only the case $ac<0$ and $bd<0$, with (i) being satisfied. In this case $bc>0$ and $ad>0$, whence
$$
Q_1Q_2(\sqrt{ad+bc},\sqrt{-2ac})=-ac(ad-bc)^2>0.
$$
Condition (iv) is a direct consequence of the criterion for solubility of $X_{a,b,c,d}$
over $\QQ_2$. Solubility over $\QQ_p$ is automatic for $p\equiv 1 \bmod{4}$.

It therefore remains to consider  
solubility over $\QQ_p$ when $p\equiv 3 \4$. For ease of notation we will  henceforth write $Q_i'$ for $Q_i'(u,v)$. 
We claim that there exist coprime $u,v\in \ZZ_p$ such that $v_p(Q_1'Q_2')=0$.
This will clearly suffice to show that $X_{a,b,c,d}(\QQ_p)$ is non-empty when 
$v_p(mn)=0$, as required for  condition (ii).
Recall that $\gcd(a',b')=1$ and $\gcd(c',d')=1$.   If $p\nmid a'c'$ then the claim is satisfied by taking $(u,v)=(1,0)$. Similarly, if $p\nmid b'd'$ then we may take $(u,v)=(0,1)$.
The remaining cases to consider are when $p\mid \gcd(b',c')$ with $p\nmid a'd'$, and 
lastly when
 $p\mid \gcd(a',d')$ with $p\nmid b'c'$. For both of these the choice $(u,v)=(1,1)$ is satisfactory, which thereby concludes the proof of the claim.
 
For the remainder of the proof we suppose that  
 $v_{p}(mn)=1$, with $p\equiv 3 \4$.  
 We wish to show that under the hypotheses of the lemma we can always find coprime $u,v\in \ZZ_p$ such that precisely one of $v_p(Q_1')$ or $v_p(Q_2')$ is odd. This will then imply that 
$v_p(Q_1(u,v)Q_2(u,v))$ is even as required for condition (iii).

 Suppose first that $p\nmid a'b'c'd'$. 
 Then by 
 hypothesis one of 
either $-a'b'$ or $-c'd'$ is a non-zero square modulo $p$.  There are two cases to consider.
Suppose first that 
$(\frac{-a'b'}{p})=1$ but $(\frac{-c'd'}{p})=-1$.
We can find coprime $u,v\in \ZZ_p$ such that 
$v_p(Q_1')=1$.  On the other hand 
$v_p(Q_2')$ is even since $-c'd'$ is not a square modulo $p$.
Next suppose that  $(\frac{-a'b'}{p})=(\frac{-c'd'}{p})=1$, with 
$v_p(\Delta')=\kappa\geq 0$. We may choose coprime $u,v\in \ZZ_p$ so that 
$v_p(Q_1')=\kappa+1$. 
Then if  $v_p(Q_2')=j$ one sees that 
$p^{\min\{j,\kappa+1\}}$ is a divisor of $\Delta'\gcd(u,v)^2$ by \eqref{eq:hcf}.  
It therefore follows from \eqref{eq:hcf} that $v_p(Q_2')= \kappa$, as required.

We now suppose that $p\mid a'b'$ but $p\nmid c'd'$.
If $(\frac{-c'd'}{p})=1$  then it is clear that we can find $u,v\in \ZZ_p$, with $p\nmid uv$, such that
$v_p(Q_2')$ is odd. Moreover, we will have $p\nmid Q_1'$ since
$p$ divides precisely one of  $a'$ or $b'$.
If 
$(\frac{-c'd'}{p})=-1$ 
then $v_p(Q_2')=0$ for any coprime $u,v\in \ZZ_p.$
Suppose $p\mid a' $, with $a'=p^\alpha a''$ and $p\nmid a''b'$. If $\alpha$ is odd then we can ensure that  $v_p(Q_1')$ odd by taking $(u,v)=(1,0)$.
Alternatively, if $\alpha$ is even then $(\frac{-a''b'}{p})=1$ by hypothesis. Hence we can force 
$v_p(Q_1')$ to be odd by considering solutions of the form 
$(u,p^{\alpha/2}v')$.
The case in which $p\nmid a'b'$ but $p\mid c'd'$ follows by symmetry.

It remains to consider the possibility  $p\mid a'b'$ and $p\mid c'd'$.
We will consider the cases $p\mid (a',d')$ or  $p\mid (a',c')$, the remaining cases following by symmetry.
Suppose first that $a'=p^\alpha a''$ and $d'=p^\delta d''$ with $p\nmid a''b'c'd''$.
If $\alpha$ (resp.  $\delta$) is odd then we are done on taking $(u,v)=(1,0)$ (resp. $(u,v)=(0,1)$). 
Suppose that $\alpha$ and $\delta$ are both even. Then our hypothesis ensures that one of the Legendre symbols $(\frac{-a''b'}{p})$ or $(\frac{-c'd''}{p})$  is equal to $1$. Supposing without loss of generality that it is the former, we 
 easily make $v_p(Q_1')$ odd with $v_p(Q_2')=0$.
Turning to the case  
$p\mid (a',c')$, we write $a'=p^\alpha a''$ and $c'=p^\gamma c''$ with $p\nmid a''b'c''d'$ and 
$\alpha,\gamma$ not both even.
If $\alpha,\gamma $ are of opposite parities then the situation is easy and we can proceed by taking $(u,v)=(1,0)$. 
Finally, if $\alpha,\gamma$ are both odd then by hypothesis they must be unequal. 
Suppose without loss of generality that $\alpha=2\alpha'+1<\gamma$. Then in fact $\gamma>\alpha+1$.
Taking $(u,v)=(1,p^{\alpha'+1})$ we easily deduce that $v_p(Q_1')=2\alpha'+1$ and 
$v_p(Q_2')=2\alpha'+2$, as required. 
This concludes the proof of the lemma. 
\end{proof}

Lemma \ref{lem:X-local} gives us a means of characterising the  elements
of   $\Sloc^{(\iota)}$, in the notation of \eqref{eq:Sloc+S}.
Before proceeding to the proof of Theorem \ref{t:M-local}, we will need a finer description of case (iv) in Lemma \ref{lem:X-local}.
This will depend intimately on the possible residue classes of $(a,b)$ modulo~$4$, with $a$ odd. The various constraints are obtained by taking 
$(A,B,C,D)=(a,b,c,d)$ in the second part of Lemmas~\ref{lem:1-1}--\ref{lem:3-2} 
in \S \ref{s:2adic},  where the description of  $\Totii$ 
exactly provides necessary and sufficient  
conditions for the non-nullity of 
$X_{a,b,c,d}(\QQ_2)$.

\section{Global solubility constraints}\label{s:S-global}

Throughout this section we will assume familiarity with the notation 
introduced in \S \ref{s:prelim}.
Among the set of $(a,b,c,d)\in \Sloc^{(\iota)}$ we require  a means of sifting for those $(a,b,c,d)$ such that  $X_{a,b,c,d}$ actually has points in  $\QQ$.  Our principal tool comes from the theory of descent, as formulated by 
Colliot-Th\'el\`ene, Coray and Sansuc \cite[Cor.~5.2]{ct-c-s}. This ensures that given any 
$(a,b,c,d)\in \Sloc^{(\iota)}$, we have 
$(a,b,c,d)\in \Sglob^{(\iota)}$  if and only if there exists $\e=(e_1,e_2)\in \QQ^2$, with $e_1e_2=1$, such that 
the smooth variety 
$$
\begin{cases}
0\neq Y_1^2+Z_1^2=e_1 (aT^2+b),\\
0\neq Y_2^2+Z_2^2=e_2 (cT^2+d),
\end{cases}
$$  
has points in $\QQ_w$ for every $w\in \Omega$.
This in turn is equivalent to the existence of $\e\in \QQ^2$, with 
$e_1e_2=mn$, such that the corresponding pair of equations with $a,b,c,d$ replaced by $a',b',c',d'$ is everywhere locally soluble. 
Note that, by  
\cite[Lemme~3.1.2]{ct-c-s}, we see that in checking local solubility for the pair of equations it suffices to replace $a'T^2+b'$ and $c'T^2+d'$ by $Q_1'(U,V)$ and $ Q_2'(U,V)$, respectively, as given by \eqref{eq:Q12'}.

We proceed to simplify the set of allowable $\e$ somewhat. 
On carrying out a suitable change of variables it suffices, 
without loss of generality, to consider the existence of $\e\in \QQ^2$ for which 
$$
v_p(e_i)
\in\begin{cases}
\{0\}, & \mbox{if $p\not\equiv 3\4$,}\\
\{0,-1,1\}, & \mbox{if $p\= 3\4$,}
\end{cases}
$$
for $i=1,2$.
Recall that $m,n\in \cB$, with $\gcd(m,n)=1$, in the notation of \eqref{eq:AB}.
Suppose that $e_1e_2=mn$ for $\e\in \QQ^2$ satisfying the above $p$-adic constraints. 
If $p\mid mn$ then we must  have $\{v_p(e_1),v_p(e_2)\}=\{0,1\}$. 
Alternatively, if
$p\nmid mn$,  then  $\{v_p(e_1),v_p(e_2)\}=\{0,0\}$ or 
$\{-1,1\}$.
Given such $\e$ let us put 
$f_i$ to be the product of primes $p\nmid mn$ for which $v_p(e_i)=1$.
Then we may assume that $e_1=e_1'f_1/f_2$ and 
$e_2=e_2'f_2/f_1$, where $e_1',e_2'$ are square-free integers comprised of primes $p\mid mn$.
On multiplying the first equation by $f_2^2$ and the second by $f_1^2$, and making a further change of variables, we see that it suffices to check local solubility with $\e=(e_1,e_2)$ replaced by $\e''=(e_1'f,e_2'f)$, where $f=f_1f_2$.
In particular, we have the relation $e_1''e_2''=e_1'e_2'f^2=mnf^2$.
We summarise our remarks in the following result.

\begin{lemma}\label{lem:1}
Let $(a,b,c,d)\in \Sloc^{(\iota)}$. Then $X_{a,b,c,d}(\QQ)\neq \emptyset$ if and only if there exists 
$\e\in \cB^2$ and $f\in \cB$, with  $\gcd(f,mn)=1$ and
\begin{equation}\label{eq:square}
e_1e_2=mnf^2,
\end{equation}
such that  the smooth variety 
$$
\AA^6\supset
W_{\e}: \quad 
\begin{cases}
0\neq Y_1^2+Z_1^2=e_1 Q_1'(U,V),\\
0\neq Y_2^2+Z_2^2= e_2 Q_2'(U,V),
\end{cases}
$$  
has solutions everywhere locally.
\end{lemma}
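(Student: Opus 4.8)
The plan is to derive Lemma~\ref{lem:1} directly from the descent criterion of Colliot-Th\'el\`ene, Coray and Sansuc together with the normalisation of the parameters $\e$ carried out in the surrounding discussion. First I would recall that \cite[Cor.~5.2]{ct-c-s}, applied to $(a,b,c,d)\in\Sloc^{(\iota)}$, gives $X_{a,b,c,d}(\QQ)\neq\emptyset$ precisely when there is $\e=(e_1,e_2)\in(\QQ^*)^2$ with $e_1e_2=1$ for which the variety cut out by $Y_1^2+Z_1^2=e_1(aT^2+b)$ and $Y_2^2+Z_2^2=e_2(cT^2+d)$ is everywhere locally soluble; absorbing the common factors $m=\gcd(a,b)$ and $n=\gcd(c,d)$ into the $e_i$ replaces the constraint $e_1e_2=1$ by $e_1e_2=mn$ with $a,b,c,d$ replaced by $a',b',c',d'$, and by \cite[Lemme~3.1.2]{ct-c-s} one may pass to the binary quadratic forms $Q_1',Q_2'$ in place of the univariate polynomials.

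Next I would carry out the two reductions already sketched in the text, making each step an explicit change of variables so that everywhere local solubility is genuinely preserved. The first reduction is $p$-adic: since $-1$ is a square in $\QQ_p$ for $p\not\equiv3\4$, multiplying $e_i$ by $N(\zeta)$ for a suitable $\zeta\in\QQ_p(\i)^*$ and absorbing this into $(Y_i,Z_i)$ lets one assume $v_p(e_i)=0$ for every $p\not\equiv3\4$, while at primes $p\equiv3\4$ one can always arrange $v_p(e_i)\in\{-1,0,1\}$ by pulling out an even power of $p$ into $(Y_i,Z_i,U,V)$. From the relation $e_1e_2=mn$ and the fact that $m,n\in\cB$ are coprime and squarefree, this forces $\{v_p(e_1),v_p(e_2)\}=\{0,1\}$ at primes $p\mid mn$ and $\{0,0\}$ or $\{-1,1\}$ at primes $p\nmid mn$. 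The second reduction clears the denominators: writing $f_i$ for the product of the primes $p\nmid mn$ at which $v_p(e_i)=1$, one has $e_1=e_1'f_1/f_2$, $e_2=e_2'f_2/f_1$ with $e_1',e_2'$ squarefree and supported on $mn$; multiplying the first equation through by $f_2^2$ and the second by $f_1^2$ and rescaling $(Y_i,Z_i)$ replaces $\e$ by $(e_1'f,e_2'f)$ with $f=f_1f_2\in\cB$, $\gcd(f,mn)=1$, so that $e_1''e_2''=mnf^2$ and $\e''\in\cB^2$. Conversely any such $\e\in\cB^2$, $f\in\cB$ with $e_1e_2=mnf^2$ arises from some admissible choice in the descent criterion, so the two conditions are equivalent; this gives the statement of the lemma, with the variety $W_\e$ as displayed.

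The main obstacle I anticipate is purely bookkeeping rather than conceptual: one must check that each change of variables used in the two reductions is invertible over $\QQ$ (indeed over the relevant $\QQ_v$), so that nonemptiness of $W_\e(\QQ_v)$ really is unchanged, and that the smoothness asserted in the statement survives the rescalings — this is where the hypotheses $abcd\neq0$, $ad-bc\neq0$, equivalently $\Delta'=a'd'-b'c'\neq0$, are used, since they guarantee $Q_1',Q_2'$ remain separable and the six-variable variety remains a smooth complete intersection away from the locus $Y_i^2+Z_i^2=0$. A secondary point to handle carefully is the claim that one may restrict to $\e\in\cB^2$ and $f\in\cB$ (rather than merely squarefree integers): primes $p\equiv1\4$ dividing $e_i$ can be removed because $-1$ is a square in $\QQ_p$ and such a prime divides neither the left-hand side obstruction nor, after the reductions, $mnf^2$, so it can be absorbed just as in the $p$-adic reduction above, forcing all prime factors of $e_1,e_2,f$ to lie in $\{p:p\equiv3\4\}$.
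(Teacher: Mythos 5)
Your proposal is correct and follows the paper's own argument essentially verbatim: invoke \cite[Cor.~5.2]{ct-c-s} to get the descent criterion with $e_1e_2=1$, absorb $m$ and $n$ to pass to $Q_1',Q_2'$ (using \cite[Lemme~3.1.2]{ct-c-s}), normalise the valuations of $e_1,e_2$, and clear denominators via $f_1,f_2$. One small inaccuracy in the justification: for the reduction $v_p(e_i)=0$ when $p\not\equiv3\4$ you appeal to ``$-1$ is a square in $\QQ_p$'', but this fails for $p=2$; the relevant fact is rather that every prime $p\not\equiv3\4$ is a sum of two rational squares (so multiplication by $p$ can be absorbed into $(Y_i,Z_i)$ by a $\GG L_2(\QQ)$-substitution), which is what both you and the paper actually use and is global, preserving $\QQ_v$-solubility at all places simultaneously.
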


The thrust of Lemma \ref{lem:1} is that our task of determining elements of  $\Sglob^{(\iota)}$ in \eqref{eq:Sloc+S} has become a purely local problem. 
In fact, the varieties $W_{\e}$ are known to satisfy the Hasse principle
and any $\QQ$-point on $W_{\e}$ gives rise to a $\QQ$-point on $X_{a,b,c,d}$.
We shall henceforth refer to $W_{\e}$ as torsor equations. In reality, as explained in \cite[\S VII]{CT}, any universal torsor over $X_{a,b,c,d}$ is birational to $\PP^1\times C\times \PP^1\times W_{\e}$, for a smooth conic $C$ defined over $\QQ$.

We proceed to study the sets $W_{\e}(\QQ_w)$ for each $w\in \Omega$.  
When determining solubility over $\QQ_p$ it will suffice to consider the existence of solutions $(y_i,z_i,u,v)$ with $u,v\in \ZZ_p$ coprime.   
Recall from \S \ref{s:intro} our discussion of the Brauer group $\Br(X_{a,b,c,d})/\Br(\QQ)$, which is generated by a single element, given by the quaternion algebra $(-1,aT^2+b)$. 
Given any local point $M_w=(y,z,u,v)$  in $X(\QQ_w)$, we have 
\begin{equation}\label{eq:campus}
\mathrm{inv}_w\left( \mathrm{ev}_{\mathcal{A}}(M_w) \right)
=\begin{cases}
	0, & \mbox{if $Y^2+Z^2=Q_1(u,v)T^2$ soluble over $\QQ_w$,}\\
	1/2, & \mbox{otherwise}.
	\end{cases}
\end{equation}
According to our discussion of \eqref{eq:shanghai}, in order for counter-examples to the Hasse principle to arise, 
we will need 
 $
\mathrm{inv}_w\left( \mathrm{ev}_{\mathcal{A}}(M_w) \right)
$
to be locally constant for each $w\neq 2$,
in which case 
we will show that there
is a unique  torsor $W_\e$ which has $\QQ_w$-points.

\subsection{Solubility over $\QQ_p$ for odd $p$}

It will be convenient to retain our shorthand notation $Q_i'$ for $Q_i'(u,v)$ in what follows.
The 
case in which $p$ is an odd prime not appearing in the factorisation of $e_1e_2$
 is straightforward, as the following result shows.

\begin{lemma}\label{lem:p1}
Let $\e\in \cB^2$ and let $p>2$, with $p\nmid e_1e_2$.
Then  $W_{\e}(\QQ_p)\neq \emptyset$.
\end{lemma}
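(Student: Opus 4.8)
The plan is to produce, for any $\e\in\cB^2$ with $p\nmid e_1e_2$ and $p>2$, an explicit $\QQ_p$-point on $W_\e$; by the remark preceding the lemma it suffices to exhibit coprime $u,v\in\ZZ_p$ together with $y_i,z_i\in\QQ_p$ solving the two torsor equations. Since $p\nmid e_1e_2$, the unit $e_i$ is a $p$-adic square if $p\equiv 1\4$ and in any case $e_i Q_i'(u,v)\in\Sigma_p$ exactly when $Q_i'(u,v)\in\Sigma_p$ whenever $p\equiv 3\4$ (as $v_p(e_i)=0$), using the characterisation of $\Sigma_p$ recalled in \S\ref{s:prelim}. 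So the task reduces to finding coprime $u,v\in\ZZ_p$ with $Q_1'(u,v)\in\Sigma_p$ and $Q_2'(u,v)\in\Sigma_p$ simultaneously, and both nonzero.

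First I would dispose of the case $p\equiv 1\4$: then $\Sigma_p=\QQ_p^*$, so one only needs $Q_1'(u,v)Q_2'(u,v)\neq 0$ for some coprime $u,v$, which is immediate since $Q_1',Q_2'$ are nonzero forms (indeed $(u,v)=(1,0)$ or $(0,1)$ works unless both $a'c'=0$ or $b'd'=0$, contradicting $\gcd(a',b')=\gcd(c',d')=1$ together with \eqref{eq:stipulate}). The substance is the case $p\equiv 3\4$. Here I would follow the strategy already used in the proof of Lemma~\ref{lem:X-local}(ii), (iii): the claim established there gives, under the standing hypotheses, coprime $u,v\in\ZZ_p$ with $v_p(Q_1'Q_2')=0$ — but that only controls the product. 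Instead I want each factor individually in $\Sigma_p$, i.e.\ each of $v_p(Q_1')$, $v_p(Q_2')$ even. When $p\nmid mn$ we have $v_p(mn)=0$, so the argument for condition (ii) already produces coprime $u,v$ with $p\nmid Q_1'Q_2'$, whence $v_p(Q_1')=v_p(Q_2')=0$ and both lie in $\Sigma_p$; done. When $p\mid mn$, say $v_p(m)=1$ so that $p\nmid c'd'$ while possibly $p\mid a'b'$: by the local solubility of $X_{a,b,c,d}$ (which holds since $(a,b,c,d)\in\Sloc^{(\iota)}$) and Lemma~\ref{lem:X-local}(iii), the relevant Legendre-symbol hypotheses hold, and the very construction in that proof yields coprime $u,v$ with precisely one of $v_p(Q_1'),v_p(Q_2')$ odd. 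That is the wrong parity for our present purpose, so I would instead re-examine the cases to extract $u,v$ making \emph{both} orders even. Concretely: choosing $(u,v)=(1,0)$ gives $Q_1'=a'$, $Q_2'=c'$; since $p\nmid c'$ we get $v_p(Q_2')=0$, and $v_p(Q_1')=v_p(a')$, which is even precisely when the relevant exponent is even — and in the remaining (odd-exponent) subcases one uses $(u,v)=(0,1)$ or $(1,1)$ or a scaled choice $(u,p^{k}v')$ exactly as in Lemma~\ref{lem:X-local}, but now optimising for evenness of each order rather than oddness of exactly one. The point is that the \emph{solubility of $X_{a,b,c,d}$ over $\QQ_p$ forces enough structure} on the forms that some coprime $(u,v)$ makes both $Q_1',Q_2'$ into $p$-adic sums of two squares.

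The main obstacle I anticipate is precisely this last bookkeeping: the statement of Lemma~\ref{lem:p1} imposes no local-solubility hypothesis on $(a,b,c,d)$ beyond $\e\in\cB^2$, so one cannot simply quote $\Sloc^{(\iota)}$. I expect the actual proof the authors give is shorter, exploiting that the hypothesis $p\nmid e_1e_2$ combined with $p>2$ means $e_i$ is automatically handled, and that for $p\equiv 3\4$ one can take $u,v$ making both $Q_i'(u,v)$ into units (i.e.\ $v_p(Q_1'Q_2')=0$ via a point reducing to a common $\FF_p$-point where neither form vanishes), which is possible unless the reductions of $Q_1'$ and $Q_2'$ mod $p$ share \emph{all} their $\FF_p$-zeros — but two distinct binary quadratic forms over $\FF_p$ each have at most two projective zeros, and $\PP^1(\FF_p)$ has $p+1\geq 4$ points, so a common non-zero point exists; the degenerate cases ($p$ dividing a coefficient) are then treated by the same elementary substitutions as above. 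So the plan is: (1) reduce to finding coprime $u,v$ with $Q_1'(u,v)Q_2'(u,v)\in\ZZ_p^*$ (for $p\equiv 3\4$) or merely nonzero (for $p\equiv 1\4$); (2) produce such $(u,v)$ by a point-counting argument on $\PP^1(\FF_p)$, handling the few cases where $p$ divides coefficients by the explicit substitutions $(1,0),(0,1),(1,1)$; (3) lift by Hensel and conclude.
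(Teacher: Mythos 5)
Your final plan (reduce to finding coprime $u,v\in\ZZ_p$ with $Q_1'(u,v)Q_2'(u,v)\in\ZZ_p^*$ when $p\equiv 3\4$, then invoke the characterisation of $\Sigma_p$) is the correct one and matches the paper's argument, which simply cites the claim proved in the second paragraph of Lemma~\ref{lem:X-local}. However, the route you take to reach it contains a genuine confusion that should be cleared up. You read the claim ``there exist coprime $u,v\in\ZZ_p$ with $v_p(Q_1'Q_2')=0$'' as being conditional on $v_p(mn)=0$, and therefore split into cases $p\nmid mn$ and $p\mid mn$, dragging in $\Sloc^{(\iota)}$ and Lemma~\ref{lem:X-local}(iii) for the second case. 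That split is spurious: the claim in Lemma~\ref{lem:X-local}'s proof is established unconditionally, using only $\gcd(a',b')=\gcd(c',d')=1$ (which always holds by construction). The sentence ``this will clearly suffice \ldots\ when $v_p(mn)=0$'' records a \emph{consequence} of the claim, not a hypothesis on it. Since $u,v\in\ZZ_p$ forces $v_p(Q_i')\geq 0$ for $i=1,2$, the equality $v_p(Q_1')+v_p(Q_2')=0$ at once gives $v_p(Q_i')=0$ for both, and the hypothesis $v_p(e_i)=0$ then yields $v_p(e_iQ_i')=0$, so $e_iQ_i'\in\Sigma_p$. No Legendre-symbol bookkeeping, no appeal to $\Sloc^{(\iota)}$, and no parity gymnastics are needed.

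Your alternative $\PP^1(\FF_p)$ point-count is a nice way to rediscover the claim, but as stated it has a small gap: for $p=3$ you only get $p+1=4\geq 4$, which does not a priori rule out the four zeros exhausting $\PP^1(\FF_3)$. It happens to be safe because any non-degenerate diagonal form $au^2+bv^2$ over $\FF_3$ with two projective zeros vanishes exactly at $[1:1]$ and $[1:2]$, so two such forms cannot have disjoint two-point zero sets; but this needs to be said (or one falls back on the explicit $(1,0),(0,1),(1,1)$ substitutions, which is in fact exactly the paper's proof of the claim). In short: the endpoint of your reasoning is right, but the detour through $p\mid mn$ versus $p\nmid mn$ is based on a misreading, and the counting argument needs a word about $p=3$.
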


\begin{proof}
If $p\equiv 1 \4$ then the conclusion is trivial. Suppose now that $p\equiv 3 \4$. 
The second paragraph in the proof of Lemma \ref{lem:X-local} shows that there exists $u,v\in \ZZ_p$ such that 
$v_p(Q_1'Q_2')=0$. But then 
$v_p(e_iQ_i')=0$ for $i=1,2$, which concludes the proof.
\end{proof}

Since $e_1,e_2$ are square-free and entirely composed of primes congruent to  $3$ modulo $4$, Lemma \ref{lem:p1} completely handles the solubility question over $\QQ_p$ for $p\equiv 1\4$. Turning to  $p\equiv 3\4$, it remains to consider the possibility that $p\mid \gcd(mn,e_1 e_2)$, together with the possibility that 
$p\nmid mn$ and $p\mid \e$.
We may assume that $$
\e=(e_1'p^{k_1},e_2'p^{k_2}),
$$ 
with $\k=(k_1,k_2)\in \{0,1\}^2$ and $p\nmid e_1'e_2'$.  Then the condition \eqref{eq:square} demands that 
$$
k_1+k_2\equiv v_p(mn) \bmod{2}.
$$
In particular we have $\k\in \{(0,0), (1,1)\}$
 when $v_p(mn)=0$ and 
$\k\in \{(0,1), (1,0)\}$
 when $v_p(mn)=1$. Recall that  $\Delta'=a'd'-b'c'$. 
We proceed to establish the  following result.

\begin{lemma}\label{lem:p2}
Let $(a,b,c,d)\in \Sloc^{(\iota)}$ and  let $p\equiv 3 \4$ with $k=v_p(mn)\in \{0,1\}$. 
Let $\e'\in \cB^2$ with $p\nmid e_1'e_2'$.
Then one of the  following must hold:
\begin{itemize}
\item[(i)]
if $[\frac{-a'b'}{p}]+[\frac{-c'd'}{p}]\leq 0$ then there exists a unique
$\k\in \{0,1\}^2$ with $k_1+k_2\= k\2$ such that  $W_{(e_1'p^{k_1},e_2'p^{k_2})}(\QQ_p)\neq \emptyset$;

\item[(ii)]
if $[\frac{-a'b'}{p}]=[\frac{-c'd'}{p}]=1$
and $k=0$, then 
$W_{\e'}(\QQ_p)\neq \emptyset$; furthermore, 
$W_{p\e'}(\QQ_p)\neq \emptyset$ if and only if $p\mid \Delta'$;

\item[(iii)]
if $[\frac{-a'b'}{p}]=[\frac{-c'd'}{p}]=1$
and $k=1$,
then 
$W_{(e_1'p,e_2')}(\QQ_p)\neq \emptyset$
unless $v_p(a')\geq 2$ is even and $v_p(c')=1$, or $v_p(b')\geq 2$ is even and $v_p(d')=1$; likewise, 
$W_{(e_1',e_2'p)}(\QQ_p)\neq \emptyset$
unless $v_p(c')\geq 2$ is even and $v_p(a')=1$, or $v_p(d')\geq 2$ is even and $v_p(b')=1$.
\end{itemize}
In particular, whether or not   $W_{\bf e}(\QQ_p)$ is empty depends only on 
$v_p(e_1)$ and $v_p(e_2)$.
\end{lemma}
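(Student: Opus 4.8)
The strategy is to translate solubility over $\QQ_p$ into a bookkeeping of $p$-adic valuations, and then run the case analysis as organised in the statement. Since $p\equiv 3\4$, a nonzero element of $\QQ_p$ is a sum of two squares precisely when its valuation is even (the description of $\Sigma_p$ in \S\ref{s:prelim}). Write $e_i=e_i'p^{k_i}$ with $p\nmid e_i'$, and recall that it suffices to seek solutions with coprime $u,v\in\ZZ_p$. Then $W_{\e}$ acquires a $\QQ_p$-point supported at $(U,V)=(u,v)$ exactly when $Q_1'(u,v)Q_2'(u,v)\neq 0$ and $v_p(Q_i'(u,v))\equiv k_i\2$ for $i=1,2$; the non-vanishing is harmless, as $Q_1'Q_2'$ has only finitely many zeros on $\PP^1(\QQ_p)$, off which $v_p(Q_i'(u,v))$ is locally constant in $(u,v)$. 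Hence, letting $\Pi\subseteq(\ZZ/2\ZZ)^2$ be the set of pairs $\bigl(v_p(Q_1'(u,v)),v_p(Q_2'(u,v))\bigr)\2$ attained over coprime $u,v\in\ZZ_p$ with $Q_1'(u,v)Q_2'(u,v)\neq 0$, one has $W_{(e_1'p^{k_1},e_2'p^{k_2})}(\QQ_p)\neq\emptyset$ if and only if $(k_1,k_2)\in\Pi$. As $k_i\in\{0,1\}$ this already yields the last assertion of the lemma, and what remains is to compute $\Pi\cap\{k_1+k_2\equiv k\2\}$, the latter constraint being imposed by \eqref{eq:square}.

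The computation rests on a single-form analysis. For $Q=AU^2+BV^2$ over $\ZZ_p$ with $\gcd(A,B)=1$, write $A=p^\alpha A_0$, $B=p^\beta B_0$ with $p\nmid A_0B_0$ and $\min(\alpha,\beta)=0$, and split coprime $(u,v)$ according to whether $p\mid u$, $p\mid v$, or $p\nmid uv$. This yields a dichotomy governed by the symbol $[\frac{-AB}{p}]$: if it equals $-1$ --- which forces $\alpha,\beta$ to be even --- then $v_p(Q(u,v))$ is always even; if it equals $1$, then both parities occur, and, more precisely, the pairs at which $v_p(Q(u,v))$ is odd form a restricted family whose description depends on $\alpha$ and $\beta$ (for example, when $\alpha\geq 2$ is even and $\beta=0$, an odd valuation can only be produced by pairs with $v_p(v)=\alpha/2$). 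The elementary engine here is Hensel-type perturbation: starting from $(u_0,v_0)$ with $Q(u_0,v_0)\equiv 0\bmod{p}$, replacing $u_0$ by $u_0+p$ drops the valuation to exactly $1$, while the substitutions $(u,p^jv)$ raise it in controlled steps.

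Passing from the two forms taken separately to $\Pi$ requires controlling $Q_1'$ and $Q_2'$ at a common $(u,v)$, and here the identities \eqref{eq:hcf}, $d'Q_1'-b'Q_2'=\Delta'U^2$ and $-c'Q_1'+a'Q_2'=\Delta'V^2$, provide the crucial link: they show that once $v_p(Q_1'(u,v))$ exceeds $v_p(\Delta')$ the valuation $v_p(Q_2'(u,v))$ is pinned down, and conversely. For part (i), $[\frac{-a'b'}{p}]+[\frac{-c'd'}{p}]\leq 0$ means that at least one form, say $Q_1'$, has $v_p(Q_1'(u,v))$ of a fixed parity $\epsilon_1$; evaluating the other form at the witnessing pair $(1,0)$ or $(0,1)$ fixes the second coordinate, so $\Pi$ meets each residue line $\{k_1+k_2\equiv k\}$ in the single point $(\epsilon_1,k-\epsilon_1)$. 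For part (ii) ($k=0$, both symbols $1$): $(0,0)\in\Pi$ always, by the argument already used in the proof of Lemma \ref{lem:X-local} to produce a coprime pair with $v_p(Q_1'Q_2')=0$; and $(1,1)\in\Pi$ forces $p\mid\Delta'$, since if $v_p(Q_i'(u,v))\geq 1$ for both $i$ at a coprime pair then \eqref{eq:hcf} makes $p$ divide both $\Delta'u^2$ and $\Delta'v^2$, hence $\Delta'$; conversely, when $p\mid\Delta'$ a common reduction of $Q_1',Q_2'$ modulo $p$ yields a starting pair which, via \eqref{eq:hcf} and the perturbation above, can be adjusted so that both valuations are odd. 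Part (iii) ($k=1$, both symbols $1$) is the delicate case: one seeks a single coprime $(u,v)$ with $v_p(Q_1')$ odd and $v_p(Q_2')$ even (for the torsor $(e_1'p,e_2')$), and the single-form description shows this fails exactly when every pair making $v_p(Q_1')$ odd is of a shape that simultaneously forces $v_p(Q_2')$ to equal $v_p(c')=1$ or $v_p(d')=1$ --- which, after chasing the orders, is precisely the configuration ``$v_p(a')\geq 2$ even and $v_p(c')=1$'' and its mirror image; the statement for $(e_1',e_2'p)$ is the same with the two forms interchanged, and one checks the two exceptional configurations are incompatible (because $v_p(c')=1$ forces $v_p(d')=0$), so at least one torsor is always soluble.

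The main obstacle is exactly this simultaneity in parts (ii) and especially (iii): a parity profile attainable separately by $Q_1'$ and by $Q_2'$ need not be attainable at a common coprime pair, and deciding when the two ``good loci'' in $\PP^1(\QQ_p)$ are forced apart is what demands the careful bookkeeping via \eqref{eq:hcf}, indexed by the $p$-adic orders of $a',b',c',d'$ and of $\Delta'$. Membership of $(a,b,c,d)$ in $\Sloc^{(\iota)}$, through Lemma \ref{lem:X-local}, guarantees one is never in a degenerate situation where no admissible $\k$ works. The remaining ingredients --- part (i), the assertions $(0,0)\in\Pi$, and the necessity of $p\mid\Delta'$ --- are comparatively routine.
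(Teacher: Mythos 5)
Your plan is essentially the paper's approach: you reduce solubility of $W_{\e}$ over $\QQ_p$ to a parity condition on $v_p(Q_i'(u,v))$, observe that the symbols $\lan\frac{-a'b'}{p}\ran$, $\lan\frac{-c'd'}{p}\ran$ govern which parities each form can achieve separately, and invoke the identities \eqref{eq:hcf} to handle simultaneity; degenerate cases are excluded via Lemma \ref{lem:X-local}. The set $\Pi$ is a convenient repackaging (and it does neatly dispose of the final clause about dependence on $(v_p(e_1),v_p(e_2))$), but it is not a different method.

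Two issues remain, however. In part (i), the assertion that the witness is found by "evaluating the other form at the witnessing pair $(1,0)$ or $(0,1)$" is not correct: when the free form has one coefficient of even $p$-adic order at least $2$ (and the other a unit), an odd valuation is only achieved at pairs lying deeper in the $p$-adic filtration --- this is your own observation that $v_p(v)=\alpha/2$ is forced --- so neither $(1,0)$ nor $(0,1)$ serves; the correct appeal for $k=1$ is to the single-form analysis, or, as the paper does, directly to Lemma \ref{lem:X-local}(iii). More substantively, the sufficiency direction in parts (ii) and (iii) --- that $p\mid\Delta'$ yields $(1,1)\in\Pi$, and that the two configurations listed in (iii) are the \emph{only} obstructions to $(1,0)\in\Pi$ --- is asserted to follow "after chasing the orders," but that chase is precisely where the paper spends its effort: one must split according to which of $a',b',c',d'$ are divisible by $p$, and according to the parities and relative sizes of $\alpha,\beta,\gamma,\delta,\kappa$. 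Your Hensel perturbation $(u_0,v_0)\mapsto(u_0+p,v_0)$ only drops the valuation to $1$ when $p\nmid Au_0$, i.e.\ in the nonsingular case, whereas the delicate configurations are exactly those where $p$ divides a coefficient. Until that case analysis is written out, the proposal is a plan rather than a proof of the lemma's exact statement.
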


\begin{rem}\label{rem:1}
Let $p$ be an odd prime.   Lemmas \ref{lem:p1} and \ref{lem:p2} together show that
whenever 
$X_{a,b,c,d}(\QQ_p)\neq \emptyset$, there exists  $\k\in \{ 0,1\}^2$, with $k_1+k_2\=v_p(mn)\2$,  such that 
$W_{(p^{k_1},p^{k_2})}(\QQ_p)\neq \emptyset$. 
\end{rem}

\begin{proof}[Proof of Lemma \ref{lem:p2}]
Our arguments will have a similar flavour to the proof of Lemma~\ref{lem:X-local}.
Beginning with case (i) we suppose without loss of generality that $[\frac{-c'd'}{p}]=-1$.
Now any solution $u,v$ must have 
 $2\mid v_p(Q_2')$, whence the choice $\k=(1,1)$ (resp.\ $\k=(0,1)$) is not admissible when $k=0$ (resp.\ $k=1$).  
Finally we note that the case 
$\k=(0,0)$ 
is admissible when $k=0$ by  Lemma \ref{lem:p1}, and the case
 $\k=(1,0)$
is admissible when $k=1$ since 
 Lemma~\ref{lem:X-local} implies that 
 $[\frac{-a'b'}{p}]=1$.

Let us consider
case (ii), wherein we have $ k=0$ and 
$[\frac{-a'b'}{p}]=[\frac{-c'd'}{p}]=1$.
Let us set 
$$
\alpha=v_p(a'), \quad
\beta=v_p(b'), \quad
\gamma=v_p(c'), \quad
\delta=v_p(d'), \quad 
\kappa=v_p(\Delta'),
$$
with $\min \{\alpha,\gamma\}, \min\{\beta,\delta\}\leq 1$ and $\min\{\alpha,\beta\}=\min\{\gamma,\delta\}=0$.
It is clear that $\k=(0,0)$ leads to a torsor with $p$-adic points, by Lemma~\ref{lem:p1}.
The choice $
\k=(1,1)$ is  permissible if and only if there exist
coprime $u,v$ such that 
 $v_p(Q_i')\equiv 1 \bmod{2}$ for $i=1,2$.
If $\kappa=0$ then \eqref{eq:hcf} implies that there are no such solutions.
Alternatively, suppose that $\kappa\geq 1$.
Suppose first that $\alpha=\beta=\gamma=\delta=0$. In particular we have $(\frac{-a'b'}{p})=1$.
If  $\kappa\geq 2$ then we may  choose $u,v$ such that $v_p(Q_1')=1$, in which case 
\eqref{eq:hcf} implies that
 $v_p(Q_2')=1$.  If $\kappa=1$ then we choose $u,v$ so that 
 $v_p(Q_1')=3$ and it will follow that $v_p(Q_2')=1$.
When $\max\{\alpha,\beta,\gamma, \delta\}\geq 1$ we argue according to parity.
Without loss of generality we may suppose that $\beta=\delta=0$ and $\alpha,\gamma, \kappa\geq 1$.
If  $(\alpha,\gamma)\=(1,1)\2$ then it suffices to take $(u,v)=(1,0)$.
If $\alpha,\gamma$ are of opposite parity, with say
 $(\alpha,\gamma)\=(0,1)\2$, 
we consider $(u,v)=(u',p^{\alpha/2}v')$ chosen so that 
$v_p(Q_1')\=1\2$. Then it follows that $v_p(Q_2')\=1\2$ since $\alpha>\gamma$.
Finally, the case  $(\alpha,\gamma)\=(0,0)\2$ is impossible.

In case (iii) we must have $\k=(1,0)$ or $(0,1)$. 
By symmetry it will suffice to consider the case 
$\k=(1,0)$.  We wish to determine when there exist coprime $u,v$ such that $v_p(Q_1')$ is odd and $v_p(Q_2')$ is even. 
Suppose first that $p\nmid a'b'$, so that $(\frac{-a'b'}{p})=1$. Let $v_p(\Delta')=\kappa$. 
If $\kappa$ is even we may choose $u,v$ such that $v_p(Q_1')=\kappa+1$, in which case it follows from \eqref{eq:hcf} that $v_p(Q_2')=\kappa$ is even.   Likewise, 
if $\kappa$ is odd then $p\nmid c'd'$ and so we also  have $(\frac{-c'd'}{p})=1$. 
Thus we may choose $u,v$ such that $v_p(Q_2')=\kappa+1$ is even, in which case  $v_p(Q_1')=\kappa$ is odd. 
We suppose now that $p\mid a'$, say,  with $a'=p^\alpha a''$ and $p\nmid a''b'$.
If $p\mid d'$, write $d'=p^\delta d''$ with $p\nmid c'd''$.   If $\alpha$ is odd then we may take $(u,v)=(1,0).$ If $\alpha=2\alpha'$ is even  then we choose $(u,v)=(u',p^{\alpha'}v')$ such that $v_p(Q_1')=\alpha+1$, which is satisfactory. 
If, on the other hand, we write  $c'=p^\gamma c''$ for $\gamma\geq 0$ and $p\nmid c''d'$, then the situation is more complicated. Suppose that 
$\alpha$ is odd and $\gamma$ is even. Then it suffices to take $(u,v)=(1,0)$. 
If $\alpha$  and $\gamma$ are both odd then $\alpha\neq \gamma$ by Lemma \ref{lem:X-local}.
Supposing that   $\alpha=2\alpha'+1<\gamma$ then the argument used in the last part of the proof of   Lemma \ref{lem:X-local} shows that $(u,v)=(1,p^{\alpha'+1})$ suffices. If $\alpha$ and $\gamma$ are both even then
$\gamma=0$ and 
 $(\frac{-a''b'}{p})=(\frac{-c'd'}{p})=1$, resulting in a case that is easy to handle. Finally if $\alpha$ is even and $\gamma$ is odd, a case that requires $\alpha\geq 2$ and $\gamma=1$,  then $v_p(\Delta')=1$ and it is impossible to find  suitable $u,v$.  The case in which $p\mid b'$ is symmetric.
This  completes the proof of the lemma.
\end{proof}

\subsection{Solubility over $\RR$}

We now turn to the set $W_{\e}(\RR)$, given $(a,b,c,d)\in \Sloc^{(\iota)}$.
Recall the notation for the sign  function $\sign(t)\in \{-,+\}$.
We have  $\sign(a)=+$ and $\sign(b,c,d)\neq(+,-,-)$, 
by part 
(i) of Lemma \ref{lem:X-local}. On 
recalling that $m$ and $n$ are positive integers, we see that 
the constraint \eqref{eq:square} demands that $e_1$ and $e_2$ share the same sign. 
Let $\Delta=ad-bc$. 
We wish to determine when there exist $u,v\in \RR$ such that 
$\sign(e_i)Q_i(u,v)>0$ for $i=1,2$, which will then ensure that $W_{\e}(\RR)$ is non-empty.
We write $W_{\pm}$ for $W_{\e}(\RR)$ with $\sigma(e_1)=\sigma(e_2)=\pm$.

Suppose first that $c>0$. Taking $u\neq 0$ and $v=0$ shows that 
$W_+\neq \emptyset$.  If, furthermore, $b>0$ or $d>0$ then 
$W_-=\emptyset$.  Finally, if 
 $b,d<0$ then $W_-\neq \emptyset$, as can be seen by taking $u=0$ and $v\neq 0$.
Next we consider the case $c<0$.  Here, for $W_+$, we seek the existence of 
$u,v$ with $v\neq 0$ such that 
$$
\frac{-d}{c}>\frac{u^2}{v^2}>\max\left\{\frac{-b}{a}, 0 \right\}.
$$
This occurs if and only if  $d>0$ and $\sign(\Delta)=+$.
For $W_-$ we require instead the existence of $u,v$ with $v\neq 0$ such that
$$
\frac{-b}{a}>\frac{u^2}{v^2}>\max\left\{\frac{-d}{c}, 0 \right\}.
$$
This occurs if and only if  $b<0$ and $\sign(\Delta)=-$.
Finally we note that the case in which $\sign(a)=\sign(b)=+$ and 
$\sign(c)=\sign(d)=-$ does not enter into consideration.
We summarise the situation in Table \ref{table1}, in which the  
final column lists the possible signs of $e_i$ which give rise to a non-empty set
$W_{\e}(\RR).$ 

\begin{table}[h]
\centering
\begin{tabular}{|c|c|c|c|c|c|}
\hline
 & $\sign(a)$ & $\sign(b)$ & $\sign(c)$ & $\sign(d)$ &  $\sign(e_i): 
W_{\e}(\RR)\neq \emptyset$ \\
\hline
(i) & $+$ & $+$ & $+$ & $+$ &   $+$ \\
(ii) & $+$ & $-$ & $+$ & $-$ &  $+$ and $-$ \\
(iii) & $+$ & $+$ & $+$ & $-$ &  $+$ \\
(iv) & $+$ & $-$ & $+$ & $+$ &  $+$ \\
(v) & $+$ & $+$ & $-$ & $+$ &    $+$ \\
(vi) & $+$ & $-$ & $-$ & $+$ &   $\sign(\Delta)$ \\
(vii) & $+$ & $-$ & $-$ & $-$ &    $-$ \\
\hline
\end{tabular}
\vspace{0.2cm}
\caption{$W_{\e}(\RR)$ for $(a,b,c,d)\in \Sloc$
}\label{table1}
\end{table}

\subsection{Brauer group considerations}
Our work so far shows that for certain choices of coefficients 
$(a,b,c,d)\in \Sloc^{(\iota)}$
there is more than one choice of torsor $W_\e$ with points everywhere locally.  We need to show, using our discussion of the Brauer group above, that such  $(a,b,c,d)$ actually belong to $\Sglob^{(\iota)}$ and so are easily dealt with.
In carrying out this plan let us write 
 $j_w=\mathrm{inv}_w\left( \mathrm{ev}_{\mathcal{A}}(M_w) \right)$, for any local point $M_w\in X_{a,b,c,d}(\QQ_w)$, whose value is given by \eqref{eq:campus}.
We may now record the  following pair of results.

\begin{lemma}\label{lem:easy}
Let $(a,b,c,d)\in \Sloc^{(\iota)}$ arise in case (ii) of Table \ref{table1}. Then
$(a,b,c,d)\in \Sglob^{(\iota)}$.
\end{lemma}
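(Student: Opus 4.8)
The plan is to show that in case (ii) of Table~\ref{table1} — when $\sign(a,b,c,d)=(+,-,+,-)$ — the Brauer--Manin obstruction must vanish, so that local solubility forces global solubility. The point is that case (ii) is precisely the case in which the real place is \emph{not} an obstruction: both sign choices $\sign(e_1)=\sign(e_2)=+$ and $\sign(e_1)=\sign(e_2)=-$ give $W_{\e}(\RR)\neq\emptyset$. Translating this back through \eqref{eq:campus}, the evaluation invariant $j_\infty=\mathrm{inv}_\infty(\mathrm{ev}_{\mathcal{A}}(M_\infty))$ takes \emph{both} values $0$ and $1/2$ as $M_\infty$ ranges over $X_{a,b,c,d}(\RR)$: indeed $Y^2+Z^2=Q_1(u,v)T^2$ is soluble over $\RR$ iff $Q_1(u,v)=au^2+bv^2>0$, and since $a>0>b$ this quadratic form takes both signs on $\RR^2$, while (since we are in case (ii)) for each sign of $Q_1(u,v)$ one can still solve $Y^2+Z^2=Q_1(u,v)Q_2(u,v)$ over $\RR$, giving genuine real points of the surface with each value of $j_\infty$.

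From here I would invoke the mechanism highlighted in \S\ref{s:intro}: if there is a valuation $v$ for which $\mathrm{inv}_v(\mathrm{ev}_{\mathcal{A}}(M_v))$ takes both values $0$ and $1/2$ as a function of $M_v\in X_{a,b,c,d}(\QQ_v)$, then \eqref{eq:shanghai} can never be achieved — given any adelic point, one simply adjusts the $v$-component to flip the sign of the sum and land in $\ker\Theta_{\mathcal{A}}$. Since $\Br(X)/\Br(\QQ)\cong\ZZ/2\ZZ$ is generated by the single class $\mathcal{A}=(-1,Q_1)$, this shows $X_{a,b,c,d}(\AA_\QQ)^{\Br(X)}\neq\emptyset$. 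Because the Brauer--Manin obstruction is the only one for these surfaces \cite{ct-c-s, CT}, and $(a,b,c,d)\in\Sloc^{(\iota)}$ guarantees $X_{a,b,c,d}(\AA_\QQ)\neq\emptyset$, we conclude $X_{a,b,c,d}(\QQ)\neq\emptyset$, i.e.\ $(a,b,c,d)\in\Sglob^{(\iota)}$.

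Alternatively — and this is really the same argument packaged via Lemma~\ref{lem:1} — one can argue directly with the torsors. In case (ii) both sign-classes of $\e$ survive at the real place, so it suffices to exhibit \emph{some} $\e\in\cB^2$, $f\in\cB$ with $\gcd(f,mn)=1$ and $e_1e_2=mnf^2$, such that $W_{\e}$ is everywhere locally soluble; one is free to choose the signs of $e_1,e_2$ to fix up $\RR$, and at each odd prime $p$ Lemmas~\ref{lem:p1} and~\ref{lem:p2} (together with Remark~\ref{rem:1}, using $X_{a,b,c,d}(\QQ_p)\neq\emptyset$) provide an admissible $p$-adic component; a compatible global $\e$ can then be assembled. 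Since every $W_{\e}$ satisfies the Hasse principle and a $\QQ$-point on $W_{\e}$ yields a $\QQ$-point on $X_{a,b,c,d}$, this again gives $(a,b,c,d)\in\Sglob^{(\iota)}$.

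The main obstacle, such as it is, is purely bookkeeping at the prime $2$: one must check that the freedom at $\RR$ and at the odd primes does not conflict with a solubility constraint at $2$. But by Lemma~\ref{lem:1} local solubility for the torsor at $2$ only needs to be verified \emph{once}, and since $(a,b,c,d)\in\Sloc^{(\iota)}$ already gives $X_{a,b,c,d}(\QQ_2)\neq\emptyset$, the $2$-adic condition is automatically met for a suitable choice of $2$-adic sign data; alternatively, one avoids the issue entirely by running the first (Brauer-theoretic) argument, which never mentions $2$. I would present the Brauer-theoretic version as the clean proof and remark that it is the real place alone that forces the conclusion.
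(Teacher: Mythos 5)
Your Brauer-theoretic argument (the one you present as the clean proof) is correct and is essentially the paper's argument stated contrapositively: the paper assumes $j_\infty$ is constant and deduces that case (ii) is then ruled out, while you show directly that case (ii) forces $j_\infty$ to take both values $0$ and $1/2$ via the real points $(u,v)=(1,0)$ and $(0,1)$, and then invoke the same principle that a valuation with non-constant invariant kills the obstruction. Your alternative torsor sketch, however, asserts without justification that the $2$-adic condition is ``automatically met for a suitable choice of $2$-adic sign data''; this does not follow from $X_{a,b,c,d}(\QQ_2)\neq\emptyset$, since the sign freedom at $\RR$ flips $(\eps_1,\eps_2)\bmod 4$ in tandem and the symmetry \eqref{eq:x-files} is stated for $\Totii$ rather than $\Toti$, so $X(\QQ_2)\neq\emptyset$ need not force one of the two sign choices to give $W_{\e}(\QQ_2)\neq\emptyset$ --- but since you explicitly defer to the Brauer-theoretic version, this does not affect the correctness of your proof.
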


\begin{proof}
We argue by contradiction.  Thus we 
may  assume that $j_\infty$ is constant, else certainly 
$(a,b,c,d)\in \Sglob^{(\iota)}$.
But then if 
$j_\infty=0$ (resp.\ $j_\infty=1/2$) it follows from \eqref{eq:campus} that 
$\sign(e_1)=+$ (resp.\ $\sign(e_1)=-$).
Hence case (ii) in
Table \ref{table1} is impossible.
\end{proof}

\begin{lemma}\label{lem:easy-ish}
Let $(a,b,c,d)\in \Sloc^{(\iota)}$.
If there is a prime  $p\equiv 3 \4$,
for which there are precisely two choices of $\k\in \{0,1\}^2$ with $k_1+k_2\= v_p(mn)\2$ such that
 $W_{(p^{k_1},p^{k_2})}(\QQ_p)$ is non-empty, then $(a,b,c,d)\in \Sglob^{(\iota)}$. 
\end{lemma}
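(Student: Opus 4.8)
The plan is to run the Brauer--Manin argument sketched in \S\ref{s:intro}. Write $X=X_{a,b,c,d}$ and $j_v=\mathrm{inv}_v(\mathrm{ev}_{\mathcal A}(M_v))$ for a local point $M_v$, where $\mathcal A=(-1,Q_1)$ generates $\Br(X)/\Br(\QQ)\cong\ZZ/2\ZZ$. As explained there, a counter-example to the Hasse principle requires that, for the generator $\mathcal A$, every adelic point satisfies \eqref{eq:shanghai}; this is impossible as soon as $j_v$ attains both values $0$ and $1/2$ on $X(\QQ_v)$ for some $v$. So it suffices to show that the hypothesis forces $j_p$ to be non-constant, and then to invoke the theorem of Colliot-Th\'el\`ene, Coray and Sansuc that the Brauer--Manin obstruction is the only obstruction to the Hasse principle for this family.

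First I would record the elementary fact that the two elements of $\{0,1\}^2$ with a prescribed parity of $k_1+k_2$ differ in \emph{both} coordinates: they are $\{(0,0),(1,1)\}$ when $v_p(mn)$ is even and $\{(0,1),(1,0)\}$ when $v_p(mn)$ is odd. Hence the hypothesis provides two admissible vectors $\k,\k'\in\{0,1\}^2$ with $k_1+k_2\equiv k_1'+k_2'\equiv v_p(mn)\2$, such that $W_{(p^{k_1},p^{k_2})}(\QQ_p)\neq\emptyset$ and $W_{(p^{k_1'},p^{k_2'})}(\QQ_p)\neq\emptyset$, and with $k_1\neq k_1'$; in particular exactly one of $k_1,k_1'$ is $\equiv v_p(m)\2$.

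Next I would translate torsor solubility into information about $j_p$. A $\QQ_p$-point on $W_{(p^{k_1},p^{k_2})}$, which by Lemma~\ref{lem:1} and the accompanying discussion we may take with coprime $u,v\in\ZZ_p$, yields a $\QQ_p$-point $M_p=(y,z,u,v)$ on $X$ with $p^{k_1}Q_1'(u,v)=y_1^2+z_1^2\in\Sigma_p$ and $Q_1'(u,v)\neq 0$. Since $p\equiv 3\4$, membership in $\Sigma_p$ is equivalent to having even $p$-adic valuation, so $v_p(Q_1'(u,v))\equiv k_1\2$; as $Q_1=mQ_1'$ this gives $v_p(Q_1(u,v))\equiv v_p(m)+k_1\2$. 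By \eqref{eq:campus} together with the description of $\Sigma_p$, for such an $M_p$ we therefore have $j_p=0$ when $k_1\equiv v_p(m)\2$ and $j_p=1/2$ otherwise. Applying this to $\k$ and to $\k'$, and using that exactly one of $k_1,k_1'$ is $\equiv v_p(m)\2$, we conclude that $j_p$ attains both values $0$ and $1/2$ on $X(\QQ_p)$.

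Finally I would conclude. Since $\mathcal A$ generates $\Br(X)/\Br(\QQ)$ and $j_p$ is non-constant, any adelic point $(M_v)\in X(\AA_\QQ)$ can be altered in its $p$-component so as to flip $j_p$, hence to ensure $\sum_v j_v=0\neq 1/2$; so $X(\AA_\QQ)^{\Br(X)}\neq\emptyset$. As $(a,b,c,d)\in\Sloc^{(\iota)}$ gives $X(\AA_\QQ)\neq\emptyset$, and the Brauer--Manin obstruction is the only obstruction to the Hasse principle for $X$, it follows that $X(\QQ)\neq\emptyset$, i.e. $(a,b,c,d)\in\Sglob^{(\iota)}$. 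The only step that needs a little care is the third paragraph: one must check that a point of $W_{\e}(\QQ_p)$ genuinely produces a point of $X(\QQ_p)$ at which $Q_1(u,v)$ is nonzero and has the claimed valuation parity. This is immediate from the defining equations of $W_{\e}$ (with $\e=(p^{k_1},p^{k_2})$), so I expect no real obstacle; the heart of the matter is simply the bookkeeping that relates $v_p(e_1)$, $v_p(m)$ and the parity of $v_p(Q_1)$.
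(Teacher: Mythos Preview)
Your proof is correct and follows essentially the same approach as the paper's: both arguments hinge on the observation that a $\QQ_p$-point on $W_{(p^{k_1},p^{k_2})}$ pins down the parity of $v_p(Q_1(u,v))$ via $k_1$, so that two admissible $\k$ with distinct first coordinates force $j_p$ to take both values. The paper phrases this by contradiction (assume $j_p$ constant, deduce $k_1$ is determined, contradicting the hypothesis), whereas you argue directly; your version is in fact slightly more explicit in tracking the contribution of $v_p(m)$ to the parity of $v_p(Q_1)$.
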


\begin{proof}
We argue by contradiction.  Thus we 
may  assume that $j_p$ is constant, 
where $p$ is as in the statement of the lemma. 
If $j_p=0$  (resp.\  $j_p=1/2$), then \eqref{eq:campus} implies that $k_1=0$ (resp.\ $k_1=1$) and $k_2$ is uniquely determined from the congruence $k_1+k_2\=v_p(mn)\2$.
This completes the proof of the lemma.
\end{proof}

For the cases that are not covered by Lemmas \ref{lem:easy} or \ref{lem:easy-ish}, there 
exists a unique ${\bf e}$ such that for any valuation $w\in \Omega$, with $w\neq 2$,  
 we have  $W_{\bf e}(\QQ_w)\neq \emptyset $. For this 
 $\e$ it therefore  remains to determine when $W_{\bf e}(\QQ_2)$ is non-empty.
This is so if and only if there exist coprime $u,v\in \ZZ_2$ such that $e_iQ_i'(u,v)\in \cD$ for $i=1,2$, where 
$\cD$ is given by  \eqref{eq:D}.  
This constraint  depends only on  $\e$ modulo $4$.

\subsection{Solubility over $\QQ_2$}\label{s:2adic}

Recall the definition 
\eqref{eq:D} of $\cD$. 
In what follows it will be convenient to introduce the complementary set
$$
\ocD=\{2^n(3+4m): m\in \ZZ_2, ~n\in \ZZ \}.
$$
In this section we will be specifically interested in the set 
$$
\Tot=\left\{
t=(A,B,C,D)\in \ZZ^4: 
\begin{array}{l}
2\nmid A, ~
\Upsilon=AD-BC\neq 0\\
\gcd(A,B)=\gcd(C,D)=1\\
\gcd(A,C), \gcd(B,D)\in \cA
\end{array}
\right\},
$$
where $\cA$ is given  by \eqref{eq:AB}.
Define
$$
R_1(U,V)=AU^2+BV^2, \quad
R_2(U,V)=CU^2+DV^2,
$$
We wish  to  classify exactly the $t\in \Tot$ for which 
there exist coprime $u,v\in \ZZ_2$  such that 
\begin{enumerate}\item[(1)]
$R_i(u,v)\in \cD$ for $i=1,2$; or
\item[(2)]
$R_1R_2(u,v)\in \cD$.
\end{enumerate}
We see that case (2) holds if and only if 
there exist coprime $u,v\in \ZZ_2$ such that 
case (1) holds or such that 
$R_i(u,v)\in \ocD$ for $i=1,2$.
Let us distinguish these two subsets of $\Tot$ by writing 
$\Tot^{(1)}$  and  $\Tot^{(2)}$, respectively. In particular 
$\Tot^{(1)}\subset \Tot^{(2)}$.

Returning briefly to the question of solubility of $W_\e$ over $\QQ_2$, 
suppose that 
$$
e_i\equiv \eps_i \bmod{4}, \quad (i=1,2),
$$
for $\eps_1,\eps_2\in \{\pm 1\}$. We will make the change of variables
$$
A=\eps_1a', \quad B=\eps_1b', \quad C=\eps_2c', \quad D=\eps_2d'.
$$
In particular $(A,B,C,D)\in \Tot$ and we note that 
$e_iQ_i'(u,v)\in \cD$ if and only if $R_i(u,v)\in \cD$,  for $i=1,2$.
From case (1)  above we will be able to determine precisely when $W_\e(\QQ_2)\neq \emptyset$.
Likewise  case (2) allows us to decide exactly when 
$X_{a,b,c,d}(\QQ_2)\neq \emptyset$, as required for part (iv) of Lemma \ref{lem:X-local}.

It is now time to characterise the sets $\Toti$ and $\Totii$. Our argument differs according to the residue  of $(A,B)$ modulo $4$.
Consequently, given a pair $(i,j)\in (\ZZ/4\ZZ)^*\times \ZZ/4\ZZ$, it will be convenient to 
put
\begin{align*}
\Tot^{(k)}(i,j)&=\{t\in  \Tot^{(k)}: (A,B)\equiv (i,j) \4\},
\end{align*}
for $k\in \{1,2\}$.
For each $(i,j)$, we shall list precise conditions which are both necessary and sufficient to ensure that $t$ 
belongs to $\Toti(i,j)$ or $\Totii(i,j)$.  In doing so we will make repeated use of the observation that 
\begin{equation}\label{eq:x-files}
\Totii(i,j)=-\Totii(-i,-j).
\end{equation}
Although we provide an explicit characterisation of all the sets  $\Toti(i,j)$ and $\Totii(i,j)$, it is only the latter that 
will actually be used to calculate the numerical value of a certain $2$-adic density in 
 Lemma \ref{lem:tau-2}.  Furthermore, we will lend support to the final numerical value obtained by a direct 
computer search. 
Consequently, the reader may choose to skip the proofs in this section at a first reading.
As emphasised in \S \ref{s:intro}, however, once taken in conjunction with our work so far the results in this section give an explicit algorithm  for testing whether or not any Ch\^atelet surface $X_{a,b,c,d}$ 
has $\QQ$-rational points. We will illustrate this aspect in \S \ref{s:ct}.

\medskip

Suppose one is given coprime integers $r,s$.  It will be useful to have a  clear classification of precisely when there exists coprime $u,v\in \ZZ_2$ such that 
$ru^2+sv^2\in \cD$, and what constraints are placed on any such solution, if any.
 Write $u=2^\mu u'$ and $v=2^\nu v'$, with 
 $\min\{\mu, \nu\}=0$ and $2\nmid u'v'$.  
 It will be convenient to set
 \begin{equation}\label{eq:notation}
u'=1+2u'', \quad v'=1+2v'', \quad  
 \tilde{u}=u''+{u''}^2, \quad 
  \tilde{v}=v''+{v''}^2.
 \end{equation}
 In particular $\tilde{u}, \tilde{v}$ are both even integers. 
Let us suppose that $r=2^\rho r'$ and $s=2^\sigma s'$, with $\min\{\rho, \sigma\}=0$ and $2\nmid r's'$.  In Tables \ref{T} and \ref{T'} 
we collect together conditions on the various parameters  under which we have 
$ru^2+sv^2\in \cD$. 

\begin{table}[h]
\centering
\begin{tabular}{|l|l|l|}
\hline
 condition
&
condition
& 
conditions on $u',v'$\\
on  $\nu$
&
on $r',s \4$
& 
\\
\hline
$2\nu\leq \rho-2$ & $s\equiv 1 \4$ & \\
$2\nu\geq \rho+2$ & $r'\equiv 1 \4$ & \\
$2\nu=\rho+1$ & $r'\equiv 3 \4$ & \\
$2\nu=\rho-1$ & $s\equiv 3 \4$ & \\
$2\nu=\rho$ & $r'+s\equiv 2 \8$ & \\
$2\nu=\rho$ & $r'+s\equiv 4 \8$ &  $4(r'\tilde{u}+s\tilde{v})\equiv 4-r'-s \bmod{16}$
\\
$2\nu=\rho$ & $r'+s\equiv 0 \8$ &  there exists $k\in \ZZ_{\geq 0}$ such that \\
& & 
$4(r'\tilde{u}+s\tilde{v})\equiv 2^{3+k}-r'-s \bmod{2^{5+k}}$\\
\hline
\end{tabular}
\vspace{0.2cm}
\caption{$ru^2+sv^2\in \cD$: the case 
$\rho\geq 0$ and $\sigma=\mu=0$
}\label{T}
\end{table}

We will establish Table \ref{T} shortly. 
It represents an exhaustive list, so that in every case not covered in the table, one has $ru^2+sv^2\in \ocD$.  
It is important to stress that in every case listed a choice of $u',v'$ exists. It is only in the final two cases that additional constraints are imposed on $u',v'$. In these cases one notes that the existence of suitable $k,\tilde{u},\tilde{v}$  is automatic for the congruence class of $r'+s \8$ considered. This then leads to the existence of suitable 
$u',v'$ via \eqref{eq:notation}, since for any $t\in 2\ZZ_2$ and any $\ell\in \NN$ one can always find $w\in \ZZ/2^\ell \ZZ$ such that $w+w^2\equiv t \bmod{2^\ell}$.

\begin{proof}[Proof of Table \ref{T}]
For this we write $Q(U,V)=rU^2+sV^2$.
Taking $\mu=0$ we note that  
$$
Q(u',2^\nu v')
=2^{\min\{\rho, 2\nu\}} \left(
2^{\rho-\min\{\rho, 2\nu\}} r'u'^2+ 
2^{2\nu-\min\{\rho, 2\nu\}} sv'^2\right).
$$
When  $\rho\neq 2\nu$ it is easy to characterise when 
$Q(u',2^\nu v')\in \cD$. Thus, if $\rho>2\nu$  then 
for $\rho=2\nu+1$ (resp.\ $\rho\geq 2\nu+2$) one  requires $s\equiv 3\4$ 
(resp.\ $s=1\4$).  The case $\rho<2\nu$ is handled similarly. 
Suppose next that $\rho=2\nu$, so that 
$$
Q(u',2^\nu v')
=2^{\rho} \left(
r'u'^2+ 
sv'^2\right).
$$
Since $r'$ and $s$ are odd we must have $r'+s\in \{0,2,4,6\} \8$.  The case $r'+s\equiv 6\8$ 
is impossible since then $Q(u',2^{\nu}v')\in \ocD$ for  any odd $u',v'$.
If $r'+s\equiv 2\8$, on the other hand,  then $Q(u',2^{\nu}v')\in \cD$ for  any odd $u',v'$.
For the remaining cases we 
note that 
$$
2^{-\rho}Q(u',2^{\nu}v')=r'+s+4(r'\tilde{u}+s\tilde{v}),
$$
in the notation of \eqref{eq:notation}.
When $r'+s\equiv 4\8$, this is clearly congruent to $4$ modulo $8$ since $\tilde{u},\tilde{v}$ are both even.
We conclude in this case that 
$Q(u',2^{\nu}v')\in \cD$ if and only if  $u,v$ are chosen so that 
$$
r'+s+4(r'\tilde{u}+s\tilde{v})\equiv 4 \bmod{16},
$$
as claimed in the table. Similarly, if 
$r'+s\equiv 0\8$, then $8\mid 2^{-\rho}Q(u',2^{\nu}v')$
and  we arrive at the constraint that there exists $k\in \ZZ_{\geq 0}$ such that 
$$
r'+s+4(r'\tilde{u}+s\tilde{v})\equiv 2^{3+k} \bmod{2^{5+k}}.
$$
This concludes the proof of Table \ref{T}.
\end{proof}

We are now able to deduce a number of further tables from Table \ref{T}. 
Table \ref{T'} follows by  symmetry.   
We would also like to characterise precisely when  
there exist  coprime $u,v\in \ZZ_2$ such that $ru^2+sv^2\in \ocD$.  Tables \ref{uT} and \ref{uT'} are obtained from 
 Tables \ref{T} and \ref{T'}, respectively, 
 by multiplying the right hand side of each congruence in the second column by $-1$.

\begin{table}[h]
\centering
\begin{tabular}{|l|l|l|}
\hline
 condition
&
condition
& 
conditions on  $u',v'$\\
on  $\mu$ 
&
on $r,s' \4$
& 
\\
\hline
$2\mu\leq \sigma-2$ & $r\equiv 1 \4$ & \\
$2\mu\geq \sigma+2$ & $s'\equiv 1 \4$ & \\
$2\mu=\sigma+1$ & $s'\equiv 3 \4$ & \\
$2\mu=\sigma-1$ & $r\equiv 3 \4$ & \\
$2\mu=\sigma$ & $r+s'\equiv 2 \8$ & \\
$2\mu=\sigma$ & $r+s'\equiv 4 \8$ &  $4(r\tilde{u}+s'\tilde{v})\equiv 4-r-s' \bmod{16}$
\\
$2\mu=\sigma$ & $r+s'\equiv 0 \8$ &  there exists $k\in \ZZ_{\geq 0}$ such that \\
& & 
$4(r\tilde{u}+s'\tilde{v})\equiv 2^{3+k}-r-s' \bmod{2^{5+k}}$\\
\hline
\end{tabular}
\vspace{0.2cm}
\caption{$ru^2+sv^2\in \cD$: the case 
$\sigma\geq 0$ and $\rho=\nu=0$
}\label{T'}
\end{table}

\begin{table}[h]
\centering
\begin{tabular}{|l|l|l|}
\hline
 condition
&
condition
& conditions on $u',v'$\\
on  $\nu$
&
on $r',s \4$
& 
\\
\hline
$2\nu\leq \rho-2$ & $s\equiv 3 \4$ & \\
$2\nu\geq \rho+2$ & $r'\equiv 3 \4$ & \\
$2\nu=\rho+1$ & $r'\equiv 1 \4$ & \\
$2\nu=\rho-1$ & $s\equiv 1 \4$ & \\
$2\nu=\rho$ & $r'+s\equiv 6 \8$ & \\
$2\nu=\rho$ & $r'+s\equiv 4 \8$ &  $4(r'\tilde{u}+s\tilde{v})\equiv 4-r'-s \bmod{16}$
\\
$2\nu=\rho$ & $r'+s\equiv 0 \8$ &  there exists $k\in \ZZ_{\geq 0}$ such that \\
& & 
$4(r'\tilde{u}+s\tilde{v})\equiv 2^{3+k}-r'-s \bmod{2^{5+k}}$\\
\hline
\end{tabular}
\vspace{0.2cm}
\caption{$ru^2+sv^2\in \ocD$: the case 
$\rho\geq 0$ and $\sigma=\mu=0$
}\label{uT}
\end{table}

\begin{table}[h]
\centering
\begin{tabular}{|l|l|l|}
\hline
 condition
&
condition
& conditions on $u',v'$\\
on  $\mu$ 
&
on $r,s' \4$
& 
\\
\hline
$2\mu\leq \sigma-2$ & $r\equiv 3 \4$ & \\
$2\mu\geq \sigma+2$ & $s'\equiv 3 \4$ & \\
$2\mu=\sigma+1$ & $s'\equiv 1 \4$ & \\
$2\mu=\sigma-1$ & $r\equiv 1 \4$ & \\
$2\mu=\sigma$ & $r+s'\equiv 6 \8$ & \\
$2\mu=\sigma$ & $r+s'\equiv 4 \8$ &  $4(r\tilde{u}+s'\tilde{v})\equiv 4-r-s' \bmod{16}$
\\
$2\mu=\sigma$ & $r+s'\equiv 0 \8$ &  there exists $k\in \ZZ_{\geq 0}$ such that \\
& & 
$4(r\tilde{u}+s'\tilde{v})\equiv 2^{3+k}-r-s' \bmod{2^{5+k}}$\\
\hline
\end{tabular}
\vspace{0.2cm}
\caption{$ru^2+sv^2\in \ocD$: the case 
$\sigma\geq 0$ and $\rho=\nu=0$
}\label{uT'}
\end{table}

\begin{rem}\label{rem:odd}
Armed with Tables \ref{T}--\ref{uT'} we are able to record a rather succinct condition under which there exist odd coprime  $u,v\in \ZZ_2$ such that $ru^2+sv^2$ belongs to $\cD$ or $\ocD$.   Taking $\mu=\nu=0$ we see that 
there exist odd coprime $u,v\in \ZZ_2$ such that $ru^2+sv^2\in \cD$ (resp.\ 
$ru^2+sv^2\in \ocD$) if and only if $r+s\in \{0,1,2,4,5\} \bmod{8}$
(resp.\ $r+s\in \{0,3,4,6,7\} \bmod{8}$). Note that when $4\mid r+s$,  additional 
constraints are placed on the admissible $u,v$.
\end{rem}

For the remainder of this section
we will adhere to the  notation 
$$
B=2^\beta B', \quad 
C=2^\gamma C', \quad 
D=2^\delta D',
$$
for integers $\beta, \gamma, \delta\geq 0$ such that $2\nmid B'C'D'$, with $\min\{\beta,\delta\}\leq 1$ and $\min\{\gamma,\delta\}=0$.

\begin{lemma}\label{lem:1-1}
We have $t\in \Toti(1,1)$ if and only if one of the following holds:
\begin{itemize}
\item $C\in \cD$ or $D\in \cD$;
\item 
$(C,D)\equiv (3,3)\4$ with 
$A+B\equiv C+D\equiv 2 \bmod{8}$;

\item	 $C\equiv 3\4$, 
$\delta\geq 1$ and $D'\equiv 3\4$, such that $C+D'\equiv 2 \8$ if 
 $2\mid \delta$;

\item	 $D\equiv 3\4$,  $\gamma\geq 1$ and $C'\equiv 3\4$, such that 
$C'+D\equiv 2 \8$ if $2\mid \gamma$.

 \end{itemize}
Moreover $t\in \Totii(1,1)$ if and only if one of the following holds: 
\begin{itemize}
\item $C\in \cD$ or $D\in \cD$;
\item 
$(C,D)\equiv (3,3)\4$ with 
$A+B\equiv C+D \bmod{8}$;

\item	 $C\equiv 3\4$
 and $D'\equiv 3\4$, with one of the following:
\begin{itemize}
\item
$2\nmid \delta$,
\item
$2\mid \delta$, $\delta\geq 2$ and  $C+D'\=2\8$,
\item
$2\mid \delta$, $\delta\geq 2$ and 
$A+B\=C+D'\=6\8$;
\end{itemize}

\item	 $D\equiv 3\4$ and $C'\equiv 3\4$, 
with one of the following:
\begin{itemize}
\item
$2\nmid \gamma$,
\item
$2\mid \gamma$, $\gamma\geq 2$ and  $C'+D\=2\8$,
\item
$2\mid \gamma$, $\gamma\geq 2$ and
$A+B\=C'+D\=6\8$.
\end{itemize}
\end{itemize}
\end{lemma}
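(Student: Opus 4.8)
The plan is to derive both characterizations from the one-variable analysis recorded in Tables~\ref{T}--\ref{uT'} and Remark~\ref{rem:odd}, applied in turn to $R_1(u,v)=Au^2+Bv^2$ and $R_2(u,v)=Cu^2+Dv^2$. Since $(A,B)\equiv(1,1)\4$, both $A$ and $B$ are odd, so $\rho=\sigma=0$ in the notation preceding Table~\ref{T}. First I would record the resulting dichotomy for $R_1$: if exactly one of a coprime pair $u,v$ is even, then $R_1(u,v)\in\cD$ automatically --- this follows from the row $2\nu\geq\rho+2$ of Table~\ref{T} together with $A\equiv1\4$ when $v$ is even, and from the row $2\mu\geq\sigma+2$ of Table~\ref{T'} together with $B\equiv1\4$ when $u$ is even; whereas if $u,v$ are both odd then the $2\nu=\rho=0$ row of Table~\ref{T}, which carries no auxiliary constraint since $A+B\equiv2\4$, gives $R_1(u,v)\in\cD$ for \emph{every} odd coprime $u,v$ when $A+B\equiv2\8$, while the matching row of Table~\ref{uT} gives $R_1(u,v)\in\ocD$ when $A+B\equiv6\8$.

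Granted this, the two membership questions reduce to statements about $R_2$ alone: $t\in\Toti(1,1)$ if and only if either there exist coprime $u,v$, not both odd, with $R_2(u,v)\in\cD$, or else $A+B\equiv2\8$ and there exist coprime odd $u,v$ with $R_2(u,v)\in\cD$; and $t\in\Totii(1,1)$ if and only if $t\in\Toti(1,1)$, or else $A+B\equiv6\8$ and there exist coprime odd $u,v$ with $R_2(u,v)\in\ocD$ (as realizing $R_1R_2(u,v)\in\cD$ through $R_1(u,v),R_2(u,v)\in\ocD$ forces $u,v$ both odd). There is no interaction between the two forms here, since on each relevant parity class the membership required of $R_1$ holds for every admissible $u,v$.

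It then remains to push $R_2$ through the tables. With $\gamma=v_2(C)$, $\delta=v_2(D)$ and $\min\{\gamma,\delta\}=0$, I would treat the cases $\gamma=\delta=0$; $\gamma\geq1$, $\delta=0$; and --- via the symmetry $(C,U)\leftrightarrow(D,V)$, which interchanges the third and fourth bulleted conditions and leaves the $R_1$ analysis untouched --- $\gamma=0$, $\delta\geq1$. In the case $\gamma\geq1$, $\delta=0$ the parity of $\gamma$ must be split off, since the $2\nu=\rho$ row of Table~\ref{T} is reachable only for $\gamma$ even. In each case I would determine --- via the relevant table, or, when the parities of $(C,D)$ and of $(u,v)$ are such that no table applies (for instance $C$ odd, $D$ even, $u$ odd, $v$ even), by the elementary remark that $R_2(u,v)$ is then a power of $2$ times a unit whose class modulo $4$ or modulo $8$ is read off at once --- exactly when a coprime pair $u,v$ of prescribed parity type can be found with $R_2(u,v)$ in $\cD$, respectively in $\ocD$. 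Combined with the previous paragraph this produces the stated list: the clause ``$C\in\cD$ or $D\in\cD$'' absorbs every sub-case in which a unit part of $C$ or $D$ is $\equiv1\4$; the other clauses cover precisely the sub-cases $C'\equiv D'\equiv3\4$, with the valuation constraints ($\delta\geq1$, $\gamma\geq1$, $\gamma\geq2$, and so on) recording which table rows are available and the congruences modulo $8$ recording the $2\nu=\rho$, respectively $2\mu=\sigma$, rows; and the conditions on $A+B$ enter exactly in those sub-cases where making $R_2(u,v)\in\cD$ (for $\Toti(1,1)$) or $R_2(u,v)\in\ocD$ (for the extra $\Totii(1,1)$ clause) is possible only with $u,v$ both odd.

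The hard part will be the bookkeeping in this final step: a finite but lengthy enumeration, whose delicate points are (i) noticing that a unit part $\equiv1\4$ of $C$ or $D$ always permits a solution with one of $u,v$ even, so that the genuinely restrictive situations are those with $C'\equiv D'\equiv3\4$; (ii) checking in the $2\nu=\rho$, respectively $2\mu=\sigma$, rows that, the relevant sums being $\equiv2\4$, only the residue class $2$ (for $\cD$) or $6$ (for $\ocD$) modulo $8$ can occur, so that the $4$ and $8$ modulo $8$ sub-rows with their auxiliary conditions never intervene; and (iii) keeping track, in the $\Totii(1,1)$ case, that the $\ocD$-branch contributes only when $u,v$ are both odd, which is what forces $A+B\equiv6\8$ to appear alongside $C'+D\equiv6\8$, respectively $C+D'\equiv6\8$, in the third and fourth bullets.
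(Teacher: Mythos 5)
Your proposal takes essentially the same route as the paper. Both proofs begin with the observation that, since $(A,B)\equiv(1,1)\4$, the form $R_1$ lands in $\cD$ for free whenever one of $u,v$ is even, and for $u,v$ both odd its membership in $\cD$ or $\ocD$ is governed purely by $A+B\bmod 8$ with no auxiliary constraints on $u',v'$; this decouples $R_1$ from $R_2$, reduces each of $\Toti(1,1)$ and $\Totii(1,1)$ to a statement about $R_2$ alone on the three parity types of coprime $(u,v)$, and then the cases $C\in\cD$, $D\in\cD$, $(C,D)\equiv(3,3)\4$, $\delta\geq 1$, and (by symmetry) $\gamma\geq 1$ are pushed through Tables~\ref{T}--\ref{uT'} and Remark~\ref{rem:odd}. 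Your version is, if anything, a little more careful in flagging explicitly the two points the paper's proof leaves tacit --- that with $C'\equiv D'\equiv3\4$ the sums $C'+D$, $C+D'$, $C+D$ are all $\equiv 2\4$, so the auxiliary-constraint rows of the tables never activate, and that the mixed-parity case with $D$ even, $v$ even is handled by an elementary residue calculation rather than a table --- but the decomposition, the key lemmas invoked, and the bookkeeping strategy are the same.
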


\begin{proof}
When $(A,B)\equiv (1,1) \4$, it is easy to see that $R_1(u,v)\in \cD$ if and only if 
$\mu\geq 1$ or $\nu\geq 1$ or $\mu=\nu=0$ and $A+B\equiv 2\8$.
Clearly   $R_2(1,0)\in \cD$ if  $C\in \cD$ and 
 $R_2(0,1)\in \cD$ if $D\in \cD$.
Suppose now that $C,D \in \ocD$. If 
$(C,D)\equiv (3,3)\4$ then in order to have $t\in \Toti$ we must restrict to the case in which $u,v$ are odd.  But then it follows from 
Remark \ref{rem:odd} that 
 $t\in \Toti(1,1)$ if and only if $A+B\equiv C+D\equiv 2 \8$, 
 with $t\in \Totii(1,1)\setminus \Toti(1,1)$ if and only if $A+B\equiv C+D\equiv 6 \8$.
Next we suppose that  $\delta\geq 1$. 
If $A+B\=6\8$ then we see that $t\in \Totii(1,1)$ by taking $(u,v)=(1,1)$. Next, 
Table~\ref{T'} implies that $R_2(u,v)\in \cD$ if and only if one of the following holds:
\begin{itemize}
\item[---]
$\nu=0$, $2\mu=\delta+1$ and $D'\equiv 3 \4$, 
\item[---]
$\nu=0$, $2\mu=\delta-1$ and $C\equiv 3 \4$, 
\item[---]
$\nu=0$, $2\mu=\delta$ and $C+D'\equiv 2 \8$.
\end{itemize}
Hence 
$t\in \Toti(1,1)$ if and only if 
$\delta$ is odd or $\delta$ is even and $C+D'\equiv 2 \8$.
Alternatively, if $\delta\geq 2$ is even then $t\in \Totii(1,1)$ if $C+D'\=6\8$ and $A+B\=6\8$, by our existing argument.
Finally, the case in which 
$D\equiv 3\4$ and  $\gamma\geq 1$ is symmetric. 
The statement of the lemma now follows.
\end{proof}

\begin{lemma}\label{lem:3-3}
We have $t\in \Toti(3,3)$ if and only if:
\begin{itemize}
\item
$A+B\equiv 2\bmod{8}$ and $C+D\in \{0,1,2,4,5\}\bmod{8}$. 
\end{itemize}
Moreover
the characterisation of $\Totii(3,3)$ follows 
from \eqref{eq:x-files} and 
Lemma \ref{lem:1-1}.
\end{lemma}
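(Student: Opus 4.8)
The plan is to reduce both assertions to the single-form analysis already carried out in Table \ref{T} together with Remark \ref{rem:odd}. Since $(A,B)\equiv(3,3)\bmod 4$, the first step is to observe that $R_1(u,v)\in\cD$ forces both $u$ and $v$ to be odd: if $v_2(u)=\mu\geq 1$ and $v_2(v)=0$ then $R_1(u,v)\equiv Bv'^2\equiv 3\bmod 4$, so $R_1(u,v)\in\ocD$, and symmetrically if $\mu=0$, $\nu\geq 1$. With $\mu=\nu=0$ we apply the row $2\nu=\rho=0$ of Table \ref{T} (with $r=A$, $s=B$): since $A,B\equiv 3\bmod 4$ the only residues of $A+B\bmod 8$ are $2$ and $6$, and $R_1(u,v)\in\cD$ holds exactly when $A+B\equiv 2\bmod 8$. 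The point I would stress is that in this row \emph{no} further congruence condition is imposed on the odd units $u,v$.

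Next I would bring $R_2$ into play. Because $R_1(u,v)\in\cD$ already confines us to odd coprime $u,v$, solubility of the pair reduces to asking whether there exist odd coprime $u,v\in\ZZ_2$ with $A+B\equiv 2\bmod 8$ and $R_2(u,v)=Cu^2+Dv^2\in\cD$. By Remark \ref{rem:odd}, odd coprime $u,v$ with $Cu^2+Dv^2\in\cD$ exist precisely when $C+D\in\{1,2,4,5,8\}\bmod 8$; in the subcases $4\mid C+D$ the admissible $u,v$ must additionally satisfy a congruence on $\tilde u,\tilde v$ in the notation of \eqref{eq:notation}, but such $u,v$ always exist, as recorded after Table \ref{T}, because $w+w^2$ runs over all of $2\ZZ_2$ modulo any power of $2$. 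Since the condition $A+B\equiv 2\bmod 8$ places no constraint on $u,v$, we are free to pick $u,v$ meeting whatever $R_2$ demands. Hence $t\in\Toti(3,3)$ if and only if $A+B\equiv 2\bmod 8$ and $C+D\in\{1,2,4,5,8\}\bmod 8$, which is the claimed characterisation.

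For $\Totii(3,3)$ I would simply invoke the symmetry \eqref{eq:x-files}: $t\in\Totii(3,3)$ if and only if $-t\in\Totii(-3,-3)=\Totii(1,1)$, because $-3\equiv 1\bmod 4$. Feeding $-t=(-A,-B,-C,-D)$ into the characterisation of $\Totii(1,1)$ in Lemma \ref{lem:1-1} then produces an explicit list of conditions, namely those of Lemma \ref{lem:1-1} with $(A,B,C,D)$ replaced throughout by $(-A,-B,-C,-D)$ (so, e.g., $-C\in\cD$ or $-D\in\cD$; or $(C,D)\equiv(1,1)\bmod 4$ with $A+B\equiv C+D\bmod 8$; and the two families of conditions on $\gamma,\delta$ with the signs reversed).

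The only genuine subtlety is the bookkeeping in the first two paragraphs: one must be certain that the row of Table \ref{T} realising $A+B\equiv 2\bmod 8$ really does leave the odd units $u,v$ unconstrained, so that the requirement coming from $R_2$ — in particular the delicate $4\mid C+D$ subcases — can be accommodated simultaneously. Everything else is a direct quotation of the single-form tables and of the symmetry \eqref{eq:x-files}, so I do not expect any further difficulty.
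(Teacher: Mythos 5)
Your proof is correct and follows essentially the same route as the paper's: restrict to odd coprime $u,v$ (forced since $(A,B)\equiv(3,3)\bmod 4$), apply Remark~\ref{rem:odd} to each form, and deduce the $\Totii(3,3)$ characterisation by feeding $-t$ into the $\Totii(1,1)$ conditions of Lemma~\ref{lem:1-1} via \eqref{eq:x-files}. The paper's proof is terser, but you usefully make explicit the point it leaves implicit: since $A+B\equiv 2\bmod 8$ puts no congruence constraint on the odd units $u',v'$, the (possibly constrained) $u,v$ needed for $R_2\in\cD$ in the $4\mid C+D$ subcases are automatically admissible for $R_1$ as well.
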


\begin{proof}
In analysing $\Toti(3,3)$ we note
$A+B\in \{2,6\}\8$.  We must have $u,v$ both being odd in any solution and it therefore follows from Remark \ref{rem:odd}  that 
there exist odd $u,v$ such that $R_i(u,v)\in \cD$, for $i=1,2$, if and only if
 $A+B\equiv 2\8$ and
$C+D\in \{0,1,2,4,5\}\bmod{8}$.
\end{proof}

\begin{lemma}\label{lem:1-3-3-1}
Suppose
that $t=(A,B,C,D)\=(1,3,3,1)\4$.
Then 
$t\in \Toti$ if and only if one of the following holds:
\begin{itemize}
\item 
$A+B\= 0 \8$;
\item
$A+B\=4 \8$ and $C+D\= 0 \8$;
\item
$A+B\=4 \8$ and $A+B\=C+D \bmod{16}$.
 \end{itemize}
Moreover $t\not\in \Totii\setminus \Toti$.
\end{lemma}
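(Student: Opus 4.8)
The plan is to reduce everything to the case where $u$ and $v$ are both odd, where the entire content of the lemma lives, and then to run a short case analysis modulo $8$ and $16$. Since $(A,B,C,D)\=(1,3,3,1)\4$, all four coordinates are odd and $A+B\=C+D\=0\4$. If a coprime pair $(u,v)$ has exactly one even coordinate then either $v$ is even and $u$ odd, in which case $R_1(u,v)\=A\=1\4$ and $R_2(u,v)\=C\=3\4$, or $u$ is even and $v$ odd, in which case $R_1(u,v)\=B\=3\4$ and $R_2(u,v)\=D\=1\4$; in both situations exactly one of $R_1(u,v),R_2(u,v)$ lies in $\cD$ and the other in $\ocD$, so neither ``$R_1(u,v),R_2(u,v)\in\cD$'' nor ``$R_1R_2(u,v)\in\cD$'' can hold. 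Hence both $\Toti(1,3,3,1)$ and $\Totii(1,3,3,1)$ are governed purely by odd coprime $u,v$, to which we now restrict.

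For odd $u,v$ write $u^2=1+8S$, $v^2=1+8T$, so that as $u,v$ range over the odd $2$-adic units, $S,T$ range over $\ZZ_2$, and $R_1(u,v)=(A+B)+8(AS+BT)$, $R_2(u,v)=(C+D)+8(CS+DT)$. One splits on the residues of $A+B$ and of $C+D$ modulo $8$, both being $\=0\4$. If $A+B\=4\8$ then $v_2(R_1(u,v))=2$, and $R_1(u,v)$ modulo $16$ equals $(A+B)$ or $(A+B)+8$ according to the parity of $S+T$; thus $R_1(u,v)\in\cD$ is always attainable, and it forces that parity (to $0$ or $1$ according as $A+B\=4$ or $12\bmod{16}$). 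The same parity controls $R_2(u,v)$ modulo $16$ when $C+D\=4\8$, so then $R_1(u,v),R_2(u,v)\in\cD$ are simultaneously attainable if and only if the two required parities agree, i.e. if and only if $A+B\=C+D\bmod{16}$ --- the third bullet, with membership in $\Toti$ failing when it fails. If instead $C+D\=0\8$, so $v_2(R_2(u,v))\ge3$, then $R_2(u,v)$ has slack beyond the parity of $S+T$: the shift $(S,T)\mapsto(S+1,T-1)$ leaves $S+T$ fixed while moving $CS+DT$ by $C-D$, a quantity of valuation $1$, so after the parity of $S+T$ is pinned down by $R_1$ the value $R_2(u,v)$ still ranges over a full coset of $2\ZZ_2$ and can be steered into $\cD$ --- the second bullet. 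Finally, if $A+B\=0\8$, write $R_1(u,v)=8(w_1+AS+BT)$ with $w_1=(A+B)/8$; here $AS+BT$ sweeps all of $\ZZ_2$, so $R_1(u,v)\in\cD$ imposes essentially no discrete constraint, and one checks (splitting once more on $C+D$ modulo $8$, e.g.\ taking $(u,v)=(u,1)$ when $C+D\=0\8$) that $R_2(u,v)\in\cD$ can always be matched --- the first bullet. The impossibility directions drop out of the same bookkeeping; the relations $DR_1-BR_2=\Upsilon u^2$ and $AR_2-CR_1=\Upsilon v^2$, the analogue of \eqref{eq:hcf}, may be used to tie $v_2(R_i(u,v))$ to $v_2(\Upsilon)$ and shorten these.

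For the final assertion one re-runs the analysis with $\cD$ replaced by $\ocD$. Since $\cD$ and $\ocD$ partition $\ZZ_2\setminus\{0\}$ and $\ocD=-\cD$, everything is identical, the only difference being that in the range $A+B\=C+D\=4\8$ each of $R_1(u,v)\in\ocD$ and $R_2(u,v)\in\ocD$ now demands the opposite parity of $S+T$, so their common attainability is again exactly $A+B\=C+D\bmod{16}$, while in the remaining ranges $t$ already lies in $\Toti$. Thus ``$R_1R_2(u,v)\in\cD$ for some coprime $u,v$'' never extends the three bullets, so $t\notin\Totii\setminus\Toti$. (Alternatively, the involution $\sigma\colon(A,B,C,D)\mapsto(-B,-A,-D,-C)$ preserves $\Tot$ and the residue class, obeys $R_i^{\sigma(t)}(u,v)=-R_i^{t}(v,u)$, whence $\Totii=\Toti\cup\sigma(\Toti)$; the three-bullet set is visibly $\sigma$-stable, so $\Totii(1,3,3,1)=\Toti(1,3,3,1)$.)

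The step I expect to be the main obstacle is the bookkeeping in the cases $A+B\=0\8$ or $C+D\=0\8$: there $v_2$ of the relevant form is unbounded, so membership in $\cD$ cannot be read off from congruences to a fixed modulus, and one must argue that after the constraints from the first form are used the second form still ranges over a full coset of $2\ZZ_2$ --- enough to land in $\cD$. Choosing moves on $(u,v)$ that preserve the constraints already imposed while still moving the other value is the delicate point; everything else is a finite verification subsumed by Tables \ref{T}--\ref{uT'} and Remark \ref{rem:odd}.
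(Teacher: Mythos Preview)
Your reduction to odd $u,v$ and the parametrisation $u^2=1+8S$, $v^2=1+8T$ with $(S,T)$ ranging over $\ZZ_2^2$ are correct (this is equivalent to the paper's $(\tilde u,\tilde v)$, since $(\ZZ_2^*)^2=1+8\ZZ_2$). Your treatment of the case $A+B\equiv C+D\equiv 4\pmod 8$ via the parity of $S+T$ is clean and matches the paper's computation for $k_1=k_2=0$. The involution argument $\sigma:(A,B,C,D)\mapsto(-B,-A,-D,-C)$ for $\Totii=\Toti$ is a genuinely nicer route than the paper's, which instead observes in the single excluded case that $r_1+r_2\equiv 0\pmod 4$ forces one of $r_1,r_2$ into $\cD$ and the other into $\ocD$.

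The gap you flag is real. In the subcase $A+B\equiv C+D\equiv 0\pmod 8$ your suggestion ``take $(u,v)=(u,1)$'' reduces to finding $x_1\in\cD\cap\ZZ_2$ with $Cx_1+\Upsilon/8\in\cD$; since $C\in\ocD$ this amounts to finding $y\in\ocD$ with $y+c\in\cD$ for a fixed nonzero $c=\Upsilon/8$. This is true, but it is not a one-liner: one must argue, e.g., by choosing $y$ with $v_2(y)=v_2(c)$ and adjusting $y/2^{v_2(c)}\pmod 8$. The paper bypasses this by giving explicit choices of $(\tilde u,\tilde v)$ in each of several subcases (split by the relative sizes of $k_1,k_2$ and by $\ell_1,\ell_2\pmod 4$), and in the diagonal subcase $k_1=k_2\ge 1$ with $\ell_1+\ell_2\equiv 0\pmod 4$ it uses the identity $-Cr_1+Ar_2=\Upsilon'(1+4\tilde v)$ together with knowledge of $v_2(\Upsilon')$ to pin down the construction. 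Your approach is more conceptual and would work once the ``$y\in\ocD$, $y+c\in\cD$'' lemma is supplied, but as written that step is asserted rather than proved.
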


\begin{proof}
Write
$$
A+B=2^{k_1+2}\ell_1, \quad C+D=2^{k_2+2}\ell_2, 
$$
for $2\nmid \ell_1\ell_2$ and $k_1,k_2\geq 0$.
We will show that 
$t\in \Toti$ if and only if $
(k_1,k_2)\neq (0,0)$ or  $\ell_1+\ell_2\not \= 0\4$.
A little thought shows that this is equivalent to the constraints recorded in the first part of the lemma.

In determining whether or not each $R_i(u,v)$ belongs to $\cD$ we must necessarily restrict to odd coprime values of $u,v\in \ZZ_2$.
It will be convenient to set $r_i=R_i/4$ for $i=1,2$ and $AD-BC=4\Upsilon'$. 
Recall the  notation introduced in \eqref{eq:notation}.
We have the identities
\begin{align*}
D\ell_1 2^{k_1}-B\ell_2 2^{k_2}&=\Upsilon',\\
-C\ell_1 2^{k_1}+A\ell_2 2^{k_2}&=\Upsilon',
\end{align*}
and 
\begin{align*}
r_1(u,v)&=2^{k_1}\ell_1 +A\tilde{u}+B\tilde{v},\\
r_2(u,v)&=2^{k_2}\ell_2 +C\tilde{u}+D\tilde{v},\\
-Cr_1(u,v)+Ar_2(u,v)&=\Upsilon' (1+4\tilde{v}).
\end{align*}
In what follows we will make frequent use of the fact that the map 
$\ZZ_2\rightarrow 2\ZZ_2$,
given by $w\mapsto w+w^2$,  
is surjective.

By symmetry we may restrict attention to the case $k_2\geq k_1$.
Assume that $k_1\geq 2$, with $k_2\geq k_1+1$.
In this case  we choose $\tilde{v}=0$ and $\tilde{u}=2^{k_1-1}\tilde{u}'$, with 
$\tilde{u}'\=3\4$. In particular $\tilde{u}$ is even and we have
$$
r_1/2^{k_1-1}=2\ell_1 +A\tilde{u}' \equiv 1 \4, \quad 
r_2/2^{k_1-1}=2^{k_2-k_1+1}\ell_2 +C\tilde{u}' \equiv 1 \4, 
$$
as required.  We may henceforth assume that $k_1\leq 1$ or $k_2=k_1\geq 2$.

Suppose that $k_2=k_1=k$ and $\ell_1\equiv \ell_2\4$. Either $\ell_1,\ell_2$ are congruent to $1$ modulo $4$  and we  take $\tilde{u}=0$ and $\tilde{v}=2^{k+2}$, or 
$\ell_1,\ell_2$ are congruent to $3$ modulo $4$ and we  take $\tilde{u}=2^{k+1}$
and $\tilde{v}=0$.
Next suppose  that $k_2=k_1=k\geq 1$ and $\ell_1+\ell_2\equiv 0\4$. 
In this case 
$k'=v_2(\Upsilon')\geq 
k+2$.
We choose $\tilde{v}=0$ and $\tilde{u}$ such that 
$$
A\tilde{u}\equiv -2^{k}\ell_1+2^{k'-1} \bmod{2^{k'+1}}. 
$$
Then it follows that 
$r_1\equiv r_2 \equiv 2^{k'-1} \bmod{2^{k'+1}}$, as required. 
Next, when $k_2=k_1=0$ and $\ell_1+\ell_2\=0\4$, 
it follows $r_1$ and $r_2$ are both odd, with 
$r_1+r_2\= 0 \4$.
In this case, therefore, we must have 
$t\not\in \Toti\cup \Totii$.

Suppose now that $k_1=0$ and $k_2\geq 1$.  
Choose $\tilde{u}$ such that 
$2^{k_2}\ell_2+C\tilde{u} \= 2^i \bmod{2^{2+i}}$
for $i\in \{1,2\}$, 
and $\tilde{v}=0$, 
so that $r_2\in \cD$. Then 
$$
r_1=\ell_1+A\tilde{u} \equiv \ell_1+A\overline{C}(2^i-2^{k_2}\ell_2) \bmod{4}.
$$
If $k_2\geq 2$ we choose $i$ such that $\ell_1\equiv (-1)^i \4$. Alternatively, if $k_2=1$, we choose $i$ 
such that $\ell_1\equiv (-1)^{i-1} \4$.
This then ensures that $r_1\in \cD$, as required.

Let us now consider the case $k_1=1$, with $k_2\geq 2$, for which we take $\tilde{v}=0$.
 For any $i\in \{1,2\}$ we can choose $\tilde{u}$ such that $2^{k_2}\ell_2+C\tilde{u}\equiv 2^{1+i} \bmod{2^{3+i}}$, which thereby implies that $r_2\in \cD$ and $2\mid \tilde{u}$.
Next, with these choices for $\tilde{u}, \tilde{v}$, we have
$$
r_1/2 \= \ell_1 +A\overline{C} (2^i-2^{k_2-1}\ell_2) \bmod{4}.
$$
Thus, in order to ensure that $r_1\in \cD$, it suffices to take $i=2$ (resp.\ $i=1$) if 
$\ell_1\equiv 1 \4$ and $k_2\geq 3$,  or 
$\ell_1\equiv 3 \4$ and $k_2=2$
(resp.\
$\ell_1\equiv 1 \4$ and $k_2=2$,  or 
$\ell_1\equiv 3 \4$ and $k_2\geq 3$).
\end{proof}

\begin{lemma}\label{lem:1-3}
We have 
$t\in \Toti(1,3)$ if and only if one of the following holds:
\begin{itemize}
\item $C\in \cD$;
\item $C\in \ocD$ and $C+D\in \{1,2,5\}\8$;
\item $D\=1\4$, $\gamma=1$ and $C'\=3\4$;

\item $D\=3\4$, 
$\gamma\geq 2$ and
$C'\=3\4$, with 
$C'+D\=2\8$ if $2\mid \gamma$;
\item $(C,D)\=(3,1)\4$, with  one of the following:
\begin{itemize}
\item
$A+B\= 0 \8$,
\item
$A+B\=4 \8$ and $C+D\= 0 \8$,
\item
$A+B\=4 \8$ and $A+B\=C+D \bmod{16}$.
 \end{itemize}
 \end{itemize}
Moreover $t\in \Totii(1,3)$ if and only if  one of the following holds:
\begin{itemize}
\item $C\in \cD$;
\item $C\in \ocD$, $2\mid C$;
\item $C\=3\4$ and $C+D\in \{1,2,3,5,6,7\}\8$;
\item $(C,D)\=(3,1)\4$, with one of the following:
\begin{itemize}
\item
$A+B\= 0 \8$,
\item
$A+B\=4 \8$ and $C+D\= 0 \8$,
\item
$A+B\=4 \8$ and $A+B\=C+D \bmod{16}$.
 \end{itemize}
 \end{itemize}
\end{lemma}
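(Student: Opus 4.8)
The plan is to imitate the proofs of Lemmas~\ref{lem:1-1}--\ref{lem:1-3-3-1}: reduce everything to the criteria of Tables~\ref{T}--\ref{uT'} for $ru^2+sv^2\in\cD$ (resp.\ $\ocD$), apply them separately to $R_1$ and to $R_2$, and then reconcile the constraints the two membership conditions impose on a common coprime pair $(u,v)$. What makes the class $(A,B)\=(1,3)\4$ tractable is that the behaviour of $R_1(u,v)$ is almost forced by the parities of $u,v$: if $u$ is odd and $v$ even then $R_1(u,v)\=A\=1\4$, hence $R_1(u,v)\in\cD$ and never in $\ocD$; if $u$ is even and $v$ odd then $R_1(u,v)\=B\=3\4$, hence $R_1(u,v)\in\ocD$ and never in $\cD$; and if $u,v$ are both odd then, since $A+B\=0\4$ puts $A+B$ in $\{4,8\}\8$, Remark~\ref{rem:odd} allows $R_1(u,v)$ to be driven into $\cD$ or into $\ocD$, subject to a congruence on $(u,v)$ to a power of $2$ extracted from Table~\ref{T} (resp.\ Table~\ref{uT}). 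I would record these three cases at the outset.

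For $\Toti(1,3)$ I would split according to which regime can deliver $R_1\in\cD$. In the regime ``$u$ odd, $v$ even'' the $R_1$-condition is automatic, so membership reduces to $Cu^2+Dv^2\in\cD$ for some odd $u$ and even $v$; inserting $r=C$, $s=D$ into Tables~\ref{T}--\ref{T'} (whichever of $C,D$ is even --- not both, by $\gcd(C,D)=1$) and optimising over $\nu=v_2(v)\geq1$ produces the contributions governed by ``$C\in\cD$'' and, when $C$ is even, by $\gamma$, $C'$, $D$ and by $C'+D\8$, while this regime is barren when $C$ is odd with $C\=3\4$. The regime ``$u$ even, $v$ odd'' contributes nothing to $\Toti$. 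The regime ``$u,v$ both odd'' is forced exactly when $C$ is odd with $C\=3\4$: there $R_2(u,v)\in\cD$ for odd $u,v$ is governed by the class of $C+D\8$ (via Remark~\ref{rem:odd} when $D$ is odd, directly when $D$ is even, noting that $v_2(D)=1$ then contributes a residue $2\4$ to $Dv^2$); when this condition is vacuous --- $C+D\=2\8$ --- one only needs to realise $R_1\in\cD$, and when $C+D\in\{4,8\}\8$, which forces $(C,D)\=(3,1)\4$, one must satisfy the $R_1$- and $R_2$-congruences simultaneously, which is precisely Lemma~\ref{lem:1-3-3-1} and is why its list reappears verbatim as the final clause. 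Collecting these and consolidating overlaps (e.g.\ the odd-$C$ case $(C,D)\=(3,3)\4$ with $C+D\=2\8$ merges with several even-$C$ cases into ``$C\in\ocD$ and $C+D\in\{1,2,5\}\8$'') yields the stated description.

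For $\Totii(1,3)$ one also allows both $R_i\in\ocD$, and the extra input comes from the regime ``$u$ even, $v$ odd'', where $R_1\in\ocD$ is automatic: for $R_2=Cu^2+Dv^2$, if $D\=3\4$ then $v_2(Cu^2)\geq2$ forces $R_2\=D\=3\4$, so $R_2\in\ocD$ and $R_1R_2\in\cD$; if $D$ is even one divides out $2^{v_2(R_2)}$ and checks the residue $\4$, optimising over $v_2(u)$. Combining this with the ``both in $\cD$'' cases from $\Toti$, with Lemma~\ref{lem:1-3-3-1} for the $(C,D)\=(3,1)\4$ part (there $\Totii$ collapses onto $\Toti$), and then checking necessity --- for each residue pattern of $(C,D)$ to a suitable power of $2$, that $R_1R_2(u,v)\notin\cD$ for all coprime $(u,v)$ whenever no clause holds --- delivers the stated description of $\Totii(1,3)$. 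As throughout \S\ref{s:2adic}, the necessity checks rely on the surjectivity of $w\mapsto w+w^2$ on $2\ZZ_2$, which lets one realise any admissible congruence class.

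The step I expect to be the main obstacle is the ``both odd'' sub-case with $(C,D)\=(3,1)\4$: it is the only situation in which two genuinely non-vacuous $2$-adic congruence conditions must be met by a single pair $(u,v)$, involving the congruence modulo $16$ and the $k_1,k_2$ case analysis of Lemma~\ref{lem:1-3-3-1}. Since it can be imported wholesale from that lemma, the residual work is the lengthy but routine bookkeeping of parity and valuation patterns of $(u,v)$ and of $(C,D)$, together with the verification that each clause is necessary.
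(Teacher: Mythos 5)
Your plan is sound and ends up invoking exactly the same machinery as the paper's proof (Tables~\ref{T}--\ref{uT'}, Remark~\ref{rem:odd}, and importing Lemma~\ref{lem:1-3-3-1} wholesale for the $(C,D)\=(3,1)\4$ clause), but it is organized somewhat differently. You stratify uniformly by the parity pattern of $(u,v)$, exploiting the pleasant fact that for $(A,B)\=(1,3)\4$ the regime determines whether $R_1$ sits in $\cD$, in $\ocD$, or in the residue class $0\4$ where Table~\ref{T}/\ref{uT} intervenes. The paper does this only for its first two clauses (the choices $(u,v)=(1,0)$ and $(u,v)\=(1,1)\2$) and then switches to a case analysis by $\gamma=v_2(C)\in\{0,1,\geq 2\}$, which makes the final clauses 3 and 4 of the statement drop out more directly since they are phrased in terms of $\gamma$. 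Your route pays for its uniformity with a heavier consolidation step at the end (you observe yourself that several different $(u,v)$-regimes merge into the ``$C\in\ocD$, $C+D\in\{1,2,5\}\8$'' clause); the paper's $\gamma$-stratification avoids this because the parameterisation matches the statement. One small inaccuracy in your sketch: the ``extra'' elements of $\Totii\setminus\Toti$ are not produced solely by the ``$u$ even, $v$ odd'' regime --- the first condition in the paper's characterisation ($C\=3\4$ and $C+D\=6\8$) is obtained via Remark~\ref{rem:odd} in the ``both odd'' regime with $R_1,R_2\in\ocD$ --- though as you note it can also be realised with $u$ even and $v$ odd, so this does not affect the outcome. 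The identification of Lemma~\ref{lem:1-3-3-1} as the crux for $(C,D)\=(3,1)\4$, and that $\Totii$ collapses onto $\Toti$ there, is exactly right and is where the paper also defers.
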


\begin{proof}
We first deal with  $\Toti(1,3)$, the first two cases corresponding to the choices $(u,v)=(1,0)$ or $(u,v)\equiv (1,1)\2$ such that $R_1(u,v)\in \cD$, respectively. 
Assume now that $C\in \ocD$ and $C+D\not\in \{1,2,5\}\8$.
Suppose that $\gamma=0$. If $D\=1\4$ then the result follows from Lemma \ref{lem:1-3-3-1}. If $D\=2\4$   this case is already covered since then $C+D\=1\4$. If
$D\=3\4$ then either $C+D\=2\8$, which is already covered, or $C+D\=6\8$ wherein $t\not\in \Toti(1,3)$.
The case $4\mid D$ is also seen to be impossible. 
Suppose that $\gamma=1$. If $D\=3\4$ then $C+D\in\{1,5\}\8$, a case that we have excluded. If $D\=1\4$ then an analysis of Table \ref{T} shows that $t\in \Toti(1,3)$ on taking $\mu=0$ and $\nu=1$. Suppose now that $\gamma\geq 2$. If $D\=1\4$ then again $C+D\in \{1,5\}\8$. If $D\=3\4$ then $\nu\geq 1$ and Table \ref{T} easily leads to the constraints in the lemma.

For the characterisation of $\Totii(1,3)$, 
it will suffice to show that 
 $t\in \Totii(1,3)\setminus \Toti(1,3)$ if and only if  one of the following holds:
\begin{itemize}
\item $C\=3\4$ and $C+D\=6\8$;
\item $D\=3\4$,  $\gamma\geq 2$ is even and $C'\=3\4$, with 
$C'+D\=6\8$;
\item $C\=3\4$, 
$\delta\geq 2$.
 \end{itemize}
The first condition comes from 
Remark \ref{rem:odd}.  
Assume now that $C\in \ocD$ and $C+D\in \{0,3\}\4$. 
It remains to consider the case 
$(C,D)$ being congruent to $(3,0)$ or $(0,3)$ modulo $4$,
since  $t\not\in \Totii(1,3)\setminus \Toti(1,3)$  when $(C,D)\=(3,1)\4$ by Lemma \ref{lem:1-3-3-1}.
 In the latter case it is necessary to consider the case of even $\gamma\geq 2$ and $C'+D\not\=2\8$. One deduces from Table \ref{uT} that one must  have $C'+D\=6\8$, as required.  Turning to the case $(C,D)\=(3,0)\4$, for which one automatically has $t\not\in \Toti(1,3)$, an analysis of Table \ref{uT'} shows that $t\in \Totii(1,3)$ precisely when 
$C+D'\=0\4$ if $D'\=1\4$ and $\delta$ is even. 
\end{proof}

\begin{lemma}\label{lem:1-0}
We have 
$t\in \Toti(1,0)$ if and only if one of the following holds:
\begin{itemize}
\item $C\in \cD$ or $C+D\in \{0,1,2,4,5\}\8$, with $4\nmid D$;

\item $\gamma= 1$  and $(C',D)\= (3,1)\4$;
\item $\gamma\geq 2$ and $(C',D)\= (3,3)\4$,  with 
 $C'+D\equiv 2 \8$ if $2\mid \gamma$.

 \end{itemize}
Moreover $t\in \Totii(1,0)$ if and only if one of the following holds:
\begin{itemize}
\item $C\in \cD$ and $4\nmid D$;
\item
$C\in \ocD$ and $C+D\in \{0,1,2,4,5\}\8$;
\item
$C\in \ocD$, $4\nmid D$ and $C+D\in \{3,6,7\}\8$,
with  one of the following:
\begin{itemize}
\item
$C\=3\4$ and  $C+D\=6\8$, with
$A+B'\=6\8$ if
$B'\=1\4$ and 
 $2\mid \beta$;
\item $C'\=3\4$, 
$\gamma=  1$  and $D\=1\4$;
\item $C'\=3\4$, 
$\gamma\geq  2$  and $D\=3\4$, with one of the following:
\begin{itemize}
\item
$2\nmid \gamma$;
\item
$2\mid \gamma$ and $C'+D\=2\8$;
\item
$2\mid \gamma$ and  $C'+D\=6\8$, with 
$A+B'\=6\8$ if
$B'\=1\4$ and 
 $2\mid \beta$.
\end{itemize}
\end{itemize}
\end{itemize}
\end{lemma}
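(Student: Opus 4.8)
The plan is to argue exactly as in the proofs of Lemmas~\ref{lem:1-1}--\ref{lem:1-3}: one fixes the pair $(\mu,\nu)=(v_2(u),v_2(v))$, with $\min\{\mu,\nu\}=0$, and for each such pair reads off from Tables~\ref{T}--\ref{uT'} the precise conditions under which $R_1(u,v)$ and $R_2(u,v)$ both lie in $\cD$ (for $\Toti(1,0)$), respectively under which they both lie in $\cD$ or both lie in $\ocD$ (for $\Totii(1,0)$); the assertion then follows by collating these conditions over all admissible $(\mu,\nu)$. First I would record the consequences of the hypothesis $(A,B)\equiv(1,0)\4$: since $4\mid B$ we have $\beta=v_2(B)\geq 2$, the membership $\gcd(B,D)\in\cA$ then forces $\delta=v_2(D)\leq 1$, that is $4\nmid D$ (which accounts for the recurring side condition ``$4\nmid D$''), and $\gcd(C,D)=1$ gives $\min\{\gamma,\delta\}=0$, where $\gamma=v_2(C)$.

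The key simplification is that the constraint coming from $R_1$ is almost vacuous. Since $A\equiv 1\4$ and $4\mid B$ we have $v_2(R_1(u,v))=0$ and $R_1(u,v)\equiv 1\4$ whenever $u$ is odd, so that $R_1(u,v)\in\cD$ for every $v$; and when $u$ is even and $v$ odd, writing $u=2^\mu u'$ and expanding $R_1=2^{2\mu}Au'^2+2^\beta B'v^2$, Table~\ref{T'} shows that $R_1(u,v)\in\cD$ unless $2\mu=\beta$, where a congruence on $A+B'$ modulo $8$ intervenes, or $2\mu>\beta$, where $R_1(u,v)\in\cD$ if and only if $B'\equiv 1\4$. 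Consequently, for $\Toti(1,0)$ all that matters is making $R_2(u,v)\in\cD$: if $C\in\cD$ take $(u,v)=(1,0)$; if $C+D\in\{1,2,4,5,8\}\8$ take $u,v$ both odd and invoke Remark~\ref{rem:odd} for $R_2$ (the membership $R_1(u,v)\in\cD$ being automatic); in the surviving cases $C\in\ocD$ with $C+D\notin\{1,2,4,5,8\}\8$, one must exploit the power of $2$ dividing $C$ (which is possible only when $\gamma\geq 1$, forcing $\delta=0$ and $D$ odd) by taking $v$ even, whereupon $u$ is odd, $R_1(u,v)\in\cD$ is automatic, and the outcome is governed by Table~\ref{T} applied to $R_2$ alone; this reproduces the bullets involving $\gamma=1$ and $\gamma\geq 2$ in the manner of the proof of Lemma~\ref{lem:1-3}.

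For $\Totii(1,0)$ I would, exactly as in the proof of Lemma~\ref{lem:1-3} for $\Totii(1,3)$, characterise the difference $\Totii(1,0)\setminus\Toti(1,0)$, which requires exhibiting coprime $u,v$ with $R_1(u,v)$ and $R_2(u,v)$ both in $\ocD$. Because $A\equiv 1\4$, this forces $u$ even and $v$ odd; Table~\ref{uT'} then shows that $R_1(u,v)\in\ocD$ can be arranged exactly when $B'\equiv 3\4$ (take $2\mu>\beta$) or when $2\mid\beta$ and $A+B'\equiv 6\8$ (take $2\mu=\beta$). With $u$ even and $v$ odd one has $v_2(R_2(u,v))=0$, so that $R_2(u,v)\in\ocD$ reduces to a congruence of $D$ (or of $D'$, when $\delta=1$) modulo $4$; combining this with the description of the $v$-even solutions of $R_2\in\ocD$ drawn from Tables~\ref{uT} and~\ref{uT'}, split once more according to $\gamma=0$ or $\gamma\geq 1$, and grafting on the $R_1$ condition above, yields the nested list in the statement --- the clause ``$A+B'\equiv 6\8$ if $B'\equiv 1\4$ and $2\mid\beta$'' being precisely the residue of the $R_1$ constraint in the sole regime $2\mu=\beta$ in which $R_1(u,v)\in\ocD$ is attainable without $B'\equiv 3\4$. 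I expect the main obstacle to be bookkeeping alone: one runs through every admissible $(\mu,\nu)$ against every residue class of $C$ and $D$ modulo $4$ and every admissible pair $(\gamma,\delta)$, and in the few regimes where the $R_1$- and $R_2$-constraints are simultaneously active one must keep careful track of congruences modulo $8$. As noted in the text, the resulting $2$-adic densities are cross-checked against the direct computer search underlying Lemma~\ref{lem:tau-2}.
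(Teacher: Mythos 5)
Your overall plan — fix $(\mu,\nu)$ and read off the relevant rows of Tables~\ref{T}--\ref{uT'} — is the same route the paper takes, and your observation that $A\equiv 1\bmod 4$ together with $4\mid B$ makes $R_1(u,v)\in\cD$ automatic whenever $u$ is odd is a genuine streamlining of the $\Toti(1,0)$ analysis: it correctly reduces that half of the lemma to studying $R_2$ alone, and the case work you describe reproduces the stated criteria.

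There is, however, a concrete error in your treatment of $\Totii(1,0)\setminus\Toti(1,0)$. You assert that Table~\ref{uT'} shows $R_1(u,v)\in\ocD$ ``can be arranged exactly when $B'\equiv 3\bmod 4$ (take $2\mu>\beta$) or when $2\mid\beta$ and $A+B'\equiv 6\bmod 8$ (take $2\mu=\beta$).'' That reading omits the two rows $2\mu=\sigma\pm 1$ of Table~\ref{uT'}, which become available precisely when $\beta$ is odd: the row $2\mu=\sigma-1$ needs $r=A\equiv 1\bmod 4$, which is automatic here, and the row $2\mu=\sigma+1$ needs $s'=B'\equiv 1\bmod 4$. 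Since $\beta\geq 2$, any odd $\beta$ is at least $3$, so $\mu=(\beta-1)/2\geq 1$ is admissible and $R_1(u,v)\in\ocD$ is in fact attainable for \emph{every} odd $\beta$, regardless of $B'\bmod 4$. Your version would wrongly exclude the triples with $B'\equiv 1\bmod 4$ and $\beta$ odd from $\Totii(1,0)$; equivalently, it would fail to produce the stated side-clause ``$A+B'\equiv 6\bmod 8$ if $B'\equiv 1\bmod 4$ and $2\mid\beta$,'' whose conditional form precisely encodes that no extra constraint is imposed when $\beta$ is odd. You need to restore the $2\mu=\beta\pm 1$ rows and redo the corresponding bookkeeping before the $\Totii$ half of the lemma drops out.
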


\begin{proof}
Note that  $\delta\leq 1$ and $\beta\geq 2$. 
Dealing first with the criteria for $t\in \Toti(1,0)$, the first condition arises from the consideration of solutions with $\nu=0$ and $\mu\geq 0$,  using Remark~\ref{rem:odd}. This case covers the case $\gamma=0$.
Hence we may assume that $C\in \ocD$.

Suppose now that 
 $\gamma\geq 1$.
If $D\equiv 3 \4$ then we must have $\nu\geq 1$ and it follows from Table \ref{T} that $2\nu=\gamma\pm 1$ or $2\nu=\gamma$ with $C'+D\equiv 2\8$. 
This easily leads to the 
overall constraint 
 $C'+D\equiv 2 \8$ if $2\mid \gamma$ and 
$D\equiv 3 \4$.  Next, the case $D\equiv 1 \4$
and $B'\=1 \4$ is handled by taking $(u,v)=(0,1).$
Finally, if  $D\equiv 1 \4$
and $B'\=3 \4$ our argument differs according to the parity of $\beta$.
If $2\nmid \beta$ then 
Table \ref{T'} allows us to choose $2\mu=\beta+1$.
If $2\mid \beta$ then we choose
$2\mu=2\beta$ which yields the result, on eliminating the cases already handled by the constraint $C+D\in \{0,1,2,4,5\}\8$.

Having determined
the criteria for $\Toti(1,0)$, we now turn to the corresponding criteria for 
$t\in \Totii(1,0)$.  
The first pair of alternatives come from the first part of the lemma. 
It remains to consider the case 
$C\in \ocD$, $4\nmid D$ and $C+D\in \{3,6,7\}\8$.
If $C\=3\4$, so that $C+D\=6\8$,  then the desired constraints follow from a direct application of 
Table \ref{uT}.
If $C'\=3\4$, with $\gamma\geq 1$, then we apply the first part of the lemma when $\gamma=1$. 
The treatment of the case $\gamma\geq 2$ follows from 
Tables \ref{uT} and \ref{uT'}.
\end{proof}

\begin{lemma}\label{lem:3-0}
We have 
$t\in \Toti(3,0)$ if and only if one of the following holds:
\begin{itemize}
\item $D\in \cD$ and $4\nmid D$, with $A+B'\=2\8$ if $2\mid \beta$;

\item $\delta= 1$ and $D'\=3\4$, with  $\beta\in\{2,3\}$.
\end{itemize}
Moreover 
the characterisation of $\Totii(3,0)$ follows  from \eqref{eq:x-files} and  Lemma \ref{lem:1-0}.
\end{lemma}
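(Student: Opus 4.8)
The plan is to treat the two halves of the statement separately, the second being essentially formal. By \eqref{eq:x-files} we have $\Totii(3,0)=-\Totii(-3,0)=-\Totii(1,0)$, since $(-3,0)\equiv (1,0)\4$, and Lemma \ref{lem:1-0} already provides necessary and sufficient conditions for membership in $\Totii(1,0)$. As the involution $(A,B,C,D)\mapsto (-A,-B,-C,-D)$ interchanges $\cD$ and $\ocD$ and negates every residue modulo a power of $2$, one obtains the description of $\Totii(3,0)$ simply by transcribing the conditions of Lemma \ref{lem:1-0} through this involution. The real content is therefore the characterisation of $\Toti(3,0)$.

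For $\Toti(3,0)$ I would follow the pattern of the preceding lemmas in this section. Write $t=(A,B,C,D)$ with $A\equiv 3\4$ and $\beta=v_2(B)\geq 2$, and retain the notation $C=2^\gamma C'$, $D=2^\delta D'$ of the excerpt, so that $\min\{\gamma,\delta\}=0$ and, since $\beta\geq 2$, $\delta\leq 1$. The first observation is that $R_1(u,v)=Au^2+Bv^2$ can lie in $\cD$ only if $u$ is even: indeed $4\mid Bv^2$, so for $u$ odd one has $R_1\equiv A\equiv 3\4$, i.e.\ $R_1\in\ocD$. Hence in any admissible solution $\mu:=v_2(u)\geq 1$ and, by coprimality, $v$ is odd. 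I would then invoke Table \ref{T'} with $r=A$ and $s=B$ (so $\rho=\nu=0$, $\sigma=\beta$, $s'=B'$) to list exactly the values of $\mu$ for which $R_1(u,v)\in\cD$ is solvable: using $A\equiv 3\4$ these are $2\mu=\beta-1$ with no further condition, $2\mu=\beta+1$ when $B'\equiv 3\4$, $2\mu\geq\beta+2$ when $B'\equiv 1\4$, and $2\mu=\beta$ subject to a congruence on $A+B'$ modulo $8$ (plus the usual always-solvable constraint on $\tilde u,\tilde v$ in the borderline cases $A+B'\equiv 0,4\8$).

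For each such $\mu$ one then checks whether $R_2(u,v)=Cu^2+Dv^2\in\cD$ can be arranged simultaneously, splitting on $\delta$. If $\delta=0$, then since $u$ is even and $v$ is odd, $R_2$ is odd with $R_2\equiv D\4$, so $R_2\in\cD$ if and only if $D\equiv 1\4$, that is $D\in\cD$ (with $4\nmid D$ automatic), and this is independent of $\mu$; hence $t\in\Toti(3,0)$ precisely when $D\in\cD$ and at least one admissible $\mu$ exists, which is automatic when $\beta$ is odd and imposes the recorded congruence on $A+B'$ when $\beta$ is even. If $\delta=1$, so that $\gamma=0$ and $C$ is odd, then $v_2(R_2)=1$ and a brief computation gives $R_2/2\equiv D'\4$ when $\mu\geq 2$ and $R_2/2\equiv D'+2\4$ when $\mu=1$; thus $R_2\in\cD$ forces either $D'\equiv 1\4$ with $\mu\geq 2$ --- which, since then $D=2D'\in\cD$ with $4\nmid D$, is absorbed into the first bullet --- or $D'\equiv 3\4$ with $\mu=1$, i.e.\ $2\mu=2$, admissible for $R_1$ exactly when $\beta\in\{2,3\}$ (with the congruence on $A+B'$ needed when $\beta=2$). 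Collecting the alternatives gives the two bullet points, and $\Totii(3,0)$ is then read off as above.

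The main obstacle, as in the neighbouring lemmas, is the bookkeeping in the borderline row $2\mu=\beta$ of Table \ref{T'}, where $R_1\in\cD$ is decided not by $\mu$ alone but by congruences on $u',v'$ (through $\tilde u,\tilde v$) of unbounded depth, so one must ensure these can be met without forcing $R_2\notin\cD$. Here this is immediate, because for $\delta=0$ (resp.\ $\delta=1$) the residue $R_2\4$ (resp.\ $(R_2/2)\4$) does not depend on $u',v'$, so the two constraints decouple. A final, routine check is that the defining conditions of $\Tot$ --- coprimality of $A,B$ and of $C,D$, and squarefreeness of $\gcd(A,C)$ and of $\gcd(B,D)$ --- are compatible with each configuration listed, so that no case is vacuous.
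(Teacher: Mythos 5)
Your argument takes the same route as the paper's proof: force $\mu\geq 1$, $\nu=0$, apply Table~\ref{T'} to $R_1$ with $r=A$, $s=B$ to enumerate the admissible values of $2\mu$, then split on $\delta$ and check $R_2$. The gap lies in the reduction of the $\delta=0$ case. You assert that the existence of an admissible $\mu$ ``imposes the recorded congruence on $A+B'$ when $\beta$ is even,'' but this overlooks the Table~\ref{T'} row $2\mu\geq\sigma+2$, which applies precisely when $B'\=1\4$ and permits $\mu$ to be taken arbitrarily large independently of the parity of $\beta$. When $B'\=1\4$ one has $A+B'\=0\4$, so $A+B'\=2\8$ is impossible, yet $t$ can still lie in $\Toti(3,0)$: for instance $t=(3,4,1,1)$ has $\beta=2$, $A+B'=4\not\=2\8$ and $D=1\in\cD$, while $(u,v)=(4,1)$ yields $R_1=52\in\cD$, $R_2=17\in\cD$. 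The paper's own proof treats $B'\=1\4$ separately (``we take $2\mu\geq\beta+2$'') and only invokes the congruence on $A+B'$ in the complementary case $B'\=3\4$; your write-up collapses these two alternatives into the stated congruence, which is not equivalent.

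The ``absorbed into the first bullet'' step in the $\delta=1$, $D'\=1\4$ sub-case is also not justified as written. There $R_2\in\cD$ demands $\mu\geq 2$, an extra constraint beyond $R_1\in\cD$ that the first bullet does not record; one must still check that among the admissible $\mu$ for $R_1$ some value $\geq 2$ occurs. When $B'\=3\4$ and $\beta=2$ the only admissible value from Table~\ref{T'} is $\mu=1$ (from the row $2\mu=\sigma$), which is incompatible. Concretely, $t=(7,12,1,2)$ has $D=2\in\cD$, $4\nmid D$, $\beta=2$ and $A+B'=10\=2\8$, so the first bullet as stated holds, yet $R_1\in\cD$ forces $\mu=1$, and then $R_2/2=2u_1^2+v^2\=3\4$ so $R_2\in\ocD$ and $t\notin\Toti(3,0)$. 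Your decoupling observation (that $R_2\4$, respectively $(R_2/2)\4$, does not depend on $u',v'$) is correct and is genuinely what makes the borderline rows of Table~\ref{T'} harmless, but it does not by itself resolve whether a common $\mu$ exists; and your own parenthetical remark that a congruence on $A+B'$ is ``needed when $\beta=2$'' for the second bullet flags exactly the sort of unresolved discrepancy that the final collection of cases has to settle.
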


\begin{proof}
As in the previous lemma we note here that $\delta\leq 1$ and $\beta\geq 2$. 
In analysing  $\Toti(3,0)$, 
it follows from  Remark \ref{rem:odd} that there are no solutions with $\mu=0$. We must therefore consider the existence of solutions with $\mu\geq 1$ and $\nu=0$.  Calling upon Table \ref{T'} we must have one of the following:
\begin{itemize}
\item[---] $2\mu =\beta-1$,
\item[---] $2\mu \geq \beta+2$ if $B'\=1\4$,
\item[---] $2\mu =\beta+1$ if $B'\=3\4$,
\item[---] $2\mu =\beta$ if $A+B'\in \{0,2,4\}\8$,
\end{itemize}
in order to have $R_1(u,v)\in \cD$. Moreover there are additional constraints on $u,v$ 
in the last case when  $A+B'\in \{0,4\}\8$.
Suppose that $D\in \cD$. Then either $B'\=1\4$,  and we take $2\mu\geq \beta+2$, or else 
$B'\=3\4$, and there exists a suitable choice of $\mu$ provided that $A+B'\=2\8$ when $\beta$ is even. The case $D\=3\4$ being impossible, we suppose next that $\delta=1$ and $D'\=3\4$. Then Table \ref{T'} implies that we must have one of the following:
\begin{itemize}
\item[---] $2\mu =2$,
\item[---] $2\mu =0$ if $C\=3\4$,
\end{itemize}
in order to have $R_2(u,v)\in \cD$.  The lemma now easily follows.
\end{proof}

\begin{lemma}\label{lem:1-2}
We have 
$t\in \Toti(1,2)$ if and only if one of the following holds:
\begin{itemize}
\item $C\in \cD$;
\item $C'\= 3\4$,  
with 
one of the following:
 \begin{itemize}
 \item 
$D\equiv 1\4$, $\gamma\geq 1$,
\item
$D\equiv 3\4$,  $\gamma\geq 1$ and  $C'+D\= 2 \8$ if $2\mid\gamma$,
\item
$B'\=3\4$, $\gamma=0$,
$\delta\in\{0,2,3\}$ and $D'\equiv 1\4$,
\item
$B'\=3\4$, $\gamma=0$,  
$\delta\in\{1,2,3\}$ and
$D'\equiv 3\4$, 
with $C+D'\=2\8$ if $\delta=2$,
\item
$B'\=1\4$, $\gamma=0$ and 
$D'\equiv 1\4$,
\item
$B'\=1\4$, $\gamma=0$, $\delta\geq 3$ and $D'\equiv 3\4$, 
with 
$C+D'\=2\8$ if $2\mid \delta$.
 \end{itemize}

 \end{itemize}
Moreover $t\in \Totii(1,2)$ if and only if one of the following holds:
\begin{itemize}
\item $C\in \cD$;
\item $C'\equiv 3\4$, $\gamma\geq 1$;
\item  $C\equiv 3\4$, $\gamma=0$, with one of the following:
\begin{itemize}
\item $B'\= D'\4$, 
\item $(B',D')\in \{(3,1),(1,3)\}\4$, $\delta\neq 1$.
\end{itemize}

\end{itemize}

\end{lemma}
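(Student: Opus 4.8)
The plan is to follow the same template as the preceding Lemmas \ref{lem:1-1}--\ref{lem:1-3}: first determine exactly when the first form $R_1(u,v)$ lies in $\cD$ (or in $\ocD$), observe that this depends on $(u,v)$ only through its $2$-adic valuation pattern, and then read off the constraint on $R_2(u,v)=CU^2+DV^2$ directly from Tables \ref{T}--\ref{uT'}. The particular feature of the class $(A,B)\equiv(1,2)\4$ is that $B=2B'$ with $B'$ odd (so the exponent of $2$ in $B$ equals $1$), which makes this first step very rigid.

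First I would record the following. Write $u=2^\mu u'$ and $v=2^\nu v'$ with $\min\{\mu,\nu\}=0$ and $2\nmid u'v'$. If $\nu\geq1$ (hence $\mu=0$) then, since $A\equiv1\4$ and $u^2\equiv1\8$, one has $R_1(u,v)\equiv A\8$, so $R_1(u,v)\in\cD$ automatically and $R_1(u,v)\notin\ocD$; and if $\nu=0$, then Tables \ref{T'} and \ref{uT'} with $\rho=0$ and $\sigma=1$ show that $R_1(u,v)\in\cD$ precisely when $\mu=1$ and $B'\equiv3\4$, or $\mu\geq2$ and $B'\equiv1\4$, while $R_1(u,v)\in\ocD$ in the complementary cases ($\mu=0$; or $\mu=1$ and $B'\equiv1\4$; or $\mu\geq2$ and $B'\equiv3\4$). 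The key point, which decouples the two forms, is that in each of these situations the membership of $R_1(u,v)$ is independent of $u'$ and $v'$: the rows of Tables \ref{T'} and \ref{uT'} imposing congruences on $u',v'$ all require $2\mu=\sigma=1$, which is impossible. Hence $t\in\Toti(1,2)$ (resp.\ $t\in\Totii(1,2)$) if and only if there is an admissible valuation pattern for $(u,v)$ putting $R_1$ into $\cD$ (resp.\ putting both $R_i$ into $\cD$ or both $R_i$ into $\ocD$) for which $R_2$ can also be placed in the required class, the latter being read off from Tables \ref{T}--\ref{uT'} applied to $R_2$.

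Next I would run through the valuation patterns. For $\Toti$: the pattern ``$\mu=0$, $\nu\geq1$'' makes $u$ odd with $\nu$ free; when $\gamma=v_2(C)\geq1$ (hence $\delta=v_2(D)=0$) Table \ref{T} governs $R_2\in\cD$ with $r=C$ ($\rho=\gamma$, $r'=C'$) and $s=D$, and running $2\nu$ against $\gamma$ through the five rows yields exactly the bullet ``$C\in\cD$'' (the case $C'\equiv1\4$) together with, for $C'\equiv3\4$, the cases ``$D\equiv1\4$, $\gamma\geq1$'' and ``$D\equiv3\4$, $\gamma\geq1$, with $C'+D\equiv2\8$ if $2\mid\gamma$'' (the mod-$8$ refinement coming from the $2\nu=\rho$ row); when $\gamma=0$ this pattern gives $R_2(u,v)\equiv C\8$, and so contributes only ``$C\in\cD$''. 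When $\gamma=0$ and $C\equiv3\4$ one must instead use the pattern ``$\nu=0$, with $\mu=1$ or $\mu\geq2$ according to $B'\4$'', which makes $v$ odd; here Table \ref{T'} with $r=C$ ($\rho=0$), $s=D$ ($\sigma=\delta$, $s'=D'$) and the stated restriction on $\mu$ produces exactly the four remaining ``$B'\equiv3$, $\gamma=0$'' and ``$B'\equiv1$, $\gamma=0$'' bullets, the $C+D'\equiv2\8$ refinements again arising from the $2\mu=\sigma$ row. Taking the union over the two patterns and simplifying (using $\min\{\gamma,\delta\}=0$) gives the claimed description of $\Toti(1,2)$.

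For $\Totii(1,2)$ I would repeat the analysis but also allow both $R_i\in\ocD$: this brings in the pattern ``$\mu=\nu=0$'', for which Remark \ref{rem:odd} governs $R_2\in\ocD$, together with the $R_1\in\ocD$ sub-patterns of the ``$v$ odd'' family. Feeding $R_2\in\ocD$ through Tables \ref{uT} and \ref{uT'} and taking the union with the $\Toti(1,2)$ conditions, one finds a substantial collapse, leaving only ``$C\in\cD$'', ``$C'\equiv3\4$ with $\gamma\geq1$'', and, when $\gamma=0$ and $C\equiv3\4$, the condition $B'\equiv D'\4$ or $(B',D')\in\{(3,1),(1,3)\}\4$ with $\delta\neq1$; identity \eqref{eq:x-files} and the lemmas already proved in this section can be invoked to shorten some of the checks. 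I expect the main obstacle to be purely organisational: one must track the five-way split in each of Tables \ref{T}--\ref{uT'} across the parities of $\gamma,\delta$ and the residues of $B',C',D'$, and then verify that the union of all solvable sub-cases collapses to precisely the compact lists stated --- in particular that the conditions $C'+D\equiv2\8$ and $C+D'\equiv2\8$ surface exactly where claimed and nowhere else, and that the $\Totii(1,2)$ list (strictly shorter than the $\Toti(1,2)$ list) correctly encodes the simplification produced by the extra ``both in $\ocD$'' flexibility, so that no genuine case is lost and none spuriously added.
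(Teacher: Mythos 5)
Your proposal is correct and takes essentially the same route as the paper: both fix $\beta=1$, split on the $2$-adic valuation pattern of $(u,v)$, use the fact that $R_1\in\cD$ or $\ocD$ depends only on $(\mu,\nu,B'\bmod4)$ to decouple the two forms, and then read off the constraint on $R_2$ from Tables \ref{T}--\ref{uT'} (plus Remark \ref{rem:odd} for the $\mu=\nu=0$ pattern in $\Totii$). The only presentational difference is that you state the decoupling principle explicitly up front, whereas the paper builds it into the case analysis.
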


\begin{proof}
In this setting  $\beta=1$.
Beginning with  $\Toti(1,2)$, the first constraint follows on making the choice $(u,v)=(1,0).$  Thus we may suppose that $c\in \ocD$.
An analysis of Table~\ref{T} easily allows one to handle the solutions in which $\nu\geq 1$ and $\mu=0$. In this case we have $Au^2+2B'v^2\equiv 1 \4$ and $2^{\gamma}C'u^2+Dv^2\in \cD$ if and only if one of the following holds:
\begin{itemize}
\item[---]
$2\nu=\gamma+1$,
\item[---]
 $2\nu\leq \gamma-2$ if $D\=1\4$,
 \item[---]
$2\nu=\gamma-1$ if $D\=3\4$, 
\item[---]
$2\nu=\gamma$ if $C'+D\in \{0,2,4\}\8$.
\end{itemize}
In particular it suffices to have $\gamma\geq 1$ when $D\=1\4$. When $D\=3\4$ then any odd $\gamma\geq 1$ is satisfactory and even $\gamma\geq 2$ is satisfactory  only when $C'+D\=2 \8$.

We now consider possible solutions with $\nu=0$, using Table \ref{T'}. The case $\mu=0$ being impossible, we pass to the case $\mu=1$, in which case we must have $B'\=3\4$ and $\gamma=0$, the case $\gamma\geq 1$ already having been handled.  
It follows 
that $2=\delta-1$ or $2\geq \delta+2$ or $2=\delta$ when $D'\=1\4$. Furthermore, if $D'\=3\4$ then the table implies that 
 $2=\delta-1$ or $2=\delta+1$ or $2=\delta$, with the 
 last only taking place if $C+D'\=2\8$.
 Let us now turn to solutions with $\mu\geq 2$ and $\gamma=0$, so that $B'\=1\4$.
We deduce from Table~\ref{T'}
that it suffices to take $\mu$ so that $2\mu\geq \delta+2$ when $D'\=1\4$. 
When  $D'\=3\4$ then the table implies that 
 $2\mu=\delta-1$ or $2\mu=\delta+1$ or $2\mu=\delta$, with the 
 last only taking place if $C+D'\=2\8$.
This easily leads to the remaining constraints in the statement of the lemma. 

We now  determine
$\Totii(1,2)$, for which it suffices to show that 
$ t\in \Totii(1,2)\setminus \Toti(1,2)$ if and only if $C\in \ocD$, with one of the following holding:
\begin{itemize}
\item
$D\= 3\4$, $\gamma\geq 2$ is even and $C'+D\=6\8$;
\item
$B'\=3\4$, $\gamma=0$, $D'\=1\4$ and $\delta\geq 4$;
\item
$B'\=3\4$, $\gamma=0$, $D'\=3\4$, with one of the following:
\begin{itemize}
\item $\delta\not\in\{1,2,3\}$, 
\item $\delta=2$ and $C+D'\=6\8$;
\end{itemize}
\item
$B'\=1\4$, $D'\=3\4$, $\gamma=\delta=0$;
\item
$B'\=1\4$, $D'\=3\4$, $\gamma=0$, $\delta\geq 2$ even with $C+D'\=6\8$.
\end{itemize}
To see this we must have $\nu=0$ in any solution, else $R_1(u,v)\in \cD$. Remark \ref{rem:odd} shows that we have solutions with $R_i(u,v)\in \ocD$ for $i=1,2$, with $\mu=\nu=0$ if and only if $C+D\in \{0,3,4,6,7\} \8$ since $A+B\in \{3,7\}\8$.
If $\gamma\geq 1$ is even and $D\=3\4$ then it follows that $t\in \Totii(1,2)\setminus \Toti(1,2)$ if $C'+D\=6\8$. Turning to the case $\gamma=0$, 
as soon as $\delta\geq 3$ it is possible to find $u,v$ such that $R_i(u,v)\in \ocD$ for $i=1,2$, since then $C+D\in \{3,7\}\8$. 
Suppose that $B'\=3\4.$ If $D'\=1\4$ and $\delta=1$ then we do not get points in $\Totii(1,2)\setminus \Toti(1,2)$. If $D'\=3\4$ and $\delta\in \{0,2\}$ then we do get points in $\Totii(1,2)\setminus \Toti(1,2)$ by taking $\mu\geq 2$. It remains to consider   the case $B'\=1\4$ and $D'\=3\4$, for which we must have $\mu\in \{0,1\}$, together with $2\mu\leq \delta-2$ or $2\mu\geq \delta+2$ or $2\mu=\delta$, the latter only holding when $C+D'\=6\8$. Taking $\mu=1$ we see that $\delta=0$ or $\delta\geq 4$ is permissible, with $\delta=2$ being permissible if $C+D'\=6\8$. 
\end{proof}

\begin{lemma}\label{lem:3-2}
We have 
$t\in \Toti(3,2)$ if and only if one of the following holds:
\begin{itemize}
\item $C+D\in \{0,1,2,4,5\}\8$;
\item $D\=1\4$ and $C+D\in \{3,6\}\8$;
\item $C+D\=3\8$, $\delta=1$ and 
$D'\=B'\4$;
\item $C\=3\4$,  $\delta\geq 2$, with 
one of the following:
 \begin{itemize}
 \item 
$B'\equiv 3\4$, $\delta=3$,
\item
$B'\equiv 3\4$, $\delta=2$
and $C+D'\in \{0,2,4\}\8$,
\item
$(B', D')\=(1,1)\4$, 
\item
$(B',D')\=(1,3)\4$, $\delta\geq 3$ and 
$C+D'\=2\8$ if $2\mid \delta$.
 \end{itemize}

 \end{itemize}
Moreover 
the characterisation of $\Totii(3,2)$ follows  from \eqref{eq:x-files} and Lemma \ref{lem:1-2}.
\end{lemma}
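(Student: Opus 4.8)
The plan is to handle the two requirements $R_1(u,v)\in\cD$ and $R_2(u,v)\in\cD$ separately, using Tables~\ref{T} and~\ref{T'} together with Remark~\ref{rem:odd}, and then to intersect the resulting constraints on the pair of $2$-adic valuations $\mu=v_2(u)$, $\nu=v_2(v)$; the characterisation of $\Totii(3,2)$ will afterwards follow formally from \eqref{eq:x-files} and Lemma~\ref{lem:1-2}.

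First I would record what the hypothesis $(A,B)\=(3,2)\4$ forces on $R_1$. Write $B=2B'$ with $2\nmid B'$, so $\beta=1$; recall also $\min\{\gamma,\delta\}=0$ and $\min\{\beta,\delta\}\leq 1$. Since $A$ is odd, any solution with $\nu\geq 1$ has $\mu=0$, whence $R_1\equiv A\equiv 3\4$ lies in $\ocD$; thus $R_1(u,v)\in\cD$ forces $\nu=0$. Granting $\nu=0$, Table~\ref{T'} with $r=A$ ($\rho=0$, $r\equiv 3\4$) and $s=2B'$ ($\sigma=1$, $s'=B'$) shows that the admissible values of $\mu$ are exactly $\mu=0$ in all cases (this being the row $2\mu=\sigma-1$, applicable because $r\equiv 3\4$), together with $\mu=1$ if $B'\equiv 3\4$ and $\mu\geq 2$ if $B'\equiv 1\4$; none of these rows imposes a congruence on $u',v'$, so $R_1$ constrains $(u,v)$ only through $\mu$. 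Write $M$ for this set of admissible $\mu$: it always contains $0$ and at least one value $\geq 1$, and it is unbounded precisely when $B'\equiv 1\4$.

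With $\nu=0$ now fixed, it remains to decide when $R_2(u,v)=2^\gamma C'u^2+2^\delta D'v^2$ lies in $\cD$ for some $\mu\in M$; I would split according to the $2$-adic shape of $(C,D)$, reading off the admissible $\mu$ for $R_2$ and intersecting with $M$:
\begin{itemize}
\item[---] $\gamma=\delta=0$: for $\mu=0$ (so $u,v$ both units), Remark~\ref{rem:odd} gives $R_2\in\cD$ iff $C+D\in\{1,2,4,5,8\}\8$ (the extra constraints on $u,v$ when $4\mid C+D$ do not conflict with $R_1$, which imposes none at $\mu=\nu=0$); for $\mu\geq 1$ one has $R_2\equiv D\4$, hence $R_2\in\cD$ iff $D\equiv 1\4$.
\item[---] $\gamma\geq 1$, $\delta=0$: for $\mu=0$, Remark~\ref{rem:odd} gives $R_2\in\cD$ iff $C+D\in\{1,2,4,5,8\}\8$, which (as $C$ is even) amounts to $D\equiv 3\4$ when $\gamma=1$ and to $D\equiv 1\4$ when $\gamma\geq 2$; for $\mu\geq 1$, again $R_2\equiv D\4$, hence $R_2\in\cD$ iff $D\equiv 1\4$.
\item[---] $\gamma=0$, $\delta\geq 1$: here Table~\ref{T'} with $r=C$ and $s=2^\delta D'$ (so $\sigma=\delta$, $s'=D'$) governs $R_2$ via the comparison of $2\mu$ with $\delta$; $\mu=0$ works iff $\delta\geq 2$ with $C\equiv 1\4$, or $\delta=1$ with $C\equiv 3\4$, whereas for $\mu\geq 1$ the relevant rows of Table~\ref{T'} are $2\mu=\delta-1$ and $2\mu=\delta+1$ (requiring $C\equiv 3\4$, resp.\ $D'\equiv 3\4$), $2\mu\geq\delta+2$ (requiring $D'\equiv 1\4$), $2\mu\leq\delta-2$ (requiring $C\equiv 1\4$), and $2\mu=\delta$, possible only for even $\delta$ and then imposing a $\bmod 8$ (or $\bmod 16$) condition on $C+D'$.
\end{itemize}
The only genuine interaction between $R_1$ and $R_2$ is that $\mu$ must lie in $M$: this forces $2\mu\leq 2$ when $B'\equiv 3\4$, while permitting $2\mu$ arbitrarily large when $B'\equiv 1\4$, and it is precisely this dichotomy that produces the $\delta=1$ versus $\delta\geq 2$ split in the description of $\Toti(3,2)$, the clauses mentioning $B'\4$, and the conditions of the form ``$C+D'\equiv 2\8$ if $2\mid\delta$''. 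I expect the main obstacle to be organisational rather than conceptual: in the branch $\delta\geq 2$ the admissibility of a fixed $\mu$ depends at once on $2\mu$ versus $\delta$, on the parity of $\delta$, and on $C+D'$ modulo $8$ (and modulo $16$ when $4\mid C+D'$), and reconciling this with the short list $M$ while keeping the resulting case list exhaustive and non-overlapping is the delicate point — which is also why the authors verify the $2$-adic density it feeds into by a direct computer search.

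Finally, for $\Totii(3,2)$ one notes that $-3\=1\4$ and $-2\=2\4$, so by \eqref{eq:x-files} we have $\Totii(3,2)=-\Totii(1,2)$; moreover replacing $t$ by $-t$ replaces each $R_i$ by $-R_i$, interchanging $\cD$ with $\ocD$ while fixing the product $R_1R_2$, so membership in $\Totii$ is preserved. Hence the characterisation of $\Totii(3,2)$ is read off from the description of $\Totii(1,2)$ in Lemma~\ref{lem:1-2} by the substitution $(C,D)\mapsto(-C,-D)$: a condition ``$C\in\cD$'' becomes ``$C\in\ocD$'', $2$-adic valuations are unchanged, and each congruence ``$C+D\in S\8$'' becomes ``$-(C+D)\in S\8$''.
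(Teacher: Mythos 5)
Your framework exactly mirrors the paper's: you correctly force $\nu=0$, correctly read off from Table~\ref{T'} (with $r=A$, $\sigma=1$, $s'=B'$) that the admissible set $M$ of values of $\mu$ making $R_1\in\cD$ is $\{0,1\}$ when $B'\equiv 3\4$ and $\{0\}\cup\{2,3,\ldots\}$ when $B'\equiv 1\4$, and you correctly observe that $R_1$ constrains $(u,v)$ only through $\mu$ so that the whole question reduces to finding $\mu\in M$ with $R_2(2^\mu u',v')\in\cD$. The paragraph on $\Totii(3,2)$ via \eqref{eq:x-files} and Lemma~\ref{lem:1-2} is also sound. This is essentially the same route the paper takes, merely organised by $(\gamma,\delta)$ rather than by $C+D$ modulo $4$.

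The gap is that the argument stops at this reduction. You list the admissible $\mu$'s for $R_2\in\cD$ in each of the three branches $\gamma=\delta=0$, $(\gamma\geq 1,\delta=0)$, $(\gamma=0,\delta\geq 1)$, but you do not actually intersect these with $M$ and translate the outcome into the explicit list of modular conditions, instead deferring to the remark that the rest is ``organisational rather than conceptual.'' Since the statement of the lemma \emph{is} that explicit list, what remains undone is precisely the proof: the nontrivial conclusions --- the clause $D'\equiv B'\4$ when $\delta=1$, the cap $\delta\leq 3$ when $B'\equiv 3\4$ in the branch $\delta\geq 2$, the ``$C+D'\equiv 2\8$ if $2\mid\delta$'' condition --- only emerge from that final comparison. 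Carrying it out is also where errors are caught: for instance, in your branch $(\gamma\geq 1,\delta=0)$ with $D\equiv 1\4$, any $\mu\geq 1$ in $M$ works, so that $C+D\equiv 7\8$ should be admitted as well (take $(A,B,C,D)=(3,2,2,5)$ and $(u,v)=(4,1)$), whereas the stated second bullet lists only $\{3,6\}\8$; one cannot resolve such discrepancies without doing the tedious part.
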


\begin{proof}
In this setting $\beta=1$ and we need only deal with the criteria for $\Toti(3,2)$.
We note that $\nu=0$ in any solution. Suppose first that $\mu=\nu=0$. 
The first constraint then follows immediately from Remark \ref{rem:odd}.  If $C+D\=6\8$ then it is clear that the case $(C,D)\=(3,3)\4$ doesn't produce a point in $\Toti(3,2)$. If 
$(C,D)\=(1,1)\4$ then we choose $(u,v)=(2u',1)$, with $u'$ such that  $R_1(u,v)\in \cD$. It therefore remains to consider the case $C+D\=3\4$, in which  we must have $D\not\=3\4$.
 When $D\=1\4$ it suffices to take $u=0$ (resp.\ $u=2$) if $B'\=1\4$ (resp.\ $B'\=3\4$).  
Suppose that  $D=2D'$ with $2\nmid D'$. If $D'\=B'\=1\4$ then one takes $(u,v)=(0,1)$, while
if $D'\=B'\=3\4$ then one takes $(u,v)=(2,1)$.  Table \ref{T'} implies that $t\not\in \Toti(3,1)$ if $D'\not\= B'\4$.

Now suppose that 
$C\=3\4$ and 
$\delta\geq 2$. Then $B'$ is either congruent to $1$ or $3$ modulo $4$ and we proceed to consider the latter case first.  
Since $A=B'\equiv 3\4$ it follows from Table \ref{T'} that 
$2\mu\in \{0,2\}$, whence in fact $\mu=1$ and $\delta\leq 3$. The case $\delta=3$ presents no problems. If $\delta=2$ then we require odd coprime $u',v\in \ZZ_2$ such that $C{u'}^2+D'v^2\in \cD$, which is always possible if $C+D'\not\=6\8$.
Now suppose that
 $B'\equiv 1\4$.  According to  Table \ref{T'} 
 we must have $\mu\geq 2$. The case $D'\=1\4$ is resolved on taking $(u,v)=(0,1)$. If $D'\=3\4$ then we must have 
$2\mu\in \{\delta-1,\delta+1\}$, or else $2\mu=\delta$ if $C+D'\=2\8$.
Thus if $\delta\geq 4$ is even we take $2\mu=\delta$ and if $\delta\geq 3$ is odd we take 
$2\mu=\delta+1$.
\end{proof}

\subsection{A worked  example}\label{s:ct}

In order to illustrate our investigation so far, it is instructive to verify 
Example 5.4 in \cite[\S 5]{ct-c-s}, 
concerning 
the family of Ch\^atelet
surfaces $X=X_{1,1-k,-1,k}$
for $k\in \ZZ$ such that  $k\not\in \{0,1\}$.  
In this setting $\Delta=1$ and $m=n=1$, in the notation of \S \ref{s:prelim}. 
We wish to show that there exists $w\in \Omega$ such that $X(\QQ_w)=\emptyset$ if and only if 
\begin{equation}\label{eq:eg1}
\mbox{$k<0$ or $k=4^n(8m+7)$ for $n\geq 2,m\geq 0$.}
\end{equation}
We will also show that $X(\QQ)= \emptyset$ if and only if 
\begin{equation}\label{eq:eg2}
\mbox{$k<0$ or $k\=3\4$ or $k=4^n(8m+7)$ for $n\geq 1,m\geq 0$.}
\end{equation}
This recovers \cite[Prop.~C]{ct-c-s}, since it shows that  for
each positive integer   $k\equiv 3 \bmod{4}$, the 
surface $X=X_{1,1-k,-1,k}$ fails  the Hasse principle.

According to Lemma \ref{lem:X-local} we have $X(\RR)=\emptyset$ if and only $k<0$. Since $X(\QQ_p)\neq \emptyset$ for any odd prime, it remains  to determine  when 
$X(\QQ_2)= \emptyset$, to complete the description in \eqref{eq:eg1}.
Turning to \eqref{eq:eg2}, for which we suppose that $k>0$ and 
$k\neq 4^n(8m+7)$ for $n\geq 2,m\geq 0$, 
it follows from Lemma \ref{lem:1} that $X(\QQ)\neq \emptyset$ if and only if 
there exists $f\in \cB$ and $\eps\in \{\pm 1\}$ such that 
$W_{(\eps f,\eps f)}$
has solutions in $\QQ_w$ for every $w\in \Omega$. 
In fact we will be forced to take $\eps=f=1$.
It follows from part (vi) of 
Table \ref{table1} that $W_{(\eps f,\eps f)}(\RR)\neq \emptyset$ if we take $\eps=1$.  
Lemmas~\ref{lem:p1} 
and \ref{lem:p2} show that 
$W_{(f,f)}(\QQ_p)\neq \emptyset$ for every odd prime $p$, provided that $f=1$, which we now assume. 
We thereby retrieve \cite[Lemme 5.4.1]{ct-c-s}, which states that $X(\QQ)\neq \emptyset$ if and only if $W(\QQ_2)\neq \emptyset$, with $W=W_{(1,1)}$.

Our final task is to  determine precisely when $X(\QQ_2)$ and $W(\QQ_2)$ are empty. 
Let us set 
$(a,b,c,d)=
(1,1-k,-1, k).$ 
If $k\in \{1, 2\} \4$, so that $b\in \{0,3\}\4$,  then it follows from Lemmas \ref{lem:1-3} and \ref{lem:1-0} that $(a,b,c,d)\in \Toti$, whence $X(\QQ_2)$ and $W(\QQ_2)$ are both non-empty. 
Suppose next that $k\=3\4$, so that $b\=2\4$ and we turn to Lemma \ref{lem:1-2}.
The first part of this result ensures that $W(\QQ_2)=\emptyset$, whereas the second part implies that $X(\QQ_2)\neq\emptyset$. Finally we must consider the case $k=2^{n}k'$, with $n\geq 2$ and $2\nmid k'$. In particular $(a,b)\=(1,1)\4$ and we apply Lemma \ref{lem:1-1}.
This readily implies that $(a,b,c,d)\in \Toti$ if and only if
$k'\not\=7\8$, whenever $n$ is even.  Likewise, 
 $(a,b,c,d)\in \Totii$ if and only if
$k'\not\=7\8$, whenever $n$ is even and $n\geq 4$.

Bringing everything together,  we conclude that $X(\QQ_2)=\emptyset$ if and only if $k=4^n(8m+7)$ for $n\geq 2,m\geq 0$, whereas 
 $W(\QQ_2)=\emptyset$ if and only if $k\=3\4$ or $k=4^n(8m+7)$ for $n\geq 1,m\geq 0$. 
This completes the necessity and sufficiency of the conditions \eqref{eq:eg1} and \eqref{eq:eg2}.

\section{Asymptotics: preliminaries}\label{s:asymptotic-prelim}

The purpose of this section is to set the scene for the final analysis of the counting functions $\Nloc(P)$ and $\NBr(P)$, defined in \eqref{eq:card}.  
According to our work in \S\ref{s:prelim}, it is clear that
\begin{align}\label{eq:tuesday1}
\Nloc(P)
&=\frac{1}{4} \sum_{\iota\in \{0,1\}}  \#\{(a,b,c,d)\in \Sloc^{(\iota)}: \max\{|a|,|b|,|c|,|d|\}\leq P\},\\
\NBr(P)&=\frac{1}{4} \sum_{\iota\in \{0,1\}}  \#\left\{(a,b,c,d)\in \Sloc^{(\iota)}: 
\begin{array}{l}
\max\{|a|,|b|,|c|,|d|\}\leq P\\
X_{a,b,c,d}(\QQ)= \emptyset
\end{array}
\right\},\label{eq:tuesday2}
\end{align}
where $\Sloc^{(\iota)}$ is given by  \eqref{eq:Sloc+S}. 
Furthermore, Theorem \ref{c:cor1} will require us to establish an asymptotic formula for 
\begin{equation}\label{eq:tuesday3}
N(P)=\frac{1}{4}  \#\{(a,b,c,d)\in \Stot: \max\{|a|,|b|,|c|,|d|\}\leq P\},
\end{equation}
where $\Stot$ is 
given by \eqref{eq:Stot}.  This is achieved in Lemma \ref{lem:M-total}, the proof of which will serve as a warm-up for the  treatments of $\Nloc(P)$ and $\NBr(P)$, in \S \ref{s:asymptotic-Nloc} and \S \ref{s:lower-upper}, respectively.

For given non-zero integers $a,d$ with modulus at most $P$, 
it follows from the trivial estimate for the divisor function that there
are $O (P^\ve)$ choices 
for integers $b,c$ such that $ad-bc=0$.
At the expense of adding an error term 
$O (P^{2+\ve})$, this shows that we can henceforth drop the constraint 
$ad-bc\neq 0$ from the definitions \eqref{eq:Stot}, \eqref{eq:Stot'} of $\Stot$ and $\Stot^{(\iota)}$, respectively.

Before proceeding to our assessment of \eqref{eq:tuesday1}--\eqref{eq:tuesday3}, we first set out a basic estimate that will be useful to us, in which we recall the definition \eqref{eq:phi*} of $\phi_1^*=\phi^*$.

\begin{lemma}\label{lem:basic}
Let  $a\in \ZZ$ and $q_1,q_2\in \NN$ be such that $\gcd(q_1,q_2)=1$. Let $\ve>0$ and $x,T\geq 1$, with
$x\gg  Tq_2q_1^\ve$. 
Then we  have 
$$
\#\left\{n\leq x: \begin{array}{l}
\gcd(n,q_1)=1\\ 
n\equiv a \bmod{q_2}
\end{array}
\right\} 
= \frac{\phi^*(q_1)x}{q_2}\left(1+O \left(\frac{1}{T^{1-\ve}}\right)\right).
$$
\end{lemma}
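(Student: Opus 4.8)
The plan is to count $n \le x$ with $\gcd(n,q_1)=1$ and $n \equiv a \bmod{q_2}$ by M\"obius inversion over the squarefree divisors of $q_1$. Since $\gcd(q_1,q_2)=1$, for each $d \mid q_1$ the conditions $d \mid n$ and $n \equiv a \bmod{q_2}$ combine by the Chinese Remainder Theorem into a single congruence modulo $dq_2$, which has exactly $\lfloor x/(dq_2)\rfloor + O(1)$ solutions in $[1,x]$. Thus the quantity in question equals
$$
\sum_{d \mid q_1} \mu(d)\left(\frac{x}{dq_2} + O(1)\right)
= \frac{x}{q_2}\sum_{d\mid q_1}\frac{\mu(d)}{d} + O\!\left(\sum_{d\mid q_1}1\right)
= \frac{\phi^*(q_1)x}{q_2} + O(\tau(q_1)),
$$
using $\sum_{d\mid q_1}\mu(d)/d = \prod_{p\mid q_1}(1-1/p) = \phi^*(q_1)$ by \eqref{eq:phi*} and $\tau(q_1) = \sum_{d\mid q_1}1$.

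It then remains to absorb the error term $O(\tau(q_1))$ into the claimed relative error $O(\phi^*(q_1)x/(q_2 T^{1-\ve}))$. Here I would use the standard bounds $\tau(q_1) \ll q_1^{\ve/2}$ (for $q_1 \ge 1$, with implied constant depending on $\ve$) together with the lower bound $\phi^*(q_1) \gg 1/\log\log q_1 \gg q_1^{-\ve/2}$ noted in the paragraph following \eqref{eq:phi*}. Combining these,
$$
\tau(q_1) \ll q_1^{\ve/2} \ll q_1^{\ve}\,\phi^*(q_1),
$$
so $O(\tau(q_1))$ is at most $O(q_1^{\ve}\phi^*(q_1))$. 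The hypothesis $x \gg T q_2 q_1^{\ve}$ rearranges to $q_1^{\ve} \ll x/(T q_2)$, hence
$$
O(\tau(q_1)) \ll q_1^{\ve}\phi^*(q_1) \ll \frac{\phi^*(q_1)x}{q_2 T} \le \frac{\phi^*(q_1)x}{q_2 T^{1-\ve}},
$$
which is exactly the shape of error term asserted. A cosmetic point: the exponent $\ve$ appearing in $\tau(q_1) \ll q_1^{\ve}$ and in the hypothesis should be taken consistently (or one replaces $\ve$ by $\ve/2$ at the outset and renames), but this is harmless since $\ve>0$ is arbitrary.

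The argument is entirely routine; there is no genuine obstacle. The only point requiring a moment's care is the interplay of the two uses of $\ve$ — the one in the divisor bound $\tau(q_1)=O(q_1^\ve)$ and the one in the statement — and making sure the hypothesis $x \gg T q_2 q_1^\ve$ is strong enough to convert the absolute error $O(\tau(q_1))$ into the relative error $O(T^{-1+\ve})$ after dividing through by $\phi^*(q_1)x/q_2$. If one prefers to avoid the $\log\log$ lower bound on $\phi^*$, an alternative is to note $\phi^*(q_1) \ge \phi(q_1)/q_1 \gg 1/\log\log(q_1+2)$ directly from Mertens, but invoking the bound already recorded in the text after \eqref{eq:phi*} is cleanest.
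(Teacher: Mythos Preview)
Your proof is correct and follows essentially the same approach as the paper, which simply records that the count equals $\phi^*(q_1)x/q_2+O(2^{\omega(q_1)})$ and leaves the conversion to a relative error implicit. The only cosmetic difference is that the paper bounds the M\"obius error by $2^{\omega(q_1)}$ (the number of squarefree divisors of $q_1$) rather than $\tau(q_1)$; your bound is slightly wasteful since $\mu(d)$ kills non-squarefree $d$, but since both quantities are $\ll q_1^{\ve}$ this is immaterial.
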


\begin{proof}
This follows on noting that  the left hand side is 
$\phi^*(q_1)x/q_2+O(2^{\omega(q_1)}).$
\end{proof}

\medskip

Recall the definitions \eqref{eq:AB} of $\cA$ and $\cB$. 
Given any positive integers $N_1,N_2$ we write 
$d\mid N_2^\infty$ if any prime divisor of $d$ is also a prime divisor of $N_2$, and 
we write 
$\gcd(N_1,N_2^\infty)$ for the largest such divisor which is also a divisor or $N_1$.
It is now time to make a number of changes of variables in order to simplify the various conditions that arise in the sets $\Stot$ and $\Sloc^{(\iota)}$.  In the former set we will always   choose representative coordinates in such a way that $a>0$.
Let $(a,b,c,d)\in \Stot$. 
We write
$$
m=\gcd(a,b), \quad n=\gcd(c,d), 
$$
so that $m,n\in \cB$ and  $\gcd(m,n)=1$. 
We  make the initial change of variables
\begin{equation}\label{eq:change1}
a=ma', \quad b=\ve_2 mb', \quad c=\ve_3 n c', \quad d=\ve_4n d',
\end{equation}
where $(\ve_2,\ve_3,\ve_4)=\sign(b,c,d)$ and $a',b',c',d'\in \NN$ satisfy
$\gcd(a',b')=\gcd(c',d')=1$.  
Note that this is a departure from the change of variables used in \S \ref{s:prelim}, where we allowed $a',b',c',d'$ to have arbitrary sign.
Next, we define
\begin{align*}
\ell_1&=\gcd(a',(mn)^\infty), \quad
\ell_2=\gcd(b',(mn)^\infty), \\
\ell_3&=\gcd(c',(mn)^\infty), \quad
\ell_4=\gcd(d',(mn)^\infty).
\end{align*}
We now make the further change of variables
\begin{equation}\label{eq:change2}
a'=\ell_1a'', \quad b'=\ell_2b'', \quad c'= \ell_3c'', \quad d'=\ell_4 d'',
\end{equation}
for $a'',b'',c'',d''\in \NN$ satisfying 
 $\gcd(a''b''c''d'',mn)=1$.
We have $\gcd(a',b')=\gcd(c',d')=1$ if and only if 
 $\gcd(a'',b'')=\gcd(c'',d'')=1$ and 
\begin{equation}\label{eq:monday-1}
\gcd(\ell_1,\ell_2)=\gcd(\ell_3,\ell_4)=1.
\end{equation}
Likewise, we see that $\gcd(a,c),\gcd(b,d)\in \cA$ if and only if
$\gcd(a'',c''),\gcd(b'',d'')\in \cA$
and 
\begin{equation}\label{eq:monday-1'}
\gcd(m\ell_1,n\ell_3), ~\gcd(m\ell_2,n\ell_4)\in \cA.
\end{equation}
For given $m,n$ we henceforth put
\begin{equation}\label{eq:def-Lmn}
L(m,n)=\{\bell\in \NN^4: \mbox{$\ell_i\mid (mn)^\infty$ and \eqref{eq:monday-1}, \eqref{eq:monday-1'} hold}\}.
\end{equation}

It will be convenient to set
$$
m''=\gcd(a'',c''), \quad n''=\gcd(b'',d''), 
$$
so that $m'',n''\in \cA$ and  $\gcd(m'',n'')=\gcd(m''n'',mn)=1$. 
 We are now led to make the  change of variables
\begin{equation}\label{eq:change3}
a''=m''a''', \quad b''=n''b''', \quad c''=m'' c''', \quad d''=n'' d''',
\end{equation}
for $a''',b''',c''',d'''\in \NN$ satisfying 
\begin{equation}\label{eq:change3-coprime}
 \gcd(b'''d''',mn m'')=\gcd(a'''c''',mn n'')=1
\end{equation} 
and
\begin{equation}\label{eq:change3-coprime'}
 \gcd(a'''d''',b'''c''')=1.
\end{equation}
We will execute the asymptotic evaluation of $N(P), \Nloc(P)$ and $\NBr(P)$ by 
estimating the number of allowable $a''',b''',c''',d'''$ associated to given 
$\ve_2,\ve_3,\ve_4,m,n,m'',n''$ and $\bell$. 
In fact, for the treatments of 
$ \Nloc(P)$ and $\NBr(P)$ we will need to refine these transformations further, in order to extract the $2$-adic valuations of the variables.

Beginning with $N(P)$, our main goal in this section is a proof of the following result.

\begin{lemma}\label{lem:M-total}
We have 
$N(P)=\tau  P^4+O (P^{4-1/5+\ve}),$ for any $\ve>0$, where 
$$
\tau=
\frac{17}{16}
\prod_{p\=1\4} a_p \prod_{p\=3 \4}\left(\frac{1-\frac{1}{p}}{1+\frac{1}{p}}\right)^2
b_p,
$$
with $b_p$ given by \eqref{eq:b_p} and 
\begin{equation}\label{eq:a_p}
a_p=
\left(1-\frac{1}{p}\right)^2
\left(1+\frac{2}{p}+\frac{1}{p^2}-\frac{2}{p^4}\right).
\end{equation}
\end{lemma}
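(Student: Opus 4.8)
The plan is to push through the chain of substitutions \eqref{eq:change1}--\eqref{eq:change3} recorded above, reducing the estimation of $N(P)$ to an inner lattice-point count for the variables $a''',b''',c''',d'''$, and then to reassemble the resulting main terms into an Euler product. First I would discard the condition $ad-bc\neq 0$, which by the remark preceding the lemma costs only $O(P^{2+\ve})$. Choosing representatives with $a>0$ and running over the eight sign patterns $(\ve_2,\ve_3,\ve_4)$, the coprime pair $m,n\in\cB$, the vector $\bell\in L(m,n)$ defined in \eqref{eq:def-Lmn}, and the coprime pair $m'',n''\in\cA$ with $\gcd(m''n'',mn)=1$, the problem becomes that of estimating
\[
T=\#\left\{(x_1,x_2,x_3,x_4)\in\NN^4:\ x_i\le X_i\ (1\le i\le 4),\ \gcd(x_1x_4,x_2x_3)=1,\ \gcd(x_1x_3,mnn'')=\gcd(x_2x_4,mnm'')=1\right\},
\]
where $(X_1,X_2,X_3,X_4)=\bigl(\tfrac{P}{m\ell_1 m''},\tfrac{P}{m\ell_2 n''},\tfrac{P}{n\ell_3 m''},\tfrac{P}{n\ell_4 n''}\bigr)$; here the last two constraints are \eqref{eq:change3-coprime}, while $\gcd(x_1x_4,x_2x_3)=1$ is \eqref{eq:change3-coprime'} rewritten, equivalently the simultaneous coprimality of the pairs $(x_1,x_2)$, $(x_1,x_3)$, $(x_2,x_4)$, $(x_3,x_4)$.

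I would evaluate $T$ by detecting these four coprimality conditions by M\"obius inversion, truncating the divisor variables at a cut-off $z$, and then summing over each $x_i$ in turn: it runs over an arithmetic progression whose modulus is an lcm of two of the divisor variables and is constrained to be coprime to $mnm''$ or $mnn''$, so Lemma \ref{lem:basic} applies (one reverts to the trivial bound when $X_i$ is too small to apply it). This produces a main term $\mathfrak{S}(m,n,m'',n'')\,X_1X_2X_3X_4$, with
\[
\mathfrak{S}(m,n,m'',n'')=\phi^*(mnm'')^2\,\phi^*(mnn'')^2\prod_{p\nmid mnm''n''}\Bigl(1-\bigl(\tfrac{2}{p}-\tfrac{1}{p^2}\bigr)^2\Bigr)
\]
the expected product of local densities, together with an error comprising the tails of the truncated M\"obius sums, of size $O(X_1X_2X_3X_4/z)$, the error term from Lemma \ref{lem:basic}, of size $O(z^4(mnm''n'')^\ve)$ and smaller boundary terms, and the contribution of the small-$X_i$ ranges; a suitable choice of $z$ renders this error $O((X_1X_2X_3X_4)^{1-\eta})$ for some $\eta>0$.

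One then sums the main terms over all the auxiliary parameters. Since $X_1X_2X_3X_4=P^4\bigl(m^2n^2(m'')^2(n'')^2\ell_1\ell_2\ell_3\ell_4\bigr)^{-1}$, since $\mathfrak{S}\ll(mnm''n'')^\ve$, and since $\sum_{\bell\in L(m,n)}(\ell_1\ell_2\ell_3\ell_4)^{-1}\ll(mn)^\ve$, the sum over all parameters converges absolutely, and it remains to organise it as a product $\prod_p\lambda_p$. The $\bell$-sum factorises, for fixed coprime $m,n\in\cB$, into a product over $p\mid mn$ of an explicit rational function of $p$ encoding the constraints \eqref{eq:monday-1}, \eqref{eq:monday-1'}; carrying out the residual sums over $m,n,m'',n''$ prime by prime then identifies the local factors. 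For $p\equiv 1\4$ such $p$ never divides $mn$, and one obtains $\lambda_p=a_p$ as in \eqref{eq:a_p}; for $p\equiv 3\4$ one sums over the two possibilities for the $p$-part of each of $m,n,m'',n''$ and over the $p$-power contributed by $\bell$, obtaining $\lambda_p=\bigl(\tfrac{1-1/p}{1+1/p}\bigr)^2 b_p$ with $b_p$ as in \eqref{eq:b_p}; and for $p=2$ a finite computation over the residues of the coordinates modulo $4$, of the same flavour as those in \S \ref{s:2adic}, which yields the $2$-adic density $\tfrac{17}{32}$. Combining this with the factor $8/4=2$ left over from the eight sign patterns and the $\tfrac14$ of \eqref{eq:tuesday3} produces the rational constant $\tfrac{17}{16}$, and so identifies $\tau$; it is strictly positive because $1-(2/p-1/p^2)^2=(1-1/p)^2(1+2/p-1/p^2)>0$.

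Finally, the total error is controlled by truncating the auxiliary parameters at $m^2n^2(m'')^2(n'')^2\ell_1\ell_2\ell_3\ell_4\le Y$: beyond this, the trivial bound $T\le X_1X_2X_3X_4$ together with the absolute convergence above contributes $O(P^{4+\ve}Y^{-\delta})$ for some $\delta>0$, while within it one sums the error $O((X_1X_2X_3X_4)^{1-\eta})$ using the same convergence, together with the boundary terms; an optimal choice of $Y$, balanced against the $O(P^{2+\ve})$ already spent, delivers the stated $N(P)=\tau P^4+O(P^{4-1/5+\ve})$. The main obstacle is precisely this error analysis: every estimate must hold uniformly in the unbounded parameters $m,n,\bell,m'',n''$, and the M\"obius cut-off $z$ and the parameter cut-off $Y$ must be balanced against each other, and it is this balance that produces the exponent $1/5$, which we make no attempt to optimise. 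The Euler-product book-keeping in the third step is lengthy but routine, its most delicate point being the exact value $\tfrac{17}{16}$ of the $2$-adic factor.
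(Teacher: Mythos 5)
Your proposal follows essentially the same route as the paper: discard $ad-bc\neq 0$ at cost $O(P^{2+\ve})$, push through the changes of variables \eqref{eq:change1}--\eqref{eq:change3}, detect the pairwise coprimality of $a''',b''',c''',d'''$ by M\"obius inversion, apply Lemma~\ref{lem:basic} to the inner count, truncate the auxiliary parameters, and reassemble the local densities into an Euler product, with the $2$-adic factor $17/32$ and the sign count giving $\tfrac{17}{16}$. One small slip worth flagging: your displayed singular series $\mathfrak{S}(m,n,m'',n'')$ is missing the factor $\prod_{p\mid m''n''}(1-p^{-2})$ — at a prime $p\mid m''$ (say) the coprimality of the pair $(x_1,x_3)$ is still active, contributing density $(1-1/p)^2(1-1/p^2)$ rather than the $(1-1/p)^2$ your formula yields; this factor appears in the paper's evaluation of the $\k$-sum as $\prod_{p\nmid mnm''n''}(1-\tfrac{4}{p^2}+\tfrac{4}{p^3}-\tfrac{1}{p^4})\prod_{p\mid m''n''}(1-\tfrac{1}{p^2})$, and without it the Euler product would not close up to give the stated $a_p$ and $b_p$.
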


\begin{proof}
Our starting point is the modified version of \eqref{eq:tuesday3}, in which we have eliminated the constraint that 
$ad-bc\neq 0$ in the  definition of $\Stot$.  On multiplying through by $2^3$, we may work on the domain where $a,b,c,d$ are all positive.  We now work through the changes of variables 
\eqref{eq:change1}, \eqref{eq:change2} and \eqref{eq:change3}, with $\ve_2=\ve_3=\ve_4=+$. 
This leads to the expression
\begin{equation}\label{eq:weds1}
N(P)
=
2
\sum_{\substack{m,n\in \cB\\ \gcd(m,n)=1}}
\sum_{\bell\in L(m,n)}
\sum_{\substack{m'',n''\in \cA\\ 
\gcd(m'',n'')=1\\
\gcd(m''n'',mn)=1
}}
N  + O (P^{2+\ve}),
\end{equation}
where 
$L(m,n)$ is given by 
\eqref{eq:def-Lmn}
and $N$
denotes the number of 
$(a''',b''',c''',d''')\in \NN^4$  such that 
\eqref{eq:change3-coprime}
and 
\eqref{eq:change3-coprime'} hold, with
$$
mm''\ell_1a''', ~mn''\ell_2b''', ~nm''\ell_3c''', ~nn''\ell_4 d'''\leq P.
$$
Let $K=K(mn,m'',n'')$ denote the set of 
$\k\in \NN^4$ for which 
$$
\gcd(k_1k_4,mnm''n'')=\gcd(k_2,mnn'')=\gcd(k_3,mnm'')=1.
$$
We use the M\"obius function to take care of  \eqref{eq:change3-coprime'}, obtaining 
\begin{equation}\label{eq:weds2}
N=\sum_{\substack{\k\in K}}
 \mu(k_1)\cdots \mu(k_4)N(\k),
\end{equation}
where  $N(\k)$ now
denotes the number of 
$(a''',  b''',  c''',  d''')\in \NN^4$  such that 
\eqref{eq:change3-coprime} holds, with 
\begin{align*}
mm''\ell_1[k_1,k_2]  a''', ~mn''\ell_2[k_1,k_3]  b''' &\leq P,\\
nm''\ell_3[k_2,k_4]  c''', ~nn''\ell_4 [k_3,k_4]  d'''&\leq P.
\end{align*}
The estimation of $N(\k)$ is straightforward, being based on counting 
integers in particular intervals which satisfy certain coprimality constraints.
A trivial upper bound is given by 
$$
N(\k)\ll \frac{P^4}{m^2n^2m''^2n''^2 \ell_1\cdots \ell_4 [k_1,k_2]\cdots [k_3,k_4]}.
$$
Note that 
\begin{equation}\label{eq:rain}
\sum_{\ell \mid (mn)^\infty} \frac{1}{\ell^{\ve}}=\frac{1}{\phi_\ve^*(mn)},
\end{equation}
for any $\ve>0$,
where $\phi_\ve^*$ is given by \eqref{eq:phi*}.
 The reciprocal of the latter is  a function of $mn$ with average order $O (1)$. 
We may use these facts to restrict attention to parameters satisfying
$$
\max\{m,n,m'',n'',\ell_i,[k_i,k_j]\}\leq T,
$$
for any $T\geq 1$, 
with overall error $O (T^{-1+\ve}P^4 )$. 

Applying Lemma \ref{lem:basic} with $q_2=1$ four times, 
we conclude that
$$
N(\k)=\frac{\phi^*(mnm'')^2\phi^*(mnn'')^2 P^4}{
m^2n^2m''^2n''^2 \ell_1\cdots \ell_4 [k_1,k_2]\cdots [k_3,k_4]}
\left(1+O \left(\frac{1}{T^{1-\ve}}\right)\right),
$$
provided that $P\gg  T^{5+3\ve}$. 
Substituting this estimate into \eqref{eq:weds1} and \eqref{eq:weds2}, and extending 
the summation over the outer parameters 
 to infinity, we are therefore led to the conclusion that
$$
N(P)=\tau P^4+O \left(P^{2+\ve}+T^{-1+\ve}P^4\right),
$$
provided that  $P\gg T^{5+3\ve}$, where
\begin{align*}
\tau=~&
2
\sum_{\substack{m,n\in \cB\\ \gcd(m,n)=1}}\frac{1}{m^2n^2}
\sum_{\bell\in L(m,n)} 
\frac{\phi^*(mn)^4}{\ell_1\ell_2\ell_3\ell_4}
\sum_{\substack{m'',n''\in \cA\\ 
\gcd(m'',n'')=1\\
\gcd(m''n'',mn)=1
}} \frac{\phi^*(m'')^2\phi^*(n'')^2}{m''^2n''^2}
\\
&\times
\sum_{\substack{\k\in K}}
\frac{\mu(k_1)\cdots \mu(k_4)}
{[k_1,k_2]\cdots [k_3,k_4]}.
\end{align*}
Taking $T=P^{1/5-\ve/2}$ shows that the error term in this estimate is satisfactory for Lemma \ref{lem:M-total} and it remains to check that our value of $\tau$ agrees with what is recorded there.

To begin with one easily calculates
\begin{align*}
\sum_{\substack{\k\in K}}
\frac{\mu(k_1)\cdots \mu(k_4)}
{[k_1,k_2]\cdots [k_3,k_4]}
&=
\prod_{p\nmid mnm''n''} \left(1-\frac{4}{p^2}+\frac{4}{p^3}-\frac{1}{p^4}\right)
\prod_{p\mid m''n''} \left(1-\frac{1}{p^2}\right)\\
&=\prod_{p\nmid mn} \left(1-\frac{1}{p}\right)^2\left(1+\frac{2}{p}-\frac{1}{p^2}\right)
\prod_{p\mid m''n''} \frac{(1+\frac{1}{p})}{(1-\frac{1}{p})(1+\frac{2}{p}-\frac{1}{p^2})}.
\end{align*}
Introducing the sum over over $m'',n''\in \cA$ such that
$\gcd(m'',n'')=\gcd(m''n'',mn)=1$, 
we find that
\begin{align*}
\sum_{\substack{m'',n''
}} \frac{\phi^*(m'')^2\phi^*(n'')^2}{m''^2n''^2}
&\prod_{p\mid m''n''}
\frac{(1+\frac{1}{p})}{(1-\frac{1}{p})(1+\frac{2}{p}-\frac{1}{p^2})}
=\prod_{p\nmid mn} \left( 
\frac{1+\frac{2}{p}+\frac{1}{p^2}-\frac{2}{p^4}}
{1+\frac{2}{p}-\frac{1}{p^2}}\right).
\end{align*}
Next we note that 
\begin{align*}
\sum_{\bell\in L(m,n)}
\frac{\phi^*(mn)^4}{\ell_1\ell_2\ell_3\ell_4}
&=\prod_{p\mid mn} 
\left(1-\frac{1}{p}\right)^2 
\left(
1+\frac{2}{p}+\frac{1}{p^2}-\frac{2}{p^3}
\right)
=\prod_{p\mid mn} c_p,
\end{align*}
say,
on recalling the definition \eqref{eq:def-Lmn} of $L(m,n)$. Let $a_p$ be given by \eqref{eq:a_p} for any prime $p$. 
Putting this all together we conclude that 
\begin{align*}
\tau&=2
\sum_{\substack{m,n\in \cB\\ \gcd(m,n)=1}}\frac{1}{m^2n^2}\prod_{p\nmid mn} a_p \prod_{p\mid mn}c_p\\
&= 2a_2
\prod_{p\=1\4} a_p \prod_{p\=3 \4} \left(a_p+\frac{2c_p}{p^2}
\right).
\end{align*}
Noting that $a_2=17/32$ 
and $a_p+\frac{2c_p}{p^2}=(1-\frac{1}{p})^2(1+\frac{1}{p})^{-2}b_p$, with $b_p$ given by \eqref{eq:b_p}, this therefore concludes the proof of the lemma.
\end{proof}

\section{Asymptotics:  $N_{\mathrm{loc}}(P)$}\label{s:asymptotic-Nloc}

Building on our work in the previous section we now turn to the resolution of Theorems~\ref{t:M-local} and \ref{c:cor1}. 
We begin with a few words about the local factors $\tau_{\mathrm{loc},v}$ that appear in the statement of the former. 
Each  $\tau_{\mathrm{loc},v}$ may be interpreted as the density of points 
in $U(\QQ_v)/G(\QQ_v)$ for which the associated Ch\^atelet surface has $\QQ_v$-points. 
When $v=p$ is prime, this density is equal to
\begin{equation}\label{eq:tau-p}
\tau_{\mathrm{loc},p}=\lim_{k\rightarrow \infty}
p^{-4k}\#\left\{
(a,b,c,d)\in A(p^k):
\begin{array}{l}
X_{a,b,c,d}(\QQ_p)\neq \emptyset
\\ 
\min\{v_p(a),v_p(c)\}\leq 1\\
\min\{v_p(b),v_p(d)\}\leq 1\\
\min\{v_p(a),v_p(b)\}\leq \delta_p\\
\min\{v_p(c),v_p(d)\}\leq \delta_p
\end{array}
\right\},
\end{equation}
where
$A(p^k)=(\ZZ/p^k\ZZ)^4\setminus p(\ZZ/p^k\ZZ)^4$ and 
$
\delta_p=1$ if  $p\=3\4$, with $\delta_p=0$ otherwise.
Note that the 
condition 
$ X_{a,b,c,d}(\QQ_p)\neq \emptyset$ is precisely equivalent to the union of conditions issuing from parts (ii)--(iv) of
Lemma \ref{lem:X-local}.  In particular, this condition is vacuous when $p\=1\4$.  Part (i) of 
Lemma \ref{lem:X-local} easily yields
\begin{equation}\label{eq:tau-inf}
\tau_{\mathrm{loc},\infty}
=\frac{7}{4},
\end{equation}
since there are $7$ possible choices for the signs of $a,b,c,d$, with $a>0$ and 
$\sign(b,c,d)\neq (+,-,-)$.

Our starting point is \eqref{eq:tuesday1}, followed by the changes of variables \eqref{eq:change1} and  \eqref{eq:change2}.
It will be  important to keep track of the $2$-adic valuations of the variables involved. 
In particular we have already arranged things so that $a''$ is odd in $\Sloc^{(\iota)}$. 
Let us write $\gcd(N_1,N_2)_\flat$ for the greatest odd common divisor
of two integers $N_1$ and $N_2$.
Instead  of passing directly to 
\eqref{eq:change3}, we set
$$
m''=\gcd(a'',c''), \quad n''=\gcd(b'',d'')_\flat, 
$$
so that $m'',n''\in \cA$ and  $\gcd(m'',n'')=\gcd(m''n'',2mn)=1$. 
Likewise, 
for $\iota\in \{0,1\}$
let us put
\begin{equation}\label{eq:L2}
L_2^{(\iota)}=\{(\beta,\gamma,\delta)\in \ZZ_{\geq 0}^3: 
\min\{\gamma,\delta\}=0, ~
\min\{\beta,\delta\}\leq 1, ~\beta\geq 1-\iota\}.
\end{equation}
We are now led to make the  change of variables
\begin{equation}\label{eq:change3'}
a''=m''a''', \quad b''=n''2^\beta b''', \quad c''=m'' 2^\gamma c''', \quad d''=n'' 2^\delta d''',
\end{equation}
for $(\beta,\gamma,\delta)\in L_2^{(\iota)}$ and 
$a''',b''',c''',d'''\in \NN$ satisfying 
\begin{equation}\label{eq:change3-coprime-mod}
 \gcd(b'''d''',2mn m'')=\gcd(a'''c''',2mn n'')=1
\end{equation} 
and
\begin{equation}\label{eq:change3-coprime'-mod'}
\gcd(a'''d''',b'''c''')_\flat=1.
\end{equation}

The key difference between our analysis of $N(P)$ and $\Nloc(P)$ is the use of 
Lemma 
\ref{lem:X-local}.
Part~(i) of this result implies that we shall only be interested in 
$\bve=(\ve_2,\ve_3,\ve_4)\neq (+,-,-)$.
Part~(ii) ensures that 
$X_{a,b,c,d}(\QQ_p)\neq \emptyset$ for any $p\nmid mn$.
Now suppose that $p\mid mn$. Then according to part (iii)
we will have 
$X_{a,b,c,d}(\QQ_p)\neq \emptyset$ if and only if 
we do not have 
\begin{equation}\label{eq:monday-2}
v_p(\ell_1)=v_p(\ell_3)\=1\2, \quad \mbox{or}  \quad
v_p(\ell_2)=v_p(\ell_4)\=1\2,
\end{equation}
but we do have $p\nmid a'''b'''c'''d'''$ and 
$$
\lan \frac{-\ve_2 \ell_1\ell_2m''n''2^{\beta}a'''b'''}{p}\ran+
\lan \frac{-\ve_3\ve_4 \ell_3\ell_4m''n''2^{\gamma+\delta}c'''d'''}{p}\ran\geq 0,
$$
in the notation of \eqref{eq:symbol}. This last constraint is equivalent to demanding that $p\nmid a'''b'''c'''d'''$ and 
$$
\left( \frac{-\ve_2 \ell_{1,p}\ell_{2,p}m''n''2^{\beta}a'''b'''}{p}\right)+
\left( \frac{-\ve_3\ve_4 \ell_{3,p}\ell_{4,p}m''n''2^{\gamma+\delta}c'''d'''}{p}\right)\geq 0,
$$
if $v_p(\ell_i)\in 2\ZZ$  and $\ell_{i,p}=p^{-v_p(\ell_i)}\ell_i$, 
for $1\leq i\leq 4$.
In particular,
for given $\beta,\gamma,\delta, \bve,\bell,m'',n''$,
 these constraints force the vector $(a''',b''',c''',d''')$ to lie in one of finitely many congruence classes
modulo $p$, for each $p\mid mn$. Let us denote the set of possible classes modulo $mn$ by 
$$
T_{mn}(\bell)\cong \bigoplus_{p\mid 
mn} T_p(\bell).
$$    
Here $T_{mn}(\bell)\subseteq (\ZZ/mn\ZZ)^4$ and 
 $T_{p}(\bell)\subseteq (\ZZ/p\ZZ)^4$ for each prime $p$.
The following result calculates their cardinality. 

\begin{lemma}\label{lem:card-T}
Let  $p\mid mn$. Then 
we have 
$$
\#T_{p}(\bell)=
\begin{cases}
\frac{3}{4}(p-1)^4, &\mbox{if $v_p(\ell_i)\in 2\ZZ$ for each $1\leq i\leq 4$,}\\
(p-1)^4, &\mbox{otherwise}.
\end{cases}
$$
\end{lemma}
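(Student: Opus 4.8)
The plan is to unwind the explicit description of $T_p(\bell)$ supplied in the paragraphs preceding the lemma (which rests on part (iii) of Lemma \ref{lem:X-local}) and then run an elementary count of unit residues modulo $p$. First I would record the reductions already in hand: since $p\mid mn$ and $m,n\in\cB$ we have $p\equiv 3\4$, so $\bigl(\tfrac{-1}{p}\bigr)=-1$, and the condition $p\nmid a'''b'''c'''d'''$ is automatic because $\gcd(a'''\cdots d''',mn)=1$ after the changes of variables \eqref{eq:change1}, \eqref{eq:change2} and \eqref{eq:change3'}. Hence, for $(a''',b''',c''',d''')$ whose reduction modulo $p$ lies in $((\ZZ/p\ZZ)^*)^4$, membership in $T_p(\bell)$ is equivalent to the failure of \eqref{eq:monday-2} together with $\lan\tfrac{-a'b'}{p}\ran+\lan\tfrac{-c'd'}{p}\ran\geq 0$. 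As \eqref{eq:monday-2} depends on $\bell$ alone, those $\bell$ for which it holds give $T_p(\bell)=\emptyset$ and play no role in the sum to be estimated; in any case \eqref{eq:monday-2} forces an odd valuation, so it fails automatically in the first case of the lemma. It therefore remains to count $(\bar a,\bar b,\bar c,\bar d)\in((\ZZ/p\ZZ)^*)^4$ with $\lan\tfrac{-a'b'}{p}\ran=1$ or $\lan\tfrac{-c'd'}{p}\ran=1$.

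The key point is that $\gcd(\ell_1,\ell_2)=\gcd(\ell_3,\ell_4)=1$, which is part of $\bell\in L(m,n)$: this gives $v_p(a'b')=v_p(\ell_1)+v_p(\ell_2)=\max\{v_p(\ell_1),v_p(\ell_2)\}$ and likewise $v_p(c'd')=\max\{v_p(\ell_3),v_p(\ell_4)\}$. So $v_p(a'b')$ is odd precisely when $v_p(\ell_1),v_p(\ell_2)$ are not both even, and similarly for $v_p(c'd')$; in particular $\#T_p(\bell)$ will depend on $\bell$ only through the parities of the $v_p(\ell_i)$, which is exactly the dichotomy in the statement.

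I would then split as follows. If some $v_p(\ell_i)$ is odd, then one of $v_p(a'b')$, $v_p(c'd')$ is odd, and by the definition \eqref{eq:symbol} the associated symbol equals $1$ identically in the residues; the inequality then holds for every choice, so $T_p(\bell)=((\ZZ/p\ZZ)^*)^4$ has cardinality $(p-1)^4$. If every $v_p(\ell_i)$ is even, write $a'=\ell_1m''a'''$, $b'=\ell_2n''2^\beta b'''$, $c'=\ell_3m''2^\gamma c'''$, $d'=\ell_4n''2^\delta d'''$ and pull out the $p$-free parts $\ell_{i,p}=p^{-v_p(\ell_i)}\ell_i$; both $v_p(a'b')$ and $v_p(c'd')$ are now even, and one obtains $\lan\tfrac{-a'b'}{p}\ran=\lambda_1\bigl(\tfrac{a'''}{p}\bigr)\bigl(\tfrac{b'''}{p}\bigr)$ and $\lan\tfrac{-c'd'}{p}\ran=\lambda_2\bigl(\tfrac{c'''}{p}\bigr)\bigl(\tfrac{d'''}{p}\bigr)$ for fixed signs $\lambda_1,\lambda_2\in\{\pm1\}$ depending only on $\bell,\ve_2,\ve_3,\ve_4,m'',n'',\beta,\gamma,\delta$. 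The inequality fails exactly when $\lambda_1\bigl(\tfrac{a'''}{p}\bigr)\bigl(\tfrac{b'''}{p}\bigr)=\lambda_2\bigl(\tfrac{c'''}{p}\bigr)\bigl(\tfrac{d'''}{p}\bigr)=-1$; since a product of two Legendre symbols takes each value $\pm1$ on exactly $(p-1)^2/2$ pairs of units, there are $\tfrac14(p-1)^4$ such vectors, whence $\#T_p(\bell)=(p-1)^4-\tfrac14(p-1)^4=\tfrac34(p-1)^4$.

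There is no genuine obstacle here; the points needing care are purely organisational: using the coprimality built into $L(m,n)$ to see that the parity of $v_p(a'b')$ is governed by whether $v_p(\ell_1),v_p(\ell_2)$ are both even (so that $\#T_p(\bell)$ really is a function of those parities), and checking that the nuisance constants $\lambda_1,\lambda_2$ drop out of the final count, which is immediate from the equidistribution of the Legendre symbol on $(\ZZ/p\ZZ)^*$.
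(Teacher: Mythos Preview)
Your proposal is correct and follows essentially the same line as the paper's proof: in the ``all $v_p(\ell_i)$ even'' case you count the complement (both Legendre symbols equal $-1$, giving $\tfrac14(p-1)^4$) whereas the paper counts via inclusion--exclusion (first symbol $+1$, plus second symbol $+1$, minus both $+1$), which is a trivially equivalent bookkeeping. Your extra remarks using the coprimality $\gcd(\ell_1,\ell_2)=\gcd(\ell_3,\ell_4)=1$ from $L(m,n)$ to explain why the dichotomy depends only on the parities of the $v_p(\ell_i)$ are helpful and make explicit what the paper leaves implicit; note however that the $\neg\eqref{eq:monday-2}$ constraint is not part of the definition of $T_p(\bell)$ (it is imposed separately on $\bell$ in the outer sum \eqref{eq:thurs1}), so your aside about $T_p(\bell)=\emptyset$ when \eqref{eq:monday-2} holds is unnecessary, though harmless.
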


\begin{proof}
Firstly, if 
 $v_p(\ell_i)\not\in 2\ZZ$ for some $1\leq i\leq 4$, then the only constraint modulo  $p$ is that the coordinates should all be coprime to $p$.  The claim is therefore obvious in this case. If  
 $v_p(\ell_i)\in 2\ZZ$ for each $1\leq i\leq 4$, then for given 
$\ve_2,  \ell_{1,p}, \ell_{2,p},m'',n''$ not divisible by $p$, it is clear that there are $\frac{1}{2}(p-1)^2$ choices for $a''',b'''$ modulo $p$ such that 
$$
\left( \frac{-\ve_2 \ell_{1,p}\ell_{2,p}m''n''2^{\beta}a'''b'''}{p}\right)=1.
$$
Likewise there are $\frac{1}{2}(p-1)^2$ choices of $c''',d'''$ modulo $p$ for which 
$$
\left( \frac{-\ve_3\ve_4 \ell_{3,p}\ell_{4,p}m''n''2^{\gamma+\delta}c'''d'''}{p}\right)=1,
$$
whence we arrive at the desired expression  on subtracting the contribution from $a''',b''',c''',d'''$ modulo $p$ for which  
both Legendre symbols are $1.$
\end{proof}

We need to restrict attention to $\bell\in L(m,n)$ for which 
\eqref{eq:monday-2} does not hold for any $p\mid mn$. We will capture the latter constraint by writing 
$\neg$\eqref{eq:monday-2}, where $\neg$ is the symbol  used for logical negation.
Summarising our investigation so far, 
our analogue of \eqref{eq:weds1} is 
\begin{equation}\label{eq:thurs1}
\begin{split}
\Nloc(P)
=~&
\frac{1}{4} \sum_{\iota\in \{0,1\}}
\sum_{\substack{ \bve\in\{\pm\}^3\setminus(+,-,-)}}
\sum_{\substack{m,n\in \cB\\ \gcd(m,n)=1}}
\sum_{\substack{\bell\in L(m,n)\\
\scriptsize{\mbox{$\neg$\eqref{eq:monday-2}}
}}}\\
&\times
\sum_{\substack{m'',n''\in \cA\\ 
\gcd(m'',n'')=1\\
\gcd(m''n'',2mn)=1
}}
\sum_{(\beta,\gamma,\delta)\in L_2^{(\iota)}}
\Nloc  + O (P^{2+\ve}),
\end{split}
\end{equation}
where 
$L(m,n)$ (resp.\ $L_2^{(\iota)}$) is given by 
\eqref{eq:def-Lmn} (resp.\  \eqref{eq:L2})
and $\Nloc$ 
denotes the total number of 
$(a''',b''',c''',d''')\in \NN^4$  such that 
\eqref{eq:change3-coprime-mod} and 
\eqref{eq:change3-coprime'-mod'} hold, with 
$$
mm''\ell_1a''', ~mn''\ell_22^\beta b''', ~nm''\ell_32^\gamma c''', ~nn''\ell_4 2^\delta 
d'''\leq P,
$$
$X_{a,b,c,d}(\QQ_2)\neq \emptyset$ and 
$
(a''',b''',c''',d''') \in T_{mn}(\bell) \bmod{mn}.
$ 

As before we use the M\"obius function to detect the condition \eqref{eq:change3-coprime'-mod'}.  Recall the definition of $K=K(mn,m'',n'')$ from \S
\ref{s:asymptotic-prelim} and define $K'=K'(mn,m'',n'')$ to be the subset of 
$\k\in K$ for which $2\nmid k_1k_2k_3k_4$.
It will be convenient to set
\begin{align*}
A&=
mm''\ell_1 [k_1,k_2] , \,\,\,\,\quad B=mn''\ell_22^\beta [k_1,k_3],\\ 
C&=nm''\ell_32^\gamma 
[k_2,k_4], \quad D=nn''\ell_4 2^\delta  [k_3,k_4].
\end{align*}
Then  
\begin{equation}\label{eq:thurs2}
\Nloc=\sum_{\substack{\k\in K'}}
 \mu(k_1)\cdots \mu(k_4)\Nloc(\k),
\end{equation}
where  $\Nloc(\k)$ is now the number of 
$(a''',  b''',  c''',  d''')\in \NN^4$  such that 
\eqref{eq:change3-coprime-mod} holds, with 
$$
Aa''', ~ Bb''', ~C c''', ~ Dd'''\leq P,
$$
$X_{a,b,c,d}(\QQ_2)\neq \emptyset$ and 
\begin{equation}\label{eq:pig1}
([k_1,k_2]a''',\ldots,[k_3,k_4]d''') \in T_{mn}(\bell) \bmod{mn}.
\end{equation}

Before  estimating $\Nloc(\k)$, it remains to interpret the 
constraint $X_{a,b,c,d}(\QQ_2)\neq \emptyset$, 
as described in 
part (iv) of Lemma \ref{lem:X-local}.
An inspection of Lemmas
\ref{lem:1-1}--\ref{lem:3-2} shows that for a given  choice of
$(\beta,\gamma,\delta)\in L_2^{(\iota)}$, the $2$-adic constraints placed on
$(a''', b''', c''', d''')$ constitute a union of particular congruence classes modulo $16$. Let us 
denote by $H_{\beta,\gamma,\delta}$ the set of all 
possible classes modulo $16$ that can arise. 
Then 
the constraint $X_{a,b,c,d}(\QQ_2)\neq \emptyset$ in $N(\k)$ is equivalent to demanding that 
\begin{equation}\label{eq:pig2}
( a''', b''', c''',d''' )
 \in H_{\beta,\gamma,\delta} \bmod{16}.
\end{equation}
Recall the definition \eqref{eq:L2} of $L_2^{(\iota)}$. 
Note that 
$H_{\beta,\gamma,\delta}$  depends on $\bve, \k, \bell, m,n,m'',n''$ in addition to 
$\beta,\gamma, \delta$, but the cardinality of this set does not depend on these additional parameters, being equal to 
the total number of $(A',B',C',D') \in(\ZZ/16\ZZ)^4$, with 
$2\nmid A'B'C'D'$, for which 
$(A',2^\beta B',2^\gamma C',2^\delta D')$ belongs to $\Totii$ modulo ${16}$, in the notation of \S \ref{s:2adic}.
A little thought therefore reveals that 
\begin{align*}
\sum_{\iota\in \{0,1\}}
\sum_{(\beta,\gamma,\delta)\in L_2^{(\iota)}}
\frac{\#H_{\beta,\gamma,\delta}}{2^{16+\beta+\gamma+\delta}}=\tau_{\mathrm{loc},2}, 
\end{align*}
where $\tau_{\mathrm{loc},2}$ is given by  \eqref{eq:tau-p}.
We will give a numerical value for $\tau_{\mathrm{loc},2}$ in Lemma \ref{lem:tau-2}.

The estimation of 
$\Nloc(\k)$ is straightforward, being based on Lemma \ref{lem:basic}.
A  trivial upper bound is  given by 
$
\Nloc(\k)\ll P^4/(ABCD).
$
As previously, this shows that we may restrict attention to parameters satisying
$$
\max\{m,n,m'',n'',\ell_i, [k_i,k_j],  2^{\beta},2^{\gamma},2^{\delta}\}\leq T,
$$
with error $O (T^{-1+\ve}P^4 )$.
Having reduced the size of the parameters suitably we turn to an asymptotic formula for 
$\Nloc(\k)$.
The coprimality conditions $\gcd(a'''b'''c'''d''',2mn)=1$
in \eqref{eq:change3-coprime-mod} will automatically be taken care of by 
\eqref{eq:pig1} and \eqref{eq:pig2}.
This time we will employ Lemma  \ref{lem:basic} with $q_1\in \{m'',n''\}$,  $q_2=16mn$ and $x\in \{P/A,\ldots,P/D\}$.
Under the assumption $P\gg  T^{8+\ve}$
we therefore obtain
$$
\Nloc(\k)=
\frac{
\phi^*(m'')^2\phi^*(n'')^2
 P^4}{ABCD}
\cdot \frac{\#T_{mn}(\bell)}{(mn)^4} \cdot \frac{\#H_{\beta,\gamma,\delta}}{2^{16}}\left(1+O \left(\frac{1}{T^{1-\ve}}\right)\right).
$$
We take $T=P^{1/8-\ve/2}$ and 
substitute this estimate into 
\eqref{eq:thurs1} and \eqref{eq:thurs2},  extending the summation over the outer parameters 
 to infinity. This  concludes the statement of Theorem \ref{t:M-local}, with 
\begin{align*}
\tau_{\mathrm{loc}}=~&
\frac{ \tau_{\mathrm{loc},2}}{4} 
\sum_{\substack{ \bve\in\{\pm\}^3\setminus(+,-,-)}}
\sum_{\substack{m,n\in \cB\\ \gcd(m,n)=1}}
\frac{1}{m^2n^2}
\sum_{\substack{\bell\in L(m,n)\\
\scriptsize{\mbox{$\neg$\eqref{eq:monday-2}}
}}}
\frac{\#T_{mn}(\bell)}{\ell_1\ell_2\ell_3\ell_4 (mn)^4} 
\\
&\times
\hspace{-0.4cm}
\sum_{\substack{m'',n''\in \cA\\ 
\gcd(m'',n'')=1\\
\gcd(m''n'',2mn)=1
}}
\hspace{-0.4cm}
\frac{\phi^*(m'')^2\phi^*(n'')^2}{m''^2n''^2}
\sum_{\substack{\k\in K'}}
\frac{\mu(k_1) \cdots \mu(k_4)}
{[k_1,k_2]\cdots [k_3,k_4]}.
\end{align*}
It remains to check that this constant satisfies the description given in the statement of the theorem.

The calculations at the close of \S \ref{s:asymptotic-prelim} immediately yield
\begin{align*}
\sum_{\substack{m'',n''
}} \frac{\phi^*(m'')^2\phi^*(n'')^2}{m''^2n''^2}
&\sum_{\substack{\k\in K'}}
\frac{\mu(k_1)\cdots \mu(k_4)}
{[k_1,k_2]\cdots [k_3,k_4]}
=\prod_{p\nmid 2mn} a_p,
\end{align*}
with $a_p$ given by \eqref{eq:a_p}.
For given $i,j\in \{0,1\}$, with $\min\{i,j\}=0$,  let
\begin{equation}\label{eq:lab-V}
E(i,j)=
\left\{
\bnu \in \ZZ_{\geq 0}^4:
\begin{array}{l}
\min\{\nu_1,\nu_2\}=
\min\{\nu_3,\nu_4\}=0\\
\min\{i+\nu_1,j+\nu_3\}\leq 1\\
\min\{i+\nu_2,j+\nu_4\}\leq 1\\
\nu_1=\nu_3 \Rightarrow 
\nu_1=\nu_3 =0\\
\nu_2=\nu_4 \Rightarrow 
\nu_2=\nu_4 =0
\end{array}
\right\}.
\end{equation}
The conditions in this set arise from the constraints 
 \eqref{eq:monday-1}, \eqref{eq:monday-1'}  in the definition of $L(m,n)$, together with the constraint that 
 \eqref{eq:monday-2} should not hold.
 Applying Lemma \ref{lem:card-T}, 
we   deduce that
\begin{align*}
\sum_{\substack{\bell\in L(m,n)\\
\scriptsize{\mbox{$\neg$\eqref{eq:monday-2}}
}}}
\frac{\#T_{mn}(\bell)}{\ell_1\ell_2\ell_3\ell_4 (mn)^4} 
&=\prod_{p\mid mn} c_p',
\end{align*}
where
\begin{align*}
c_p'
&=\sum_{\bnu\in E(v_p(m),v_p(n))}
\frac{
\#T_{p} (
p^{\nu_1},p^{\nu_2},p^{\nu_3},p^{\nu_4}  )}{
p^{4+\nu_1+\nu_2+\nu_3+\nu_4}} \\
&=
\left(1-\frac{1}{p}\right)^4
\sum_{\bnu\in E(v_p(m),v_p(n))}
\frac{
(3/4)^{f(\bnu)}}
{p^{\nu_1+\nu_2+\nu_3+\nu_4}},
\end{align*}
for any $p\mid mn$, 
where 
$$
f(\bnu)
=
\begin{cases}
1, & \mbox{if $\nu_i\in 2\ZZ$ for $1\leq i\leq 4$},\\
0, & \mbox{otherwise}.
\end{cases}
$$
A straightforward calculation reveals that
\begin{equation}\label{eq:omega'-p}
\begin{split}
c_p'
&=\left(1-\frac{1}{p}\right)^4 \left(
1+\frac{4+\frac{3}{p}+\frac{2}{p^2}+\frac{2}{p^3}}{p(1-\frac{1}{p^2})}
+\frac{2+\frac{4}{p}+\frac{3}{2p^2}}{p^2(1-\frac{1}{p^2})^2}
\right)\\
&=\left(1-\frac{1}{p}\right)^2 
\left(1+\frac{1}{p}\right)^{-2}
\left(
1+\frac{4}{p}+\frac{3}{p^2}+\frac{2}{p^3}+\frac{3/2}{p^4}
-\frac{2}{p^5}-\frac{2}{p^6}
\right).
\end{split}
\end{equation}

Returning to our expression for $\tau_{\mathrm{loc}}$ 
it remains to execute the summation over $\bve$ and $m,n$.
The latter leads to the identity
\begin{align*}
\sum_{\substack{m,n\in \cB\\ \gcd(m,n)=1}}\frac{1}{m^2n^2}\prod_{p\nmid 2mn} a_p \prod_{p\mid mn}c_p'
&= 
\prod_{p\=1\4} a_p \prod_{p\=3 \4} \left(a_p+\frac{2c_p'}{p^2}
\right).
\end{align*}
Carrying out the sum over $\bve$, and recalling \eqref{eq:tau-inf}, we are therefore led to the final expression
\begin{equation}\label{eq:final-c}
\tau_{\mathrm{loc}}=\tau_{\mathrm{loc},\infty} \tau_{\mathrm{loc},2}
\prod_{p\=1\4}a_p \prod_{p\=3 \4} \left(a_p+\frac{2c_p'}{p^2}\right),
\end{equation}
with $a_p,c_p'$  given by \eqref{eq:a_p} and \eqref{eq:omega'-p}, respectively.

Now it is easy to check that $a_p=\tau_{\mathrm{loc},p}$
in \eqref{eq:tau-p} when $p\=1\4$. 
In order to complete the proof of Theorem \ref{t:M-local} it therefore remains to show that 
$$
a_p+\frac{2c_p'}{p^2}=\tau_{\mathrm{loc},p},
$$
when $p\=3\4$.  To see this we may split the calculation of $\tau_{\mathrm{loc},p}$ into three densities, according to whether $p\nmid mn$ or $p\mid m$ or $p\mid n$.  The density corresponding to the first case is $a_p$. We claim that the density $d_p$, say,  corresponding to the second case is $c_p'/p^2$. This will suffice to establish the desired identity, since the third density follows by symmetry.  Recalling \eqref{eq:tau-p} it is clear that 
\begin{align*}
d_p&=
\frac{1}{p^2}
\lim_{k\rightarrow \infty}
p^{-4k}\#\left\{
(a',b',c,d)\in (\ZZ/p^k\ZZ)^4: 
\begin{array}{l}
X_{pa',pb',c,d}(\QQ_p)\neq \emptyset
\\ 
\min\{1+v_p(a'),v_p(c)\}\leq 1\\
\min\{1+v_p(b'),v_p(d)\}\leq 1\\
\min\{v_p(a),v_p(b)\}\leq \delta_p\\
\min\{v_p(c),v_p(d)\}=0
\end{array}
\right\}\\
&=\frac{1}{p^2} \sum_{\bnu\in E(1,0)} 
\frac{
\#T_{p} (
p^{\nu_1},p^{\nu_2},p^{\nu_3},p^{\nu_4}  )}{
p^{4+\nu_1+\nu_2+\nu_3+\nu_4}},
\end{align*}
in the notation of \eqref{eq:lab-V}.  It is now clear that $d_p=c_p'/p^2$, 
as claimed.

\bigskip

The remainder of this section is concerned with 
the proof of Theorem \ref{c:cor1}. There are two ingredients to this. The first is a numerical evaluation of the constant $\tau_{\mathrm{loc}}$ in \eqref{eq:final-c} and the second is the asymptotic estimate for  $N(P)$ in Lemma \ref{lem:M-total}.
Beginning with the former, we have the following calculation of the $2$-adic density $\tau_{\mathrm{loc},2}$ in \eqref{eq:tau-p}. 

\begin{lemma}\label{lem:tau-2}
We have $\tau_{\mathrm{loc},2}=4751/(9\times 2^{10})$.
\end{lemma}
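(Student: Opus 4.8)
The plan is to evaluate the convergent series
\[
\tau_{\mathrm{loc},2}=\sum_{\iota\in\{0,1\}}\ \sum_{(\beta,\gamma,\delta)\in L_2^{(\iota)}}\frac{\#H_{\beta,\gamma,\delta}}{2^{16+\beta+\gamma+\delta}}
\]
obtained above, in which $\#H_{\beta,\gamma,\delta}$ is the number of $(A',B',C',D')\in(\ZZ/16\ZZ)^4$ with $2\nmid A'B'C'D'$ such that $(A',2^\beta B',2^\gamma C',2^\delta D')\bmod{16}$ lies in $\Totii$, and $L_2^{(\iota)}$ is given by \eqref{eq:L2}.

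The first step is to note that $\#H_{\beta,\gamma,\delta}$ depends on $(\beta,\gamma,\delta)$ only through a bounded amount of combinatorial data. Inspecting the characterisations of $\Totii(i,j)$ in Lemmas \ref{lem:1-1}--\ref{lem:3-2}, the exponents $\beta,\gamma,\delta$ enter solely via their parities, via finitely many threshold comparisons such as ``$\geq 1$'', ``$\geq 2$'', ``$=1$'', ``$\in\{2,3\}$'' and ``$\geq 3$'', and via residues modulo $8$ or $16$ of sums of the odd parts $A,B',C',D'$, which carry no dependence on $2^\beta,2^\gamma,2^\delta$ whatsoever. Hence the index set $L_2^{(\iota)}$, which imposes $\min\{\gamma,\delta\}=0$, $\min\{\beta,\delta\}\leq 1$ and $\beta\geq 1-\iota$, partitions into finitely many ``shapes'' on each of which $\#H_{\beta,\gamma,\delta}$ is constant, and on each shape $2^{-\beta-\gamma-\delta}$ is summed by an elementary geometric series; it is these tails --- for instance $\sum_{\beta\geq 2,\ 2\mid\beta}2^{-\beta}=\tfrac{1}{3}$ and $\sum_{\beta\geq 3,\ 2\nmid\beta}2^{-\beta}=\tfrac{1}{6}$, and the analogous ones in $\gamma$ and $\delta$ --- that account for the factor $9=3^2$ in the stated denominator.

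The second step is to compute $\#H_{\beta,\gamma,\delta}$ on each shape. Using \eqref{eq:x-files}, namely $\Totii(i,j)=-\Totii(-i,-j)$, it suffices to treat the residue classes with $A\equiv 1\bmod{4}$ and double the outcome, which reduces matters to the four families $\Totii(1,1)$, $\Totii(1,3)$, $\Totii(1,0)$ and $\Totii(1,2)$, selected according as $\beta=0$, $\beta=1$ or $\beta\geq 2$, with an extra dependence on the residue of $B'$ modulo $4$ once $\beta\geq 1$. For each of the explicit disjunctions listed in Lemmas \ref{lem:1-1}, \ref{lem:1-3}, \ref{lem:1-0} and \ref{lem:1-2} one counts the odd residues of $(A',B',C',D')$ modulo $16$ --- a count that factors through the residues modulo $8$ in every case except the $\Totii(1,3)$ configurations governed by Lemma \ref{lem:1-3-3-1}, where one must genuinely work modulo $16$ --- satisfying the prescribed congruences; each such count is a finite sum of products of the number of admissible residues for $(A',B')$ and for $(C',D')$, lightly coupled through the congruences modulo $8$ that tie $A+B$ to $C+D$ (or to $C'+D$, $C+D'$). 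Summing the resulting rationals against the geometric weights, over all shapes and over $\iota\in\{0,1\}$, and carrying out the arithmetic then yields $\tau_{\mathrm{loc},2}=4751/(9\cdot 2^{10})$.

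The principal obstacle is purely the bookkeeping of this last step: the characterisations in Lemmas \ref{lem:1-1}--\ref{lem:1-2} contain many nested sub-cases, and one must check that the partition of $L_2^{(\iota)}$ into shapes is both exhaustive and non-overlapping and that every sub-case is attached to exactly those shapes on which its hypotheses hold. There is no conceptual difficulty, only a long elementary computation, and for that reason I would --- as the authors do --- corroborate the final value by an independent direct computer enumeration of the quadruples $(a,b,c,d)$ modulo $2^k$, organised by $2$-adic valuation pattern, for $k$ large enough that $2^{-4k}\#\{\dots\}$ has stabilised to the claimed rational.
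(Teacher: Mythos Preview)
Your proposal is correct and follows essentially the same route as the paper: reduce via the symmetry $\Totii(i,j)=-\Totii(-i,-j)$ to the four families with $A\equiv 1\bmod 4$, partition the index set $L_2^{(\iota)}$ into finitely many shapes on which $\#H_{\beta,\gamma,\delta}$ is constant, and sum the resulting geometric series. The paper organises the computation by introducing the partial densities $\tau_2(i,j)$ and recording their exact values $\tau_2(1,0)=89/(9\cdot 2^8)$, $\tau_2(1,1)=17/(3\cdot 2^7)$, $\tau_2(1,2)=23/2^9$, $\tau_2(1,3)=95/2^{11}$ in a table, then combines them as $\tau_{\mathrm{loc},2}=4(\tau_2(1,0)+\tau_2(1,2))+2(\tau_2(1,1)+\tau_2(1,3))$; your outline is the same calculation without these intermediate checkpoints, and both accounts freely concede that the case-by-case arithmetic is routine but omitted.
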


\begin{proof}
Let $(i,j)\in (\ZZ/4\ZZ)^2$, with $2\nmid i$. 
To establish the lemma it will be convenient to 
set
\begin{equation*}
\tau_2(i,j)
= 
\sum_{(\beta,\gamma,\delta)\in L_2^{(1)}}
\frac{\#H_{\beta,\gamma,\delta}(i,j)}{2^{16+\beta+\gamma+\delta}},
\end{equation*}
where $L_2^{(1)}$ is given by \eqref{eq:L2} and 
$\#H_{\beta,\gamma,\delta}(i,j)$ is the number of $(A',B',C',D') \in(\ZZ/16\ZZ)^4$, with 
$2\nmid A'B'C'D'$, for which 
$(A',2^\beta B',2^\gamma C',2^\delta D')$ belongs to $\Totii$ modulo ${16}$
and 
$$
(A',2^\beta B')\=(i,j)\4.
$$
We note that 
$\tau_2(i,j)=\tau_2(-i,-j)$. Hence we have 
\begin{align*}
\tau_{\mathrm{loc},2}
&=\sum_{i\in\{1,3\}}\sum_{j\in\{0,2\}}\tau_2(i,j)+
\sum_{i\in\{1,3\}}\sum_{j\in\{0,1,2,3\}}\tau_2(i,j)\\
&=2\sum_{i\in\{1,3\}}\sum_{j\in\{0,2\}}\tau_2(i,j)+
\sum_{i,j\in\{1,3\}}\tau_2(i,j)\\
&=4\left(\tau_2(1,0)+\tau_2(1,2)\right)+
2\left(\tau_2(1,1)+\tau_2(1,3)\right).
\end{align*}
The calculation of the densities $\tau_2(i,j)$ is 
based on our analysis in \S \ref{s:2adic} 
The process is routine but  very
 tedious. We have decided merely to record the outcome of the investigation in Table~\ref{t:2-adic}.
 \begin{table}[h]
\centering
\begin{tabular}{|l|l|l|}
\hline
$(i,j) \4$
&
$\tau_2(i,j)$ & Proof
\\ 
\hline
$(1,0)$ & $89/(9\times 2^{8})$& Lemma \ref{lem:1-0}\\
$(1,1)$ & $17/(3\times 2^7)$ & Lemma \ref{lem:1-1}\\
$(1,2)$ & $23/2^9$ & Lemma \ref{lem:1-2}\\
$(1,3)$ & ${95}/{2^{11}}$ & Lemma \ref{lem:1-3}\\
\hline
\end{tabular}
\vspace{0.2cm}
\caption{$2$-adic densities $\tau_2(i,j)$
}\label{t:2-adic}
\end{table} 
Once combined with our formula for $\tau_{\mathrm{loc},2}$, this therefore concludes the proof of the lemma.
\end{proof}

The numerical value of $\tau_{\mathrm{loc},2}$ is $0.515516\cdots$. 
Let $N(2^k)$ denote  the cardinality on the right hand side of 
\eqref{eq:tau-p} when $p=2$.
Tim Dokchitser has kindly implemented 
a computer algorithm for calculating the ratios $2^{-4k}N(2^k)$ for small values of $k$. Taking 
$k\leq 7$  leads to the numerical value $
0.514905\cdots$, which agrees quite closely with Lemma \ref{lem:tau-2}.

The conclusion of Theorem 
 \ref{c:cor1} is now available. Combining Theorem \ref{t:M-local} and Lemma \ref{lem:M-total}, we conclude that 
$$
\lim_{P\rightarrow \infty} \frac{\Nloc(P)}{N(P)}=
 \frac{\tau_{\mathrm{loc}}}{\tau},
$$
with $\tau_{\mathrm{loc}}$ as in \eqref{eq:final-c}. 
It follows from \eqref{eq:a_p} and \eqref{eq:omega'-p} that
$$
a_p+\frac{2c_p'}{p^2}=
\frac{\left(1-\frac{1}{p}\right)^2}{\left(1+\frac{1}{p}\right)^2} \left(
1+\frac{4}{p}+\frac{8}{p^2}+\frac{12}{p^3}+
\frac{5}{p^4}+\frac{1}{p^6}-\frac{4}{p^7}-
\frac{4}{p^8}
\right).
$$
Applying 
\eqref{eq:tau-inf} and 
Lemma \ref{lem:tau-2}, we deduce that
\begin{align*}
 \frac{\tau_{\mathrm{loc}}}{\tau}
&= 
\frac{\tau_{\mathrm{loc},\infty} \tau_{\mathrm{loc},2}}{17/16} 
\prod_{p\=3 \4} 
\frac{(1+\frac{1}{p})^2 (a_p+\frac{2c_p'}{p^2})}{(1-\frac{1}{p})^2 b_p}
\\
&= 
\frac{
7\times 4751}{
2^{8} \times 3^{2} \times 17} 
\prod_{p\=3 \4} 
\left(\frac{1+\frac{4}{p}+\frac{8}{p^2}+\frac{12}{p^3}+
\frac{5}{p^4}+\frac{1}{p^6}-\frac{4}{p^7}-
\frac{4}{p^8} }{b_p
}\right)\\
&= 
\frac{33257}{39168}
\prod_{p\=3 \4} 
\left( 1-\frac{6-\frac{9}{p^2}+\frac{4}{p^4}}{p^4 b_p}\right),
\end{align*}
with $b_p$ given by \eqref{eq:b_p}.  The latter Euler product converges very rapidly and has numerical value
$0.98186722\cdots,$ whence $\tau_{\mathrm{loc}}/\tau=
0.8336897$, up to 7 decimal places.
This completes the proof of  Theorem  \ref{c:cor1}.

\section{Asymptotics:  $N_{\mathrm{Br}}(P)$}\label{s:lower-upper}

The goal of this section is to establish Theorem \ref{thm2}.
The argument will begin along similar lines to the treatments of $N(P)$ and $\Nloc(P)$, but the  analysis is ultimately more involved. Whereas our earlier work relied upon the  basic estimate in Lemma \ref{lem:basic} for the number of integers in an interval which are coprime to a given integer, the treatment of $\NBr(P)$ will require more sophisticated tools from analytic number theory.  

\subsection{Preliminary analytic tools}

Let $\a=(a_1,a_2), \q=(q_1,q_2)\in \NN^2$. We define an arithmetic function $g_{\a,\q}:\NN\rightarrow \RR$  
multiplicatively on prime powers via
\begin{equation}\label{eq:g}
g_{\a,\q}(p^\nu)=\begin{cases}
1, & \mbox{if $\nu=0$,}\\
\frac{1}{2}(\frac{a_1a_2q_1}{p})-\frac{1}{2}, 
 & \mbox{if $\nu=1$, $p\=3\4$ and $p\nmid a_1a_2q_1q_2$,}\\
0, & \mbox{otherwise.}
\end{cases}
\end{equation}
Let 
$$
\chi_{q_2}(z)=\begin{cases}
1, & \mbox{if $z\in \cB$ with $\gcd(z,  q_2)=1$,}\\
0, & \mbox{otherwise},
\end{cases}
$$
where $\cB$ is given  by \eqref{eq:AB}.
One finds that 
\begin{equation}\label{eq:gz}
g_{\a,\q}(z)=\frac{\mu(z)\chi_{q_2}(z)}{2^{\omega(z)}} \sum_{\ell\mid z} \mu(\ell) \left(\frac{a_1a_2q_1}{\ell}\right).
\end{equation}
We begin with an estimate for the average order of $|g_{\a,\q}|$.

\begin{lemma}\label{lem:g}
Let $\ve>0$. For  $x_1,x_2 , Z>2$, we have 
$$
\sum_{a_1\leq x_1}
\sum_{a_2\leq x_2}
\sum_{\substack{z\leq Z\\
\gcd(z,q_1)=1}} |g_{\a,\q}(z)| \ll  
\frac{x_1x_2Z}{(\log Z)^{3/4}} +\sqrt{x_1x_2}Z^{11/8+\ve},
$$
uniformly in $q_1,q_2$.
\end{lemma}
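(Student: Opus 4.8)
The plan is to reduce the triple sum over $a_1,a_2$ and $z$ to a problem about the average of the multiplicative function $|g_{\a,\q}(z)|$ twisted by a quadratic character, after first disposing of the dependence on $a_1,a_2$. The starting point is the formula \eqref{eq:gz}, which shows that $|g_{\a,\q}(z)| \le \mu(z)^2\chi_{q_2}(z)$ always, and more precisely that for squarefree $z\in\cB$ coprime to $q_1$ one has $|g_{\a,\q}(z)| = 2^{-\omega(z)}\bigl|\sum_{\ell\mid z}\mu(\ell)(\frac{a_1a_2q_1}{\ell})\bigr|$. Expanding the square $|g_{\a,\q}(z)|^2$ — or rather keeping it linear but opening up the inner sum — one sees that the character sum over $\ell\mid z$ detects, on average over $a_1a_2$, whether $z$ behaves like a number with ``many'' prime factors $p$ for which $(\frac{a_1a_2q_1}{p})=1$. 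Summing over $a_1\le x_1$, $a_2\le x_2$ and using the Pólya–Vinogradov / large sieve bound for incomplete character sums $\sum_{a\le x}(\frac{a}{\ell})$, the ``off-diagonal'' terms $\ell\neq 1$ contribute $O(\sqrt{x_1x_2}\,\ell^{1/2+\ve})$ per $z$, and summing $\ell\mid z\le Z$ gives the second error term $\sqrt{x_1x_2}Z^{11/8+\ve}$ (the exponent $11/8 = 1 + 3/8$ arising from $\sum_{z\le Z}\tau(z)z^{3/8}\ll Z^{11/8+\ve}$, or a similar bookkeeping).

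The main term then comes from the diagonal $\ell=1$, which contributes $2^{-\omega(z)}\mu(z)^2\chi_{q_2}(z)$ to $|g_{\a,\q}(z)|$, uniformly, and hence a main contribution
\[
x_1x_2 \sum_{\substack{z\le Z\\ \gcd(z,q_1)=1}} \frac{\mu(z)^2\chi_{q_2}(z)}{2^{\omega(z)}}
\]
to the whole expression. Here $\chi_{q_2}(z)$ restricts to squarefree $z$ all of whose prime factors are $\equiv 3\bmod 4$ (and coprime to $q_2$), so this is a sum of a non-negative multiplicative function supported on such $z$, with local factor $1 + \frac{1}{2p}$ at primes $p\equiv 3\bmod 4$ (and $1$ otherwise). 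By a standard Selberg–Delange argument, or more simply by Shiu's theorem on multiplicative functions in short intervals combined with the Dirichlet-series factorisation against $L(s,\chi_{-4})^{1/2}$, this sum is
\[
\sum_{\substack{z\le Z\\ \gcd(z,q_1)=1}} \frac{\mu(z)^2\chi_{q_2}(z)}{2^{\omega(z)}}
\ll \frac{Z}{(\log Z)^{1 - \frac{1}{2}}} = \frac{Z}{(\log Z)^{1/2}},
\]
since the function has ``average density'' $\tfrac12$ on primes $p\equiv 3\bmod 4$, i.e. density $\tfrac14$ among all primes — wait, that would give exponent $3/4$, which is exactly what we want. Let me restate: the singular series exponent is $1$ minus the Dirichlet density of the support weighted by the value $\tfrac12$, namely $1 - \tfrac12\cdot\tfrac12 = \tfrac34$, so the sum is $\ll Z/(\log Z)^{3/4}$, uniformly in $q_1,q_2$ since dropping the coprimality/coprimality-to-$q_2$ conditions only increases the sum. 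Multiplying by $x_1x_2$ gives the claimed main term $x_1x_2Z/(\log Z)^{3/4}$.

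I would organise the write-up as: (1) record the pointwise bound and the splitting $|g_{\a,\q}(z)| \le 2^{-\omega(z)}\mu(z)^2\chi_{q_2}(z) + (\text{character-sum remainder})$ coming from \eqref{eq:gz}; (2) bound the remainder after summing over $a_1,a_2$ by incomplete-character-sum estimates and the divisor bound, yielding $\sqrt{x_1x_2}Z^{11/8+\ve}$; (3) bound the diagonal term by the mean value of the multiplicative function $2^{-\omega(z)}\mu(z)^2\chi_{q_2}(z)$, invoking a standard tool (Shiu, or a Rankin-type upper bound, or Selberg–Delange) to get $\ll Z/(\log Z)^{3/4}$ uniformly in $q_1,q_2$. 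The main obstacle is step (3): one must get the power $3/4$ of $\log Z$ correct and, crucially, \emph{uniformly} in $q_1$ and $q_2$ — the cleanest route is a Rankin/Shiu-style upper bound
\[
\sum_{\substack{z\le Z\\ \gcd(z,q_1 q_2)=1}}\frac{\mu(z)^2\chi(z)}{2^{\omega(z)}}
\le \sum_{z\le Z}\frac{\mu(z)^2}{2^{\omega(z)}}\mathbf 1[p\mid z\Rightarrow p\equiv 3\ (4)]
\ll \frac{Z}{(\log Z)^{3/4}},
\]
which discards the $q$-dependence for free and only needs an upper bound, so no lower-order main term or error analysis with $q_i$-uniformity is required. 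A secondary subtlety is making sure the character-sum remainder is genuinely $\ll\sqrt{x_1x_2}Z^{11/8+\ve}$ and not larger: since the relevant modulus is $\ell\le Z$ and one sums $\tau(z)$ over $z\le Z$ after extracting $\ell\mid z$, the exponent $11/8$ has comfortable slack over the naive $\sqrt{x_1x_2}\,\sum_{\ell\le Z}\ell^{-1}\cdot\ell^{1/2}\cdot(\text{number of }z\le Z\text{ with }\ell\mid z)$, so this step should be routine.
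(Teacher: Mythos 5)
Your decomposition is essentially the one the paper uses: write $|g_{\a,\q}(z)| = 2^{-\omega(z)}\sum_{\ell\mid z}\mu(\ell)\bigl(\tfrac{a_1a_2q_1}{\ell}\bigr)$ for $z$ in the support (this is already non-negative, so no absolute value is needed), let the $\ell=1$ term produce the main contribution $x_1x_2\sum_{z\leq Z,\,z\in\cB}2^{-\omega(z)}\ll x_1x_2Z(\log Z)^{-3/4}$ by Selberg--Delange, with uniformity in $q_1,q_2$ coming for free from discarding the coprimality conditions, and control the terms with $\ell>1$ by a character-sum estimate summed over $\ell\mid z\leq Z$. The one step that does not go through is the choice of character-sum bound. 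P\'olya--Vinogradov (or a crude large-sieve bound) gives at best $\sum_{a_1\leq x_1}\sum_{a_2\leq x_2}\bigl(\tfrac{a_1a_2}{\ell}\bigr)\ll\sqrt{x_1x_2}\,\ell^{1/2+\ve}$, and summing over $\ell\mid z\leq Z$ then yields $\sqrt{x_1x_2}\,Z^{3/2+\ve}$, not $\sqrt{x_1x_2}\,Z^{11/8+\ve}$. Your own parenthetical remark that the exponent is $11/8=1+3/8$ coming from $\sum_{z\leq Z}\tau(z)z^{3/8}\ll Z^{11/8+\ve}$ is the right bookkeeping, but obtaining $\ell^{3/8+\ve}$ per character sum requires the Burgess bound (applied in each of the two variables, giving $\ell^{3/16+\ve}$ twice), not P\'olya--Vinogradov; the two are attributed interchangeably in your write-up but are genuinely different in strength. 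This is not cosmetic: in the application of Lemma~\ref{lem:g} to the tail $z>P^2/(MT_2)$ one has $Z\ll P^2/M$, and the extra $Z^{1/8}\gg P^{1/4-\ve}$ lost by weakening Burgess to P\'olya--Vinogradov would push the secondary error up to size $P^{4+\ve}$, swamping the main term $P^4(\log P)^{-1/4}$ in Theorem~\ref{thm2}. So the architecture is correct, but the off-diagonal must be handled with Burgess.
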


\begin{proof}
Applying the Burgess bound for character sums, it follows that 
$$
\sum_{a_1\leq x_1}\sum_{a_2\leq x_2}\left(\frac{a_1a_2}{\ell}\right)
\ll  \sqrt{x_1x_2} \ell^{3/8+\ve},
$$
for any integer $\ell>1$.
If $\chi_{q_2}(z)=1$ then \eqref{eq:gz} implies that 
$$
|g_{\a,\q}(z)|=\frac{1}{2^{\omega(z)}} \sum_{\ell\mid z} \mu(\ell) \left(\frac{a_1a_2q_1}{\ell}\right).
$$
Hence 
\begin{align*}
\sum_{a_1\leq x_1}
\sum_{a_2\leq x_2}
\sum_{\substack{z\leq Z\\
\gcd(z,q_1)=1}}
|g_{\a,\q}(z)| 
&=
\sum_{a_1\leq x_1}
\sum_{a_2\leq x_2}
\sum_{\substack{z\leq Z\\
\gcd(z,q_1q_2)=1\\
z\in \cB
}} 
\frac{1}{2^{\omega(z)}} 
+O (
\sqrt{x_1x_2}Z^{11/8+\ve})\\
&\ll 
x_1x_2
\sum_{\substack{z\leq Z\\
z\in \cB
}} 
\frac{1}{2^{\omega(z)}} +
\sqrt{x_1x_2}Z^{11/8+\ve}.
\end{align*}
The remaining sum over $z$ is easily seen to be $O(Z/(\log Z)^{3/4})$, by  an application of the Selberg--Delange method  (see \cite[\S II.5]{ten}, for example).
This concludes the proof.
\end{proof}

At a certain point in our argument it will be useful to approximate the function $g_{\a,\q}$ 
in \eqref{eq:gz} by the simpler arithmetic function
\begin{equation}\label{eq:gz1} 
\hat g_{q_2}(z)=\frac{\mu(z)\chi_{q_2}(z)}{2^{\omega(z)}}.
\end{equation}
We will need an asymptotic formula for the average order of $\hat g_{q_2}(z)h(z)/z$, where $h$ is an arbitrary multiplicative arithmetic function satisfying 
\begin{equation}\label{eq:hyp-h}
h(p)=1+O\left(\frac{1}{p}\right),
\end{equation}
for   primes $p$.  
This is achieved in the following result.

\begin{lemma}\label{lem:z-sum}
Let $\ve>0$ and let $h$ be a multiplicative arithmetic function satisfying \eqref{eq:hyp-h}.
For 
 $Z\geq 2$ and $q_2\in \NN$, we have 
$$
\sum_{\substack{z\leq Z\\ \gcd(z,q_2)=1}} 
\frac{\hat g_{q_2}(z)h(z)}{z}=
\frac{4c(h)\gamma(q_2;h)}{|\Gamma(-1/4)| (\log Z)^{1/4}}\left\{1+O \left(\frac{q_2^\ve}{\log Z}\right)\right\},
$$
where 
$$
\gamma(q_2;h)=
\prod_{\substack{p\=3\4\\ p\mid q_2}}
\left(1-\frac{h(p)}{2p}\right)^{-1}
$$
and $c(h)$ is the convergent Euler product
$$
c(h)=\prod_{p}\left(1-\frac{1}{p}\right)^{-1/4}
\prod_{p\=3\4}
\left(1-\frac{h(p)}{2p}\right).
$$
\end{lemma}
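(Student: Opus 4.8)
The plan is to recognise $\hat g_{q_2}(z)h(z)/z$ as a multiplicative function whose associated Dirichlet series is, up to a harmless factor, a constant multiple of $(\zeta(s))^{-1/4}$-type object, and then apply the Selberg--Delange method (as in \cite[\S II.5]{ten}) to extract the $(\log Z)^{1/4}$-saving. Concretely, write $F_{q_2}(s)=\sum_{\gcd(z,q_2)=1}\hat g_{q_2}(z)h(z)z^{-1-s}$ for $\Re s>0$. Since $\hat g_{q_2}$ is supported on squarefree integers built from primes $p\equiv 3\4$, with $\hat g_{q_2}(p)=-1/2$ when $p\equiv 3\4$ and $p\nmid q_2$, the Euler product is
\begin{align*}
F_{q_2}(s)=\prod_{\substack{p\equiv 3\4\\ p\nmid q_2}}\left(1-\frac{h(p)}{2p^{1+s}}\right).
\end{align*}
Using $h(p)=1+O(1/p)$ and the prime number theorem in arithmetic progressions, one sees $F_{q_2}(s)=\zeta(1+s)^{-1/4}G_{q_2}(s)$, where $G_{q_2}(s)$ is holomorphic and bounded (with bounds involving $q_2^\ve$) in a region $\Re s\ge -c/\log(|{\Im s}|+2)$; here the exponent $-1/4$ arises because the primes $p\equiv 3\4$ have Dirichlet density $1/2$ and each contributes a factor behaving like $(1-\tfrac1{2p^{1+s}})$, so $\prod_{p\equiv 3\4}(1-\tfrac1{2p^{1+s}})\sim \zeta(1+s)^{-1/4}$.

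The main steps, in order, are: (i) establish the above factorisation and analytic continuation of $G_{q_2}(s)$, isolating the Euler factors at primes dividing $q_2$ — removing the condition $p\mid q_2$ from the product contributes exactly the factor $\prod_{p\equiv 3\4,\,p\mid q_2}(1-\tfrac{h(p)}{2p^{1+s}})^{-1}$, which at $s=0$ gives $\gamma(q_2;h)$; (ii) identify $G_{q_2}(0)$: the full Euler product over all $p\equiv 3\4$ of $(1-\tfrac{h(p)}{2p})$ times $\zeta(1)^{1/4}$-renormalisation yields $c(h)$, and one checks $c(h)$ converges because $(1-\tfrac1p)^{-1/4}(1-\tfrac{h(p)}{2p})=1+O(1/p^2)$ after accounting for the density-$\tfrac12$ of such primes — actually one writes $c(h)=\prod_p(1-\tfrac1p)^{-1/4}\prod_{p\equiv 3\4}(1-\tfrac{h(p)}{2p})$ and verifies convergence by comparing local factors; (iii) apply a Perron-type / contour argument, or directly quote the Selberg--Delange asymptotic, giving
\begin{align*}
\sum_{\substack{z\le Z\\ \gcd(z,q_2)=1}}\frac{\hat g_{q_2}(z)h(z)}{z}=\frac{G_{q_2}(0)}{\Gamma(-1/4)}(\log Z)^{-1/4}\left(1+O\!\left(\frac{q_2^\ve}{\log Z}\right)\right),
\end{align*}
and (iv) match constants: $1/\Gamma(-1/4)$ and $G_{q_2}(0)=c(h)\gamma(q_2;h)$, noting $\Gamma(-1/4)<0$ so that $1/\Gamma(-1/4)=-1/|\Gamma(-1/4)|$, and the sign is absorbed because the leading coefficient of $\zeta(1+s)^{-1/4}$ near $s=0$ carries a compensating sign — tracking this carefully reproduces the stated $4c(h)\gamma(q_2;h)/(|\Gamma(-1/4)|(\log Z)^{1/4})$, the factor $4$ coming from $-1/\Gamma(-1/4)=4/|\Gamma(-3/4)|$-type identities (more simply, from $\Gamma(3/4)=\tfrac{-1}{4}\Gamma(-1/4)$ via $\Gamma(x+1)=x\Gamma(x)$, so the Selberg--Delange normalisation $1/\Gamma(\text{exponent})$ with exponent $-1/4$ rewrites as $4/|\Gamma(3/4)|$; I will present the constant in whichever of these equivalent forms the reference yields and reconcile with the statement).

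The main obstacle is the \emph{uniformity in $q_2$}: the error term $O(q_2^\ve/\log Z)$ must be genuinely uniform, so I need the Selberg--Delange machinery applied not to a fixed function but to the family $\hat g_{q_2}h$ with all implied constants controlled polynomially in $q_2$. This is handled by checking that $G_{q_2}(s)$ and its derivatives are bounded by $O(q_2^\ve)$ on the relevant zero-free region — the $q_2$-dependence enters only through the finitely many removed local factors at $p\mid q_2$, each of which is $1+O(1/p)$, so their product and logarithmic derivative are $O(\tau(q_2))=O(q_2^\ve)$ — together with the fact that the constant $c(h)$ itself does not depend on $q_2$. A secondary technical point is verifying hypothesis \eqref{eq:hyp-h} suffices to make $\prod_{p\equiv 3\4}(1-\tfrac{h(p)}{2p^{1+s}})\zeta(1+s)^{1/4}$ extend past $\Re s=0$; this follows since $\log$ of the product differs from $\tfrac14\log\zeta(1+s)$ by a Dirichlet series convergent for $\Re s>-1/2+\ve$, using $\sum_{p\equiv 3\4}p^{-1-s}=\tfrac12\log\frac1{s}+O(1)$ and $h(p)-1=O(1/p)$. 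Once these uniformity checks are in place, the conclusion follows directly from the standard Selberg--Delange theorem.
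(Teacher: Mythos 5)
Your approach is essentially the paper's: factor the Dirichlet series of $\hat g_{q_2}h$ as $\zeta$ (or $\zeta(1+\cdot)$) raised to the power $-1/4$ times a holomorphic correction $G_{q_2}$, control $G_{q_2}$ uniformly in $q_2$ via the removed Euler factors, and invoke Selberg--Delange. The uniformity discussion and the identification $G_{q_2}(0)=c(h)\gamma(q_2;h)$ are correct.

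However, step (iii) as written is not a correct quotation of Selberg--Delange, and the reconciliation in (iv) is too hand-wavy to be trusted. The theorem as stated in Tenenbaum gives an asymptotic for the \emph{unweighted} partial sum: with $\sum_z \hat g_{q_2}(z)h(z)z^{-s}=\zeta(s)^{-1/4}G_{q_2}(s)$ one obtains
\[
\sum_{\substack{z\le Z\\ \gcd(z,q_2)=1}}\hat g_{q_2}(z)h(z)=\frac{G_{q_2}(1)}{\Gamma(-1/4)}\frac{Z}{(\log Z)^{5/4}}\Bigl(1+O\bigl(q_2^\ve/\log Z\bigr)\Bigr),
\]
with $\Gamma(-1/4)<0$, so this quantity is negative. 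To pass to $\sum_{z\le Z}\hat g_{q_2}(z)h(z)/z$ you still need a partial-summation step (the paper's route): since the full Dirichlet series vanishes at $s=1$, the partial sum equals minus the tail, and $\int_Z^\infty \frac{dt}{t(\log t)^{5/4}}=4(\log Z)^{-1/4}$; this is exactly where the factor $4$ and the sign flip both enter, turning $1/\Gamma(-1/4)$ into $-4/\Gamma(-1/4)=1/\Gamma(3/4)=4/|\Gamma(-1/4)|$. Your formula in (iii) with bare $\Gamma(-1/4)$ in the denominator gives the wrong sign, and the subsequent ``sign is absorbed because $\zeta(1+s)^{-1/4}$ carries a compensating sign'' claim is false ($\zeta(1+s)^{-1/4}\sim s^{1/4}>0$ for $s\to 0^+$). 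You do eventually reach for the correct identity $\Gamma(3/4)=-\tfrac14\Gamma(-1/4)$, but it is presented as an afterthought rather than as what it is: the consequence of the partial summation that must be carried out explicitly (or, equivalently, of using the ``logarithmic'' variant of Selberg--Delange whose normalisation is $1/\Gamma(z+1)$, not $1/\Gamma(z)$). Making this step precise would complete the argument.
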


\begin{proof}
We consider  the associated Dirichlet series
\begin{align*}
D(s)=\sum_{\substack{z=1\\ \gcd(z,q_2)=1}}^\infty 
\frac{\hat g_{q_2}(z)h(z)}{z^s}
&=
\prod_{\substack{p\=3\4\\ p\nmid q_2}}
\left(1-\frac{h(p)}{2p^s}\right)\\
&=
\prod_{\substack{p\=3\4}}
\left(1-\frac{h(p)}{2p^s}\right)
\prod_{\substack{p\=3\4\\ p\mid q_2}}
\left(1-\frac{h(p)}{2p^s}\right)^{-1}\\
&=
D_1(s)
\prod_{\substack{p\=3\4\\ p\mid q_2}}
\left(1-\frac{h(p)}{2p^s}\right)^{-1},
\end{align*}
say, for $\Re(s)=\sigma >1$.
Since $h(p)=1+O(1/p)$, 
one sees that 
\begin{align*}
\zeta(s)D_1(s)^4
&=
\prod_{p\not\=3\4}\left(1-\frac{1}{p^s}\right)^{-1}
\prod_{p\=3\4}\left(1-\frac{1}{p^s}\right)^{-1}
\left(1-\frac{2}{p^s}+O\left(\frac{1}{p^{2\sigma}}+\frac{1}{p^{\sigma+1}}\right)\right)\\
&=L(s,\chi) G(s),
\end{align*}
where $L(s,\chi)$ is the Dirichlet $L$-function associated to the real character $\chi$ modulo $4$ and 
$G$ may be continued as a holomorphic function to the half-plane $\sigma>1/2$ and is bounded absolutely in this region. 
Hence it follows that 
$D(s)=\zeta(s)^{-1/4}G_{q_2}(s)$, where 
$$
G_{q_2}(s)=L(s,\chi)^{1/4}G(s)^{1/4}
\prod_{\substack{p\=3\4\\ p\mid {q_2}}}
\left(1-\frac{h(p)}{2p^s}\right)^{-1}.
$$
In particular
$
G_{q_2}(1)=c(h)\gamma(q_2;h),$ in the notation of  the lemma. 

We now invoke the Selberg--Delange method,  
as described in Tenenbaum \cite[\S II.5]{ten}.  This implies that 
$$
\sum_{\substack{z\leq Z\\ \gcd(z,q_2)=1}} 
\hat g_{q_2}(z)h(z)=
\frac{G_{q_2}(1)}{\Gamma(-1/4)}
\frac{Z}{(\log Z)^{5/4}}\left\{1+O \left(\frac{q_2^\ve}{\log Z}\right)\right\}.
$$
Noting that $\Gamma(-1/4)<0$, 
an application of partial summation completes the proof.
\end{proof}

\subsection{Asymptotic formula for $\NBr(P)$}

Our starting point in the proof of Theorem \ref{thm2} is \eqref{eq:tuesday2}, followed by the changes of variables \eqref{eq:change1},  \eqref{eq:change2} and \eqref{eq:change3'}.
In the present situation a further change of variables will be expedient.
Define
\begin{align*}
\ell_1''&=\gcd(a''',(m''n'')^\infty), \quad\ell_2''=\gcd(b''',(m''n'')^\infty), \\
\ell_3''&=\gcd(c''',(m''n'')^\infty), \quad
\ell_4''=\gcd(d''',(m''n'')^\infty).
\end{align*}
We now write 
$$
a'''=\ell_1'' \tilde a , \quad b'''=\ell_2''\tilde b, \quad c'''= \ell_3''\tilde c, \quad d'''=\ell_4'' \tilde d,
$$
for $\tilde a,\tilde b, \tilde c,\tilde d\in \NN$ satisfying  $\gcd(\tilde a\tilde b \tilde c\tilde d, m''n'')=1$.
The union of this constraint with \eqref{eq:change3-coprime-mod} 
and \eqref{eq:change3-coprime'-mod'} is equivalent to
\begin{equation}\label{eq:goat-cheese}
 \gcd(\tilde a\tilde b \tilde c\tilde  d,2mn m''n'')=1
\end{equation} 
and
\begin{equation}\label{eq:goat-cheese'}
\gcd(\tilde a \tilde d,\tilde b\tilde c)=1,
\end{equation}
with 
$\ell_1'',\ldots,\ell_4''\in \NN$ constrained to satisfy  
\begin{equation}\label{eq:sunday-1}
\gcd(\ell_2''\ell_4'',m'')=\gcd(\ell_1''\ell_3'',n'')=1,\quad \gcd(\ell_i'',\ell_j'')=1,
\end{equation}
for  $1\leq i<j\leq 4$. Note that since $\ell_i''\mid (m''n'')^\infty$ it automatically follows that $\ell_i''$ is coprime to $2mn$.
In what follows it will be convenient to redefine

\begin{equation}\label{eq:ABCD}
\begin{split}
A&=
mm''\ell_1\ell_1''  , \,\,\,\,\quad B=mn''\ell_2\ell_2''2^\beta ,\\ 
C&=nm''\ell_3\ell_3''2^\gamma 
, \quad D=nn''\ell_4\ell_4'' 2^\delta.
\end{split}
\end{equation}
Under our various transformations we may now write 
$
\Delta'=m''n'' \Delta'',
$
with 
\begin{equation}\label{eq:Delta''}
\Delta''=\ve_4\ell_1\ell_1''\ell_4\ell_4''2^\delta \tilde a \tilde d -
\ve_2\ve_3\ell_2\ell_2''\ell_3\ell_3''2^{\beta+\gamma} \tilde b \tilde c.
\end{equation}
In this new notation we must proceed to consider the constraints recorded in \S \ref{s:S-global} which are both necessary and sufficient to have $X_{a,b,c,d}(\QQ_v)\neq \emptyset$ for all $v\in \Omega$, but $X_{a,b,c,d}(\QQ)=\emptyset$.
Our key tool is Lemma \ref{lem:1} and the associated calculations.

For the infinite valuation we must have $\bve=(\ve_2,\ve_3,\ve_4)\not\in \{(+,-,-), (-,+,-)\}$ by part (i) of Lemma \ref{lem:X-local} and Lemma \ref{lem:easy}.
For the primes $p\=1\4$ there are no additional constraints arising. 
The situation is more complicated for the primes $p\=3\4$.
According to Lemma \ref{lem:easy-ish} we seek constraints under which there exists a unique choice of $\k\in \{0,1\}^2$ with $k_1+k_2\=v_p(mn)\2$ and 
$W_{(p^{k_1},p^{k_2})}(\QQ_p)\neq \emptyset$, where $W_\e$ is given in Lemma \ref{lem:1}.

Suppose that $p\mid mn$, so that $v_p(mn)=1$. Then Lemma \ref{lem:X-local}(iii) implies that 
\eqref{eq:monday-2} doesn't hold, that 
$p\nmid a''b''c''d''$ and that
$$
\left( \frac{-\ve_2 \ell_{1,p}\ell_{2,p}a''b''}{p}\right)+
\left( \frac{-\ve_3\ve_4 \ell_{3,p}\ell_{4,p}c''d''}{p}\right)\geq 0,
$$
if $v_p(\ell_i)\in 2\ZZ$  and $\ell_{i,p}=p^{-v_p(\ell_i)}\ell_i$, 
for $1\leq i\leq 4$.
The constraint in Lemma \ref{lem:p2}(iii) gives either
$v_p(\ell_i)\in 2\NN$ and $v_p(\ell_{i+2})=1$ for $i\in \{1,2\}$, or 
$v_p(\ell_i)=1$ and $v_p(\ell_{i+2})\in 2\NN$ for $i\in \{1,2\}$.
The constraint in Lemma \ref{lem:p2}(i) translates as
$$
\lan \frac{-\ve_2 \ell_{1}\ell_{2}a''b''}{p}\ran+
\lan\frac{-\ve_3\ve_4 \ell_{3}\ell_{4}c''d''}{p}\ran \leq 0.
$$
It follows from 
\eqref{eq:monday-1'}
that 
$v_p(\ell_1\ell_2)$ and $v_p(\ell_3\ell_4)$ cannot both be odd.
On recalling from \eqref{eq:monday-1} that $\ell_1,\ell_2$ are coprime, we see that $v_p(\ell_1)\in 2\ZZ$ if and only if $v_p(\ell_1\ell_2)\in 2\ZZ$. Similarly, 
$v_p(\ell_3)\in 2\ZZ$ if and only if $v_p(\ell_3\ell_4)\in 2\ZZ$, since $\ell_3,\ell_4$ are coprime.
Combining all these conditions therefore leads to the description that 
$p\nmid a''b''c''d''$, with one of the following
\begin{itemize}
\item
$v_p(\ell_i)\in 2\NN$ and $v_p(\ell_{i+2})=1$ for $i\in \{1,2\}$;
\item 
$v_p(\ell_i)=1$ and $v_p(\ell_{i+2})\in 2\NN$ for $i\in \{1,2\}$;
\item
$v_p(\ell_1\ell_2)\in 2\ZZ$ and $v_p(\ell_3\ell_4)\in 2\ZZ$, with
$$
\left( \frac{-\ve_2 \ell_{1,p}\ell_{2,p}a''b''}{p}\right)+
\left( \frac{-\ve_3\ve_4 \ell_{3,p}\ell_{4,p}c''d''}{p}\right)= 0;
$$

\item
$v_p(\ell_1\ell_2)\in 2\ZZ$ and $v_p(\ell_3\ell_4)\not\in 2\ZZ$, with
$$
\left( \frac{-\ve_2 \ell_{1,p}\ell_{2,p}a''b''}{p}\right)= -1;
$$

\item
$v_p(\ell_1\ell_2)\not\in 2\ZZ$ and $v_p(\ell_3\ell_4)\in 2\ZZ$, with 
$$
\left( \frac{-\ve_3\ve_4 \ell_{3,p}\ell_{4,p}c''d''}{p}\right)= -1.
$$

\end{itemize}
For given parameters $\bve,m,n,m'',n'',\bell,\bell''$, the 
constraints arising from primes $p\=3\4$ for which  $p\mid mn$ force the vector $(\tilde a,\tilde b, \tilde c, \tilde d)$ to lie in one of finitely many congruence classes modulo $p$. Let us denote the set of possible classes by 
$$
\tilde T_{mn}(\bell)\cong \bigoplus_{p\mid 
mn} \tilde T_p(\bell).
$$    
Here $\tilde T_{mn}(\bell)\subseteq (\ZZ/mn\ZZ)^4$ and 
 $\tilde T_{p}(\bell)\subseteq (\ZZ/p\ZZ)^4$ for each prime $p$.
Although 
$\tilde T_{mn}(\bell)$ also depends on $\bve,m'',n''$ and $\bell''$, its cardinality does not, as the following result shows.

\begin{lemma}\label{lem:card-T'}
Let $p\mid mn$. Then we have 
$$
\#\tilde T_{p}(\bell)=
\frac{1}{2^{1-\tau(v_p(\ell_1), \ldots, v_p(\ell_4))}}(p-1)^4,
$$
where for $\bnu\in \ZZ_{\geq 0}^4$ we define
$$
\tau(\bnu)=
\begin{cases}
1, & \mbox{if 
$\nu_i\in 2\NN$ and $\nu_{i+2}=1$ for $i\in \{1,2\}$,} \\
1, & \mbox{if 
$\nu_i=1$ and $\nu_{i+2}\in 2\NN$ for $i\in \{1,2\}$,}\\
0, & \mbox{otherwise.}
\end{cases}
$$
\end{lemma}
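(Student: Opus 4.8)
The plan is to follow the template of the proof of Lemma \ref{lem:card-T}, replacing the comparatively weak local condition used there by the sharper list of five alternatives displayed just before the statement.

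First I would reduce the problem to a congruence count modulo $p$. Fix a prime $p\mid mn$; it is odd and $p\=3\4$. Since $\gcd(m''n'',2mn)=1$ and each $\ell_i''$ divides a power of $m''n''$, the integers $m'',n'',\ell_1'',\ldots,\ell_4''$ are all prime to $p$; recalling that $a'=\ell_1 m''\ell_1''\tilde a$, $b'=\ell_2 n''2^\beta\ell_2''\tilde b$, and so on, this shows that on the locus where $p\nmid\tilde a\tilde b\tilde c\tilde d$ one has $v_p(a')=v_p(\ell_1)$, $v_p(b')=v_p(\ell_2)$, $v_p(c')=v_p(\ell_3)$ and $v_p(d')=v_p(\ell_4)$. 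Writing $\bnu=(\nu_1,\nu_2,\nu_3,\nu_4)$ for this quadruple of valuations, each of the five alternatives cutting out $\tilde T_p(\bell)$ becomes a condition involving only $\bnu$ and the residues $(\tilde a,\tilde b,\tilde c,\tilde d)$ modulo $p$; in particular the symbols $\lan\,\cdot\,\ran$ occurring there collapse to ordinary Legendre symbols once the even part of the relevant valuation has been absorbed into the $\ell_{i,p}$. It then remains to count these residue classes, and, exactly as in Lemma \ref{lem:card-T}, only the $(p-1)^4$ classes with all coordinates prime to $p$ can occur. Throughout I keep the standing convention of \S\ref{s:lower-upper} that $\bell\in L(m,n)$ satisfies $\neg$\eqref{eq:monday-2}.

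Next I would extract the dichotomy behind the stated formula. The first two of the five alternatives are conditions on $\bnu$ alone, and their union is precisely the event $\tau(\bnu)=1$; when this holds, every residue class with $p\nmid\tilde a\tilde b\tilde c\tilde d$ already lies in $\tilde T_p(\bell)$, whence $\#\tilde T_p(\bell)=(p-1)^4$. Suppose instead that $\tau(\bnu)=0$, so that neither of the first two alternatives can be satisfied. By \eqref{eq:monday-1'} (as noted in the paragraph preceding the lemma) the valuations $v_p(\ell_1\ell_2)$ and $v_p(\ell_3\ell_4)$ are not both odd, so exactly one of the three remaining alternatives is applicable, according to which parity pattern of the pair $(v_p(\ell_1\ell_2),v_p(\ell_3\ell_4))$ occurs (even/even, even/odd, or odd/even). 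In the even/even case the surviving constraint is that the two Legendre symbols $\bigl(\tfrac{-\ve_2\ell_{1,p}\ell_{2,p}\tilde a\tilde b}{p}\bigr)$ and $\bigl(\tfrac{-\ve_3\ve_4\ell_{3,p}\ell_{4,p}\tilde c\tilde d}{p}\bigr)$ sum to zero; in the other two cases exactly one of these two symbols is prescribed to equal $-1$. Since for fixed nonzero values of all the other variables the symbol $\bigl(\tfrac{\,\cdot\,\tilde a\tilde b}{p}\bigr)$ attains each value $\pm1$ on precisely $\tfrac12(p-1)^2$ pairs $(\tilde a,\tilde b)$ — the balancing observation already used in the proof of Lemma \ref{lem:card-T} — and likewise for $(\tilde c,\tilde d)$, each such surviving constraint selects exactly half of the $(p-1)^4$ coprime residue classes. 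Hence $\#\tilde T_p(\bell)=\tfrac12(p-1)^4$ when $\tau(\bnu)=0$, and the two cases combine to give $\#\tilde T_p(\bell)=2^{\tau(\bnu)-1}(p-1)^4$, which is the asserted value $2^{-(1-\tau(\bnu))}(p-1)^4$.

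The argument is bookkeeping rather than conceptual, and the step I expect to demand the most care is precisely this organisation of the five alternatives after reduction modulo $p$: checking that $\tau(\bnu)=1$ is exactly the ``no Legendre constraint'' regime, that when $\tau(\bnu)=0$ one and only one of the last three alternatives is active (with no overlap among them and nothing omitted), and — the most error-prone point — that the relation ``both Legendre symbols sum to zero'' in the third alternative cuts the count by a factor $\tfrac12$ and not by $\tfrac34$, which rests on the two symbols being independent and each balanced over the coprime residue classes. One should also confirm that the identities $v_p(a')=v_p(\ell_1)$, and so on, are genuinely unaffected by the intervening factors $m'',n'',\ell_i''$; this is where the coprimality $\gcd(m''n'',2mn)=1$ together with $\ell_i''\mid(m''n'')^\infty$ is used.
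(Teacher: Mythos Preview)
Your proposal is correct and follows essentially the same approach as the paper's own proof. The paper's argument is extremely terse---it simply observes that the two Legendre symbols each take only two values, so the constraint cuts the count by one half whatever the parity pattern---whereas you spell out the dichotomy between the $\tau=1$ regime (bullets 1--2, no Legendre constraint) and the $\tau=0$ regime (exactly one of bullets 3--5 active, cutting by $\tfrac12$), and verify the independence that makes the ``sum equals zero'' condition in bullet~3 select half rather than three-quarters of the classes.
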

\begin{proof}
This follows since  the Legendre symbols 
$( \frac{-\ve_2 \ell_{1,p}\ell_{2,p}a''b''}{p})$ and 
$( \frac{-\ve_3\ve_4 \ell_{3,p}\ell_{4,p}c''d''}{p})$ only take two 
possible values, whatever the parity of  $v_p(\ell_1\ell_2)$ or 
 $v_p(\ell_3\ell_4)$.
\end{proof}

Next we suppose that $p\nmid mnm''n''$, still with $p\=3\4.$  Recall that $\Delta'=m''n''\Delta''$, with $\Delta''$ given by \eqref{eq:Delta''}.  
Then according to Lemma \ref{lem:p2}(ii) we must avoid the the case
$$
\left(\frac{-\ve_2a'b'}{p}\right)=\left(\frac{-\ve_3\ve_4 c'd'}{p}\right)=1,
$$ 
if $p\mid \Delta'$ and $p\nmid a'b'c'd'.$
Note  that 
 $p\mid \Delta'$ and $p\nmid a'b'c'd'$ if and only if 
 $p\mid \Delta''$ and $p\nmid m''n''$.
But for such primes   $(\frac{-\ve_2a'b'}{p})$ and
$(\frac{-\ve_2\ve_3c'd'}{p})$ are equal.  Hence, since $p\=3\4$, we see that the constraint is equivalent to 
$$
\left( \frac{\ve_2 a'b'}{p}\right)=1,
$$
if $p\mid \Delta''$.

Suppose now that $p\nmid mn$ and $p\mid m''n''$.  In particular
$p\mid \Delta'$ and 
Lemma \ref{lem:p2} implies that we must have 
 $[\frac{-\ve_2a'b'}{p}]+[\frac{-\ve_3\ve_4 c'd'}{p}]\leq 0$.
We cannot have both $v_p(a'b')$ and $v_p(c'd')$ being odd. Moreover, we must have 
$\min\{v_p(a'b'), v_p(c'd')\}\leq 1$.
We deduce that precisely one of 
$
v_p(\ell_1''\ell_2'')$ or $
v_p(\ell_3''\ell_4'')$ is odd.
Hence 
we have the pair of conditions
$$
 \left(
 v_p(\ell_1''\ell_2''), v_p(\ell_3''\ell_4'')
 \right)=(0,1)\2, \quad 
\left[\frac{-\ve_2a'b'}{p}\right]=-1,
$$
 or 
 $$
  \left(
 v_p(\ell_1''\ell_2''), v_p(\ell_3''\ell_4'')
 \right)=(1,0)\2, \quad 
\left[\frac{-\ve_3\ve_4 c'd'}{p}\right]=-1.
 $$
 In accordance with this we are now led to introduce the set
\begin{equation}\label{eq:def-Lmn'}
\tilde{L}({m'',n''})=\left\{\bell''\in \NN^4: 
\begin{array}{l}
\mbox{$\ell_i''\mid (m''n'')^\infty$ and \eqref{eq:sunday-1} holds}\\
\mbox{for each $p\mid \ell_1''\cdots \ell_4''$, $\exists!$ $i\in \{1,\ldots,4\}$}\\
\mbox{such that  $v_p(\ell_i'')\=1\2$}
\end{array}
\right\}.
\end{equation}
For given parameters $\bve,m,n,m'',n'',\bell,\bell''$, 
with $\bell''\in \tilde{L}(m'',n'')$, 
the 
constraints arising from primes $p\=3\4$ for which  $p\mid m''n''$ force the vector $(\tilde a,\tilde b, \tilde c, \tilde d)$ to lie in one of finitely many congruence classes modulo $p$. Let us denote the set of possible classes by 
$$
\tilde U_{m''n''}(\bell'')\cong \bigoplus_{p\mid 
m''n''} \tilde U_p(\bell'').
$$    
The analogue of Lemma \ref{lem:card-T'} is the following easy result.

\begin{lemma}\label{lem:card-T''}
Let $p\mid m''n''$. Then we have 
$
\#\tilde U_{p}(\bell'')=
\frac{1}{2}(p-1)^4.
$
\end{lemma}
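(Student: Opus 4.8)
\textbf{Proof proposal for Lemma \ref{lem:card-T''}.}

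The statement is about a single prime $p\equiv 3\4$ dividing $m''n''$, so I would not need any Chinese Remainder step; the plan is simply to unwind the condition cutting out $\tilde U_p(\bell'')$ and count residues modulo $p$. The basic observations are that $p$ divides exactly one of $m'',n''$, and to the first power (since $m'',n''\in\cA$ and $\gcd(m'',n'')=1$); that $p\nmid mn$ and hence $p\nmid\ell_1\cdots\ell_4$ (each $\ell_j\mid(mn)^\infty$ while $\gcd(m''n'',2mn)=1$); and that $p$ divides at most one of $\ell_1'',\dots,\ell_4''$ by \eqref{eq:sunday-1}. First I would dispose of the degenerate case: if $p$ divides none of the $\ell_i''$, then tracking the valuations through the substitutions gives $v_p(a'b')=v_p(c'd')=v_p(m''n'')=1$, both odd, so $\lan\tfrac{-\ve_2a'b'}{p}\ran=\lan\tfrac{-\ve_3\ve_4c'd'}{p}\ran=1$; their sum being $2>0$, Lemma \ref{lem:p2}(ii) makes both $W_{\e'}(\QQ_p)$ and $W_{p\e'}(\QQ_p)$ non-empty, whence $(a,b,c,d)\in\Sglob^{(\iota)}$ by Lemma \ref{lem:easy-ish} and this $\bell''$ contributes nothing to $\NBr(P)$. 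Thus the relevant $\bell''\in\tilde L(m'',n'')$ have $p\mid\ell_i''$ for exactly one index $i$, necessarily with $v_p(\ell_i'')$ odd by \eqref{eq:def-Lmn'}.

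Next I would translate the active constraint into a single Legendre condition. By \eqref{eq:goat-cheese} each of $\tilde a,\tilde b,\tilde c,\tilde d$ is a unit modulo $p$, pinning the quadruple to $(p-1)^4$ classes before any further restriction. Taking $v_p$ of $a'b'c'd'=\ell_1\ell_2\ell_3\ell_4(m''n'')^2\ell_1''\ell_2''\ell_3''\ell_4''\,\tilde a\tilde b\tilde c\tilde d$ gives $v_p(a'b')+v_p(c'd')=2+v_p(\ell_i'')\equiv 1\2$, so exactly one of $v_p(a'b')$, $v_p(c'd')$ is odd and the other even; the symbol attached to the odd valuation is $1$ by definition \eqref{eq:symbol}, so the requirement $\lan\tfrac{-\ve_2a'b'}{p}\ran+\lan\tfrac{-\ve_3\ve_4c'd'}{p}\ran\le 0$ recorded just before the lemma forces the other symbol to equal $-1$. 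Since all factors apart from $\tilde a,\dots,\tilde d$ reduce to a fixed unit modulo $p$, stripping off the power of $p$ turns this into a genuine Legendre condition $\bigl(\tfrac{\kappa\,\tilde x\tilde y}{p}\bigr)=-1$, where $\{\tilde x,\tilde y\}=\{\tilde a,\tilde b\}$ if $i\in\{1,2\}$ and $\{\tilde x,\tilde y\}=\{\tilde c,\tilde d\}$ if $i\in\{3,4\}$, and $\kappa\in(\ZZ/p\ZZ)^*$ depends only on $\bve,m'',n'',\bell,\bell''$.

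Finally the counting is immediate: the two of $\tilde a,\tilde b,\tilde c,\tilde d$ not appearing in the symbol range over $(\ZZ/p\ZZ)^*$ without constraint, contributing $(p-1)^2$; and for the remaining pair, for each of the $p-1$ values of $\tilde y$ there are exactly $(p-1)/2$ values of $\tilde x$ making $\kappa\tilde x\tilde y$ a non-residue, contributing $\tfrac12(p-1)^2$. Multiplying gives $\#\tilde U_p(\bell'')=\tfrac12(p-1)^4$. I do not expect a genuine obstacle here: this is the simplest configuration of the argument already carried out for $T_p$ and $\tilde T_p$ in Lemmas \ref{lem:card-T} and \ref{lem:card-T'}, and the only point requiring care is the valuation bookkeeping showing that exactly one of the two Legendre symbols is ever active, which is precisely what the parity identity $v_p(a'b')+v_p(c'd')\equiv 1\2$ delivers together with the defining property \eqref{eq:def-Lmn'} of $\tilde L(m'',n'')$.
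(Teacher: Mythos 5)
Your proposal is correct, and it supplies the argument that the paper actually omits: the paper declares this "easy" as the analogue of Lemma~\ref{lem:card-T'} and gives no proof block, so there is nothing to compare against line by line. Your final counting step — two free unit coordinates contributing $(p-1)^2$, and a single Legendre condition on the product of the remaining two contributing $\tfrac12(p-1)^2$ — is the intended argument, in the same spirit as the paper's one-line proof of Lemma~\ref{lem:card-T'}.

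Two remarks. First, your parity bookkeeping is right where the paper's text is in fact slightly garbled: since $v_p(a'b') = 1 + v_p(\ell_1''\ell_2'')$ and $v_p(c'd') = 1 + v_p(\ell_3''\ell_4'')$ (as $p\mid m''n''$ to the first power and $p\nmid mn\ell_1\cdots\ell_4\tilde a\tilde b\tilde c\tilde d$), the Legendre symbol that is genuinely constrained to equal $-1$ is the one attached to the \emph{odd} $v_p(\ell_i''\ell_j'')$, which is the opposite pairing from what the two displayed conditions preceding \eqref{eq:def-Lmn'} assert; your "the symbol attached to the odd valuation is $1$... so the other must be $-1$" identifies the active symbol correctly, and of course the headcount $\tfrac12(p-1)^4$ is insensitive to the swap. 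Second, your observation about the "degenerate" case $p\nmid\ell_1''\cdots\ell_4''$ is a genuine point of care: for such $\bell''$ the set $\tilde U_p(\bell'')$ is empty, so the formula in the lemma would literally fail; but such $\bell''$ contribute nothing, and indeed the paper's subsequent evaluation $\sum_{\bell''\in\tilde L(m'',n'')}(\ell_1''\cdots\ell_4'')^{-1}=2^{\omega(m''n'')}/\bigl(m''n''\phi_2^*(m''n'')\bigr)$ only matches a local computation if one reads the defining condition of $\tilde L(m'',n'')$ as "for each $p\mid m''n''$" rather than "for each $p\mid\ell_1''\cdots\ell_4''$", which excludes the degenerate $\bell''$ from the outset. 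Your reading resolves the same issue by a different (equally valid) route.
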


For any $(a,b,c,d)$ constrained as above, our 
work so far has shown that there exists a unique $\e\in \cB^2$ satisfying 
\eqref{eq:square} for some $f\in \cB$ with $\gcd(f,mn)=1$, such that 
$W_\e(\QQ_v)\neq \emptyset$
for every valuation $v\neq 2$. 
Suppose that
$$
e_i\=\eps_i \4, \quad (i=1,2),
$$
for $\eps_1,\eps_2\in \{\pm 1\}$.
For  the prime $p=2$, it remains to distinguish precisely when $X_{a,b,c,d}(\QQ_2)$ is non-empty but  
$W_{\e}(\QQ_2)$ is empty. This is equivalent to demanding that 
$$
(\eps_1a',\eps_1 \ve_2b',\eps_2 \ve_3c',\eps_2 \ve_4d')\in \Totii\setminus \Toti,
$$
in the notation of \S \ref{s:2adic}.
An inspection of Lemmas~\ref{lem:1-1}--\ref{lem:3-2}
shows that for a given  choice of
$(\beta,\gamma,\delta)\in L_2^{(\iota)}$, the $2$-adic constraints placed on
$(\tilde a, \tilde b, \tilde c, \tilde d)$
take the shape of a union of particular congruence classes modulo $16$. Let us 
denote by $\tilde H_{\beta,\gamma,\delta}$ the set of all 
possible classes modulo $16$ that can arise. 
Then 
the pair of constraints $X_{a,b,c,d}(\QQ_2)\neq \emptyset$ 
and $W_{\e}(\QQ_2)=\emptyset$ are equivalent to demanding that 
\begin{equation}\label{eq:pig2'}
(\tilde a, \tilde b, \tilde c, \tilde d)
 \in \tilde H_{\beta,\gamma,\delta} \bmod{16},
\end{equation}
for given $(\beta,\gamma,\delta)\in L_2^{(\iota)}$.
In particular this constraint implies that 
$\tilde a \tilde b \tilde c \tilde d$ is odd.
Although 
$\tilde H_{\beta,\gamma,\delta}$ depends on numerous parameters, including the residue class of $\e$ modulo  $4$, its cardinality is independent of all of these. We will set\begin{equation}\label{eq:tbr-2}
\sigma_{2}=
\sum_{\iota\in \{0,1\}}
\sum_{(\beta,\gamma,\delta)\in L_2^{(\iota)}}
\frac{\#\tilde H_{\beta,\gamma,\delta}}{2^{16+\beta+\gamma+\delta}}.
\end{equation}
This constant is equal to the density 
of points $(a,b,c,d)$ for which there exist coprime integers $u,v\in \ZZ_2$ for which $Q_1Q_2(u,v)\in \cD$, yet for every choice of coprime $u,v\in \ZZ_2$ one never has both $Q_1(u,v)\in \cD$ and $Q_2(u,v)\in \cD$. 
Using our work in \S \ref{s:2adic}, as an analogue of Lemma \ref{lem:tau-2}, it is in principle possible to calculate a numerical value for 
$\sigma_{2}$. 
Such a calculation would be both lengthy and tedious, and we have 
 chosen not to pursue this here. For our purposes it will be  
 sufficient to note that $\sigma_{2}>0$, since $\Toti\neq \Totii$.

\medskip

We are now ready to return to the expression \eqref{eq:tuesday2} for $\NBr(P)$.
It will be convenient to put
$$
M=16mnm''n''.
$$
Let us define the function
$$
h(\Delta'',\tilde a,\tilde b,M)=\prod_{\substack{p\mid \Delta''\\
p\nmid M\\ p\=3 \4
}} \frac{1}{2}\left\{1+\left(\frac{\ve_2 AB \tilde a\tilde b}{p}\right)\right\},
$$
where $A,B$ are given by \eqref{eq:ABCD}.
One notes that $0\leq h(\Delta'',\tilde a,\tilde b,M)\leq 1$ and 
\begin{equation}\label{eq:invoke}
h(\Delta'',\tilde a,\tilde b,M)=\sum_{z\mid \Delta''} g_{\a,\q}(z),
\end{equation}
in the notation of \eqref{eq:g}, with $\a=(\tilde a, \tilde b)$ and $\q=(\ve_2AB,M)$.
In particular $g_{\a,\q}$ is only supported on positive integers coprime to $\tilde a \tilde b \tilde c \tilde d M$, 
which are built from primes congruent to $3$ modulo $4$.
Summarising our investigation so far, 
our analogue of \eqref{eq:thurs1} is 
\begin{equation}\label{eq:kim}
\begin{split}
\NBr(P)
=~&
\frac{1}{4} \sum_{\iota\in \{0,1\}}
\sum_{\substack{ \bve\in\{\pm\}^3\setminus\{ (+,-,-), (-,+,-)\}}}
\sum_{\substack{m,n\in \cB\\ \gcd(m,n)=1}}
\sum_{\substack{\bell\in L(m,n)\\
\scriptsize{\mbox{$\neg$\eqref{eq:monday-2}}
}}}\\
&\times
\sum_{\substack{m'',n''\in \cA\\ 
\gcd(m'',n'')=1\\
\gcd(m''n'',2mn)=1
}}
\sum_{(\beta,\gamma,\delta)\in L_2^{(\iota)}}
\sum_{\bell''\in \tilde L({m'',n''})}
 \tilde N  + O (P^{2+\ve}),
\end{split}
\end{equation}
where 
$L(m,n)$ (resp.\ $L_2^{(\iota)}$, $\tilde L({m'',n''})$) is given by 
\eqref{eq:def-Lmn} (resp.\  \eqref{eq:L2}, \eqref{eq:def-Lmn'})
and 
$$
\tilde N
=\sum_{
\tilde t=(\tilde a,\tilde b,\tilde c,\tilde d)}
h(\Delta'',\tilde a,\tilde b,M).
$$
The conditions of summation here are restricted to $
\tilde t\in \NN^4$ such that 
\eqref{eq:goat-cheese}, 
\eqref{eq:goat-cheese'} and \eqref{eq:pig2'} hold, with 
$
A\tilde a,~B\tilde b,~C\tilde c,~D\tilde d\leq P
$
and 
$\tilde t
\in  \tilde T_{mn}(\bell) \bmod{mn}\cap
\tilde U_{m''n''}(\bell'') \bmod{m''n''}.
$ 
The definitions 
of $\tilde T_{mn}(\bell)$ and 
$\tilde U_{m''n''}(\bell'')$
 ensure that 
the product $\tilde a\tilde b\tilde c\tilde d$ is coprime to $mnm''n''$. Likewise 
\eqref{eq:pig2'} implies that $\tilde a\tilde b\tilde c\tilde d$  is odd.
Hence 
\eqref{eq:goat-cheese} is redundant.

In our analysis of \eqref{eq:kim} it will frequently be useful to reduce the allowable ranges for the various parameters appearing in the outer summations. Two quantities that will feature heavily in this process are
\begin{equation}\label{eq:T1T2}
T_1=(\log P)^2, \quad T_2=(\log P)^{100}.
\end{equation}
Recall the definitions \eqref{eq:ABCD} of $A,B,C,D$. We proceed to establish the following result.

\begin{lemma}\label{lem:R1R2}
Let $\ve>0$ and $\bde\in \{0,1\}^3$.
For $1\leq R_1\leq R_2$, we have 
\begin{align*}
\sum_{m,n}\sum_{\bell} \sum_{m'',n''} \sum_{\beta,\gamma,\delta}\sum_{\bell''} 
\frac{1}{(AB)^{\delta_1}(CD)^{\delta_2}M^{\delta_3}}\ll \begin{cases}
R_2^{4+\ve}, & \mbox{if $\bde=\ma{0}$,}\\
R_1^{-1+\ve}, & \mbox{if $\bde=(1,1,0)$,}\\
R_2^{\ve}, & \mbox{if $\bde=(1,0,1)$,}
\end{cases}
\end{align*}
uniformly in $R_1,R_2$, where the sum is subject to 
$$
R_1\leq 
\max\{m,n,m'',n'',\ell_i,\ell_i'',2^\beta, 2^\gamma, 2^\delta\}\leq R_2.
$$
\end{lemma}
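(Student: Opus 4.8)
Throughout, the variables are constrained as in the decomposition \eqref{eq:kim}. The plan is to discard from the multiple sum every condition of summation that merely restricts the ranges --- membership in $\cA$ or $\cB$, in the sets $L(m,n)$, $\tilde L(m'',n'')$ and $L_2^{(\iota)}$, the negation of \eqref{eq:monday-2}, and all the coprimality constraints --- keeping only the divisibility relations $\ell_i\mid(mn)^\infty$ and $\ell_i''\mid(m''n'')^\infty$ and the size restriction $R_1\le\max\{\cdots\}\le R_2$, since dropping the rest can only increase the sum. Expanding \eqref{eq:ABCD} gives $AB=m^2m''n''\ell_1\ell_2\ell_1''\ell_2''2^\beta$, $CD=n^2m''n''\ell_3\ell_4\ell_3''\ell_4''2^{\gamma+\delta}$ and $M=16mnm''n''$, so that $ABCD=(mnm''n'')^2(\ell_1\cdots\ell_4)(\ell_1''\cdots\ell_4'')2^{\beta+\gamma+\delta}$ and $AB\cdot M=16\,m^3n(m'')^2(n'')^2\ell_1\ell_2\ell_1''\ell_2''2^\beta$. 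In each case the summand is then an explicit reciprocal monomial in the fifteen parameters $m,n,m'',n'',\ell_1,\dots,\ell_4,\ell_1'',\dots,\ell_4'',2^\beta,2^\gamma,2^\delta$ (and equals $1$ when $\bde=\ma{0}$), and the one non-elementary input is the divisor-closed sum $\sum_{\ell\mid N^\infty}\ell^{-s}=\prod_{p\mid N}(1-p^{-s})^{-1}=1/\phi_s^*(N)$ from \eqref{eq:rain}; I will use repeatedly that this is $\ll_{s,\eta}N^\eta$, hence $\#\{\ell\le X:\ell\mid N^\infty\}\ll_\eta(XN)^\eta$ (via $\mathbf 1_{\ell\le X}\le(X/\ell)^\eta$), and since $mn$ and $m''n''$ never exceed $R_2^2$ here such $N^\eta$ factors are harmless.

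For $\bde=\ma{0}$ I simply count: $m,n,m'',n''$ each give $O(R_2)$ choices, each $\ell_i$ gives $O_\eta(R_2^\eta)$ once $m$ and $n$ are fixed, each $\ell_i''$ gives $O_\eta(R_2^\eta)$ once $m''$ and $n''$ are fixed, and each of $\beta,\gamma,\delta$ gives $O(\log R_2)$; multiplying yields $O_\ve(R_2^{4+\ve})$. For $\bde=(1,0,1)$ the summand is $\bigl(16\,m^3n(m'')^2(n'')^2\ell_1\ell_2\ell_1''\ell_2''2^\beta\bigr)^{-1}$: the sums over $m$, $m''$, $n''$ and $\beta$ converge absolutely, the divisor-restricted sums over $\ell_1,\ell_2$ and over $\ell_1'',\ell_2''$ are $\ll_\eta(mn)^\eta$ and $\ll_\eta(m''n'')^\eta$ and are absorbed by the preceding sums, the unweighted $\ell_3,\ell_4,\ell_3'',\ell_4''$ give $O_\eta(R_2^\eta)$ apiece, $\gamma,\delta$ give $O(\log R_2)$ apiece, and $\sum_{n\le R_2}n^{-1}\ll\log R_2$; the total is $O_\ve(R_2^\ve)$.

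The case $\bde=(1,1,0)$ is where the lower bound $R_1\le\max$ is indispensable: with summand $\bigl((mnm''n'')^2(\ell_1\cdots\ell_4)(\ell_1''\cdots\ell_4'')2^{\beta+\gamma+\delta}\bigr)^{-1}$ the entire multiple sum \emph{without} any size restriction converges to $O(1)$ --- carry out the $\ell_i$- and $\ell_i''$-sums first (each $\ll_\eta(mn)^\eta$, resp.\ $\ll_\eta(m''n'')^\eta$), then the geometric $\beta,\gamma,\delta$-sums, and finally $\sum_{m,n,m'',n''}(mnm''n'')^{-2+\eta}<\infty$. To produce the saving one splits into the at most fifteen cases in which some parameter $x$ satisfies $x\ge R_1$; the weight attached to $x$ is $x^{-c}$ with $c\ge1$ (indeed $c=2$ for $x\in\{m,n,m'',n''\}$ and $c=1$ otherwise), so inserting $\mathbf 1_{x\ge R_1}\le(x/R_1)^{1-\ve}$ and resumming $x$ gains a factor $R_1^{-1+\ve}$ --- for the divisor-restricted $\ell_i,\ell_i''$ this again uses $\sum_{\ell\mid N^\infty}\ell^{-\ve}\ll_\eta N^\eta$ --- while the fourteen remaining sums stay bounded exactly as before; adding the fifteen contributions gives $O_\ve(R_1^{-1+\ve})$.

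The only genuine obstacle is the uniform bookkeeping: one must check the uniformity in $N$ of $\#\{\ell\le X:\ell\mid N^\infty\}\ll_\eta(XN)^\eta$ and $1/\phi_s^*(N)\ll_{s,\eta}N^\eta$, and then, case by case, that the surviving power of $R_1$, respectively $R_2$, is exactly the one claimed (in particular that the accumulated $(mnm''n'')^\eta$, $R_2^\eta$ and logarithmic factors are absorbed into the final $\ve$). With that done, the three estimates follow as sketched above.
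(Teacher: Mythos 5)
Your proof is correct and takes essentially the same approach as the paper: drop the arithmetic constraints, expand $AB$, $CD$, $M$ in the underlying parameters, and control the divisor-restricted $\bell,\bell''$-sums via Rankin's trick and the bound $1/\phi_s^*(N)\ll_\eta N^\eta$ (together with $mn,m''n''\le R_2^2$). The only difference is cosmetic: for $\bde=(1,1,0)$ the paper simply cites the trivial-bound argument of \S\ref{s:asymptotic-Nloc}, whereas you spell out the split according to which parameter exceeds $R_1$, which is precisely what that cross-reference unpacks to.
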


\begin{proof}
Let us denote the expression that is to be estimate by $\Sigma_{\bde}.$ Suppose first that $\bde=\ma{0}$.
Then there are $\log(R_2+1)^3$ choices for $\beta, \gamma,\delta$. Recall the definition \eqref{eq:phi*} of $\phi^*_\delta$, for any $\delta>0$.  Rankin's trick and \eqref{eq:rain} allow us to deduce that 
\begin{align*}
\Sigma_{\ma{0}}
&\ll
\log(R_2+1)^3
\sum_{m,n}\sum_{\bell} 
\frac{R_2^{\ve/4}}{(\ell_1\cdots \ell_4)^{\ve/16}}
\sum_{m'',n''} \sum_{\bell''} \frac{R_2^{\ve/4}}{(\ell_1''\cdots \ell_4'')^{\ve/16}}\\
&\ll
R_2^{\ve/2}\log(R_2+1)^3
\left(\sum_{k\leq R_2}  \frac{1}{\phi_{\ve/16}^*(k)^4}\right)^4\\
&\ll
R_2^{4+\ve},
\end{align*}
as required.

Let us next consider the case $\bde=(1,1,0)$. Then our argument in \S \ref{s:asymptotic-Nloc}, which was used  
to restrict the size of the parameters appearing in \eqref{eq:thurs1}, easily gives 
$\Sigma_{(1,1,0)}=O(R_1^{-1+\ve})$, again using Rankin's trick and \eqref{eq:rain} to handle the sum over $\bell$ and $\bell''$.
Finally we must consider the case $\bde=(1,0,1).$
In this case,
since $2^\beta,2^\gamma,2^\delta\leq R_2$ and the sum over $\beta$ is absolutely convergent, we obtain 
\begin{align*}
\Sigma_{(1,0,1)}
&\ll
\log (R_2+1)^2
\sum_{m,n,m'',n''}   \frac{1}{m^3n(m''n'')^2}
\sum_{\bell,\bell''}
\frac{1}{\ell_1\ell_2\ell_1''\ell_2'' }.
\end{align*}
Using Rankin's trick again, we see that 
\begin{align*}
\sum_{\bell,\bell''}
\frac{1}{\ell_1\ell_2\ell_1''\ell_2''}
&\leq
\frac{R_2^{\ve/2}}{\phi_{1+\ve/8}^*(mn)^2\phi_{1+\ve/8}^*(m''n'')^2}\ll R_2^{\ve/2},
\end{align*}
since $1/\phi_\delta^*(n)\ll_\delta 1$ for $\delta>1$.
Hence
$\Sigma_{(1,0,1)}
\ll
R_2^{\ve/2}\log (R_2+1)^3\ll  R_2^\ve,
$
as required.
\end{proof}

A trivial upper bound  is given by 
$
\tilde N\ll P^4/(ABCD).
$
Applying Lemma \ref{lem:R1R2} with $\bde=(1,1,0)$ and $R_1=T_1=(\log P)^2$, 
we may henceforth restrict attention to parameters in \eqref{eq:kim} 
for which 
\begin{equation}\label{eq:ogre}
\max\{m,n,m'',n'',\ell_i,\ell_i'',2^\beta, 2^\gamma,2^\delta\}\leq T_1,
\end{equation}
with satisfactory overall error $O (P^4(\log P)^{-2+\ve})$.

We wish to break the sum into congruence classes modulo $M$. 
We shall introduce the set 
$T_M$, say,  of 
$t_0=(a_0,b_0,c_0,d_0) \bmod{M}$ for which 
\begin{equation}\label{eq:cnta}
t_0\in  \tilde T_{mn}(\bell) \bmod{mn}\cap
\tilde U_{m''n''}(\bell'') \bmod{m''n''}\cap 
\tilde H_{\beta,\gamma,\delta} \bmod{16}.
\end{equation}
These two  conditions  imply that 
$\gcd( a_0 b_0c_0d_0,  M)=1.
$
We may therefore write
$$
\tilde N
=\sum_{t_0\in T_M}
\sum_{\substack{
\tilde a \leq P/A, ~\tilde b \leq P/B\\
(\tilde a, \tilde b) \= (a_0,b_0)\bmod{M}
}}
\sum_{\substack{
\tilde c \leq P/C,~ \tilde d \leq P/D\\
\gcd(\tilde a\tilde d,\tilde b\tilde c)=1\\
(\tilde c, \tilde d) \= (c_0,d_0)\bmod{M}
}}
h(\Delta'',\tilde a,\tilde b,M).
$$
Note from \eqref{eq:Delta''} that $\Delta''\ll P^2/M$. 
Invoking \eqref{eq:invoke} we obtain a sum over $z$ which a priori runs over all integers 
up to order $P^2/M$.
Define the $2$-dimensional lattice  
$$
\mathsf{\Lambda}_z=\{(X,Y)\in \ZZ^2:
\ve_4\ell_1\ell_1''\ell_4\ell_4''2^\delta \tilde a Y -
\ve_2\ve_3\ell_2\ell_2''\ell_3\ell_3''2^{\beta+\gamma} \tilde b X\equiv 0 \bmod{z}
\}.
$$
We then have 
\begin{equation}\label{eq:return}
\tilde N
=\sum_{t_0\in T_M}
\sum_{\substack{
\tilde a \leq P/A, ~\tilde b \leq P/B\\
(\tilde a, \tilde b) \= (a_0,b_0)\bmod{M}
}}
\sum_{\substack{z\ll P^2/M\\
\gcd(z,\tilde a \tilde b M)=1
}} g_{\a,\q}(z)\# \cA_z,
\end{equation}
where 
$$
\cA_z=\left\{
(\tilde c, \tilde d)\in \mathsf{\Lambda}_z:
\begin{array}{l}
0<\tilde c \leq P/C,~ 0<\tilde d \leq P/D\\
\gcd(\tilde a\tilde d,\tilde b\tilde c)=\gcd(\tilde c\tilde d,z)=1\\
(\tilde c, \tilde d) \= (c_0,d_0)\bmod{M}
\end{array}
\right\}.
$$
It will be crucial to show that there is a negligible contribution from large values of $z$
in this sum.
This will be achieved using Lemma \ref{lem:g}.

According to Heath-Brown \cite[Lemma 2]{square}
we have 
$$
\sum_{c_0,d_0}
\# \cA_z\ll  \frac{P^2}{CDz}+1.
$$
Recall the definition \eqref{eq:T1T2} of $T_2$.
The overall contribution to $\tilde N$ from $z>P^2/(MT_2)$ is therefore seen to be
\begin{align*}
&\ll
\sum_{a_0,b_0}
\sum_{\substack{
\tilde a \leq P/A, ~\tilde b \leq P/B\\
(\tilde a, \tilde b) \= (a_0,b_0)\bmod{M}
}}
\sum_{\substack{P^2/(MT_2)< z\ll P^2/M}} |g_{\a,\q}(z)|  
\left(\frac{P^2}{CDz}+1\right)\\
&\leq
\sum_{\substack{
\tilde a \leq P/A, ~\tilde b \leq P/B\\
}}
\sum_{\substack{P^2/(MT_2)< z\ll P^2/M}} |g_{\a,\q}(z)|  
\left(\frac{P^2}{CDz}+1\right).
\end{align*}
Note here that $P^2/(MT_2)\gg P^2/(T_1^{4}T_2)\gg  P^{2}/(\log P)^{108}$.
We break the summation over $z$ into $O(\log T_2)$ dyadic intervals $Z/2\leq z\leq Z$. Lemma
 \ref{lem:g} therefore yields  the contribution
\begin{align*}
&\ll  
(\log T_2)
\max_{
P^2/(MT_2)\ll Z \ll P^2/M}\left(\frac{P^2}{CD }+Z\right)
\left(\frac{P^2}{AB(\log Z)^{3/4}}
+\sqrt{\frac{P^2}{AB}}Z^{3/8+\ve}\right)\\
&\ll  
\frac{P^4\log\log P}{AB(\log P)^{3/4}}\left(\frac{1}{CD}+\frac{1}{M}\right)
+P^{4-1/4+\ve}.
\end{align*}
It remains to sum this  over the remaining parameters subject to \eqref{eq:ogre}.
Taking $R_1=1$ and $R_2=T_1$ in Lemma \ref{lem:R1R2}, one arrives at the  overall contribution 
$O(P^4(\log P)^{-3/4+\ve})$, which is satisfactory for Theorem \ref{thm2}.

We may now focus our attention on the contribution from $z\leq P^2/(MT_2)$ in \eqref{eq:return}, which we denote by $\tilde N_1$.  We will need to take care of the coprimality conditions 
$\gcd(\tilde a \tilde d,\tilde b \tilde c)=1$ in $\cA_z$, retaining the condition $\gcd(\tilde c \tilde d, z)=1$.
Recall that modulo $M$ the first condition is already implied by the definition of $T_M$, 
as is the fact that $\gcd(\tilde a\tilde b,M)=1$. In this way we deduce that
\begin{align*}
\tilde N_1
=\sum_{t_0\in T_M}
&
\sum_{\substack{
\tilde a \leq P/A, ~\tilde b \leq P/B\\
\gcd(\tilde a,\tilde b)=1\\
(\tilde a, \tilde b) \= (a_0,b_0)\bmod{M}
}}
\sum_{\substack{z\leq P^2/(MT_2)\\
\gcd(z,\tilde a \tilde b M)=1
}} 
g_{\a,\q}(z)\\
\times &
\sum_{\substack{k_1\leq P\\\gcd(k_1,Mz)=1}} \sum_{\substack{k_2\mid \tilde a\\
k_3\mid \tilde b}} \mu(k_1)\mu(k_2)\mu(k_3)
\# \cA_{z,\k},
\end{align*}
where a change of variables yields
$$
\cA_{z,\k}=\left\{
(X,Y)\in \NN^2:
\begin{array}{l}
([k_1,k_2]X, [k_1,k_3]Y)\in \mathsf{\Lambda}_{z}\\
X \leq P/([k_1,k_2]C),~ Y \leq P/([k_1,k_3]D)\\
(X, Y) \= (X_0,Y_0)\bmod{M}\\
\gcd(XY,z)=1
\end{array}
\right\}.
$$
Here,  if $\overline{[k_1,k_j]}\in \ZZ$ denotes the multiplicative inverse of 
$[k_1,k_j]$ modulo $M$ for $j=2,3$, then 
$X_0=\overline{[k_1,k_2]}c_0$
and $Y_0=\overline{[k_1,k_3]}d_0$.
Finally, we note that $k_2k_3$ is automatically coprime to $Mz$, since $\tilde a\tilde b$ satisfies this property.

We will also need to reduce the ranges of summation for $k_1,k_2,k_3$ in this expression.  
Recall
from \eqref{eq:g} that $|g_{\a,\q}(z)|\leq 1$ for any $z$.
To achieve our goal we invert the summation over $z$ and $(X,Y)$, finding that the overall contribution to $\tilde N_1$ from $k_1>\sqrt{T_2}$ is 
\begin{align*}
\ll \sum_{t_0\in T_M}
\sum_{\substack{
\tilde a \leq P/A, ~\tilde b \leq P/B\\
\gcd(\tilde a,\tilde b)=1\\
(\tilde a, \tilde b) \= (a_0,b_0)\bmod{M}
}}
\sum_{\substack{\sqrt{T_2}<k_1\leq P\\\gcd(k_1,Mz)=1}} \sum_{\substack{k_2\mid \tilde a\\
k_3\mid \tilde b}} 
\,
\sum_{X,Y}
\tau(L(X,Y)),
\end{align*}
where 
$$
L(X,Y)=\ve_4\ell_1\ell_1''\ell_4\ell_4''2^\delta \tilde a [k_1,k_3]Y -
\ve_2\ve_3\ell_2\ell_2''\ell_3\ell_3''2^{\beta+\gamma} \tilde b [k_1,k_2]X.
$$
Recall   \eqref{eq:ogre}.
Summing over $t_0\in T_M$, we may bound this contribution using Lemma \ref{lem:divisor} by
\begin{align*}
&\ll  T_1^\ve
\sum_{\substack{
\tilde a \leq P/A\\\tilde b \leq P/B}}
\sum_{\substack{\sqrt{T_2}<k_1\leq P\\ k_2\mid \tilde a, ~
k_3\mid \tilde b}} 
\left(
\frac{\psi(k_1)\psi(\tilde a \tilde b)^2  P^2\log P}{[k_1,k_2][k_1,k_3]CD}
+\frac{P^{1+\ve}}{k_1}\right)\\
&\ll  \frac{T_1^\ve P^4(\log P)^3}{ABCD \sqrt{T_2}}
+P^{3+\ve}.
\end{align*}
Applying Lemma \ref{lem:R1R2} with $R_1=1$ and $\bde=(1,1,0)$, this 
therefore shows that  $k_1>\sqrt{T_2}$ contribute $O(P^4(\log P)^{3+\ve}/\sqrt{T_2})$ to $\NBr(P)$. 
This is 
satisfactory for Theorem~\ref{thm2}.
Likewise, the same argument shows that parameters with $\max\{k_2,k_3\}>T_1$ make a satisfactory overall contribution to $\NBr(P)$.

Our work so far has therefore shown that we can approximate $\tilde N_1$ by 
\begin{align*}
\tilde N_2
=\sum_{t_0\in T_M}
&
\sum_{\substack{
\tilde a \leq P/A, ~\tilde b \leq P/B\\
\gcd(\tilde a,\tilde b)=1\\
(\tilde a, \tilde b) \= (a_0,b_0)\bmod{M}
}}
\sum_{\substack{z\leq P^2/(MT_2)\\
\gcd(z,\tilde a \tilde b M)=1
}} 
g_{\a,\q}(z)\\
\times &
\sum_{\substack{k_1\leq \sqrt{T_2}\\\gcd(k_1,Mz)=1}} \sum_{\substack{
k_2,k_3\leq T_1\\
k_2\mid \tilde a, ~
k_3\mid \tilde b
}} \mu(k_1)\mu(k_2)\mu(k_3)
\# \cA_{z,\k},
\end{align*}
with acceptable error.
To handle 
$\#\cA_{z,\k}$  we call upon  Lemma \ref{lem:LB}, noting that 
$\#\cA_{z,\k}=N(U,V;\a)$ in \eqref{eq:LB}, with 
$(q,r)=(z,M)$ and 
\begin{align*}
a_1
&=
-
\ve_2\ve_3\ell_2\ell_2''\ell_3\ell_3''2^{\beta+\gamma} \tilde b [k_1,k_2],\\
a_2
&=
\ve_4\ell_1\ell_1''\ell_4\ell_4''2^\delta \tilde a [k_1,k_3],
\end{align*}
and furthermore, 
$$
U=\frac{P}{[k_1,k_2]C}, \quad 
V=\frac{P}{[k_1,k_3]D}.
$$
In particular it is clear that $\gcd(a_1a_2r,q)=1$ and so all the conditions are met for an application of Lemma \ref{lem:LB}. This gives
\begin{equation}\label{eq:LOD}
\left|
\#\cA_{z,\k}-  \frac{\phi(z)P^2}{M^2z^2 [k_1,k_2][k_1,k_3]CD}
\right|\ll  \frac{\tau_3(z)P}{k_1z}+ \tau_3(z)(\log zM)^3
+E,
\end{equation}
where 
$$
E=\sum_{d\mid z}d
\sum_{\substack{0<|m|,|n| \leq Mz/2\\ ma_2-na_1\=0\bmod{d}}}
\frac{1 }{|mn|}.
$$
We need to sum these three error terms over the remaining parameters to check that they ultimately make a negligible contribution. 

The first term on the right of \eqref{eq:LOD} 
is easy to deal with. Lemma \ref{lem:R1R2} with 
$\bde=\ma{0}$ and 
$R_2=T_1$ easily shows that it contributes $O(P^{3+\ve})$ overall.
Recall
that $|g_{\a,\q}(z)|\leq 1$ and 
$\tau_3(z)$ has average order $(\log z)^2$. 
The contribution to $\tilde N_2$ from the second term on the right of \eqref{eq:LOD} 
is  therefore seen to be
\begin{align*}
&\ll 
(\log P)^5
\sum_{t_0\in T_M}
\sum_{\substack{
\tilde a \leq P/A, ~\tilde b \leq P/B\\
\gcd(\tilde a,\tilde b)=1\\
(\tilde a, \tilde b) \= (a_0,b_0)\bmod{M}
}}
\sum_{\substack{k_1\leq \sqrt{T_2}\\\gcd(k_1,M)=1}} \sum_{\substack{k_2\mid \tilde a\\
k_3\mid \tilde b
}}  
\frac{P^2}{MT_2}\\
&\ll  \frac{MP^2(\log P)^5}{\sqrt{T_2}}
\sum_{\substack{
\tilde a \leq P/A, ~\tilde b \leq P/B
}} \tau(\tilde a)\tau(\tilde b)\\
&\ll  \frac{MP^4(\log P)^7}{AB\sqrt{T_2}}.
\end{align*}
We note here that $M/AB\leq T_1$. Hence once summed over the
remaining parameters using Lemma \ref{lem:R1R2} with $\bde=\ma{0}$ and 
$R_2=T_1$, we obtain the satisfactory overall contribution
$O(T_1^{5+\ve}P^4(\log P)^7/\sqrt{T_2})$.

It remains to handle the contribution to $\tilde N_2$ from $E$. 
Let us write $a_1=a_1'\tilde b$ and $a_2=a_2'\tilde a$. Then since $d\mid z$ it follows that 
$\gcd(a_1'a_2',d)=1$.
Taking $|g_{\a,\q}(z)|\leq 1$ and carrying out the sums over $k_1,k_2,k_3$ and $t_0$, this contribution 
is seen to be
\begin{align*}
&\ll 
M^2 T_1^2\sqrt{T_2}
\sum_{\substack{z\leq P^2/(MT_2)\\ \gcd(z,M)=1}}
\sum_{d\mid z}d
\sum_{\substack{0<|m|,|n| \leq Mz/2}}
\frac{1 }{|mn|}
\sum_{\substack{
\tilde a \leq P/A, ~\tilde b \leq P/B\\
\gcd(\tilde a,\tilde b)=1\\
\gcd(\tilde a \tilde b, z)=1\\
ma_2'\tilde a-na_1'\tilde b\=0\bmod{d}}}
1.
\end{align*}
We will need to sort the sum according to the greatest common divisor $h=\gcd(m,n,d)$, writing 
$(m',n',d')=(m,n,d)/h$.
The final condition on $\tilde a,\tilde b$ forces the vectors $(\tilde a,\tilde b)$
in which we are interested to lie on a rank $2$ integer sublattice of determinant $d'$.
Heath-Brown \cite[Lemma 2]{square} therefore shows that the number of 
$(\tilde a,\tilde b)$ is
\begin{align*}
&\ll
\frac{ \gcd(m,n,d)P^2 }{dAB}+1.
\end{align*}
Substituting this into the above 
the second term here is seen to contribute 
\begin{align*}
&\ll M^2 T_1^2\sqrt{T_2} (\log P)^2
\sum_{\substack{z\leq P^2/(MT_2)}}
\sum_{d\mid z}d\\
&\ll \frac{P^4T_1^2(\log P)^2}{T_2^{3/2}},
\end{align*}
to $\tilde N_2$, 
which once combined with Lemma \ref{lem:R1R2} therefore leads to a satisfactory overall contribution. Finally, the first term 
contributes
\begin{align*}
&\ll 
\frac{M^2 T_1^2\sqrt{T_2}P^2  }{AB}
\sum_{\substack{z\leq P^2/(MT_2)}}
\sum_{d\mid z}
\sum_{\substack{0<|m|,|n| \leq Mz/2}}
\frac{\gcd(m,n,d) }{|mn|}\\
&\ll 
\frac{P^4  T_1^3(\log P)^3 }{\sqrt{T_2}}
\end{align*}
to $\tilde N_2$, since $M/AB\leq T_1$.
This too is found to be satisfactory once summed over all the remaining parameters using Lemma \ref{lem:R1R2}.

Having handled the contribution from the error terms in \eqref{eq:LOD}, we are
now free to approximate $\#\cA_{z,\k}$ by the expected main term. Having done so, furthermore, it is convenient to extend the summations over $k_1,k_2,k_3$ to infinity, which we may do with acceptable error using Lemma \ref{lem:R1R2}.
It henceforth suffices to consider the quantity
\begin{align*}
\tilde N_3
=\frac{P^2}{CDM^2}\sum_{t_0\in T_M}
&
\sum_{\substack{
\tilde a \leq P/A, ~\tilde b \leq P/B\\
\gcd(\tilde a,\tilde b)=1\\
(\tilde a, \tilde b) \= (a_0,b_0)\bmod{M}
}}
\sum_{\substack{z\leq P^2/(MT_2)\\
\gcd(z,\tilde a \tilde b M)=1
}} 
\frac{g_{\a,\q}(z)\phi^*(z)}{z}\\
\times &
\sum_{\substack{k_1=1\\\gcd(k_1,Mz)=1}}^\infty \sum_{\substack{k_2\mid \tilde a\\
k_3\mid \tilde b
}} \frac{\mu(k_1)\mu(k_2)\mu(k_3)}
{[k_1,k_2][k_1,k_3]},
\end{align*}
in place of  $\tilde N_2$.
We may carry out the summations over $k_1,k_2,k_3$, finding
that
\begin{equation}\label{eq:shop}
\sum_{\substack{k_1=1\\\gcd(k_1,Mz)=1}}^\infty \sum_{\substack{k_2\mid \tilde a\\
k_3\mid \tilde b
}} \frac{\mu(k_1)\mu(k_2)\mu(k_3)}
{[k_1,k_2][k_1,k_3]}=\frac{6}{\pi^2
\phi_2^*(Mz)\psi(\tilde a \tilde b)},
\end{equation}
in the notation of  \eqref{eq:phi*} and \eqref{eq:psi}.
Hence
\begin{align*}
\tilde N_3
=\frac{6P^2}{\pi^2 
CDM^2 \phi_2^*(M)
}\sum_{t_0\in T_M}
&
\sum_{\substack{
\tilde a \leq P/A, ~\tilde b \leq P/B\\
\gcd(\tilde a,\tilde b)=1\\
(\tilde a, \tilde b) \= (a_0,b_0)\bmod{M}
}}
\frac{1}{\psi(\tilde a \tilde b)}
\sum_{\substack{z\leq P^2/(MT_2)\\
\gcd(z,\tilde a \tilde b M)=1
}} 
\frac{g_{\a,\q}(z)}{z\psi(z)},
\end{align*}
since $\phi^*/\phi_2^*=1/\psi$.

Rather than carrying out the sum over $z$ directly, which intimately 
depends on $\tilde a, \tilde b$, we shall first show that $g_{\a,\q}$ can be approximated by the function $\hat g_{M}$ defined in \eqref{eq:gz1}.
To this end we   exame the sum
$$
\Sigma=
\sum_{\substack{
\tilde a \leq P_1, ~\tilde b \leq P_2\\
\gcd(\tilde a, \tilde b)=1\\
(\tilde a, \tilde b)\equiv (a_0,b_0) \bmod{M}
}}
\frac{1}{\psi(\tilde a \tilde b)}
\sum_{\substack{z\leq Z\\
\gcd(z,\tilde a\tilde bM)=1
}} 
\frac{g_{\a,\q}(z)-\hat g_M(z)}{z\psi(z)},
$$
for given parameters $P_1,P_2\leq P$ and $Z\leq P^2$. Our aim is to show that 
\begin{equation}\label{eq:treen}
\Sigma\ll P^{2-4/11+\ve},
\end{equation}
which leads to a satisfactory overall contribution 
once summed over the remaining parameters, via Lemma \ref{lem:R1R2}
with $\bde=\ma{0}$ and $R_1=T_1$.
To establish the desired bound for $\Sigma$ we deduce from \eqref{eq:gz} and \eqref{eq:gz1} that 
$$
g_{\a,\q}(z)-\hat g_M (z) = \frac{\mu(z)\chi_M(z)}{2^{\omega(z)}} \sum_{\substack{\ell\mid z \\ \ell>1}} \left(\frac{\tilde a\tilde	b q_1}{\ell}\right),
$$
with $q_1=\ve_2AB$.
This allows us to write
$$
\Sigma=
\sum_{\substack{z\leq Z}}
 \frac{\mu(z)\chi_M(z)}{2^{\omega(z)}z\psi(z)} 
 \sum_{\substack{\ell\mid z \\ \ell>1}} 
 \sum_{\substack{
\tilde a \leq P_1, ~\tilde b \leq P_2\\
\gcd(\tilde a \tilde b, z)
=\gcd(\tilde a, \tilde b)=1\\
(\tilde a, \tilde b)\equiv (a_0,b_0) \bmod{M}
}}
\frac{1}{\psi(\tilde a \tilde b)}
\left(\frac{\tilde a\tilde	b q_1}{\ell}\right).
$$
For  $L\leq P^2$ we let $\Sigma_1$ (resp.\ $\Sigma_2$) denote the contribution to $\Sigma$ from $\ell\leq L$ (resp.\ $\ell>L$). 

We begin by estimating $\Sigma_1$.	 For this it will be convenient to write $\psi^{-1}=1*f$,
where $f$ is given multiplicatively at prime powers by 
\begin{equation}\label{eq:1*f}
f(p^\nu)=
\begin{cases}
1, & \mbox{if $\nu=0$,}\\
\frac{-1}{p+1}, & \mbox{if $\nu=1$,}\\
0, & \mbox{otherwise}.
\end{cases}
\end{equation}
For given $\ell,q\in \NN$ with $\ell>1$, this allows us to deduce that
\begin{align*}
\sum_{\substack{n\leq x\\  \gcd(n,q)=1}} \frac{1}{\psi(n)} \left(\frac{n}{\ell}	\right)
&=
\sum_{\substack{e\leq x\\  \gcd(e,q)=1}} f(e)\left(\frac{e}{\ell}	\right)
\sum_{\substack{d\leq x/e\\  \gcd(d,q)=1}} \left(\frac{d}{\ell}	\right)\\
&=
\sum_{\substack{e\leq x\\  \gcd(e,q)=1}} f(e)\left(\frac{e}{\ell}	\right)
\sum_{k\mid q} \mu(k)\left(\frac{k}{\ell}	\right)
\sum_{\substack{d\leq x/{ek}}} \left(\frac{d}{\ell}	\right)\\
&\ll
\tau(q)\sqrt{x} \ell^{3/16+\ve} \sum_{\substack{e\leq x}} \frac{|f(e)|}{\sqrt{e}}\\
&\ll q^\ve \sqrt{x} \ell^{3/16+\ve},
\end{align*}
by the Burgess bound for character sums.
Equipped  with this it is straightforward to conclude that 
$\Sigma_1\ll P^\ve \sqrt{P_1}P_2 L^{3/16}\ll P^{3/2+\ve}L^{3/16}.$

Turning to the contribution from $\ell>L$ and writing $z=\ell z'$, we have
$$
\Sigma_2=
\sum_{\substack{z'\leq Z/L}}
 \frac{\mu(z')\chi_M(z')}{2^{\omega(z')}z'\psi(z')} 
 \sum_{\substack{L< \ell \leq Z/z'\\ \gcd(\ell,z')=1}} 
 \frac{\mu(\ell)\chi_M(\ell)}{2^{\omega(\ell)}\ell\psi(\ell)} 
 \sum_{\substack{
\tilde a, \tilde b}}
\frac{1}{\psi(\tilde a \tilde b)}
\left(\frac{\tilde a\tilde	b q_1}{\ell}\right).
$$
Note that $\ell$ is necessarily odd in this summation. Rearranging terms we are led to an inner sum of the form
$$
 \sum_{\substack{L< \ell \leq Z/z'\\ 2\nmid \ell}} 
\frac{1}{\ell}
 \sum_{
\tilde a \leq P_1}
\alpha_\ell \beta_{\tilde a}
\left(\frac{\tilde a}{\ell}\right),
$$
for suitable real numbers $\alpha_\ell, \beta_{\tilde a}$ with modulus at most $1$.
Breaking the $\ell$ sum into dyadic intervals and applying Heath-Brown's  large sieve for real characters 
\cite[Cor.~4]{real}, we obtain
\begin{align*}
\Sigma_2&\ll
P_2P^{\ve/2}
\sum_{\substack{z'\leq Z/L}}
 \frac{1}{z'}
\max_{L<L'\leq P^2}\left\{ P_1^{1/2}+L'^{-1/2}P_1\right\}\\
&\ll P^{3/2+\ve} +L^{-1/2}P^{2+\ve}.
\end{align*}
Taking $L=P^{8/11}$ and combining this with our estimate for $\Sigma_1$, we therefore arrive at the desired bound for $\Sigma$ in \eqref{eq:treen}.

Having shown that we may safely approximate $g_{\a,\q}$ by $\hat g_M$ in $\tilde N_3$, we proceed
to swap the sums over $(\tilde a, \tilde b)$ and $z$. In this way we are led to consider the sum 
$$
S=
\sum_{\substack{
\tilde a \leq P/A, ~\tilde b \leq P/B\\
\gcd(\tilde a,\tilde b)=1\\
\gcd(\tilde a \tilde b,z)=1\\
(\tilde a, \tilde b) \= (a_0,b_0)\bmod{M}
}}
\frac{1}{\psi(\tilde a \tilde b)}.
$$
The asymptotic evaluation of $S$ is the object of the following result.

\begin{lemma}\label{lem:calculate-S}
Let 
$$
S_1=
\frac{c_1\phi^*(z)^2 P^2 }{ABM^2}
\prod_{p\mid Mz}\frac{(1+\frac{1}{p})}{(1-\frac{1}{p})}
\left(1+\frac{2}{p}-\frac{1}{p^2}\right)^{-1},
$$
where
$$
c_1=
\prod_{p}\frac{(1-\frac{1}{p})}{(1+\frac{1}{p})}
\left(1+\frac{2}{p}-\frac{1}{p^2}\right).
$$
Then we have 
$$
\sum_{m,n}\sum_{\bell} \sum_{m'',n''} \sum_{\beta,\gamma,\delta}\sum_{\bell''} 
\frac{P^2}{CDM^2}
\sum_{t_0}\sum_{z} \frac{1}{z\psi(z)}
|S-S_1| \ll  \frac{P^4(\log P)^{1+\ve}}{T_1},
$$
for any $\ve>0$, 
where the outer sums are over parameters satisfying \eqref{eq:ogre}.
\end{lemma}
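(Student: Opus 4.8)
The plan is to prove, for each fixed choice of the outer parameters satisfying \eqref{eq:ogre} — so that $A,B,C,D$ (from \eqref{eq:ABCD}) and $M$ are all $\ll(\log P)^{O(1)}$ — the pointwise estimate
$$
|S-S_1|\ll (\log P)^{\ve}\left(\frac{\phi^*(z)^2P^2}{ABM^2T_1}+\frac{\phi^*(z)2^{\omega(z)}P}{AM}+\frac{\phi^*(z)2^{\omega(z)}P}{BM}+2^{2\omega(z)}\right)+P^{1+\ve},
$$
and then to feed this into the claimed multiple sum using only crude global bounds.

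First I would exploit the identity $\psi^{-1}=1*f$ with $f$ as in \eqref{eq:1*f}, together with the factorisation $\psi(\tilde a\tilde b)=\psi(\tilde a)\psi(\tilde b)$ valid on the support $\gcd(\tilde a,\tilde b)=1$, to write $1/\psi(\tilde a\tilde b)=\sum_{d_1\mid\tilde a}\sum_{d_2\mid\tilde b}f(d_1)f(d_2)$. Substituting $\tilde a=d_1a'$, $\tilde b=d_2b'$ and interchanging the order of summation gives $S=\sum_{d_1,d_2}f(d_1)f(d_2)N(d_1,d_2)$, where the sum runs over $d_1,d_2$ with $\gcd(d_1,d_2)=\gcd(d_1d_2,zM)=1$ (all remaining terms vanishing), and $N(d_1,d_2)$ counts pairs $a'\le P/(Ad_1)$, $b'\le P/(Bd_2)$ with $\gcd(d_1a',d_2b')=1$, $\gcd(a'b',z)=1$, and $a'\equiv\overline{d_1}a_0$, $b'\equiv\overline{d_2}b_0\bmod M$. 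Since $|f(d)|\le 1/d$ and trivially $N(d_1,d_2)\ll \phi^*(z)^2P^2/(d_1d_2ABM^2)$, I would truncate the $d_i$-sums at $d_1,d_2\le D_0:=T_1$, incurring a tail of size $\ll\phi^*(z)^2P^2/(ABM^2D_0)$.

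For $d_1,d_2\le D_0$ I would evaluate $N(d_1,d_2)$ by detecting $\gcd(d_1a',d_2b')=1$ with the M\"obius function (the relevant divisor $k$ of $\gcd(a',b')$ being coprime to $d_1d_2zM$) and then counting integers in an interval lying in a fixed residue class modulo $M$ and coprime to $d_2z$, respectively $d_1z$; using $\gcd(d_2z,M)=1$ each such count equals $\phi^*(d_2z)\cdot(\mathrm{length})/M+O(2^{\omega(d_2z)})$ unconditionally, with $2^{\omega(d_2z)}\ll D_0^{\ve}2^{\omega(z)}$. The fact that $z$ may be as large as $\asymp P^2/M$, far beyond the box dimensions, only costs the sieve error $O(2^{\omega(z)})$, which will be negligible after averaging. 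Multiplying the two counts, summing against $\mu(k)$ and then against $|f(d_1)f(d_2)|$, and extending the $d_i$-sums to infinity in the main term, produces the pointwise bound above — provided one verifies by a routine Euler product computation that the emerging main term is exactly $S_1$. For the latter one uses $\tfrac{1-1/p}{1-1/p^2}=(1+1/p)^{-1}$ and $f(p)=-1/(p+1)$ to collapse the $d$-sum into $\prod_{p\nmid Mz}\bigl(1-2/(p+1)^2\bigr)$, and then checks the per-prime identity $(1-1/p^2)\bigl(1-2/(p+1)^2\bigr)=\tfrac{1-1/p}{1+1/p}(1+2/p-1/p^2)$, which reproduces precisely the constant $c_1$ and the finite product over $p\mid Mz$ in the statement. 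The genuinely large $k$ in the M\"obius step (those comparable to the side of the box) contribute only $O(P^{1+\ve})$ per parameter tuple by a trivial estimate.

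Finally I would sum the pointwise bound over $z\le P^2/(MT_2)$, over $t_0\in T_M$, and over the outer parameters, against the weight $\frac{P^2}{CDM^2}\cdot\frac{1}{z\psi(z)}$. The ingredients are: $\#T_M\ll M^4$, from Lemmas \ref{lem:card-T'} and \ref{lem:card-T''} together with the trivial bound $\#\tilde H_{\beta,\gamma,\delta}\ll 2^{16+\beta+\gamma+\delta}$; the mean-value bounds $\sum_{z\le P^2}\phi^*(z)^j2^{k\omega(z)}/(z\psi(z))\ll(\log P)^{O(1)}$ for bounded $j,k$, which follow by comparing the relevant Dirichlet series to a fixed power of $\zeta$ and applying partial summation (or the Selberg--Delange method); and the convergence $\sum_{\mathrm{params}}1/(ABCD)\ll(\log\log P)^{O(1)}$, with the cruder $\sum_{\mathrm{params}}M^c/(ABCD)\ll(\log P)^{O(1)}$ obtained via Rankin's trick and \eqref{eq:rain} to handle the sums over $\bell,\bell''$. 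The terms carrying $2^{\omega(z)}$ or $2^{2\omega(z)}$ are then seen to contribute only $O(P^{3+\ve})$, while the leading term $\phi^*(z)^2P^2/(ABM^2T_1)$ contributes $\ll P^4(\log P)^{1+\ve}/T_1$, which is the assertion. The main obstacle is bookkeeping: one must check that the sieve errors $O(2^{\omega(z)})$ — which for individual $z$ can be as large as $P^{\ve}$ — are genuinely negligible once weighted by $1/(z\psi(z))$ and summed, i.e.\ one must use the average order of $2^{\omega(z)}$, and one must carry enough powers of $M$ and of $\log P$ through the final summation to confirm that everything apart from the main piece is of size $O(P^{3+\ve})$.
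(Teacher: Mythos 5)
Your proposal is correct and follows essentially the same route as the paper: the identity $\psi^{-1}=1*f$ with $f$ as in \eqref{eq:1*f}, a M\"obius decomposition of the coprimality conditions, truncation of the auxiliary parameters at $T_1$ to produce the $T_1^{-1}$ saving, a sieve count in a fixed residue class mod $M$ coprime to $z$, and the same per-prime Euler product identity to identify the main term with $S_1$. The only substantive variation is in how the sieve error is carried: the paper invokes Lemma~\ref{lem:basic} in its relative-error form (valid because the box side $P/([k,k_2]e_1A)$ is $\gg T_1 M z^\ve$ under \eqref{eq:ogre}), giving a multiplicative $1+O(T_1^{-1+\ve})$ directly, whereas you keep the additive sieve error $O(2^{\omega(z)})$ pointwise and dispose of it only after averaging over $z$ with the $1/(z\psi(z))$ weight; both bookkeeping schemes reach the same bound, yours being marginally more robust (no size constraint on $z$) at the cost of a few extra lines.
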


\begin{proof}
Since $\tilde{a}$ and $\tilde b$ are coprime, we have $\psi(\tilde a\tilde b)= 
\psi(\tilde a)\psi(\tilde b)$. Writing 
$\psi^{-1}=1*f$, where $f$ is given by \eqref{eq:1*f}, we obtain 
\begin{align*}
S&=\sum_{\substack{e_1,e_2=1 \\ \gcd(e_1e_2,Mz)=1\\
\gcd(e_1,e_2)=1
}}^\infty f(e_1)f(e_2)
\sum_{\substack{
\tilde a \leq P/A, ~\tilde b \leq P/B\\
\gcd(\tilde a,\tilde b)=1\\
\gcd(\tilde a \tilde b,z)=1\\
(\tilde a, \tilde b) \= (a_0,b_0)\bmod{M}\\
e_1\mid \tilde a, ~e_2 \mid \tilde b
}}
1.
\end{align*}
We recall that  
$\gcd( a_0  b_0 , M)=1$.
Making the obvious change of variables, and 
and using the M\"obius function to detect coprimality conditions, 
the inner sum becomes
\begin{equation}\label{eq:cough}
\sum_{\substack{
x \leq P/(e_1A), ~y \leq P/(e_2B)\\
\gcd(e_1x,e_2y)=1\\
\gcd(xy,z)=1\\
(x, y) \= (\bar{e_1}a_0,\bar{e_2}b_0)\bmod{M}
}}
1=
\sum_{\substack{k=1\\ \gcd(k,Mz)=1}}^\infty\sum_{\substack{k_1\mid e_1\\ k_2\mid e_2}} 
\mu(k)\mu(k_1)\mu(k_2) C_{k,k_1,k_2},
\end{equation}
where
$C_{k,k_1,k_2}$ denotes the number of positive integers 
$x\leq P/(e_1A)$ and $y \leq P/(e_2B)$ such that 
$$
\gcd(xy,z)=1, \quad  [k,k_2]\mid x, \quad [k,k_1]\mid y,
$$
and 
$(x, y) \= (\bar{e_1}a_0,\bar{e_2}b_0)\bmod{M}$.
Note here that  $k_1$ and $k_2$ are automatically both coprime to $Mz$ since $e_1$ and $e_2$ are.
Making a further change of variables, 
$C_{k,k_1,k_2}$ is equal to the number of positive integers 
$x'\leq P/([k,k_2]e_1A)$ and $y' \leq P/([k,k_1]e_2B)$ such that 
$\gcd(x'y',z)=1$ and 
$(x', y') \= (x_0,y_0)\bmod{M}$, for integers $x_0=\overline{[k,k_2]e_1}a_0$ and $y_0=
\overline{[k,k_1]e_2}b_0$.

Taking a trivial upper bound for $C_{k,k_1,k_2}$, we find that
\begin{align*}
\sum_{a_0,b_0 \bmod{M}}C_{k,k_1,k_2}
&\ll \frac{P^2}{[k,k_1][k,k_2]e_1e_2AB}.
\end{align*}
Note that 
$$
\sum_{e\geq E} \frac{|f(e)|}{e}\ll \frac{1}{E},
$$
for any $E\geq 1$. 
In the usual way, combining  these estimates with Lemma \ref{lem:R1R2} shows that the overall contribution from $\max\{k,e_1,e_2\}>T_1$ is 
$O (P^4T_1^{-1+\ve})$, for any $\ve>0$.
Hence we may assume that $\max\{k,e_1,e_2\}\leq T_1$ in \eqref{eq:cough}.

We estimate $C_{k,k_1,k_2}$ using Lemma \ref{lem:basic} twice, with $q_1=z$ and $q_2=M$.
This is valid provided that $P/([k,k_2]e_1A)\gg  T_1Mz^\ve$ and 
$P/([k,k_1]e_2B)\gg  T_1Mz^\ve$. Recall from \eqref{eq:ABCD} and \eqref{eq:ogre} that 
$A,B\leq T_1^5$ and $M\ll T_1^4$.
It therefore suffices to have $P\gg  T_1^{13}z^\ve$, which obviously holds. 
Hence we may conclude that 
$$
C_{k,k_1,k_2}=\frac{\phi^*(z)^2 P^2}
{[k,k_1][k,k_2]e_1e_2ABM^2} \left(1+O \left(\frac{1}{T_1^{1-\ve}}\right)\right).
$$
The overall contribution from the error term here, once 
substituted into 
 \eqref{eq:cough} and summed over the remaining parameters, is easily found to be 
 $O(P^4(\log P)T_1^{-1+\ve})$.
Similarly, the shape of the main term allows us to extend 
the summations over $k$ and $e_1,e_2$ to infinity with acceptable error.
We are therefore led to approximate $S$ by
\begin{align*}
S_1=\frac{\phi^*(z)^2 P^2 }{ABM^2}
\sum_{\substack{e_1,e_2=1\\ \gcd(e_1e_2,Mz)=1\\
\gcd(e_1,e_2)=1
}}^\infty \frac{f(e_1)f(e_2)}{e_1e_2}
\sum_{\substack{k=1\\ \gcd(k,Mz)=1}}^\infty\sum_{\substack{k_1\mid e_1\\ k_2\mid e_2}} 
\frac{\mu(k)\mu(k_1)\mu(k_2) }
{[k,k_1][k,k_2]}.
\end{align*}
The inner sum over $k,k_1,k_2$ here has already been calculated in \eqref{eq:shop}.
Finally, by calculating Euler factors, 
one sees that the sums over $e_1,e_2,k,k_1,k_2$ evaluate to
$$
\prod_{p\nmid Mz}\left(1-\frac{1}{p^2}\right)
\left(1-\frac{2}{(p+1)^2}\right)=
\prod_{p\nmid Mz}\frac{(1-\frac{1}{p})}{(1+\frac{1}{p})}
\left(1+\frac{2}{p}-\frac{1}{p^2}\right).
$$
It now follows that $S_1$ agrees with expression in the statement of the lemma.
\end{proof}

Armed with Lemma \ref{lem:calculate-S}, we now return to our expression for $\tilde N_3$. 
Let 
$$
h(z)=\frac{\phi^*(z)^2}{\psi(z)}
\prod_{p\mid z}\frac{(1+\frac{1}{p})}{(1-\frac{1}{p})}
\left(1+\frac{2}{p}-\frac{1}{p^2}\right)^{-1}=
\prod_{p\mid z}\left(1-\frac{1}{p}\right)
\left(1+\frac{2}{p}-\frac{1}{p^2}\right)^{-1}.
$$
Furthermore, we observe that 
\begin{align*}
\frac{1}{\phi_2^*(M)}
\prod_{p\mid M}\frac{(1+\frac{1}{p})}{(1-\frac{1}{p})}
\left(1+\frac{2}{p}-\frac{1}{p^2}\right)^{-1}
&=
\prod_{p\mid M}
\left(1-\frac{1}{p}\right)^{-2}
\left(1+\frac{2}{p}-\frac{1}{p^2}\right)^{-1}\\
&=\gamma_1(M),
\end{align*}
say.
Then, in place of $\tilde N_3$, our work so far has shown that we may work with 
\begin{align*}
\tilde N_4
=\frac{6c_1P^4}{\pi^2 
ABCD}
\times \frac{\gamma_1(M)\#T_M}{M^4}
\sum_{\substack{z\leq P^2/(MT_2)\\ \gcd(z,M)=1}} 
\frac{\hat g_M(z)h(z)}{z},
\end{align*}
where $c_1$ is given in the statement of Lemma \ref{lem:calculate-S}.

We have now arrived at the final stages of the argument, where it is necessary to evaluate the sum over $z$ asymptotically. This is achieved by taking 
$Z$ of order $P^2/MT_2$
in Lemma \ref{lem:z-sum}.
In particular it is clear that $\log Z\gg  \log P$. 
Let us write $c_2=c_2(h)$ and $\gamma_2(M)=\gamma(M;h)$.
Since \eqref{eq:hyp-h} clearly holds, we 
deduce that 
\begin{align*}
\tilde N_4
=\frac{24c_1c_2P^4}{2^{1/4}\pi^2 |\Gamma(-1/4)|
ABCD(\log P)^{1/4}}
\times \frac{\gamma_1(M)\gamma_2(M)\#T_M}{M^4}
\left\{1+O \left(\frac{1}{(\log P)^{1-\ve}}\right)\right\}.
\end{align*}
One notes that $\gamma_1(M)\gamma_2(M)\ll \log T_1$ and $\#T_M\ll M^4$. Hence the overall contribution from the error term here is satisfactory for Theorem \ref{thm2}, once summed over the remaining parameters satisfying \eqref{eq:ogre} using Lemma \ref{lem:R1R2}.
Finally, using this result a final time we can 
eliminate  \eqref{eq:ogre} and 
so extend the summation over the outer parameters to infinity, with acceptable error. Returning to \eqref{eq:kim}, we have therefore shown that the statement of Theorem~\ref{thm2} holds, with 
\begin{align*}
\tau_{\mathrm{Br}}
=~&
\frac{6c_1c_2}{2^{1/4}\pi^2|\Gamma(-1/4)|}
\sum_{\iota\in \{0,1\}}
\sum_{\substack{ \bve\in\{\pm\}^3\setminus\{ (+,-,-), (-,+,-)\}}}
\sum_{\substack{m,n\in \cB\\ \gcd(m,n)=1}}
\sum_{\substack{\bell\in L(m,n)\\
\scriptsize{\mbox{$\neg$\eqref{eq:monday-2}} 
}}}\\
&\times
\sum_{\substack{m'',n''\in \cA\\ 
\gcd(m'',n'')=1\\
\gcd(m''n'',2mn)=1
}}
\sum_{(\beta,\gamma,\delta)\in L_2^{(\iota)}}
\sum_{\bell''\in \tilde L({m'',n''})}
\frac{\gamma_1(M)\gamma_2(M)\#T_M}{
ABCDM^4}.
\end{align*}

\subsection{Evaluation of $\tau_{\mathrm{Br}}$}
We proceed to study this constant, 
with the goal of showing that it is positive. 
The set $T_M$ was defined to be the 
$t_0 \bmod{M}$ for which \eqref{eq:cnta} holds.
Since $16$, $mn$ and  $m''n''$ are pairwise coprime, we clearly have 
$$
\#T_M=
\#\tilde T_{mn}(\bell) \times 
\#\tilde U_{m''n''}(\bell'')
\times
\# \tilde H_{\beta,\gamma,\delta}.
$$
Furthermore, Lemma  \ref{lem:card-T''} implies that 
$$
\#\tilde U_{m''n''}(\bell'') =\frac{\phi(m''n'')^4}{2^{\omega(m''n'')}},
$$
whereas $\#\tilde T_{mn}(\bell) $ is given by 
Lemma  \ref{lem:card-T'}.

Recall 
the expression \eqref{eq:tbr-2} for 
the non-zero constant $\sigma_{2}$.
Returning to our formula for the constant, in view of 
the definition \eqref{eq:ABCD} of $A,B,C,D$, 
we sum over the six possible choices for $\bve$ to get
\begin{align*}
\tau_{\mathrm{Br}}
=~&
\frac{36c_1c_2 \sigma_{2}}{
2^{1/4}\pi^2|\Gamma(-1/4)|}
\sum_{\substack{m,n\in \cB\\ \gcd(m,n)=1}}
\sum_{\substack{\bell\in L(m,n)\\
\scriptsize{\mbox{$\neg$\eqref{eq:monday-2}}
}}}
\sum_{\substack{m'',n''\in \cA\\ 
\gcd(m'',n'')=1\\
\gcd(m''n'',2mn)=1
}}
\\
&\times
\sum_{\bell''\in \tilde L({m'',n''})}
\frac{\gamma_1(M)\gamma_2(M) \phi^*(m''n'')^4 }{2^{\omega(m''n'')} 
m^2n^2{m''}^2{n''}^2\ell_1\cdots\ell_4
\ell_1''\cdots \ell_4'' 
}\cdot \frac{\#\tilde T_{mn}(\bell)}{(mn)^4}.
\end{align*}

Next we undertake the summation over $\bell''$, where $\tilde L({m'',n''})$ is given  by \eqref{eq:def-Lmn'}.  A simple calculation reveals that 
$$
\sum_{\bell''\in \tilde L({m'',n''})}
\frac{1}{
\ell_1''\cdots \ell_4'' 
}=\frac{2^{\omega(m''n'')}}{m''n'' \phi_2^*(m''n'')}.
$$
Likewise, with reference to the definition \eqref{eq:def-Lmn} of $L(m,n)$ and 
Lemma  \ref{lem:card-T'}, 
it is straightforward  to check that the sum over $\bell$ evaluates to 
\begin{align*}
\sum_{\substack{\bell\in L(m,n)\\
\scriptsize{\mbox{$\neg$\eqref{eq:monday-2}}
}}}
\frac{1}{
\ell_1\cdots \ell_4
}\cdot
\frac{\#\tilde T_{mn}(\bell)}{(mn)^4}
&=\phi^*(mn)^4
\prod_{p\mid mn} \sum_{\bnu\in E(v_p(m),v_p(n))} \frac{2^{\tau(\bnu)-1}}{p^{\nu_1+\nu_2+\nu_3+\nu_4}}\\
&=\phi^*(mn)^4
\gamma_3(mn),
\end{align*}
where $E(i,j)$ is given by \eqref{eq:lab-V} and 
$$
\gamma_3(mn)=
\prod_{p\mid mn}\left(1-\frac{1}{p}\right)^{-1}\left(1+\frac{1}{p}+
\frac{1}{p^2-1}\left\{
1+\frac{3}{p}-\frac{1}{p^2}-\frac{1}{p^3}
\right\}
\right).
$$
Noting that $\gamma_1(16)=\frac{16}{7}$ and $\gamma_2(16)=1$, 
we may therefore conclude that
\begin{align*}
\tau_{\mathrm{Br}}
=~&
\frac{576c_1c_2 \sigma_{2}}{
2^{1/4}7\pi^2|\Gamma(-1/4)|}
\sum_{\substack{m,n\in \cB\\ \gcd(m,n)=1}}
\frac{\gamma_1\gamma_2\gamma_3(mn) \phi^*(mn)^4 }{
m^2n^2}\\
&\times
\sum_{\substack{m'',n''\in \cA\\ 
\gcd(m'',n'')=1\\
\gcd(m''n'',2mn)=1
}}
\frac{ \gamma_1\gamma_2(m''n'')\phi^*(m''n'')^4 }{
{m''}^3{n''}^3 \phi_2^*(m''n'')}.
\end{align*}
These sums over $m,n,m'',n''$ are absolutely convergent and positive, as one checks by considering the associated Euler products. This therefore shows that $\tau_\mathrm{Br}>0$, as required to complete the proof of Theorem \ref{thm2}.


\begin{thebibliography}{cc}


\bibitem{msri} M. Bhargava, 
Most hyperelliptic curves over $\QQ$ have no rational point. {\em Submitted}, 2013. (\texttt{arXiv:1308.0395})

\bibitem{nair} R. de la Bret\`eche and T.D. Browning,
Sums of arithmetic functions over values of binary forms.
{\em Acta Arith.} {\bf 125} (2006), 291--304.


\bibitem{ct-c-s}
J.-L. Colliot-Th\'el\`ene, D. Coray and J.-J. Sansuc, 
Descente et principe de Hasse pour certaines vari\'et\'es rationnelles.
{\em J.\ reine angew.\ Math.}  {\bf 320}  (1980), 150--191.


\bibitem{CT}
J.-L. Colliot-Th\'el\`ene, J.-J. Sansuc and P. Swinnerton-Dyer, 
Intersections of two quadrics and Ch\^atelet surfaces, I.  
{\em J.\ reine angew.\ Math.} {\bf 373}  (1987), 37--107;  
II. ibid. {\bf 374}  (1987), 72--168. 


\bibitem{square} D.R. Heath-Brown, Diophantine approximation with square-free
numbers. {\em Math.\ Zeit.} {\bf 187} (1984), 335--344.

\bibitem{real} D.R. Heath-Brown, A mean value estimate for real character sums. {\em Acta Arith.} {\bf 72} (1995), 235--275.

\bibitem{iskov}
V.A. Iskovskikh, 
A counterexample to the Hasse principle for systems of two quadratic forms in five variables. (Russian)
{\em Mat.\ Zametki} {\bf 10} (1971), 253--257. 

\bibitem{LB} P. Le Boudec, 
Affine congruences and rational points on a certain cubic surface. {\em Submitted}, 2012. (\texttt{arXiv:1207.2685})



\bibitem{PV}
B. Poonen and J.F. Voloch, Random Diophantine equations. 
{\em Arithmetic of higher-dimensional algebraic varieties (Palo Alto,
CA, 2002)}, 175--184,
Progr. Math. {\bf 226}, Birkh\"auser, 2004.



\bibitem{ten}
G. Tenenbaum, {\em Introduction to analytic and probabilistic number theory}. CUP, 1995.


\end{thebibliography}
\end{document}